\definecolor{winered}{rgb}{0.8,0,0}
\numberwithin{equation}{section}
\theoremstyle{plain}
\newtheorem{theo}{Theorem}[section]
\newtheorem{lem}[theo]{Lemma}
\newtheorem{prop}[theo]{Proposition}
\newtheorem{coro}[theo]{Corollary}
\theoremstyle{definition}
\newtheorem{defi}[theo]{Definition}
\theoremstyle{remark}
\newtheorem{rem}[theo]{Remark}
\newtheorem{exam}{Example}
\newtheorem{claim}[theo]{Claim}
\newcommand{\Z}{\mathbb{Z}}
\renewcommand{\P}{\mathbb{P}}
\newcommand{\CH}{\mathrm{CH}}
\newcommand{\trdeg}{\mathrm{trdeg}}
\newcommand{\fil}{\mathrm{fil}}
\newcommand{\Ker}{\mathrm{Ker}}
\newcommand{\degn}{\mathrm{degn}}
\newcommand{\Tot}{\mathrm{Tot}}
\newcommand{\bA}{\mathbb{A}}
\newcommand{\cM}{\mathcal{M}}
\newcommand{\bH}{\mathbb{H}}
\newcounter{spec}
{\end{list}}%
\def\gr{{\mathrm{gr}}}     
\def\eff{{\mathrm{eff}}}     
\def\codim{{\mathrm{codim}}}% codimension
\def\CH{{\mathrm{CH}}}      % Chow group
\def\Coker{{\mathrm{Coker}}}% cokernel
\def\Ker{{\mathrm{Ker}}}    % kernel
\def\div{{\mathrm{div}}}    % divisible elements, divisor
\def\dlog{d{\mathrm{log}}}  % log. differential form
\def\et{{\text{\'{e}t}}}    % etale
\def\an{{\text{an}}}    % etale
\def\zar{{\text{zar}}}    % etale
\def\Ext{{\mathrm{Ext}}}    % extension group
\def\H{{\mathrm{H}}}        % (co)homology group
\def\Image{{\mathrm{Image}}}   % image
\def\ker{{\mathrm{Ker}}}    % kernel
\def\mod{\;{\mathrm{mod}}\;}    % modulo
\def\Pic{{\mathrm{Pic}}}    % Picard group
\def\Proj{{\mathrm{Proj}}}  % Proj. functor
\def\Spec{{\mathrm{Spec}}}
\def\ch{{\mathrm{ch}}}    % symbol
\def\trdeg{{\mathrm{trdeg}}}    % symbol
\def\Frac{{\mathrm{Frac}}}    % symbol
\def\Cone{{\mathrm{Cone}}}    % symbol
\def\Res{{\mathrm{Res}}}    % symbol
\def\Tr{{\mathrm{Tr}}}    % symbol
\def\cF{{\mathcal F}}
\def\cI{{\mathcal I}}
\def\cL{{\mathcal L}}
\def\cO{{\mathcal O}}
\def\cC{{\mathcal C}}
\def\cZ{{\mathcal Z}}
\def\cB{{\mathcal B}}
\def\cD{{\mathcal D}}
\def\cS{{\mathcal S}}
\def\cK{{\mathcal K}}
\def\Lam{\Lambda}
\def\lam{\lambda}
\def\ep{\epsilon}
\def\fm{{\frak{m}}}          % Milnor
\def\ol#1{\overline{#1}}
\def\ul#1{\underline{#1}}
\def\bN{{\mathbb N}}
\def\bC{{\mathbb C}}
\def\bR{{\mathbb R}}
\def\bZ{{\mathbb Z}}
\def\bG{{\mathbb G}}
\def\bP{{\mathbb P}}
\def\bF{{\mathbb F}}
\def\bH{{\mathbb H}}
\def\bA{{\mathbb A}}
\def\pz{\bZ/p\bZ}
\def\indlim#1{ \underset{#1}{\underset{\longrightarrow}{\mathrm{lim}}}\; }
\def\rmapo#1{\overset{#1}{\longrightarrow}}
\def\isom{\overset{\cong}{\longrightarrow}}
\def\qaq{\quad\text{and}\quad}
\def\qfor{\quad\text{for }}
\def\qwith{\quad\text{with }}
\def\Ub{\ol U}
\def\Xb{\ol X}
\def\Yb{\ol Y}
\def\Cb{\ol C}
\def\yb{\ol y}
\def\pib{\ol \pi}
\def\ZU#1#2#3{Z^{#3}_{#1,#2}(U)}
\def\BU#1#2#3{B^{#3}_{#1,#2}(U)}
\def\ol#1{\overline{#1}}
\begin{document}

\def\Ycd#1{Y^{(#1)}}
\def\Yx{Y_x}
\def\Yxo{Y_x^\circ}
\def\Yy{Y_y}
\def\Yyo{Y_y^\circ}
\def\Dx{D_x}
\def\Dxo{D_x^\circ}
\def\Dy{D_y}

\def\ZrYD{\bZ(r)_{Y|D_Y}}
\def\ZrYDx{\bZ(r)_{\Yx|\Dx}}
\def\ZrYDy{\bZ(r)_{\Yy|\Dy}}

\def\cKrYD{\cK^M_{r,Y|D_Y}}
\def\cKrYDx{\cK^M_{r,\Yx|\Dx}}
\def\cKrYDy{\cK^M_{r,\Yy|\Dy}}

\def\EXD#1#2{E_{\Xb|D}^{#1,#2}}
\def\FXD#1#2{F_{\Xb|D}^{#1,#2}}

\def\UXDr{U^r_{\Xb|D}}
\def\JXDr{J^r_{\Xb|D}}
\def\JXDredr{J^r_{\Xb|\Dred}}
\def\UXDd{U^d_{\Xb|D}}
\def\JXDd{J^d_{\Xb|D}}
\def\JXDredd{J^d_{\Xb|\Dred}}

\def\gammaXD#1#2{\gamma_{\Xb|D}^{#1,#2}}
\def\HXDZr#1{\underline{H}^{#1}(\Xb|D,\bZ(r))}
\def\VXD#1#2{V_{\Xb|D}^{#1,#2}}
\def\HXD#1{H^{#1}_{\Xb|D}}

\def\HMXDr#1{\H^{#1}_{\cM}(\Xb|D,\bZ(r))}
\def\HDeXDr#1{\H_{\cD}^{#1}(\Xb|D,\bZ(r))}

\def\clOmega#1#2{cl_{\Omega}^{#1}(#2)}
\def\clDR#1#2{cl_{DR}^{#1}(#2)}

\def\WW#1#2{\Omega^{#1}_{#2}}
\def\WWk#1#2{\Omega^{#1}_{#2/k}}

\def\Omegacl#1#2{\Omega^{#1}_{#2,cl}}

\def\Zr{\bZ(r)}
\def\ZrX{\bZ(r)_X}
\def\ZdX{\bZ(d)_X}

\def\Xan{X_{\an}}
\def\Xzar{X_{\zar}}
\def\Yan{Y_{\an}}
\def\Gan{G_{\an}}

\def\Xban{\Xb_{\an}}
\def\Xbzar{\Xb_{\zar}}

\def\Dred{D_{red}}
\def\Zred{Z_{red}}

\def\filD{\fil_D}

\def\PA#1{P_{#1}(A|I)}
\def\PAt#1{\tilde{P}_{#1}(A|I)}

\def\PAgr#1{P_{#1}(A|I)^{\gr}}
\def\PAtgr#1{\tilde{P}_{#1}(A|I)^{\gr}}

\def\NPAgr#1{NP_{#1}(A|I)^{\gr}}
\def\PAgrdeg#1{P_{#1}(A|I)^{\gr}_{deg}}

\def\lamub{\underline{\lambda}}
\def\tub{\underline{t}}

\def\fm{\frak{m}}
\def\nuR#1{\nu_R^{#1}}
\def\nuRI#1{\nu_{R,I}^{#1}}
\def\nuRd#1{\nu_{R'}^{#1}}
\def\nuRId#1{\nu_{R',IR'}^{#1}}
\def\bF{{\mathbb F}}
\def\cO{{\mathcal O}}

\def\rmapo#1{\overset{#1}{\longrightarrow}}

\def\Pp#1{(\bP^1)^{#1}}
\def\cub{\square}
\def\cubb#1{\cub^{#1}}
\def\cubbb#1#2{\overline{\cub}^{#1}_{#2}}

\def\Fni{F^n_i}
\def\Fnj{F^n_j}

\def\Fna{F^n_1}
\def\Enie{E^n_{i,\epsilon}}

%def\Fn{\Sigma_n}
%\def\Fnp{\Sigma_{n-1}}
\def\Fn{F_n}
\def\Fnp{F_{n-1}}

\def\inie{\iota^n_{i,\epsilon}}
\def\ini#1{\iota^n_{i,#1}}

\def\WX#1{\Omega^{#1}_{\Xb}}
\def\WXD#1{\Omega^{#1}_{\Xb|D}}
\def\WXDred#1{\Omega^{#1}_{\Xb|\Dred}}
\def\FrWXD{\Omega^{\geq r}_{\Xb|D}}

\def\WXDp#1{\Omega^{#1}_{\Xb|D'}}
\def\ZXD#1{Z^{#1}_{\Xb|D}}
\def\BXD#1{B^{#1}_{\Xb|D}}
\def\nuXD#1{\nu^{#1}_{\Xb|D}}

\def\WY#1{\Omega^{#1}_{Y}}
\def\WYD#1{\Omega^{#1}_{Y|D}}
\def\WYDp#1{\Omega^{#1}_{Y|D'}}
\def\ZYD#1{Z^{#1}_{Y|D}}
\def\BYD#1{B^{#1}_{Y|D}}
\def\nuYD#1{\nu^{#1}_{Y|D}}

\def\WXlogD#1{\Omega^{#1}_{\Xb}(\log D)}
\def\WXlogDD#1{\Omega^{#1}_{\Xb}(\log D)(-D)}
\def\WPlogH#1{\Omega_{\bP}^{#1}(\log H)}
\def\WPlogHD#1{\Omega_{\bP}^{#1}(\log H+D)}
\def\WPXlogH#1{\Omega_{\Xb\times\bP}^{#1}(\log H)}
\def\WPXlogHD#1{\Omega_{\Xb\times\bP}^{#1}(\log H+D)}

\def\WXlogDn#1{\Omega^{#1}_{\Xb_n}(\log D_n)}
\def\WXlogFDn#1{\Omega^{#1}_{\Xb_n}(\log \Fn+D_n)}
\def\WXlogFDDn#1{\Omega^{#1}_{\Xb_n}(\log \Fn+D_n)(-D_n)}

\def\WU#1{\Omega^{#1}_{U}}
\def\WBU#1{\Omega^{#1}_{U}/d}
\def\ZU#1{Z^{#1}_{U}}
\def\BU#1{B^{#1}_{U}}
\def\nuU#1{\nu^{#1}_{U}}

\def\WXFD#1{\Omega^{#1}_{\Xb|D}(\log F)}
\def\WXFpD#1{\Omega^{#1}_{\Xb|pD}(\log F)}
\def\WXFDp#1{\Omega^{#1}_{\Xb|D'}(\log F)}

\def\WYFD#1{\Omega^{#1}_{Y|D}(\log F)}
\def\WYFpD#1{\Omega^{#1}_{Y|pD}(\log F)}
\def\WYFDp#1{\Omega^{#1}_{Y|D'}(\log F')}
\def\WYFDpp#1{\Omega^{#1}_{Y|D''}(\log F')}
\def\WYFDred#1{\Omega^{#1}_{Y|\Dred}(\log F)}
\def\WYFDn#1{\Omega^{#1}_{Y_n|D_n}(\log F_n)}
\def\WYFDlogn#1{\Omega^{#1}_{Y_n}(\log F_n +D_n)}
\def\WYFDDlogn#1{\Omega^{#1}_{Y_n}(\log F_n +D_n)(-D_n)}

\def\FrWYFD{\Omega^{\geq r}_{Y|D}(\log F)}
\def\FrWYFDn{\Omega^{\geq r}_{Y_n|D_n}(\log F_n)}
\def\FrWYFDstar{\Omega^{\geq r}_{Y_\star|D_\star}(\log F_\star)}

\def\WZFD#1{\Omega^{#1}_{Z|D_Z}(\log F_Z)}

\def\WYFD#1{\Omega^{#1}_{Y|D}(\log F)}
\def\WYFDcl#1{\Omega^{#1}_{Y|D}(\log F)_{cl}}

\def\ZXFD#1{Z^{#1}_{\Xb|D}(\log F)}
\def\ZXFpD#1{Z^{#1}_{\Xb|pD}(\log F)}

\def\BXFD#1{B^{#1}_{\Xb|D}(\log F)}
\def\BXFpD#1{B^{#1}_{\Xb|pD}(\log F)}

\def\WBXFD#1{\Omega^{#1}_{\Xb|D}(\log F)/d}
\def\WBXFDp#1{\Omega^{#1}_{\Xb|D'}(\log F)/d}

\def\ZAFI#1{Z^{#1}_{A|I}(\log F)}
\def\WAFI#1{\Omega^{#1}_{A|I}(\log F)}
\def\WBAFI#1{\Omega^{#1}_{A|I}(\log F)/d}

\def\WAFIp#1{\Omega^{#1}_{A|I'}(\log F)}

\def\ZAs#1{Z^{#1}_{A[s^{-1}]}}
\def\WAs#1{\Omega^{#1}_{A[s^{-1}]}}
\def\WBAs#1{\Omega^{#1}_{A[s^{-1}]}/d}

\def\nuXFD#1{\nu^{#1}_{\Xb|D}(\log F)}
\def\nuU#1{\nu^{#1}_{U}}
\def\nuXFDp#1{\nu^{#1}_{\Xb'|D'}(\log F')}
\def\nuXFDpp#1{\nu^{#1}_{\Xb'|D''}(\log F')}
\def\nuYFD#1{\nu^{#1}_{Y|D_Y}(\log F_Y)}
\def\nuYU#1{\nu^{#1}_{Y\cap U}}

\def\nuXFDn#1{\nu^{#1}_{\Xb_n|D_n}(\log \Fn)}
\def\nuUFDn#1{\nu^{#1}_{U_n|D_n}(\log \Fn)}

\def\cub{\square}

\def\Delb#1{\overline{\Delta^{#1}}}
\def\Del#1{\Delta^{#1}}

\def\zXD#1#2{z^{#1}(\Xb|D,#2)}
\def\zXY#1#2{z^{#1}(\Xb|Y,#2)}

\def\zX#1#2{z^{#1}(\Xb,#2)}
\def\zUD#1#2{z^{#1}(U|D,#2)}
\def\zUY#1#2{z^{#1}(U|Y,#2)}

\def\zXDeff#1#2{z^{#1}(\Xb|D,#2)_{\eff}}

\def\CXD#1#2{C^{#1}(\Xb|D,#2)}
\def\CXY#1#2{C^{#1}(\Xb|Y,#2)}
\def\CXDt#1#2{C^{#1}(\widetilde{\Xb}|\widetilde{D},#2)}

\def\CUD#1#2{C^{#1}(U|D_n,#2)}
\def\CX#1{C^{#1}(\Xb)}

\def\zzXD#1#2{\underline{z}^{#1}(\Xb|D,#2)}
\def\zzUD#1#2{\underline{z}^{#1}(U|D,#2)}
\def\zzXY#1#2{\underline{z}^{#1}(\Xb|Y,#2)}
\def\zzUY#1#2{\underline{z}^{#1}(U|Y,#2)}

\def\zzUDpz#1#2{\underline{z}^{#1}(U|D,#2;\pz)}
\def\zzXDnd#1#2{\underline{z}^{#1}(\Xb|D,#2)_0}
\def\zzXDeff#1#2{\underline{z}^{#1}(\Xb|D,#2)_{\eff}}

\def\zUD#1#2{z^{#1}(U|D,#2)}
\def\zUDpz#1#2{z^{#1}(U|D,#2;\pz)}

\def\CHXD#1#2{CH^{#1}(\Xb|D,#2)}
\def\CHX#1#2{CH^{#1}(\Xb,#2)}

\def\ChXD#1{CH^{#1}(\Xb|D)}
\def\CHXDpz#1#2{CH^{#1}(\Xb|D,#2;\pz)}

\def\CHRI#1#2{CH^{#1}(R|I,#2)}
\def\CHRIpz#1#2{CH^{#1}(R|I,#2;\pz)}

\def\zXDd#1#2{z_{#1}(\Xb|D,#2)}

\def\Wb{\overline{W}}
\def\WbN{\overline{W}^N}
\def\WbNo{\overline{W}^{N,o}}

\def\Vb{\overline{V}}

\def\Dinf{D_{\infty}}

\def\rhoXD#1#2{\rho_{\Xb|D}^{#1,#2}}
\def\bZXDr{\bZ(r)_{\Xb|D}}
\def\bZXDret{\bZ(r)_{\Xb|D}^{\et}}
\def\bZXDran{\bZ(r)_{\Xb|D}^{\an}}
\def\bZXDrde{\bZ(r)_{\Xb|D}^{\cD}}
\def\bZXDonede{\bZ(1)_{\Xb|D}^{\cD}}

\def\pzXDr{\pz(r)_{\Xb|D}}
\def\pzXDret{\pz(r)_{\Xb|D}^{\et}}

\def\bZXDd{\bZ(d)_{\Xb|D}}
\def\bZXDdet{\bZ(d)_{\Xb|D}^{\et}}
\def\pzXDd{\pz(d)_{\Xb|D}}
\def\pzXDdet{\pz(d)_{\Xb|D}^{\et}}

\def\bZYFDrde{\bZ(r)_{(Y,F,D)}^{\cD}}
\def\bZYFDnrde{\bZ(r)_{(Y_n,F_n,D_n)}^{\cD}}
\def\bZYFDstarrde{\bZ(r)_{(Y_\star,F_\star,D_\star)}^{\cD}}

\def\CXFDr{C^r(\Xb,F,D)}
\def\CXFDrp{C^r(\Xb',F',D')}
\def\CYFDr{C^r(Y,F,D)}
\def\CYFDrp{C^r(Y',F',D')}
\def\CZFDr{C^r(Z,F_Z,D_Z)}

\def\iWb{i_{\Wb}}

\def\phiW{\phi_{\Wb}}
\def\phiWi{\phi_{\Wb_i}}

\def\phiWp{\phi_{\Wb'}}
\def\phiV{\phi_{\Vb}}
\def\phiVi{\phi_{\Vb_i}}

\def\uhom{{\mathcal{H}om}}

\def\ul#1{\underline{#1}}
\def\clXD#1{cl_{\Xb|D}(#1)}
\def\clYD#1{cl^r_{Y|D}(#1)}
\def\clZD#1{cl^r_{Z|D_Z}(#1)}

\def\clXDn#1{cl_{\Xb_n|D_n}(#1)}
\def\clUD#1{cl_{U|D}(#1)}
\def\clUDn#1{cl_{U_n|D_n}(#1)}

\def\SrnU{\cS^{r,n}_U}
\def\SrstarU{\cS^{r,\star}_U}

\def\pzXDret{\pz(r)^{\et}_{\Xb|D}}

\def\tD{\widetilde{D}}

\def\clOmega#1#2{cl_{\Omega}^{#1}(#2)}
\def\clDR#1#2{cl_{DR}^{#1}(#2)}
\def\clDe#1#2{cl_{\cD}^{#1}(#2)}
\def\clDeb#1#2{\overline{cl}_{\cD}^{#1}(#2)}

\def\clB#1#2{cl_{B}^{#1}(#2)}

\def\regDe#1#2{\phi_{\cD}^{#1,#2}}
\def\regDR#1#2{\phi_{DR}^{#1,#2}}
\def\regB#1#2{\phi_{B}^{#1,#2}}

\def\reggDe{\phi_{\cD}}
\def\reggDR{\phi_{DR}}
\def\reggB{\phi_{B}}

\def\clK#1#2{cl_{\cK}^{#1}(#2)}

\def\WW#1#2{\Omega^{#1}_{#2}}
\def\WWk#1#2{\Omega^{#1}_{#2/k}}

\def\Omegacl#1#2{\Omega^{#1}_{#2,cl}}
\def\cKM#1#2{\cK^M_{#1,#2}}
\def\cKMYFr{\cK^M_{r,Y}(F)}
\def\cKMYDFr{\cK^M_{r,Y|D}(F)}
\def\cKMYF#1{\cK^M_{#1,Y}(F)}
\def\cKMYDF#1{\cK^M_{#1,Y|D}(F)}
\def\cKMYDFrp{\cK^M_{r,Y'|D'}(F')}

\def\cKMXDr{\cK^M_{r,\Xb|D}}
\def\cKMXr{\cK^M_{r,\Xb}}

\def\cKMXDrn{\cK^M_{r,Y_n|D_n}(F_n)}
\def\cKMXDrstar{\cK^M_{r,Y_\star|D_\star}}

\def\Ycd#1{Y^{(#1)}}
\def\Xcd#1{X^{(#1)}}

\def\KM#1#2{K^M_{#1}(#2)}

\def\cOYDF{\cO^\times_{Y|D}(F)}
\def\cOYF{j_*\cO^\times_{X}}

\def\Dm{D_{\fm}}
\def\Dlam{D_{\lam}}
\def\Dnu{D_{\nu}}
\def\Dnum{D_{\nu,\fm}}

\def\Fm{F^{\fm}}
\def\Fmn{F^{\fm+\delta_\nu}}
\def\mlam{m_{\lam}}
\def\mnu{m_{\nu}}

\def\Im{I_{\fm}}
\def\Imp{I_{\fm'}}
\def\Cmn{C_{\fm,\nu}^{-1}}

\def\grmn{gr^{\fm,\nu}}

\def\Wmn#1{\omega_{\fm,\nu}^{#1}}
\def\Zmn#1{\cZ_{\fm,\nu}^{#1}}
\def\Bmn#1{\cB_{\fm,\nu}^{#1}}
\def\ZZmn#1#2{\cZ_{#2,\fm,\nu}^{#1}}
\def\BBmn#1#2{\cB_{#2,\fm,\nu}^{#1}}
\def\dmn#1{d_{\fm,\nu}^{#1}}
\def\WDn#1{\omega_{D_\nu}^{#1}}
\def\Resmn#1{\Res^{#1}_{\fm,\nu}}    % symbol

\def\WmnX#1{\omega_{\fm,\nu,\Xb}^{#1}}
\def\ZmnX#1{\cZ_{\fm,\nu,\Xb}^{#1}}
\def\BmnX#1{\cB_{\fm,\nu,\Xb}^{#1}}
\def\ZZmnX#1#2{\cZ_{#2,\fm,\nu,\Xb}^{#1}}
\def\BBmnX#1#2{\cB_{#2,\fm,\nu,\Xb}^{#1}}

\def\WmnY#1{\omega_{\fm,\nu,Y}^{#1}}
\def\ZmnY#1{\cZ_{\fm,\nu,Y}^{#1}}
\def\BmnY#1{\cB_{\fm,\nu,Y}^{#1}}
\def\ZZmnY#1#2{\cZ_{#2,\fm,\nu,Y}^{#1}}
\def\BBmnY#1#2{\cB_{#2,\fm,\nu,Y}^{#1}}

\def\WmnPY#1{\omega_{\fm,\nu,\bP^N_Y}^{#1}}
\def\ZmnPY#1{\cZ_{\fm,\nu,,\bP^N_Y}^{#1}}
\def\BmnPY#1{\cB_{\fm,\nu,,\bP^N_Y}^{#1}}
\def\ZZmnPY#1#2{\cZ_{#2,\fm,\nu,,\bP^N_Y}^{#1}}
\def\BBmnPY#1#2{\cB_{#2,\fm,\nu,,\bP^N_Y}^{#1}}

\def\PhirXD{\Phi^r(\Xb,D)}

	\renewcommand{\hom}{\operatorname{Hom}}
	\newcommand{\ext}{\operatorname{Ext}}%
	\newcommand{\tensor}{\otimes}
	\newcommand{\cU}{\mathcal{U}}

%%%%%%%%%%%% ADDED Macro

\title{Relative cycles with moduli and regulator maps}
\author{Federico Binda and Shuji Saito}
\address{Federico Binda\\ 
Fakult\"at f\"ur Mathematik,  Universit\"at Duisburg-Essen, Thea-Leymann Strasse 9, 45127 Essen, Germany}
\curraddr{ Fakult\"at f\"ur Mathematik, Universit\"at Regensburg, 
93040 Regensburg, Germany}
\email{federico.binda@ur.de}

\address{Shuji Saito\\ 
Interactive Research Center of Science, 
Graduate School of Science and Engineering,
Tokyo Institute of Technology\\
Ookayama, Meguro\\
Tokyo 152-8551\\
Japan
}
\email{sshuji@msb.biglobe.ne.jp}

\subjclass[2010]{Primary 14C25; Secondary 14C30, 14F42, 14F43}

\maketitle
\begin{abstract}Let $\Xb$ be a separated scheme of finite type over a field $k$ and $D$ a non-reduced effective Cartier divisor on it. We attach to the pair $(\Xb, D)$ a cycle complex with modulus, those homotopy groups - called higher Chow groups with modulus - generalize additive higher Chow groups of Bloch-Esnault, R\"ulling, Park and Krishna-Levine, and that sheafified on $\Xb_{Zar}$ gives a candidate definition for a relative motivic complex of the pair, that we compute in weight $1$.
	
	When $\Xb$ is smooth over $k$ and $D$ is such that $D_{red}$ is a normal crossing divisor, we construct a fundamental class in the cohomology of relative differentials for a cycle satisfying the modulus condition, refining El-Zein's explicit construction of the fundamental class of a cycle. This is used to define a natural regulator map from the relative motivic complex of $(\Xb, D)$ to the relative de Rham complex. When $\Xb$ is defined over $\bC$, the same method leads to the construction of a regulator map to a relative version of Deligne cohomology, generalizing Bloch's regulator from higher Chow groups.
	
	Finally, when $\Xb$ is moreover connected and proper over $\bC$, we use relative Deligne cohomology to define relative intermediate Jacobians with modulus $J^r_{\Xb|D}$ of the pair $(\Xb, D)$. For $r=\dim \Xb$, we show that $J^{r}_{\Xb|D}$ is the universal regular quotient of the Chow group of $0$-cycles with modulus.
	\end{abstract}
{\hypersetup{ linkcolor=black}
\tableofcontents
}
\section{Introduction}

\subsection{}A quest for a geometrically defined cohomology theory for an algebraic variety, playing in algebraic geometry the role of ordinary cohomology of a topological space, dates back to the work of A.~Grothendieck and early days of algebraic geometry. In \cite{Be1}, A.~Beilinson gave a precise conjectural framework for such hoped-for theory, foreseeing the existence of an Atiyah-Hirzebruch type spectral sequence for any scheme $S$ %(arbitrary singular)
\begin{equation}\label{eq.AH}E^{p,q}_{2}=\H^{p-q}_{\mathcal{M}}(S, \bZ(-q)) \Rightarrow K_{-p-q}(S)\end{equation}
converging to $K_\bullet(S)$, Quillen's algebraic $K$-theory of $S$. Narrowing the context a little, fix a perfect field $k$ and consider the category $\mathbf{Sch}_k$ of separated schemes of finite type over $k$. When $X$ is smooth and quasi-projective, S.~Bloch's apparently na\"ive definition of higher Chow groups, given in terms of algebraic cycles, provides the right answer, as established in \cite{FrSus} and \cite{LCon}. In larger generality, motivic cohomology groups have been defined by V.~Voevodsky \cite{V} and M.~Levine \cite{LMixed} as Zariski hypercohomology of certain complexes of sheaves, and they are known to agree with Bloch's definition in the smooth case \cite{VChow}. So, if $X$ is any scheme of finite type over $k$, we are now able to consider the motivic cohomology groups
\[X\mapsto \H^*_{\mathcal{M}}(X, \bZ(*))=\H^{*,*}_{\mathcal{M}}(X, \bZ)\]
having a number of good properties, including the existence of the spectral sequence \eqref{eq.AH} for smooth $X$.

While the smooth case is thus established, the conjecture in the general form proposed by Beilinson is still  open. As a motivating example consider, for a smooth variety $X$, the $K$-theory of its $m$-th thickening $X_m$, $K_\bullet(X\times_k \Spec{k[t]/t^m})$. These groups behave very differently from the corresponding motivic cohomology groups, since while it is known that
 \[K_\bullet(X\times_k \Spec{k[t]/t^m}) = K_\bullet(X_m) \neq K_\bullet(X),\]
   according to the current definitions one has
\[{\rm H}^*_{\mathcal{M}}(X, \mathbb{Z}(*)) = {\rm H}^*_{\mathcal{M}}(X\times_k \Spec{k[t]/t^m}, \mathbb{Z}(*)),\]
and this quite obviously prevents the existence of the desired spectral sequence. The insensibility of motivic cohomology to nilpotent thickening is manifesting the fact that, in Voevodsky's triangualated category $\mathbf{DM}(k, \bZ)$, one has $M(X) = M(X_m)$. From this point of view, the available definitions are not completely satisfactory, as they fail to encompass this kind of non-homotopy invariant phenomena.

\subsection{}Without an appropriate categorical framework, such as the one provided by $\mathbf{DM}(k,\bZ)$, the quest starts again from algebraic cycles. The first attempt was made by S.~Bloch and H.~Esnault, that in \cite{BE} introduced additive higher Chow groups of $0$-cycles over a field in order to describe the $K$-theory of the ring $k[t]/(t^2)$ and gave the first evidence in this direction, showing that these groups are isomorphic to the absolute differentials $\Omega^n_k$ agreeing  with  Hasselholt-Madsen description of the $K$-groups of a truncated polynomial algebra. Their work was refined in \cite{BE2} and extended by K.R\"ulling to higher modulus in \cite{Rul}, where the additive higher Chow groups of $0$-cycles were actually shown to be isomorphic to the generalized deRham-Witt complex of Hesselholt-Madsen.

The generalization to schemes was firstly given by J.~Park in \cite{Pa}, that defined additive higher Chow groups for any variety $X$. Park's groups were then further studied by A.~Krishna and M.~Levine in \cite{KL}, that proved a number of structural properties for smooth projective varieties, such as a projective bundle formula, a blow-up formula and some basic functorialities.

\subsubsection{}Additive higher Chow groups are a modified version of Bloch's higher Chow groups, defined by imposing some extra condition, commonly called ``Modulus Condition'', on admissible cycles (i.e.~cycles in good position with respect to certain faces) and are conjectured to describe the relative $K$-groups $K^{nil}_{\bullet}(X, m)$, where $K^{nil}(X,m)$ denotes the homotopy fiber
	\[K(X\times_k \mathbb{A}^1_{k})\to K(X\times_k k[t]/{t^m}).\]
From this point of view, additive higher Chow groups are a candidate definition for the relative motivic cohomology of the pair
	\[(X\times_k \mathbb{A}^1_k, X\times_k k[t]/{t^m} = X_m).\]	
One of the goals of this paper is to generalize this construction, defining for every pair $(\Xb, D)$ consisting of a scheme $\Xb$ (separated and of finite type over $k$) together with a (non reduced) effective Cartier divisor $D\hookrightarrow \Xb$, cubical abelian groups
\[z^{r}(\Xb|D, \bullet) \subset z^{r}(\Xb, \bullet), \quad (\text{Bloch's cubical cycle complex})\]
those $n$-th homotopy groups will be called \textit{higher Chow groups of $\Xb$ with modulus $D$}
\begin{equation}\label{eq.ChowgrMod}\CH^r(\Xb|D, n) = \pi_n(z^{r}(\Xb|D, \bullet)) = \H_n(z^{r}(\Xb|D, *)).\end{equation}
	These groups are controvariantly functorial for flat maps of pairs and covariantly functorial for proper maps of pairs.
Sheafifying this construction on $\Xb_{Zar}$ we obtain, for every $r\geq 0$, complexes of sheaves
\[\bZ_{\Xb|D}(r)\to \bZ_{\Xb}(r)  \]
called \textit{relative motivic complexes}, naturally mapping to $\bZ_{\Xb}(r)$, the complexes of sheaves computing Bloch's higher Chow groups $\CH^r(\Xb; n)$. We call the hypercohomology groups of $\bZ_{\Xb|D}(r)$ the \textit{motivic cohomology groups of the pair $(\Xb,D)$},
\begin{equation}\label{eq.mot-coh-intro}\H^*_{\mathcal{M}}(\Xb|D, \bZ(r)) = \mathbb{H}^{*}(\Xb_{Zar}, \bZ_{\Xb|D}(r)).\end{equation}
There are some new results which provide a ground for the optimistic choice of the words. W.~Kai \cite{KaiMoving} established a moving lemma for cycle complexes with modulus which implies an appropriate contravariant functoriality of the Nisnevich version of \eqref{eq.mot-coh-intro} (see Theorem \ref{thm:KaiMoving} for the precise statement). A work by R.~Iwasa and W.~Kai \cite{IKChern} provides Chern classes from the relative $K$-groups of the pair $(\Xb,D)$ to the Nisnevich motivic cohomology groups $\H^*_{\mathcal{M}, {\rm Nis}}(\Xb|D, \bZ(*))$, while a construction of F.~Binda \cite[Theorem 4.4.10]{BThesis} (see also \cite{Bcyc}) gives cycle classes from the groups of higher $0$-cycles with modulus $\CH^{d+n}(\ol{X}|D, n)$ to the relative $K$-groups $K_n(\ol{X}, D)$. Other positive results are obtained in \cite{KSa}, \cite{RulSaito} and \cite{BKrishna}.

	\subsection{}\label{SecCFT}When $\Xb=C$ is a smooth projective curve over $k$ and $D$ is an effective divisor on it, the Chow group of $0$-cycles with modulus is indeed a classical object. In \cite{Se}, J-P.~Serre introduced and studied the equivalence relation on the set of divisors on $C$ defined by the ``modulus'' $D$ (this explains the choice of the terminology), describing in terms of divisors the relative Picard group $\Pic(C, D)$, that is the group of equivalence classes of pairs $(\mathcal{L}, \sigma)$, where $\mathcal{L}$ is a line bundle on $C$ and $\sigma$ is a fixed trivialization of $\mathcal{L}$ on $D$. When the base field $k$ is finite and $C$ is geometrically connected, the group 
	 \[\varprojlim_{D} \CH_0(C|D)\] is isomorphic to the id\`ele class group of the function field $k(C)$ of $C$.
	
	In \cite{KSa}, M.~Kerz  and S.~Saito introduced Chow groups of $0$-cycles with modulus for varieties over finite fields and used it to prove their main theorem on wildly ramified Class Field Theory. If $X$ is smooth over $k$, take a compactification $X\hookrightarrow \Xb$, with $\Xb$ integral and proper over $k$, and a (possibly non reduced) closed subscheme $D$ supported on $\Xb-X$. Then the group $\CH_0(\Xb|D)$ is defined as the quotient of the group of $0$-cycles $z_0(X)$ modulo rational equivalence with modulus $D$ (see \cite{KSa} and \ref{def.relChowgroup}), and it is used to describe the abelian fundamental group $\pi_1^{ab}(X)$. 
	This work is one of the main sources of motivations for the present paper, and explains our choice of generalizing addivite higher Chow groups to the case of an arbitrary pair. Higher Chow groups with modulus \eqref{eq.ChowgrMod} recover (for $n=0$ and $r=\dim \Xb$) Kerz-Saito definition (see Theorem \ref{Thm.relchow}).

\subsection{}\label{IntroComp}Motivated by \ref{SecCFT}, we can use our relative motivic complexes to give a definition of higher Chow groups with compact support. Let $X$ be a separated scheme of finite type over $k$ and let $\Xb$ be a proper compactification of $X$ such that the complement  of $X$ in $\Xb$ is the support of an effective Cartier divisor $D$. Define for $r, n\geq0$
%\[\H^r_{\mathcal{M}, c}(X, \bZ(n)) = \{ \H^i_{\mathcal{M}}(\Xb| rD, \bZ(n))  \}_r \in pro-\mathcal{A}b\]
\[\CH^r(X,n)_{c} = \{\CH^r(\Xb|mD, n)\}_{m} \in pro-\mathcal{A}b\]
where $pro-\mathcal{A}b$ denotes the category of pro-Abelian groups. This definition does not depend on the choice of the compactification $\Xb$, and it is consistent with the definition of $K$-theory with compact support proposed by M.~Morrow in \cite{Morr}.
\bigskip

We give an overview of the content of the different sections. 

\subsection{}Section \ref{cyclecomplex} contains the  definitions of our objects of interest, namely higher Chow groups with moduli and relative motivic cohomology groups, together with some basic properties. %From that point on we will restrict to the case were the closed subscheme $Y$ is actually given by an effective Cartier divisor $D$ (possibly non reduced) on $\Xb$.
 We define relative Chow groups with modulus, generalizing Kerz-Saito's definition, in Section \ref{relchow}, where they are also shown to be isomorphic to higher Chow groups with modulus for $n=0$. In Section \ref{Cod1} we compute the relative motivic cohomology groups in codimension $1$, showing that
\[\bZ_{\Xb|D}(1)\cong \cO_{\Xb|D}^\times [-1]= \ker(\cO_{\Xb}^\times \to \cO_{D}^\times)[-1]\quad \text{(quasi-isomorphism)}\]
 generalizing Bloch's computation in weight $1$, $\bZ_{\Xb}(1)\cong \cO_{\Xb}^{\times}[-1]$, and proving the first of the expected properties of the relative motivic cohomology groups (see Theorem \ref{theo:weigh-1-mot-coh}).

\subsection{}\label{ab}Suppose that $D$ is an effective Cartier divisor on $\Xb$ such that its reduced part $D_{red}$ is a normal crossing divisor on $\Xb$. Our first main result, presented in Section \ref{clOmega}, is the construction of a fundamental class in the cohomology of relative differentials for a cycle satisfying the modulus condition. More precisely, consider the sheaves
\begin{equation}\label{eq.reldiff.intro}\Omega^{r}_{\Xb|D}=\Omega_{\Xb}^{r}(\log D)\tensor\cO_{\Xb}(-D), \quad r\geq 0\end{equation}
where $\Omega^r_{\Xb}(\log D)$ denotes the sheaf of absolute K\"ahler differential $r$-forms on $\Xb$ with logarithmic poles along $|D_{red}|$. Using El-Zein's explicit construction of the fundamental class of a cycle given in \cite{ElZ}, we can show that if an admissible cycle satisfies the Modulus Condition, then its fundamental class in Hodge cohomology with support appears as restriction of a unique class in the cohomology with support of sheaves constructed out of \eqref{eq.reldiff.intro} (Theorem \ref{thm-purityOmega}). The refined fundamental class is then shown to be compatible with proper push forward (Lemma \ref{pushforDerCat}). Some further technical lemmas are proved in Section 6.

\subsection{}Let $(\Xb, D)$ be as in \ref{ab}. The second main technical result of this paper, presented in Section \ref{clDR}, is the construction, using the fundamental class in relative differentials, of regulator maps from the relative motivic complex $\bZ_{\Xb|D}(r)$ to a the relative de Rham complex of $\Xb$
\[\phi_{dR}\colon \bZ_{\Xb|D}(r) \to \Omega^{\geq r}_{\Xb|D} = \Omega^{\geq r}_{\Xb}(\log D)\tensor \cO_{\Xb}(-D) \quad \text{ in } D^{-}(\Xb_{Zar})\]
where $\Omega^{\geq r}_{\Xb}(\log D)$ denotes the $r$-th truncation of the complex $\Omega_{\Xb}^{\bullet}(\log D)$. The map $\phi_{dR}$ is compatible with flat pullbacks and proper push forwards of pairs. 

When $\Xb$ is a smooth algebraic variety over the field of complex numbers, we can use the same technique to define regulator maps to a relative version of Deligne cohomology (see \eqref{relDelignecomplex}) and to Betti cohomology with compact support
\[\phi_{\mathcal{D}}\colon \epsilon^*\bZ_{\Xb|D}(r) \to \bZ^{\mathcal{D}}_{\Xb|D}(r); \quad \phi_{B}\colon \epsilon^*\bZ_{\Xb|D}(r) \to j_!\bZ(r)_X\quad \text{ in } D^{-}(\Xb_{an}), \]
where $\epsilon$ is the morphism of sites and $j\colon X\to \Xb$ is the open embedding of the complement of $D$ in $\Xb$, generalizing Bloch's regulator from higher Chow groups to Deligne cohomology, constructed in \cite{Bl}. This regulator map is further studied in Section \ref{Sec:addDilog} in the case of additive Chow groups. Evaluated on suitable cycles, our regulator recovers Bloch-Esnault additive dilogarithm (introduced in \cite{BE2}) and gives a refinement of \cite[Proposition 5.1]{BE2} (see \eqref{eq:BE-refined}).

\subsection{}Suppose that $\Xb$ is moreover connected and proper over $\bC$, and  consider the induced maps in cohomology in degree $2r$. We have a commutative diagram (see \ref{RelJac.Dia})
\[
\xymatrix{
&& \HMXDr {2r} \ar[d]^{\phi_{\mathcal{D}}^{2r,r}} \ar[rd]^{\phi_B^{2r,r}}\\
0 \ar[r]& J^r_{\Xb|D}\ar[r] & \H^{2r}_{\mathcal{D}}(\Xb|D, \bZ(r))\ar[r] & \H^{2r}(\Xb_{\an},j_!\ZrX)\\
}\]
and in analogy with the classical situation, we call the kernel $J^{r}_{\Xb|D}$ the $r$-th \textit{relative intermediate Jacobian of the pair $(\Xb, D)$}. We note that they admit a description in terms of extensions groups $\Ext^1$ in the abelian category of enriched Hodge structures defined by S.Bloch and V.Srinivas in \cite{BS}.

One can show that $J^{r}_{\Xb|D}$ fits into an exact sequence 
\[0\to U_{\Xb|D}\to J^{r}_{\Xb|D}\to J^{r}_{\Xb|D_{red}}\to 0,\]
where $U_{\Xb|D}$ is a unipotent group (i.e. a finite product of $\mathbb{G}_a$) 
and $J^{r}_{\Xb|D_{red}}$ (constructed as $J^{r}_{\Xb|D}$ with $D_{red}$ in place of $D$) is an extension of 
a complex torus by a finite product of $\mathbb{G}_m$.
If we compose with the canonical map 
\[\CH^r(\Xb|D) \to \HMXDr {2r}\]
we get an induced map
\begin{equation}\label{eq.rhoXD.intro}
\rho_{\Xb|D}\colon \CH^r(\Xb|D)_{hom} \to J^{r}_{\Xb|D}
\end{equation}
that we may view as the Abel-Jacobi map with $\mathbb{G}_a$-part, 
where $\CH^r(\Xb|D)_{hom}$ is the subgroup of $\CH^r(\Xb|D)$ consisting of the classes of cycles homologically
trivial.

The problem of considering a suitable equivalence relation with modulus for algebraic cycles in order to define a $\mathbb{G}_a$-valued Abel-Jacobi map was already sketched by S.~Bloch in \cite{BlochLetter}, with reference to his joint work with H.~Esnault. In case $r=d:=\dim \Xb$, the Jacobian (or Albanese) $J^{d}_{\Xb|D}$ is actually 
a commutative algebraic group and the map \eqref{eq.rhoXD.intro} becomes
\[
\rho_{\Xb|D}\colon \CH_0(\Xb|D)^0 \to J^{d}_{\Xb|D},
\]
where $\CH_0(\Xb|D)^0$ denotes the degree $0$ part of the Chow gorup $\CH_0(\Xb|D)$ of zero-cycles with modulus.
A different construction of Albanese variety with modulus was given by H.~Russell in \cite{Ru} and (in characteristic zero) by K.~Kato and H.~Russell in \cite{KR} using duality theory for $1$-motives with unipotent part.

In Section 10 we prove, using transcendental arguments, that   $J^{d}_{\Xb|D}$ with $d=\dim \Xb$ is the universal regular quotient of $\CH_0(\Xb|D)^0$, in analogy with the results of H.~Esnault, V.~Srinivas and E.~Viehweg \cite{ESV} and L.~Barbieri-Viale and V.~Srinivas \cite{BarSri} for singular varieties (Theorem \ref{thm.universailty}). 

\subsection*{Acknowledgments}The first author wishes to thank heartily Marc Levine for many friendly conversations and much advice on these topics, for providing an excellent working environment at the University of Duisburg-Essen and for the support via the Alexander von Humboldt foundation. % and the SFB Transregio 45 ``Periods, moduli spaces and arithmetic of algebraic varieties''.
The second author wishes to thank heartily Moritz Kerz for inspiring discussions from which many ideas of this work
arose. He is also very grateful to the department of mathematics of the university of Regensburg
for the financial support via the SFB 1085 ``Higher Invariants'' (Regensburg).
%\newpage

\newcommand{\Set}{\mathbf{Set}}
\newcounter{elno}   
\newenvironment{romanlist}{\begin{list}{\roman{elno})}{\usecounter{elno}}}{\end{list}}
%\usepackage{dsfont}
%\newpage
\section{Cycle complex with modulus}\label{cyclecomplex}

\subsubsection{}\label{StandardCube}We fix a base field $k$. Let $\P^1_k =\Proj{k[Y_0, Y_1]} $ be the projective line over $k$ and denote by $y$ the rational coordinate function $Y_1/Y_0$ on $\P^1_k$. For $n\in \bN\setminus\{0\}, 1\leq i\leq n$, let $p_i^n\colon (\P^1)^n\to (\P^1)^{n-1}$  be the projection onto the $i$-th component. We use on $(\P^1)^n$ the rational coordinate system $(y_1,\ldots, y_n)$, where $y_i = y\circ p_i$. Let \[\square^n = (\P^1_k\setminus\{1\})^n\] and let 
$\iota_{i,\epsilon}^n \colon \square^n\to {\square}^{n+1}$ with 
\[\iota_{i,\epsilon}^n(y_1, \ldots, y_n) = (y_1,\ldots, y_{i-1}, \epsilon, y_i, \ldots, y_n), \text{ for } n\in \bN, 1\leq i\leq n+1, \epsilon \in \{0,\infty\},\]
be the inclusion of the codimension one face given by $y_i = \epsilon, \epsilon \in \{0,\infty\}$. The assignment $n \mapsto \square^n$ defines a cocubical object $\square^{\bullet}$. Note that this is an extended cocubical object in the sense of \cite[1.5]{L}. We conventionally set $\square^{0} =\Spec{k}$.

A face of $\cubb n$ is a closed subscheme $F$ defined by equations of the form
\[
y_{i_1}=\epsilon_1,\dots,y_{i_r}=\epsilon_r\;;\;\; \epsilon_j\in \{0,\infty\}.
\]
For a face $F$, we write $\iota_F: F \hookrightarrow \cubb n$ for the inclusion. Finally, we write $\Fni\subset \Pp n$ for the Cartier divisor on $\Pp n$ defined by $\{y_i=1\}$ and put
$\Fn=\underset{1\leq i\leq n}{\sum} \Fni$.
\subsubsection{}Let $Y$ be a scheme of finite type over $k$, equidimensional over $k$, $D$ an effective Cartier divisor and $F$ a simple normal crossing divisor on $Y$. Assume that $D$ and $F$ have no common components. Let $X$ be the open complement $X= Y- (F+D)$. The following Lemma can be proved using the same argument as  \cite[Proposition 2.4]{KP}.
\begin{lem}\label{Containment_YFD}Let $W$ be an integral closed subscheme of $X$ and let $V\subset W$ be an integral closed subscheme of $W$. 
{  Let $\Wb$ (resp. $\Vb$) be the closure of $W$ (resp. of $V$) in $Y$.  Let $\phiW\colon \Wb^N\to Y$ (resp. $\phiV\colon \Vb^N\to Y$) be the normalization morphism. Then the inequality 
$\phiW^*(D)\leq \phiW^*(F)$ as Cartier divisors on $\Wb^N$ implies the inequality
$\phiV^*(D)\leq \phiV^*(F)$ as Cartier divisors on $\Vb^N$.
}
%Then if $W$ satisfies the modulus condition \ref{moduluscond}, so does $V$.
\end{lem}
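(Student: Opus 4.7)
The condition $\phi_V^*(D) \leq \phi_V^*(F)$ on the normal integral scheme $\overline{V}^N$ is equivalent, via codimension one points, to $v_\xi(\phi_V^*(D)) \leq v_\xi(\phi_V^*(F))$ for every discrete valuation $v_\xi$ of $k(V)$ centered on $\overline{V}^N$. I would therefore aim to check the inequality one valuation at a time, after producing an auxiliary normal integral scheme that sits simultaneously over $\overline{V}^N$ and inside $\overline{W}^N$.

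The key construction is the following. Form the scheme-theoretic preimage $\phi_W^{-1}(\overline{V}) \subset \overline{W}^N$, which is finite and surjective over $\overline{V}$ because $\phi_W$ is. Pick an irreducible component $T$ of $\phi_W^{-1}(\overline{V})$ dominating $\overline{V}$ (such a component exists by surjectivity), endowed with the reduced induced structure, and let $\tau\colon T^N \to T$ be its normalization. Then $T^N$ is a normal integral scheme with function field a finite extension of $k(V)$, equipped with the composite $\iota\colon T^N \to \overline{W}^N$ and, by the universal property of $\overline{V}^N$ as the integral closure of $\overline{V}$ in $k(V)$ together with $k(V) \subset k(T^N)$, a canonical finite surjective morphism $\pi\colon T^N \to \overline{V}^N$. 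The two compositions $\phi_W \circ \iota$ and $\phi_V \circ \pi$ from $T^N$ to $Y$ agree, as both factor through the closed embedding $\overline{V} \hookrightarrow Y$ via the same map $T^N \to \overline{V}$.

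Since $V = \overline{V} \cap X$ is disjoint from $\mathrm{supp}(D) \cup \mathrm{supp}(F)$ in $Y$ and the generic point of $T^N$ maps to the generic point of $V$, the Cartier divisors $\phi_W^*(D)$ and $\phi_W^*(F)$ pull back to bona fide Cartier divisors on $T^N$ (no component of $T^N$ lies in either support, and $T^N$ is integral). Pulling back the hypothesis via $\iota$ gives $\iota^*\phi_W^*(D) \leq \iota^*\phi_W^*(F)$ on $T^N$, which by the commutativity above rewrites as $\pi^*\phi_V^*(D) \leq \pi^*\phi_V^*(F)$.

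The final step is to descend the inequality along the finite surjective morphism $\pi\colon T^N \to \overline{V}^N$ of normal integral schemes. For every codimension one point $\xi$ of $\overline{V}^N$, there is a codimension one preimage $\widetilde{\xi} \in T^N$, and the valuations are related by $v_{\widetilde{\xi}} = e \cdot (v_\xi \circ \pi^{\#})$ for a ramification index $e \geq 1$; this turns the inequality on $T^N$ into $e \cdot v_\xi(\phi_V^*(D)) \leq e \cdot v_\xi(\phi_V^*(F))$, hence into the desired statement on $\overline{V}^N$. The main technical point is setting up $T^N$ with the correct factorizations through $\overline{W}^N$ and $\overline{V}^N$ and verifying the commutativity of the resulting diagram of maps to $Y$; once that is in place, the well-definedness of the pullbacks and the descent along the finite surjective $\pi$ are essentially formal consequences of normality and integrality.
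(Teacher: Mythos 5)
Your proposal is correct and follows essentially the same route as the paper: the paper forms $Z=\Wb^N\times_{\Wb}\Vb$, normalizes it to get a finite surjective $h\colon Z^N\to \Vb^N$ commuting with the maps to $Y$, pulls back the inequality, and descends effectivity along $h$ by citing Lemmas 2.1 and 2.2 of Krishna--Park. Your choice of a dominating irreducible component and the explicit valuation-theoretic descent are just in-line verifications of those two cited lemmas.
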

%	\begin{proof}We use the same argument as \cite[Proposition 2.4]{KP}. Let $Z=\Wb^N\times_{\Wb}\Vb\; {\hookrightarrow}\; \Wb^N$ and let $Z^N$ be its normalization. By the universal property of the normalization, there exists a unique surjective morphism $h$ making the diagram  
%		\[ \xymatrix{ Z^N \ar@/^1pc/[rr]^{f} \ar[r]\ar[d]^{h} & Z\ar[r] \ar[d] & \Wb^N \ar[d] \ar[rd]^{\phiW}	\\
%		\Vb^N \ar@/_1pc/[rrr]_{\phiV} \ar[r] & \Vb \ar@{^{(}->}[r] & \Wb \ar@{^{(}->}[r] & Y
%		}
%		\] 
%		commutative. Note that all the schemes are of finite type over the base field $k$, so the normalization morphisms are finite and hence $h$ is finite too. 
%By assumptions,  we have \begin{equation}\label{eq_Cont-Lemma}\phiW^*(D)\leq \phiW^*(F)	\end{equation as Cartier divisors on $\Wb^N$. Since $\Wb\cap (Y-F)\cap D = \emptyset$, 
%{ Note that $\Vb$ is not contained in $D_{red}$ nor $F$ and hence intersects properly both $D$ and $F$.}
%We can therefore apply \cite[Lemma 2.1]{KP} to get from {  $\phiW^*(D)\leq \phiW^*(F)$} the inequality
%		\[f^*\phiW^*(D)\leq f^*\phiW^*(F) \text{ on $Z^N$.}\]
%	By the commutativity of the above diagram, we get then $h^*(\phiV^*(F)-\phiV^*(D)) \geq 0$. Since $h$ is a finite surjective morphism between normal varieties, we have that the pullback along $h$ of a Cartier divisor $E$ is effective only if $E$ was already effective (see \cite[Lemma 2.2]{KP}). In particular, we have $\phiV^*(F)-\phiV^*(D) \geq 0$, proving the lemma. 
%		\end{proof}
\subsection{Cycle complexes}
\subsubsection{}\label{cycComplD}
Let $\Xb$ be a scheme of finite type over $k$, equidimensional over $k$, and let $D$ be an effective Cartier divisor on $\Xb$. Let $X$ be the open complement of $D$ in $\Xb$. 
We define the {\it cycle complex of $X$ with modulus $D$} as follows.
%Put $\cub=\bP^1_k-\{1\}$ and $\cubb=\bP^1_k$.
\begin{defi}\label{def-CXD}
%Suppose that $D$ is an effective Cartier divisor on $\Xb$. 
Let $\CXD r n$ be the set of all integral closed subschemes $V$ of 
codimension $r$ on $X\times \cubb n$ which satisfy the following conditions:
\begin{itemize}
\item[(1)]
$V$ has proper intersection with $X\times F$ for all faces $F$ of $\cubb n$.
\item[(2)]
For $n=0$, $\CXD r 0$ is the set of all integral closed subschemes $V$ of 
codimension $r$ on $X$ such that the closure of $V$ in $\Xb$ does not meet $D$.
\item[(3)]
For $n>0$, let $\Vb$ be the closure of $V$ in $\Xb\times \Pp n$ and $\Vb^N$ be its normalization and 
$\phiV:\Vb^N\to \Xb\times \Pp n$ be the natural map. 
If $(D\times \Pp n) \cap \Vb\not=\emptyset$, then the following inequality as Cartier divisors holds:
\begin{equation}\label{def-CXD.eq}
\phiV^*(D\times \Pp n)\leq \phiV^*(\Xb\times \Fn).
\end{equation}
\end{itemize}
%In case $D$ is not an effective Cartier divisor on $\Xb$, we define $\CXD r n=\CXDt r n$, 
An element of $\CXD r n$ is called a {\it relative cycle of codimension $r$ for $(\Xb,D)$}.
\end{defi}

\begin{rem}\label{def-CXD.rem}
The condition \ref{def-CXD}(3) implies
$\Vb\cap (D\times \Pp n)\subset \Xb\times \Fn$ as closed subsets of $\Xb\times \Pp n$, and hence
$\Vb \cap (D\times \cubb n)=\emptyset$ and $V$ is closed in $\Xb\times \cubb n$.
This implies that $\CXD r n$ is viewed as a subset of the set of all integral closed subschemes $W$ of codimension $r$ on $\Xb\times \cubb n$ which intersects properly with $\Xb\times F$ for all faces $F$ of $\cubb n$.
%For all faces $F\subset \cubb n$ of $\dim(F)>0$ (resp. $\dim(F)=0$), $V$ intersects properly with $D\times F$ (resp. $V\cap (D\times F)=\emptyset$).
\end{rem}

Let $V\subset W$ be integral closed subschemes of $X\times\cubb n$ which are closed in $\Xb\times\cubb n$. Lemma \ref{Containment_YFD} shows that if the inequality \eqref{def-CXD.eq} holds for $\Wb$, then it also holds for $\Vb$. This implies then the following
\begin{lem}\label{cor-containment}
Let $V\in \CXD r n$. For a face $F$ of $\cubb n$ of dimension $m$, the cycle
$(id_X\times \iota_F)^* (V)$ on $X\times F\simeq X\times \cubb m$ is in $\CXD r m$.
\end{lem}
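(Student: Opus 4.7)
The plan is to deduce the lemma from the boxed observation stated just before it (itself a direct consequence of Lemma \ref{Containment_YFD}), by decomposing the pulled-back cycle into integral components and checking both conditions of Definition \ref{def-CXD} one at a time.

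Write $(id_X\times \iota_F)^*(V)=\sum_j n_j V_j$ as a sum of distinct integral codimension $r$ components in $X\times F$. Each $V_j$ is an integral closed subscheme of $X\times F\simeq X\times \cubb m$ of codimension $r$, the codimension being correct thanks to the hypothesis that $V$ meets $X\times F$ properly (which is part of $V\in \CXD r n$). I will show each $V_j$ lies in $\CXD r m$.

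For the proper intersection condition: let $F'\subset F$ be a face (viewed as a face of $\cubb m$). Then $V_j\cap (X\times F')\subset V\cap (X\times F')$, and since $F'$ is also a face of $\cubb n$, the right-hand side is either empty or of pure codimension $r$ in $X\times F'$. As $V_j$ has codimension $r$ in $X\times F$, a standard dimension count shows that every component of $V_j\cap (X\times F')$ has codimension exactly $r$ in $X\times F'$, so $V_j$ meets $F'$ properly.

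For the modulus condition: let $\bar F\subset \Pp n$ denote the closure of $F$, which identifies canonically with $\Pp m$. Since $V_j\subset X\times F$, its closure $\overline{V_j}$ in $\Xb\times \Pp n$ is contained in $\Xb\times \bar F$, and therefore coincides (as a scheme, hence under normalization) with its closure in $\Xb\times \Pp m$. A direct inspection shows
\[
(D\times \Pp n)|_{\Xb\times \bar F}=D\times \Pp m,\qquad (\Xb\times \Fn)|_{\Xb\times \bar F}=\Xb\times F_m,
\]
because if $F$ is cut out by $y_{i_1}=\epsilon_1,\dots,y_{i_s}=\epsilon_s$ with $\epsilon_\ell\in\{0,\infty\}$, then the divisors $F^n_{i_\ell}=\{y_{i_\ell}=1\}$ are disjoint from $\bar F$, while the remaining $F^n_j$ restrict to the coordinate divisors defining $F_m$. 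Consequently, the modulus inequality for $V_j$ required by \eqref{def-CXD.eq} in $\CXD r m$ is exactly the pullback to $\overline{V_j}^N$ of the same inequality on $\Xb\times \Pp n$.

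Finally, I invoke Lemma \ref{Containment_YFD} with $Y=\Xb\times \Pp n$, $D_{\text{amb}}=D\times \Pp n$, $F_{\text{amb}}=\Xb\times \Fn$, taking $W=V$ and the new cycle to be $V_j\subset V$. The hypothesis of that lemma is precisely the modulus inequality for $V$, which holds by assumption, and the conclusion is the modulus inequality for $V_j$ on $\overline{V_j}^N$. Combined with the identification of divisors above, this yields the modulus condition for $V_j$ as a cycle on $X\times \cubb m$, finishing the proof. The only mildly delicate step is the bookkeeping identifying $\Fn|_{\bar F}$ with $F_m$ under $\bar F\cong \Pp m$; everything else is a formal consequence of what has already been proved.
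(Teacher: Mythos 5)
Your proof is correct and follows the same route as the paper, which deduces the lemma from the remark immediately preceding it: apply Lemma \ref{Containment_YFD} to each integral component $V_j\subset V$ of the pulled-back cycle, with ambient data $(Y,D,F)=(\Xb\times\Pp n,\,D\times\Pp n,\,\Xb\times F_n)$. You simply spell out the details the paper leaves implicit (the proper-intersection check via the dimension count and the identification of the restricted divisors on $\Xb\times\bar F\cong\Xb\times\Pp m$), and these are all fine.
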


\begin{defi}\label{def-zXD}
Let $\zzXD r n$ be the free abelian group on the set $\CXD r n$.
By Lemma \ref{cor-containment}, the cocubical object of schemes $ n \to \cubb n$ gives rises to
a cubical object of abelian groups:
\[
\ul n \to  \zzXD r n\quad (\ul n =\{0,\infty\}^n,\; n=0,1,2,3...).
\]
The associated non-degenerate complex is called the cycle complex $\zXD r \bullet$ of $X$ with modulus $D$:
\[
\zXD r n = \frac{\zzXD r n}{{\zzXD r n}_{degn}}.
\] 
The boundary map of the complex $\zXD r \bullet$ is given by
\[
\partial=\underset{1\leq i\leq n}{\sum}(-1)^i(\partial_i^\infty - \partial_i^0),
\]
where 
$\partial_i^\epsilon: \zXD r n \to \zXD r {n-1}$ is the pullback along $\inie$, well defined by Lemma \ref{cor-containment}.
The $q$-th homology group of the complex will be denoted by
\[
\CHXD r q =\H_q(\zXD r \bullet).
\]
We call it the {\it higher Chow group of $X$ with modulus $D$}.
%For $q=0$ we simply write $\ChXD r =\CHXD r 0$.
 
\end{defi}
\begin{rem}\label{rem-zXD}
\begin{enumerate} \item By Remark \ref{def-CXD.rem}, $\zXD r n$ can be naturally viewed as a subcomplex of $\zX r n$, the (cubical version) of 
Bloch's cycle complex, so that we have a natural map
 \[
 \CHXD r q \to \CHX r q.
 \]
\item The above definition generalizes the additive higher Chow groups defined by Bloch and Esnault \cite{BE}, Park \cite{Pa}, Krishna and Levine \cite{KL}.
In case $\Xb=Y\times \bA^1_k$ with $Y$ of finite type over $k$ and $D=n\cdot Y\times \{0\}$ for $n\in \bZ_{>0}$,
$\CHXD r q$ coincides with $TCH^r(Y,q+1;m)$.
\end{enumerate}
\end{rem}
%
%\bigskip
%
%The following functorialities can be shown.
%
\begin{lem}\label{lem-zXD}Let $\Xb$ and $D$ be as above. Let $r\in \bN$.
\begin{enumerate}
\item 
Let $f:\Yb\to \Xb$ be a proper morphism of schemes of finite type over $k$, equidimensional over $k$. Assume that $f^*D$ is defined as effective Cartier divisor on $\Yb$. Then the push-forward of cycles induces a map of complexes:
\[
f_*: z^{r+\dim(\Yb)-\dim(\Xb)}(\Yb|f^*D,\bullet) \to z^r(\Xb|D,\bullet).
\]
\item 
Let $f:\Yb\to \Xb$ be a flat morphism of schemes of finite type over $k$, equidimensional over $k$. Then the pull-back of cycles induces a map of complexes:
\[
f^*: \zXD r \bullet \to z^r(\Yb|f^*D,\bullet).
\]
\end{enumerate}
\end{lem}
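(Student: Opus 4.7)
The plan is to verify that the standard flat pullback and proper pushforward on Bloch's cubical cycle complex preserve the modulus condition of Definition \ref{def-CXD}(3). Once the condition is checked for each irreducible component of the resulting cycle (Lemma \ref{cor-containment} handles the face compatibility), the fact that one obtains a morphism of complexes follows from the standard functoriality of the boundary on Bloch's complex.

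For part (2), let $V\in \CXD r n$ and let $V'$ be an irreducible component of $(f\times\mathrm{id})^{-1}(V)$; flatness together with the dimension formula forces $V'\to V$ to be dominant. Write $\Vb$ (resp.\ $\Vb'$) for the closure of $V$ (resp.\ $V'$) in $\Xb\times\Pp n$ (resp.\ $\Yb\times\Pp n$). The induced dominant map $\Vb'\to \Vb$ lifts, by the universal property of normalization, to a unique morphism $g\colon \Vb'^N\to \Vb^N$ with $(f\times\mathrm{id})\circ\phiVp=\phiV\circ g$. Since $f^*D\times\Pp n=(f\times\mathrm{id})^*(D\times\Pp n)$ and $\Yb\times \Fn=(f\times\mathrm{id})^*(\Xb\times \Fn)$, one obtains
\[
\phiVp^*(f^*D\times\Pp n)=g^*\phiV^*(D\times\Pp n),\qquad \phiVp^*(\Yb\times\Fn)=g^*\phiV^*(\Xb\times\Fn),
\]
and pulling back the modulus inequality for $V$ along $g$ yields the modulus inequality for $V'$.

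For part (1), let $V\in C^{r+\dim\Yb-\dim\Xb}(\Yb|f^*D,n)$. The image cycle $(f\times\mathrm{id})_*V$ is supported on $V':=(f\times\mathrm{id})(V)$ and is nonzero only when $\dim V'=\dim V$, so we may assume the latter. Let $\Vb,\Vb'$ denote the closures in $\Yb\times\Pp n$ and $\Xb\times\Pp n$; properness of $f\times\mathrm{id}$ gives $(f\times\mathrm{id})(\Vb)=\Vb'$. The universal property of normalization produces a proper surjective morphism $g\colon \Vb^N\to \Vb'^N$, and the modulus condition on $V$ translates to
\[
g^*\phiVp^*(D\times\Pp n)\leq g^*\phiVp^*(\Xb\times\Fn) \quad \text{on } \Vb^N.
\]
To descend this inequality, note that $\dim \Vb^N=\dim \Vb'^N$: the dimension formula for proper morphisms of finite type then shows that every generic point of a fiber of $g$ over a codimension one point of $\Vb'^N$ is itself a codimension one point of $\Vb^N$, with a zero-dimensional fiber there. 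Hence $g$ is finite over an open subset $U\subseteq \Vb'^N$ containing every codimension one point of $\Vb'^N$; on $U$ the argument of \cite[Lemma 2.2]{KP} used in the proof of Lemma \ref{Containment_YFD} descends the inequality. Since $\Vb'^N$ is normal and $\Vb'^N\setminus U$ has codimension at least two, effectivity of the Cartier divisor $\phiVp^*(\Xb\times\Fn)-\phiVp^*(D\times\Pp n)$ on $U$ forces its effectivity on the whole of $\Vb'^N$.

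The main obstacle is the cancellation of $g^*$ in part (1): the morphism $g$ need not be finite, so \cite[Lemma 2.2]{KP} does not apply directly. The dimension-theoretic argument isolating a finite-map open $U\subseteq \Vb'^N$ containing all codimension one points of $\Vb'^N$ is what bridges this gap and lets us invoke the Krishna-Park lemma.
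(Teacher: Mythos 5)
Your argument is correct and follows the same route the paper intends: the paper's proof of Lemma \ref{lem-zXD} simply defers to \cite[Theorem 3.1 (1),(2)]{KP}, whose content is exactly the verification you carry out (transport the inequality of Cartier divisors through the induced map on normalizations of closures, using that $(f\times\mathrm{id})^*(D\times\Pp n)=f^*D\times\Pp n$ and $(f\times\mathrm{id})^*(\Xb\times F_n)=\Yb\times F_n$). The only place you do extra work is the non-finiteness of $g$ in the pushforward case; your reduction to codimension-one points and the codimension-$\geq 2$ complement is correct, and is in substance the proof of \cite[Lemma 2.2]{KP}, which in Krishna--Park is stated for proper surjective morphisms of normal integral schemes (the paper's Lemma \ref{Containment_YFD} only invokes the finite case), so your workaround re-derives that lemma in the generality actually needed.
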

\begin{proof}The proof of the Lemma uses the same argument of \cite{KP}, Theorem 3.1 (1) and (2). 
\end{proof}
%In this section we assume $\Xb$ is smooth over $k$ and $D_{red}$ is a simple normal crossingn divisor on $\Xb$.
\subsubsection{}In \ref{IntroComp}, we introduced the notion of higher Chow group with compact support for a scheme of finite type over $k$ as the cohomology of the pro-complex $\{\zXD r \bullet \}_{D\subset\Xb}$ for a chosen compactification $\Xb$ of $X$ with complement an effective Cartier divisor. The following Lemma shows that this object is well defined and does not depend on the choice of $\Xb$.
\begin{lem}\label{lem-indepcompactn}
Let $X$ be an integral separated scheme of finite type over $k$ and choose a compactification $\tau: X\hookrightarrow \Xb$,
where $\Xb$ is a proper integral scheme over $k$, $\tau$ is an open immersion such that 
$\Xb-X$ is the support of a Cartier divisor.
The pro-complex 
\[
\{\zXD r \bullet \}_{D\subset\Xb}
\]
where $D$ ranges over all effective Cartier divisors with $|D|=\Xb-X$, does not depend on the compactification
$X\hookrightarrow \Xb$. 
\end{lem}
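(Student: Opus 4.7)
The plan is to exploit the fact that by Definition \ref{def-CXD} the cycles underlying $z^r(\Xb|D,\bullet)$ all live in $X\times \cub^{\bullet}$, so that $z^r(\Xb|D,\bullet)$ is a subcomplex of Bloch's cycle complex $z^r(X,\bullet)$; the dependence on the pair $(\Xb,D)$ is entirely encoded in the modulus inequality \eqref{def-CXD.eq} applied to the closures of cycles inside $\Xb\times \Pp\bullet$. It therefore suffices to compare the pro-system for one compactification $\Xb$ with that of a proper birational refinement $f\colon \Xb'\to \Xb$, after dominating any two compactifications by a third.

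Given two compactifications $\tau_i\colon X\hookrightarrow \Xb_i$ with $i=1,2$, I take $\Xb_3$ to be the schematic closure of the image of $(\tau_1,\tau_2)\colon X\hookrightarrow \Xb_1\times_k \Xb_2$. Then $\Xb_3$ is integral and proper over $k$, the two projections $p_i\colon \Xb_3\to \Xb_i$ are proper birational morphisms restricting to the identity on $X$, and for any effective Cartier divisor $D_i\subset \Xb_i$ with $|D_i|=\Xb_i\setminus X$ the pullback $p_i^*D_i$ remains an effective Cartier divisor on $\Xb_3$ with support $\Xb_3\setminus X$ (since $X$ is dense in the integral $\Xb_3$ and disjoint from $D_i$, no component of $\Xb_3$ is contained in $D_i$, and the local equation of $D_i$ is a non-zero-divisor on $\Xb_3$). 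Hence $\Xb_3$ satisfies the hypotheses of the lemma, and we reduce to the case of a single proper birational $f\colon \Xb'\to \Xb$ of such compactifications.

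For such an $f$, the heart of the argument is the equality of subcomplexes of $z^r(X,\bullet)$
\[
z^r(\Xb|D,\bullet)\;=\;z^r(\Xb'|f^*D,\bullet)\qquad \text{for every }D\subset \Xb.
\]
Indeed, for any integral $V\subset X\times \cub^n$ the closures $\Vb\subset \Xb\times \Pp n$ and $\Vb'\subset \Xb'\times \Pp n$ are proper birational via $(f\times\mathrm{id})|_{\Vb'}$, and therefore share a common normalization $\tilde V$. The canonical maps $\phi\colon \tilde V\to \Xb\times \Pp n$ and $\phi'\colon \tilde V\to \Xb'\times \Pp n$ satisfy $\phi=(f\times\mathrm{id})\circ \phi'$, which gives $\phi^*(D\times \Pp n)=\phi'^*(f^*D\times \Pp n)$ and $\phi^*(\Xb\times \Fn)=\phi'^*(\Xb'\times \Fn)$; the two modulus inequalities \eqref{def-CXD.eq} are thus literally identical, and the proper pushforward of Lemma \ref{lem-zXD}(1) reduces here to the identity.

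To conclude I will verify that $\{f^*D\}_{D\subset \Xb}$ is cofinal in the family of effective Cartier divisors $D'\subset \Xb'$ with $|D'|=\Xb'\setminus X$, ordered by $\leq$. Fix once for all an effective Cartier divisor $D_0\subset \Xb$ with $|D_0|=\Xb\setminus X$; then $f^*D_0$ and any such $D'$ have common support $\Xb'\setminus X$, so on each affine chart they are cut out by non-zero-divisors $h,g$ with $\sqrt{(h)}=\sqrt{(g)}$, whence $g\mid h^n$ for some $n$. Quasi-compactness of the noetherian $\Xb'$ makes a single $n$ work globally, giving $f^*(nD_0)\geq D'$. Combined with the equality of the previous paragraph, this presents $\{z^r(\Xb'|f^*D,\bullet)\}_{D\subset\Xb}$ as a cofinal sub-pro-system of $\{z^r(\Xb'|D',\bullet)\}_{D'\subset \Xb'}$, and yields the desired pro-isomorphism. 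The only delicate point is this cofinality statement: it rests on $f^{-1}(X)=X$ (so that $f^*D_0$ has the right support) and on $\Xb'$ being noetherian (to find a uniform $n$).
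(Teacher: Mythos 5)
Your overall strategy coincides with the paper's: dominate two compactifications by the closure of $X$ in their product, reduce to a single proper morphism $f\colon\Xb'\to\Xb$ restricting to the identity on $X$, prove the equality $C^r(\Xb|D,n)=C^r(\Xb'|f^*D,n)$ (this is exactly the paper's Lemma \ref{lem2-indepcompactn}, deduced there from the modulus condition and \cite[Lemma 3.2]{KL}), and then check that $\{f^*D\}_{D}$ is cofinal among the effective Cartier divisors on $\Xb'$ supported on $\Xb'-X$. Your cofinality paragraph, which the paper leaves entirely implicit, is correct and is a genuine addition; the reduction to a single $f$ via the closure of the diagonal is also fine (including the check that $p_i^{-1}(X)=X$, which follows from separatedness of $\Xb_i$).

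The one step that fails as stated is the assertion that $\Vb\subset\Xb\times\Pp n$ and $\Vb'\subset\Xb'\times\Pp n$ ``share a common normalization.'' The induced map $\Vb'\to\Vb$ is proper, surjective and birational, but it need not be finite (for instance when $\Xb'\to\Xb$ is a blow-up and $\Vb'$ meets the exceptional locus in a positive-dimensional fiber of $f\times\mathrm{id}$), so the induced map $\psi\colon\Vb'^N\to\Vb^N$ is in general only a proper birational surjection of normal integral schemes, not an isomorphism. Consequently the two modulus inequalities of \eqref{def-CXD.eq} live on genuinely different schemes and are not ``literally identical.'' The conclusion survives, but for a slightly different reason: since $\phiV\circ\psi=(f\times\mathrm{id})\circ\phi_{\Vb'}$, applying $\psi^*$ carries the inequality on $\Vb^N$ to the one on $\Vb'^N$; conversely, effectivity of a Cartier divisor descends along a surjective morphism of normal integral schemes (this is \cite[Lemma 2.2]{KP}, already used in the proof of Lemma \ref{Containment_YFD}, and is the content of the reference \cite[Lemma 3.2]{KL} that the paper invokes). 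One should also note that $\Vb\cap(D\times\Pp n)=\emptyset$ if and only if $\Vb'\cap(f^*D\times\Pp n)=\emptyset$, by surjectivity of $\Vb'\to\Vb$, so the vacuous case of condition (3) of Definition \ref{def-CXD} matches up as well. With these substitutions your proof is complete.
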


It is indeed enough to show the following Lemma, that is a direct consequence of the definition of the modulus condition and \cite[Lemma 3.2]{KL}.

\begin{lem}\label{lem2-indepcompactn}
Let $X\hookrightarrow \Xb$ and $X\hookrightarrow \Xb'$ be two compactifications as above.
Let $f:\Xb'\to \Xb$ be a proper surjective morphism which is the identity on $X$.
Let $D\subset\Xb $ be an effective Cartier divisor supported on $\Xb-X$ and put $D'=f^*D$.
Then we have the equality (cf. Definition 1.2)
\[
C^r(\Xb|D,n) =  C^r(\Xb'|D',n) 
\]
as subsets of the set of integral closed subschemes of $X\times\cubb n$.
\end{lem}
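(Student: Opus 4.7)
The plan is to realize an integral closed subscheme $V\subset X\times \cubb n$ together with its closures $\Vb\subset \Xb\times \Pp n$ and $\Vb'\subset \Xb'\times \Pp n$ as related by a proper birational morphism on normalizations, and then to transport the modulus inequality across this morphism.

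Since $f$ is proper and the identity on $X$, the base-changed morphism $f\times \mathrm{id}\colon \Xb'\times \Pp n\to \Xb\times \Pp n$ is proper and restricts to the identity on $X\times \Pp n$. As $V$ is dense in both $\Vb$ and $\Vb'$, the image $(f\times \mathrm{id})(\Vb')$ is closed, irreducible and contains $V$, hence equals $\Vb$. We obtain a proper surjective morphism $g\colon \Vb'\to \Vb$ which is the identity on $V$, and so birational. By the universal property of normalization, $g$ lifts uniquely to $h\colon \Vb'^N\to \Vb^N$ fitting in the commutative square
\[
\xymatrix{
\Vb'^N \ar[r]^-{h} \ar[d]_{\phi_{\Vb'}} & \Vb^N \ar[d]^{\phiV} \\
\Xb'\times \Pp n \ar[r]_-{f\times \mathrm{id}} & \Xb\times \Pp n.
}
\]
Since $\Vb^N\to \Vb$ is finite (hence separated) while $\Vb'^N\to \Vb$ is proper, $h$ is proper; as it restricts to the identity over the common dense open $V$, it is also birational.

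Now we compare the two modulus conditions. The face intersection requirement of Definition \ref{def-CXD}(1) depends only on $V$, so it holds simultaneously on both sides. For $n=0$, since $g\colon \Vb'\twoheadrightarrow \Vb$ and $|D'|=f^{-1}(|D|)$ as closed subsets, one has $\Vb\cap D=\emptyset$ if and only if $\Vb'\cap D'=\emptyset$, giving \ref{def-CXD}(2). For $n\geq 1$, the relation $D'=f^*D$ together with the above square yields
\[
\phi_{\Vb'}^*(D'\times\Pp n) = h^*\phiV^*(D\times\Pp n),\qquad \phi_{\Vb'}^*(\Xb'\times \Fn) = h^*\phiV^*(\Xb\times \Fn),
\]
so the inequality \eqref{def-CXD.eq} for $(\Xb',D')$ is the $h$-pullback of the corresponding inequality for $(\Xb,D)$.

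The remaining point, which is the main technical obstacle, is the following variant of \cite[Lemma 2.2]{KP}: for a proper birational morphism $h\colon Y\to X$ between normal integral schemes, a Cartier divisor $E$ on $X$ satisfies $E\geq 0$ if and only if $h^*E\geq 0$. The ``only if'' direction is immediate from local equations. For the converse, let $p\in X$ be any codimension-one point; applying the valuative criterion of properness to the DVR $\mathcal{O}_{X,p}\subset k(X)=k(Y)$ produces $q\in Y$ with $h(q)=p$ such that $\mathcal{O}_{Y,q}$ dominates $\mathcal{O}_{X,p}$. Since both are DVRs with the same fraction field, they coincide, and the order at $p$ of a local equation of $E$ equals the order at $q$ of its pullback. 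Hence $h^*E\geq 0$ forces $E\geq 0$. Applying this to $E=\phiV^*(\Xb\times \Fn)-\phiV^*(D\times\Pp n)$ on $\Vb^N$ concludes the proof.
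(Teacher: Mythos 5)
Your proof is correct and follows essentially the same route as the paper, whose entire argument is to observe that the claim follows from the definition of the modulus condition together with \cite[Lemma 3.2]{KL} (descent of effectivity of Cartier divisors along proper surjective morphisms of normal integral schemes). You have simply supplied the details the paper outsources to that reference — the construction of the proper birational morphism $h\colon \Vb'^N\to\Vb^N$ and a correct valuative-criterion proof of the effectivity-descent lemma in the birational case.
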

%\begin{proof}
%This follows immediately from the definition of the modulus condition and \cite[Lemma 3.2]{KL} \end{proof}
\subsubsection{}Let $\Xb$ and $D$ be as in \ref{cycComplD}. 
For $U$ \'etale over $\Xb$,  we let $D$ denote $D\times_{\Xb} U$ for simplicity.
As for Bloch's cycle complex, the presheaves
\[
z^r(-|D,n):\; U\to z^r(U|D,n)
\]
are sheaves on the \'etale site on $\Xb$ and therefore on the small Nisnevich and Zariski site of $\ol{X}$.
%  We define
% \begin{equation}\label{eq.ZXD}
% \bZXDr \quad \text{(resp. $\bZXDret$)})
% \end{equation}
% as the cohomological complex of sheaves $z^r(-|D,2r-i)$ in degree $i$ on $X_{zar}$ (resp. $X_{\et}$).
% \begin{defi}
% %We fix an integer $r>0$.
% We introduce the {\it motivic cohomology of the pair $(\Xb,D)$} as the hypercohomology of the complex of sheaves $\bZXDr$.
%
% \HMXDr q = \bH^q(\Xb_{\zar},\bZXDr ).
% \end{equation*}
% \end{defi}
For $\tau$ any of these topologies and $A$ an abelian group, we define 
\begin{equation}\label{eq.ZXD}
 A_{\ol{X}|D}(r)_\tau = (z^r(-|D_{(-)}, *)_\tau\tensor A)[-2r] \end{equation}
and call it the \textit{relative motivic complex} of the pair $(\ol{X}, D)$. The complex $A_{\ol{X}|D}(r)_\tau$ is unbounded below.
\begin{defi}The \textit{motivic cohomology of the pair $(\ol{X}, D)$} or the \textit{motivic cohomology of $\ol{X}$ with modulus $D$} (with coefficients in $A$) is defined as the hypercohomology of the complex of sheaves $A_{\ol{X}|D}(r)_\tau$,
 \begin{equation*}\label{relmotiviccoh}\H^n_{\cM, \tau}(\ol{X}|D, A(r)) = \mathbb{H}^{n}_\tau(\ol{X}, A_{\ol{X}|D}(r)_\tau).\end{equation*}
    \end{defi}
For $\tau = {\rm Zar}$, we omit it from the notation.     
\subsubsection{}\label{subsec:remark-descent}When $D = \emptyset$, the complex of presheaves $U\mapsto z^r(U|\emptyset, *) = z^r(U, *)$ on $\ol{X}_{\rm Zar}$ satisfies the Mayer-Vietoris property (see \cite[Section 3]{Bloch86} for the statement and \cite{LevineLoc} for the proofs) and therefore has Zariski descent, in the sense that the natural maps
\[\CH^r(\ol{X}, 2r-n) = \H^n(z^r(\ol{X}, *)[-2r]) \xrightarrow{\simeq} \mathbb{H}^{n}_{\rm Zar}(\ol{X}, \bZ_{\ol{X}}(r)_{\rm Zar})\]
are isomorphisms. When $D\neq \emptyset$, the situation is considerably more intricate. The natural map
\[\CH^r(\ol{X}|D, 2r-n) = \H^n(z^r(\ol{X}|D, *)[-2r]) \to \mathbb{H}^{n}_{\rm Zar}(\ol{X}, \bZ_{\ol{X}|D}(r)_{\rm Zar}) = \H^n_{\cM, {\rm Zar}}(\ol{X}|D, \bZ(r))\]
has been object of several speculations and, in general, is not expected to be an isomorphism. An evident example is the case where $\ol{X}$ is a smooth projective variety and $D$ is a very ample divisor on it. For $n=2r$, there are simply no cycles missing $D$, so that the group $\CH^r(\ol{X}|D, 0) = 0$ for $r<\dim \ol{X}$, while, in general, the groups $\H^{2r}_{\cM, {\rm Zar}}(\ol{X}|D, \bZ(r))$ have no reason to be zero (we discuss the case $r=1$ in Section 4).

To get a more serious example, which illustrates the rather pathological nature of the ``naive'' cycle groups with modulus (i.e., the actual homology groups of $z^r(\ol{X}|D, *)$ and not the relative motivic cohomology groups defined above), we mention the following result (see \cite{KrishnaOnCycles}).
\begin{prop}[A.~Krishna] Let $k$ be an algebraically closed field of characteristic zero with infinite transcendence degree over the field of rational numbers. Let $Y$ be a connected projective curve over $k$ of genus $g\geq 1$.  For $m\geq 2$, let $D_m = \Spec{k[t]/(t^m)} \hookrightarrow \mathbb{A}^1_k$. Then for any inclusion $i\colon \{P\}\hookrightarrow Y$ of a closed point, the localization sequence 
    \[\CH_0(\{P\}\times \mathbb{A}^1 |\{P\} \times D_m ) \xrightarrow{i_*} \CH_0(Y\times \mathbb{A}^1_k| Y\times D_m) \xrightarrow{j^*} \CH_0(Y\setminus \{P\} \times \mathbb{A}^1_k| Y\setminus \{P\} \times D_m )\to 0\]
    fails to be exact at $\CH_0(Y\times \mathbb{A}^1_k| Y\times D_m)$.
    \end{prop}
For different counter-examples to the descent property, see \cite{RulSaito}, before Theorem 3.
    
On the bright side, we mention the following important result recently obtained by W.~Kai (see \cite[Theorem 1.4]{KaiMoving}). If $(\ol{X}, D)$ and $(\ol{Y}, E)$ are two pairs of schemes of finite type over $k$ equipped with effective Cartier divisors, we say that a morphism $f\colon \Xb \to \ol{Y}$ is an {\it admissible morphism of pairs} if $f^*E$ is defined as Cartier divisor on $\Xb$ and $f$ restricts to a morphism $D\to E$.
\begin{theo}[W.~Kai]\label{thm:KaiMoving} Let $(\ol{X}, D)$ and $(\ol{Y}, E)$  be pairs of equidimensional schemes of finite type over $k$ and effective divisors on them. Assume that $\ol{Y}\setminus E$ is smooth. Let $f\colon (\ol{X},D)\to (\ol{Y}, E)$ be an admissible morphism of pairs. Then there are natural maps of abelian groups
    \[f^*\colon  \H^n_{\cM, {\rm Nis}}(\ol{Y}|E, \bZ(r))\to \H^n_{\cM, {\rm Nis}}(\ol{X}|D, \bZ(r)).
    \]
    This makes Nisnevich motivic cohomology with modulus contravariantly functorial for any map of smooth schemes with effective Cartier divisors.
    \end{theo}
    
\begin{rem}It is an interesting question whether the groups $\H^{2r}_{\cM}(\ol{X}|D, \Z(r))$ coincides with some modified Chow group with modulus defined using relative $K$-sheaves or relative Milnor $K$-sheaves. There are some results (see \cite{KSa}, \cite[Theorem 1.9]{BKrishna} and \cite[Theorem 3]{RulSaito}) related to this question, indicating that the descent property may hold for higher Chow groups with modulus at least in the case of $0$-cycles.
    \end{rem}
%In the following Sections we will prove some properties of these groups and relate them to various cohomology theories, namely  a  relative version of de Rham cohomology and Deligne cohomology, constructing the corresponding Regulator maps.

%\newpage

\section{Relative Chow groups with modulus}\label{relchow}
 
%\subsection{Relative Chow group with modulus}\label{relChow}
Let $\Xb$ and $D$ be again a pair consisting of an integral scheme $\Xb$ of finite type over $k$ and an effective Cartier divisor $D$ on it. Fix an integer $r\geq 1$.
In this Section we give a description of the groups $\CHXD r 0$ in terms of the relative Chow groups $\ChXD r$ defined below. 

\begin{defi}\label{def.relChowgroup}Let $Z^r(X)$ (resp. $Z^r(\Xb)_D$) be the free abelian group on the set $C^r(X)$ (resp. $C^r(\Xb)_D$) 
of integral closed subschemes $W\subset X$ of codimension $r$ 
(resp. integral closed subschemes $W\subset \Xb$ of codimension $r$ such that $ W\cap D=\emptyset$). 
For an integral scheme $Z$ and an effective Cartier divisor $E$ on $Z$, we set
\begin{equation}\label{eq.DefG}
G(Z,E)= \indlim U \Gamma(U,\Ker(\cO_U^\times \to \cO_E^\times)),
\end{equation}
where $U$ ranges over all open subscheme of $Z$ containing $|E|$.
We then put
\begin{equation}\label{eq.PhirXD}
\PhirXD =\underset{W\in C^{r-1}(X)}{\bigoplus}\; G(\Wb^N,\gamma_W^*D),
\end{equation}
where $\Wb^N$ denotes the normalization of the closure $\Wb$ of $W$ in $\Xb$ and $\gamma_W^*D$ is the pullback of 
the Cartier divisor $D$ via the natural map $\gamma_W:\Wb^N \to \Xb$. We set 
\begin{equation}\label{eq.relchow}
\ChXD r=\Coker\big(\PhirXD \rmapo{\delta} Z^r(\Xb)_D\big),
\end{equation}
where $\delta$ is induced by the composite of the divisor map on $\Wb^N$ and 
the pushforward map of cycles via  $\gamma_W$ for $W\in C^{r-1}(X)$. The groups $\ChXD r$ are called {\it Chow groups of $\Xb$ with modulus $D$}.
\end{defi}
\begin{rem}The notations of Definition \ref{def.relChowgroup} should be compared with the one in \cite[1.1 and 2.9]{KSY}. Note that in the definition of a modulus pair $(\Xb, Y)$ in {\it loc.~cit.}, $X = \Xb - |Y|$ is required to be quasi-affine over $k$. In this paper we don't need this condition. 
	\end{rem}
The main result of this section is the following
\begin{theo}\label{Thm.relchow}
There is a natural isomorphism
\[ \CH^r(\Xb|D,0) \isom \CH^r(\Xb|D)\]
\end{theo}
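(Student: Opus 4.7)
Since $\zXD r 0 = Z^r(\Xb)_D$ directly from Definition~\ref{def-CXD}(2), the group $\CHXD r 0$ is the cokernel of $\partial\colon \zXD r 1 \to Z^r(\Xb)_D$, and the theorem reduces to proving the equality of subgroups $\partial(\zXD r 1) = \delta(\PhirXD)$ inside $Z^r(\Xb)_D$.

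For the inclusion $\delta(\PhirXD) \subseteq \partial(\zXD r 1)$, I would attach to each generator $(W, f)$, with $W \in C^{r-1}(X)$ and $f \in G(\WbN, \gamma_W^* D)$, the graph cycle $\Gamma_{W,f}$ obtained by intersecting the image of $(\gamma_W, f)\colon \WbN \to \Xb \times \P^1$ with $X \times \cub^1$. Since $\WbN$ is the normalization of $\overline{\Gamma}_{W,f}$, direct pullback gives $(\gamma_W, f)^*(D \times \P^1) = \gamma_W^* D$ and $(\gamma_W, f)^*(\Xb \times \{1\}) = \mathrm{div}_0(f-1)$, so the modulus condition \eqref{def-CXD.eq} becomes $\gamma_W^* D \leq \mathrm{div}_0(f-1)$, which is exactly the condition defining $G(\WbN, \gamma_W^* D)$. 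Pulling back along the $\{t=0\}$ and $\{t=\infty\}$ faces yields $\partial \Gamma_{W,f} = \gamma_{W,*}\, \mathrm{div}(f) = \delta(W,f)$.

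For the reverse inclusion, take any integral $V \in \CXD r 1$ with closure $\Vb$, and let $W \subset X$ be the closure of the image of $V$ under the first projection. A dimension count forces $\mathrm{codim}_X W \in \{r-1, r\}$. If $\mathrm{codim}_X W = r$, then $V = W \times \cub^1$ and $\partial V = 0$. If $\mathrm{codim}_X W = r-1$, let $\pi\colon \Vb^N \to \WbN$ be the induced proper, generically finite morphism and let $\widetilde f \in k(\Vb^N)^\times$ be the rational function coming from $\Vb \to \P^1$. The modulus condition on $\Vb$ reads $\widetilde f \in G(\Vb^N, \pi^*\gamma_W^* D)$, and applying the norm-divisor identity $\pi_*\,\mathrm{div}(\widetilde f) = \mathrm{div}(N(\widetilde f))$ for the norm $N\colon k(\Vb^N)^\times \to k(\WbN)^\times$ gives $\partial V = \gamma_{W,*}\,\mathrm{div}(N(\widetilde f)) = \delta(W, N(\widetilde f))$.

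The main technical obstacle is proving that $N(\widetilde f)$ does lie in $G(\WbN, \gamma_W^* D)$, i.e., that the norm preserves the property of being $\equiv 1$ modulo the ideal of $D$. This is a standard but delicate local computation: writing $\widetilde f = 1 + g\,b$ near $|\gamma_W^* D|$ with $g$ a local generator of $\mathcal{I}_{\gamma_W^* D}$ and $b \in \cO_{\Vb^N}$, expanding the characteristic polynomial of multiplication by $\widetilde f$ on $\pi_*\cO_{\Vb^N}$ produces $N(\widetilde f) = 1 + g\cdot \mathrm{tr}(b) + g^2(\cdots) \in 1 + \mathcal{I}_{\gamma_W^* D}$. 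Some care is required because $\pi$ may fail to be flat or finite in general; however, the modulus condition forces $\pi$ to be quasi-finite over $|\gamma_W^* D|$ (via Remark~\ref{def-CXD.rem}), hence finite on a neighborhood by properness, which is where the computation takes place.
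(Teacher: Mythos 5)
Your proposal is correct and follows essentially the same route as the paper: reduce to comparing the images of $\partial$ and $\delta$ on the common generator set $Z^r(\Xb)_D$, realize each pair $(W,f)$ by the graph cycle of $f$, and in the other direction push an integral $V$ down via the norm of the coordinate function, the key point being that the norm stays congruent to $1$ modulo $I_D$ — which you get from the characteristic polynomial of $1+gb$, while the paper reads it off the minimal polynomial $(1-y)^m+\sum_\nu a_\nu(1-y)^{m-\nu}$ with $a_\nu\in I^\nu$ from Lemma \ref{lem.divisor-functionMC}, evaluated at $y=0$. The one imprecision is your claim that $\Wb^N$ is the normalization of the graph closure: in general it is only proper birational to it (the closure of the graph of the rational map $f$ may acquire positive-dimensional fibres over the indeterminacy locus of $f$), but since $f\in G(\Wb^N,\gamma_W^*D)$ is an honest regular unit on a neighbourhood of $|\gamma_W^*D|$, the two agree over that neighbourhood, which is the only place the modulus condition is tested.
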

\begin{rem} As noticed in \ref{subsec:remark-descent}, when $\Xb$ is projective and $D$ is an ample divisor, there are no positive dimensional closed subschemes of $\Xb$ missing $D$ and the groups $\CH^r(\Xb|D)$ are therefore trivial except for the case $d=\dim \Xb$. This is one of the reason for introducing the relative motivic cohomology groups as hypercohomology of the relative cycle complex rather then as the naive higher Chow groups.
     
  An interesting problem suggested by Bloch \cite{BlochLetter} is to extend our definition of cycles with modulus $\CH^r(\Xb|D, n)$ replacing the divisor $D$ with a closed subscheme $T\subset \Xb$. If $\dim T <r$, there are many cycles of dimension $r$ not meeting $T$, so that one gets non-trivial generators of the corresponding relative Chow group. A possible way to extend our definition is to set
  \[ \CH^r(\Xb|T, n) = \CH^r(\tilde{\Xb}|\tilde{T}, n)\]
  where $\pi\colon \tilde{\Xb}\to \Xb$ is the blow-up of $\Xb$ with center $T$ and $\tilde{T}$ is the divisor $\pi^{-1}(T)$. The analogue of Theorem \ref{Thm.relchow} in this generality implies that $\CH^r(\Xb|T,0)$ coincides with the group considered by Bloch in  {\it loc.~cit.}.   
\end{rem}
    
\subsection{A description of relative cycles}\label{description}%
%For the proof of Theorem \ref{Thm.relchow} we need to give an alternative and more concrete presentation of the modulus condition of Definition \ref{def-CXD}. This is the content of Lemma \ref{lem-cyclecondition0} and \ref{lem.divisor-functionMC} below.
%(see Lemma \ref{lem-cyclecondition}).
\subsubsection{}
 Let $n\in \bN$. 
%This will be used also in the computation of relative higher Chow groups of codimension $1$ 
%in the next section (see Theorem \ref{thm.codim1}). 
For $1\leq i\leq n$, we denote by $ \cubbb n i$ the closure of the $n$-th dimensional box $\cub^n$  in the $i$-th direction, i.e.
\[
\Pp n \supset \cubbb n i =
\cub \times \cdots \times \overset{\overset{i}{\vee}}{\bP^1} \times \cdots \times \cub
= \Pp n - \underset{j\not=i}{\sum} \Fnj \;\simeq \;\cubb {n-1}\times \bP^1.
\]
We let $\Fni$ denote $\Fni\cap \cubbb n i$ for simplicity and write $pr_i:\Xb\times \cubbb n i \to \Xb\times \cubb {n-1}$ for the projections removing the $i$-th factor $\bP^1$. 
%Recall that we have chosen a standard coordinate $y$ on $\bP^1-\{\infty\}$.
%\medbreak

%The following lemma is an easy consequencce of \ref{def-CXD}[3].

\begin{lem}\label{lem-cyclecondition0}
%Let $n\geq 1$ be an integer.
Let $V\in C^{r}(X\times \cub^n)$ be an integral cycle,  
%Let $V\subset X\times \cubb n$ be an integral closed subscheme of codimension $r$ 
and $\Vb$  the closure of $V$ in $\Xb\times \Pp n$.
For $1\leq i\leq n$, let $\Vb_i$ be the closure of $V$ in $\Xb\times \cubbb n i$, $\Vb_i^N$ 
be its normalization. Let  $\phiVi:\Vb_i^N\to \Xb\times \cubbb n i$ be the natural map. Then 
the condition (3) of Definition \ref{def-CXD} implies the following condition:
\begin{itemize}
\item[(3)']
The following inequality as Cartier divisors holds for all $1\leq i\leq n$:
\begin{equation}\label{def-CXD.eq'}
\phiVi^*(D\times \cubbb n i)\leq \phiVi^*(\Xb\times \Fni).
\end{equation}
\end{itemize}
The converse implication holds if either $n=1$ or none of the components of $\Vb\cap (D\times\Pp n)$ is contained in 
$\underset{1\leq i\leq n}{\cap}\; \Xb\times \Fni$. 
\end{lem}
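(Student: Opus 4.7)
The key geometric observation I will exploit is that $\cubbb n i = \Pp n \setminus \bigcup_{j \neq i} \Fnj$ is a Zariski open subset of $\Pp n$, so $\Xb \times \cubbb n i$ is open in $\Xb \times \Pp n$. Since normalization commutes with open immersions, $\Vb_i = \Vb \cap (\Xb \times \cubbb n i)$ is open in $\Vb$ and $\Vb_i^N = \phiV^{-1}(\Xb \times \cubbb n i)$ is open in $\Vb^N$, with the restriction of $\phiV$ to $\Vb_i^N$ coinciding with $\phiVi$.

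For the forward implication $(3) \Rightarrow (3)'$, I would restrict the inequality of effective Cartier divisors (3) along the open immersion $\Vb_i^N \hookrightarrow \Vb^N$. The restriction of $\phiV^*(D \times \Pp n)$ to $\Vb_i^N$ is tautologically $\phiVi^*(D \times \cubbb n i)$. Decomposing $\phiV^*(\Xb \times \Fn) = \sum_{j=1}^n \phiV^*(\Xb \times \Fnj)$ and using that $\Fnj \cap \cubbb n i = \emptyset$ for $j \neq i$, every summand with $j \neq i$ restricts to the zero divisor on $\Vb_i^N$, leaving exactly $\phiVi^*(\Xb \times \Fni)$. Thus (3) restricts to (3)' on each $\Vb_i^N$, which is precisely the forward implication.

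For the converse my approach is pointwise: since $\Vb^N$ is normal and the inequality in (3) is of effective Cartier divisors, it suffices to check it at each codimension one point $P$ of $\Vb^N$ lying in the support of $\phiV^*(D \times \Pp n)$. Such a $P$ sits over the generic point of an irreducible codimension one component $C$ of $\Vb \cap (D \times \Pp n)$. If I can place $P$ inside some $\Vb_i^N$ (equivalently, show $C \subset \Xb \times \cubbb n i$ for some $i$), then (3)' applied on $\Vb_i^N$ gives the desired multiplicity bound at $P$, and hence (3) at $P$. The case $n=1$ is immediate, because $\cubbb 1 1 = \bP^1$ forces $\Vb^N = \Vb_1^N$. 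In the general case, the avoidance hypothesis on the intersection $\bigcap_i (\Xb \times \Fni)$ is used to produce such an index $i$, via a combinatorial analysis of which face divisors $\Fnj$ can possibly contain $C$.

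The hard part will be exactly this combinatorial selection of $i$ in the converse: tracking the incidence pattern of $C$ with the collection $\{\Xb \times \Fnj\}_{j=1}^n$ and showing that excluding the deepest corner $\bigcap_i \Xb \times \Fni$ suffices to force $C$ into at least one of the partial open charts $\Xb \times \cubbb n i$. Once that combinatorial step is in place, the rest of the argument is formal restriction and multiplicity bookkeeping, and the forward direction is straightforward restriction along open immersions.
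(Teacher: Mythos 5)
Your forward direction is correct and is exactly the paper's argument: condition (3)' is obtained from (3) by base change along the open immersion $\Xb\times\cubbb n i\hookrightarrow \Xb\times\Pp n$, under which the summands $\Xb\times F^n_j$ with $j\neq i$ disappear because $F^n_j\cap\cubbb n i=\emptyset$. Your reduction of the converse to the generic points of the components of $\Vb\cap(D\times\Pp n)$ (equivalently, to codimension-one points of $\Vb^N$ in the support of $\phiV^*(D\times\Pp n)$, which is legitimate since $\Vb^N$ is normal) is also the route the paper takes.

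The genuine gap is precisely the step you flag as ``the hard part'' and then leave undone: producing, for each such component $C$, an index $i$ with $C\subset\Xb\times\cubbb n i$. As you have set it up, this amounts to the set-theoretic claim that $\Pp n\setminus\bigcap_{i}F^n_i=\bigcup_i\cubbb n i$. That claim fails for $n\ge 3$: a point lies in $\cubbb n i$ iff it avoids \emph{every} $F^n_j$ with $j\neq i$, i.e.\ iff it lies on at most one of the face divisors, whereas avoiding $\bigcap_i F^n_i$ only says it misses at least one of them. Concretely, for $n=3$ a component of $\Vb\cap(D\times\Pp 3)$ whose generic point lies on $\Xb\times(F^3_1\cap F^3_2)$ but not on $\Xb\times F^3_3$ satisfies the lemma's hypothesis, yet is contained in no chart $\Xb\times\cubbb 3 i$, so none of the inequalities (3)' gives any control on the relevant multiplicities along it. Hence the ``combinatorial selection of $i$'' cannot be carried out in general from the stated hypothesis; what your argument actually needs is the stronger statement that every generic point of $\Vb\cap(D\times\Pp n)$ lies in $\bigcup_i\cubbb n i$, i.e.\ meets at most one face divisor. (The paper's own one-line proof verifies exactly this last condition and then identifies the union with $\Pp n-\bigcap_i F^n_i$, so your plan mirrors a weak point of the source; but since this identification is the crux of the converse, deferring it to ``a combinatorial analysis'' leaves the proof incomplete, and for $n\ge 3$ no bookkeeping will close it without changing the hypothesis.) For $n\le 2$ the two conditions coincide and your plan goes through.
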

\begin{proof}
The condition (3)' follows from  Definition \ref{def-CXD}(3) by  base change via
the open immersion $\Pp n - \underset{j\not=i}{\sum} \Fnj \hookrightarrow \Pp n$.
The converse implication holds if the generic points of $\Vb\cap (D\times\Pp n)$ are all in
\[
\underset{1\leq i\leq n}{\cup}\; (\Pp n - \underset{j\not=i}{\sum} \Fnj)\;=
\left.\left\{\begin{gathered}
 \Pp n \;\quad \text{if $n=1$,}, \\ 
 \Pp n - \underset{1\leq i\leq n}{\cap}\; \Fnj \;\quad \text{if $n>1$,}\\
\end{gathered}\right.\quad
\right.
\]
proving the last assertion.
\end{proof}

\begin{rem}\label{lem-cyclecondition0.rem}
The condition (3)' of Lemma \ref{lem-cyclecondition0}  implies
$\Vb_i\cap (D\times \cubbb n i)\subset \Xb\times \Fni$ as closed subsets of $\Xb\times \cubbb ni$, and this in turn implies that $V$ is closed in $\Xb\times \cubb n$, namely $V\in C^r(\Xb\times \cub^n)_{D\times \cub^n}$
(cf. Definition \ref{def.relChowgroup}).
\end{rem}

%Put $\cub=\bP^1_k-\{1\}$ and $\cubb=\bP^1_k$.
\begin{lem}\label{lem-cyclecondition}
Let $n\geq 1$ and let $V\in C^r(\Xb\times \cub^n)_{D\times \cub^n}$. Suppose that $V$ intersects properly $\ini \infty(X \times \cubb {n-1})$.
%Let $V\subset \Xb\times \cubb n$ be an integral closed subscheme of codimension $r$ such that
%$V\cap (D\times \cubb n)=\emptyset$ and that it intersects properly with $\ini \infty(X \times \cubb {n-1})$.
 Write
\[
\partial_i^\infty V=(\ini \infty)^{-1}(V)\subset \Xb\times \cubb {n-1}.
\]
%Note that $\partial_i^\infty V$ is closed in $\Xb\times \cubb {n-1}$ by the above assumption.
For $1\leq i\leq n$, let $\Vb_i$ be the closure of $V$ in $\Xb\times \cubbb n i$ and put 
\[
\Wb_i=pr_i(\Vb_i) \subset \Xb\times \cubb {n-1}, \quad
\Wb_i^o=\Wb_i \backslash \partial_i^\infty V,\quad \Vb_i^o=\Vb_i\times_{\Wb_i} \Wb_i^o.
\]
Then $\Vb_i^o$ is finite over $\Wb_i^o$ and 
\[
\Vb_i\cap (D\times \cubbb n i)\subset \Vb_i^o\subset \Wb_i^o[y],
\]
where $\Wb_i^o[y]$ denotes $\Wb_i^o\times (\bP^1-\{\infty\})$.
\end{lem}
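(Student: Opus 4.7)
The plan is to extract all three assertions from a single set-theoretic consequence of condition (3)' of Lemma \ref{lem-cyclecondition0}. Since the support of the effective Cartier divisor $\phiVi^*(D\times\cubbb n i)$ coincides with $\phiVi^{-1}(D\times\cubbb n i)$, and $\phiVi$ is surjective onto $\Vb_i$, the inequality $\phiVi^*(D\times \cubbb n i)\leq \phiVi^*(\Xb\times \Fni)$ translates into the set-theoretic containment
\[
\Vb_i\cap (D\times \cubbb n i) \;\subset\; \Vb_i\cap (\Xb\times \Fni),
\]
i.e.\ every point of $\Vb_i$ lying over $D$ has $y_i=1$. This single fact, together with the identification $\Vb_i\cap(X\times\cub^n)=V$ (which holds because $V$ is closed in the open subset $X\times\cub^n$ of $\Xb\times\cubbb n i$), drives everything else.

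For finiteness of $\Vb_i^o\to\Wb_i^o$, properness is automatic because $pr_i\colon\Xb\times\cubbb n i\to\Xb\times\cubb{n-1}$ has proper $\bP^1$-fibers, so $\Vb_i\to\Wb_i$ is proper and then so is $\Vb_i^o\to\Wb_i^o$ after base change. It therefore suffices to rule out fibers equal to all of $\bP^1$ over points of $\Wb_i^o$. Such a bad fiber would force $(w,\infty)\in\Vb_i$, and I finish by a dichotomy on $w=(x,y')\in\Wb_i\subset\Xb\times\cub^{n-1}$. If $x\in X$, then $(x,y',\infty)\in X\times\cub^n$, hence lies in $V$, so $w\in\partial_i^\infty V$ and $w\notin\Wb_i^o$. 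If $x\in D$, then $(x,y',\infty)\in D\times\cubbb n i$ yet $y_i=\infty\neq 1$, contradicting the boxed containment. The same dichotomy immediately gives $\Vb_i^o\subset\Wb_i^o[y]$: no point of $\Vb_i^o$ can have $y_i=\infty$, because either case above forces the underlying $w$ out of $\Wb_i^o$. For the remaining inclusion $\Vb_i\cap(D\times\cubbb n i)\subset\Vb_i^o$, a point $v=(x,y',y_i)$ with $x\in D$ satisfies $y_i=1$ by the boxed containment, so $pr_i(v)\in D\times\cub^{n-1}$; since $\partial_i^\infty V\subset X\times\cub^{n-1}$ by $V\subset X\times\cub^n$, this places $pr_i(v)$ in $\Wb_i^o$ and therefore $v$ in $\Vb_i^o$.

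The only non-routine point is the opening translation of the divisor inequality into the boxed set-theoretic containment, which combines the identification of supports of pulled-back Cartier divisors with the set-theoretic preimage, surjectivity of $\phiVi$, and the fact that $\Vb_i$ is not contained in $D\times\cubbb n i$ so that the pullbacks are defined. Once that translation is in hand, the argument is a single uniform case analysis distinguishing the two ways a point of $\Vb_i$ can land on the face $\{y_i=\infty\}$: as a limit of points of $V$ (captured by $\partial_i^\infty V$) or as a boundary point over $D$ (ruled out by modulus).
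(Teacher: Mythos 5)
Your geometric picture is essentially the right one, but the proof as written rests on a hypothesis that the lemma does not grant you. The assumption is only that $V\in C^r(\Xb\times \cub^n)_{D\times \cub^n}$, i.e.\ (by Definition \ref{def.relChowgroup}) that $V$ is an integral closed subscheme of $\Xb\times\cub^n$ of codimension $r$ with $V\cap(D\times\cub^n)=\emptyset$, together with proper intersection with $\ini\infty(X\times\cubb{n-1})$. The modulus condition is \emph{not} assumed, so condition (3)' of Lemma \ref{lem-cyclecondition0} --- and hence your boxed containment $\Vb_i\cap(D\times\cubbb n i)\subset\Vb_i\cap(\Xb\times\Fni)$ --- is not available. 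This is not a pedantic point: the lemma is invoked precisely in situations where the modulus condition is the thing being \emph{proved} (the equivalence in Lemma \ref{lem.divisor-functionMC} read in the direction (3)''$\Rightarrow$(3)', the converse direction of Lemma \ref{lem-cyclecondition1}, and the surjectivity step in the proof of Theorem \ref{Thm.relchow}), so a proof that presupposes (3)' would render those arguments circular.

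Fortunately every appeal you make to the boxed containment can be replaced by the weaker facts you do have. Since $V$ is closed in $\Xb\times\cub^n$ and $\cubbb n i\cap\{y_i=\infty\}\subset\cub^n$, one gets $\Vb_i\cap\{y_i=\infty\}=V\cap\{y_i=\infty\}$, which $pr_i$ carries into $\partial_i^\infty V$; hence a point $(w,\infty)\in\Vb_i$ with $w\in\Wb_i^o$ is impossible regardless of whether $w$ lies over $X$ or over $D$ (in the latter case because $V\cap(D\times\cub^n)=\emptyset$), and your finiteness and $\Vb_i^o\subset\Wb_i^o[y]$ arguments go through unchanged. For the last containment the detour through $y_i=1$ is also unnecessary: $pr_i$ forgets $y_i$, so any $v\in\Vb_i\cap(D\times\cubbb n i)$ has $pr_i(v)\in D\times\cubb{n-1}$, which is disjoint from $\partial_i^\infty V\subset X\times\cubb{n-1}$; hence $pr_i(v)\in\Wb_i^o$ and $v\in\Vb_i^o$ --- which is exactly the paper's argument. (A minor stylistic remark: the paper obtains finiteness more cheaply, observing that $\Vb_i^o$ is closed in the affine $\Wb_i^o$-scheme $\Wb_i^o[y]$ and proper over $\Wb_i^o$, so that it is finite without invoking the proper-plus-quasi-finite theorem.)
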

\begin{proof}
	By definition, $\Vb_i^o$ is proper over $\Wb_i^o$ and closed in $\Wb_i^o[y]=\Wb_i^o\times(\bP^1-\{\infty\})$. 
	Since $\Wb_i^o[y]$ is affine over $\Wb_i^o$, we have immediately that
	$\Vb_i^o$ is finite over $\Wb_i^o$. 
	By  assumption, $V\cap (D\times \cubb n)=\emptyset$, so that we have $\partial_i^\infty V\cap (D\times \cubb {n-1})=\emptyset$. Hence $D\times \cubbb ni=pr_i^{-1}(D\times \cubb {n-1})$ does not intersect $pr_i^{-1}(\partial_i^\infty V)$, and therefore $\Vb_i\cap (D\times \cubbb ni)\subset \Vb_i^o=\Vb\backslash pr_i^{-1}(\partial_i^\infty V)$. 
\end{proof}
\begin{lem}\label{lem.divisor-functionMC} Let $V$ be as in   Lemma \ref{lem-cyclecondition}. Then the condition (3)' of Lemma \ref{lem-cyclecondition0} for $V$ is equivalent to the following condition:
\begin{itemize}
\item[(3)'']
Let $\Wb_i^N$ be the normalization of $\Wb_i$ and $\WbNo_i=\WbN_i\times_{\Wb_i}\Wb^o_i$.
Then there exists an integer $\nu \geq 1$ such that 
\[
\Vb_i^o \times_{\Wb_i} \Wb_i^N \subset \WbNo_i[y]:=\WbNo_i\times(\bP^1-\{\infty\})
\] 
is the divisor of a function of the form
\medbreak\noindent
$f= (1-y)^m+ \underset{1\leq \nu \leq m}{\sum}\; a_\nu (1-y)^{m-\nu} 
\qwith a_\nu\in \Gamma(\WbNo_i,I^\nu_{\Wb_i^N}),$
\medbreak\noindent
where $I_{\Wb_i^N}\subset \cO_{\Wb_i^N}$ is the  ideal sheaf for the divisor
$D\times_{\Xb} \Wb_i^N$ in $\Wb_i^N$ and $I^\nu_{\Wb_i^N}$ denotes its $\nu$-th power.
\end{itemize}
%Conversely, if $V$ satisfies the above conditions and $\partial_i^\infty V\cap (D\times\cubb {n-1})=\emptyset$,
%then $V$ satisfies the condition \ref{lem-cyclecondition0}(3)'.
\end{lem}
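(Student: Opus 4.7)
The plan is to translate the intersection-theoretic inequality in (3)$'$ on the normalization $\Vb_i^N$ into an algebraic statement about the polynomial defining $\Vb_i^o\times_{\Wb_i}\WbN_i$ inside $\WbNo_i[y]$. The translation rests on the finiteness of $\Vb_i^o$ over $\Wb_i^o$ from Lemma \ref{lem-cyclecondition}, the normality of $\WbN_i$ and $\Vb_i^N$, and the classical fact that the minimal polynomial of an element integral over a normal domain has coefficients in that domain. Working Zariski-locally on $\WbNo_i$, pick an affine open $U=\Spec A$ with $A$ a normal domain and $I_{\WbN_i}|_U$ principal, say equal to $(t)$. By Lemma \ref{lem-cyclecondition}, $(\Vb_i^o\times_{\Wb_i}\WbN_i)|_U$ is a finite closed subscheme of $\Spec A[y]$; if $\bar y\in k(V)$ denotes the image of $y$, its minimal polynomial over $K=\mathrm{Frac}(A)$ lies in $A[y]$ by normality, is monic of degree $m:=[k(V):K]$, and cuts out the cycle $(\Vb_i^o\times_{\Wb_i}\WbN_i)|_U$ as $\mathrm{Div}(\hat f)$ on $\Spec A[y]$.

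Rewrite $\hat f=(1-y)^m+a_1(1-y)^{m-1}+\cdots+a_m$ with $a_\nu\in A$, and set $z:=(1-y)/t\in k(V)$. Dividing the relation $\hat f(\bar y)=0$ by $t^m$ yields
\[
z^m+(a_1/t)z^{m-1}+(a_2/t^2)z^{m-2}+\cdots+(a_m/t^m)=0,
\]
and since $K(z)=K(y)$ has degree $m$ over $K$, this is the minimal polynomial of $z$ over $K$. Now (3)$'$ restricted to $U$ says $v_P(t)\leq v_P(1-y)$ at every codimension one point $P$ of $\Vb_i^N|_U$ lying over $V(t)\subset\WbN_i|_U$, i.e., $v_P(z)\geq 0$ at such $P$. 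At points $P$ not lying over $V(t)$ one automatically has $v_P(z)=v_P(1-y)\geq 0$, since $1-y\in A[y]$ is regular on $\Vb_i^N|_U$; hence (3)$'$ is equivalent to $v_P(z)\geq 0$ at every codimension one point of $\Vb_i^N|_U$, which by normality of $\Vb_i^N$ amounts to $z\in\Gamma(\Vb_i^N|_U,\cO)$, i.e., to $z$ being integral over $A$. Applying the minimal-polynomial fact to the displayed equation, this holds if and only if $a_\nu/t^\nu\in A$ for all $\nu$, equivalently $a_\nu\in(t^\nu)=I_{\WbN_i}^\nu|_U$.

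The local coefficients $a_\nu$ are uniquely determined as functions on $U$ by expanding $\hat f$ in powers of $(1-y)$, so they glue to global sections of $\cO_{\WbNo_i}$, and by the local analysis they lie in $\Gamma(\WbNo_i, I_{\WbN_i}^\nu)$; this yields (3)$''$, and conversely the same chain of equivalences shows that (3)$''$ implies (3)$'$. The main obstacle is the bookkeeping between the possibly non-reduced base change $(\Vb_i^o\times_{\Wb_i}\WbN_i)|_U$ and the Cartier divisor $\mathrm{Div}(\hat f)$; this is handled by passing to the function field $k(V)$ and invoking normality at both ends, so the divisorial inequality reduces cleanly to an integrality statement over $A$.
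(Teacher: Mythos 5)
Your proof is correct and follows essentially the same route as the paper: both reduce to the locus over $\Wb_i^o$ where $I$ is principal, identify $\Vb_i^o\times_{\Wb_i}\Wb_i^N$ with the divisor of the minimal polynomial of $\bar y$ written in powers of $(1-y)$, reinterpret (3)$'$ as integrality of $(1-y)/\pi$ over $\Gamma(\WbNo_i,\cO)$, and conclude by dividing the minimal polynomial by $\pi^m$ and using normality of the base. The only cosmetic difference is that you localize on $\WbNo_i$ while the paper localizes on $\Xb$, and you make the valuation-theoretic translation of the Cartier-divisor inequality slightly more explicit.
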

\begin{proof}% so that $\Vb_i\cap (D\times \cubbb ni)=\Vb_i^o\cap (D\times \cubbb ni)$.
%We show the last assertion of Lemma \ref{lem-cyclecondition}.
Let $\yb\in \Gamma(\Vb_i^o,\cO)$ be the image of $y$. By Lemma \ref{lem-cyclecondition}, $\Vb_i^o$ is finite over $\Wb^o_i$ and 
the minimal polynomial of $\yb$ over the function field $K$ of $\Wb_i$ can be written as:
\[
f(T)= (1-T)^m+ \underset{1\leq \nu \leq m}{\sum}\; a_\nu (1-T)^{m-\nu} \;\in 
\Gamma(\WbNo_i,\cO)[T].
\]
We claim that $\Vb_i^o\times_{\Wb_i} \Wb_i^N\subset \WbNo_i[y]$ coincides with the divisor of the function
\[
h=(1-y)^m+ \underset{1\leq \nu\leq m}{\sum}\; a_\nu (1-y)^{m-\nu}\in  \Gamma(\WbNo_i[y],\cO).
\]
%\begin{claim}\label{lem-cyclecondition-claim2}
%\end{claim}
Indeed, since $\div(h) \subseteq \Vb_i^o\times_{\Wb_i} \Wb_i^N$, it is enough to show that it is irreducible. But this is clear as $\div(h)$ is finite over $\WbNo_i$ and its generic fiber over $\WbNo_i$ is irreducible. The last assertion of Lemma \ref{lem.divisor-functionMC} follows then from the following.
\begin{claim}\label{lem-cyclecondition-claim1}
The condition \ref{lem-cyclecondition0}(3)' holds if and only if
$a_\nu\in \Gamma(\WbNo_i,I^\nu_{\Wb_i^N})$ for all $\nu \geq 1$.
\end{claim}
\def\Ulam{U_\lambda}
\def\pilam{\pi_\lambda}
\def\piblam{\pib_\lambda}

%It suffices to show $a_\nu\in \Gamma(\WbNo_i\times_{\Xb} \Ulam,I^\nu_{\Wb_i^N})$ for any open covering $\Xb=\underset{\lambda}{\cup} \Ulam$ and any $\lambda$.
The question is local on $\Xb$ and we may assume that the ideal sheaf $I_D\subset \cO_{\Xb}$ is generated by a regular function $\pi \in \Gamma(\Xb,\cO)$. 
%Let $\piblam\in \Gamma(\Vb_i_i\times_X \Ulam,\cO)$ be the images of $\pilam$. 
Note that, by Lemma \ref{lem-cyclecondition}, we have
\[\Vb_i \cap (D\times \cubbb  n i) \subset  \Vb_i^{o},\]
so that  we can actually remove $\partial_{i}^{\infty} V$ to check the modulus condition. If we still denote by $\pi$ the image of $\pi$ in $\Gamma(\Vb_i^o, \cO)$, we see then that the condition \ref{lem-cyclecondition0}(3)' is equivalent to require that 
\begin{equation}\label{lem-cyclecondition-eq1}
\theta:=\displaystyle{\frac{1-\yb}{\pi}}\in \Gamma(\Vb_i^N\times_{\Wb_i} \Wb_i^o,\cO),
\end{equation} 
for every $i=1, \ldots, n$, where $\Vb_i^N$ is the normalization of $\Vb_i$. Since $\pi$ does not vanish identically on $\Wb_i$, we have $\pi\in K$ and thus the minimal polynomial of $\theta$ over $K$ is 
\[
g(T) =T^m+ \underset{1\leq \nu \leq m}{\sum}\; \frac{a_\nu}{\pi^\nu} \;T^{m-\nu}.
\]
Since $\Vb_i^o$ is finite over $\Wb_i^o$, \eqref{lem-cyclecondition-eq1} is equivalent to the condition that 
$\theta$ is finite over $\Gamma(\WbNo_i,\cO)$, which is equivalent to  
\[
\frac{a_\nu}{\pi^\nu}\in\Gamma(\WbNo_i,\cO)\quad\text{ for all }\nu,
\]
completing the proof of Claim \ref{lem-cyclecondition-claim1}.
\end{proof}

\subsection{Proof of theorem \ref{Thm.relchow}}
By definition, the groups $\CHXD r 0$ and $\ChXD r$ have the same set of generators $\zzXD r 0=Z^r(\Xb)_D$ and to prove the theorem it suffices to construct a surjective homomorphism 
$\phi\colon \zzXD r 1\to \PhirXD$ which fits into a commutative diagram
\begin{equation}\label{eq0.proof-thmrelchow}
\begin{CD}
 \zzXD r 1 @>{\partial}>> \zzXD r 0 \\
 @VV{\phi}V @| \\
 \PhirXD @>{\delta}>> Z^r(\Xb)_D \\ 
\end{CD}
\end{equation}
Let $V\in \CXD r 1$ be an integral cycle of codimension $r$, $V\subset X\times \cub^1$ satisfying the modulus condition of Definition \ref{def-CXD}.
By Remark \ref{def-CXD.rem}, we note that $V$ is actually closed in $\Xb\times \cub^1$.
Let $\Vb$ be the closure of $V$ in $\Xb\times \bP^1$, $\Wb\subset \Xb$  its image along the projection $\Xb\times \bP^1\to\Xb$ and $\Wb^N$
the normalization of $\Wb$. We write  $\gamma_W$  for the natural map $\Wb^N \to \Xb$. Let $\partial^\infty V$ denote $\iota_\infty^{-1}(V)$ (resp. $\partial^0 V$ denote $\iota_0^{-1}(V)$), where $\iota_\infty$ and $\iota_0$ are the two closed immersions $\Xb \to \Xb\times\bP^1$ induced by $\infty\in \bP^1$ and $0 \in\bP^1$ respectively. The restriction to $V$ of the projection $X\times \bP^1\to\bP^1$  induces a rational function $g_V\in k(\Vb)^\times$, and by \cite[Prop.1.4 and \S 1.6]{Ful} we have
\begin{equation}\label{eq1.proof-thmrelchow}
\partial V =  \partial^\infty V- \partial^0 V= {\gamma_W}_* (\div_{\Wb^N}(N_{k(\Vb)/k(\Wb)} g_V)),
\end{equation}
where $N_{k(\Vb)/k(\Wb)}\colon k(\Vb)^\times \to k(\Wb)^\times$ denotes the norm map induced by $\Vb\to \Wb$,
which is generically finite by Lemma \ref{lem-cyclecondition}. We claim that 
\begin{equation}\label{eq.claim.proof} N_{k(\Vb)/k(\Wb)} g_V \in G(\Wb^N,\gamma_W^*D)
\end{equation}
Indeed, let $\Wb^o=\Wb\backslash \partial^\infty V$ and $\WbNo=\Wb^N\times_{\Wb} \Wb^o$. By Lemma \ref{lem-cyclecondition} we have that
% let $\partial^\infty V$ denote $\iota_\infty^{-1}(V)$ with $\iota_\infty : \Xb \to \Xb\times\bP^1$ induce by $\infty\in \bP^1$.
%By Lemma \ref{lem-cyclecondition}, letting $\Wb^o=\Wb\backslash \partial^\infty V$ and $\WbNo=\Wb^N\times_{\Wb} \Wb^o$,
\[
\Vb \times_{\Wb} \WbNo \subset \WbNo[y]=\WbNo\times(\bP^1-\{\infty\})
\] 
is the divisor of a function of the form
\[f(y)= (1-y)^m+ \underset{1\leq \nu \leq m}{\sum}\; a_\nu (1-y)^{m-\nu} \]
with $a_\nu\in \Gamma(\WbNo,I^\nu_{\Wb^N}(D))$ and where
  $I_{\Wb^N}(D)\subset \cO_{\Wb^N}$ denotes the ideal sheaf of the pullback $\gamma_W^*D$ of $D$ to $\Wb^N$.
Since $\partial^\infty V\cap D=\emptyset$, \eqref{eq.claim.proof} follows from the equality: 
\[
N_{k(\Vb)/k(\Wb)} g_V = f(0) = 1 + \underset{1\leq \nu \leq m}{\sum}\; a_\nu .
\]
We define now $\phi\colon \zzXD r 1\to \PhirXD$ by the assignment 
\[
\phi(V)= N_{k(\Vb)/k(\Wb)} g_V \in G(\Wb^N,\gamma_W^*D)\subset \PhirXD\quad(V \in \CXD r 1).
\]
The commutativity of \eqref{eq0.proof-thmrelchow} then follows from \eqref{eq1.proof-thmrelchow}.

To complete the proof of Theorem \ref{Thm.relchow}, it remains to show the surjectivity of $\phi$.
Take $W\in C^{r-1}(X)$ and $g\in G(\Wb^N,\gamma_W^*D)$. % (cf. \eqref{eq.PhirXD}).
%Let $\Sigma\subset \Wb^N$ be the union of the prime divisors $T$ on $\Wb^N$ such that $v_T(g)<0$, where $v_T$ is the valuation associated to $T$. 
Let $\Sigma\subset \Wb^N$ be the closure of the union of points $x\in \Wb^N$ of codimension one such that $v_x(g)<0$,
where $v_T$ is the valuation associated to $x$. 
Since $\Wb^N$ is normal, we have $g\in \Gamma(\Wb^N-\Sigma,\cO)$
and the assumption $g\in G(\Wb^N,\gamma_W^*D)$ implies
\begin{equation}%\label{eq3.proof-thmrelchow}
g-1 \in \Gamma(\Wb^N-\Sigma,I_{\Wb^N}(D)).
\end{equation}
Now we identify $g$  with a morphism $\psi_g: \Wb^N-\Sigma \to \bP^1-\{\infty\}$.
Let $\Gamma\subset \Wb^N \times \bP^1$ be the closure of the graph of $\psi_g$, 
$\Vb\subset \Wb\times \bP^1$  its image along   $\Wb^N\times \bP^1\to\Wb\times \bP^1$ and $V=\Vb\cap (\Wb\times \cub^1)\subset \Xb\times \cub^1$. 
It suffices  to show that the cycle $V$ defined in this way belongs to $\zzXD r 1$, i.e.~that it satisfies the modulus condition, and that $\phi(V) = g$. Note that once the first assertion is proven, the second follows from the very construction of $V$. 

We have the following diagram of schemes
\begin{equation}\label{eq1.52.proof-thmrelchow}
\xymatrix{
\Wb \ar[r]^{\hskip -20pt\iota_\infty} & \Wb\times\bP^1 & \ar[l] \Vb \\
\Wb^N \ar[r]^{\hskip -20pt\iota_\infty}\ar[u]^{\pi_{\Wb}} \ar[rd]_{id_{\Wb}} 
& \Wb^N\times\bP^1\ar[u]\ar[d]^{pr} & \ar[l]\ar[u]_{\pi_{\Vb}}\ar[ld]^{pr_\Gamma} \Gamma \\
&\Wb^N\\
}
\end{equation}
where the horizontal arrows denoted  $\iota_\infty$ are induced by the inclusion $\infty\in \P^1$ and where the squares are cartesians. %Indeed this is obvious for the left one, and for the right one we notice that the natural map $\Gamma\to \Vb\times_{\Wb} \Wb^N$ is an isomorphism, since it is both birational and closed (both are closed subschemes of $\Wb^N\times \bP^1$). 
Moreover, we note that
\begin{equation}\label{eq3.proof-thmrelchow}
\Sigma \subset pr_{\Gamma}\big((\Wb^N\times {\infty})\cap \Gamma\big).
\end{equation}
Indeed, let $\eta$ be a generic point $\eta \in \Sigma$. Then there exists a unique $\xi \in \Gamma$, of codimension $1$, such that $\eta = pr_{\Gamma}(\xi)$ and we have $v_\xi(g) = v_\eta(g)<0$ (note that $g\in k(\Gamma)=k(\Wb^N)$, as $\Gamma$ is birational to $\Wb^N$). Such point $\xi$ is actually in $(\Wb^N\times {\infty})\cap \Gamma$:  the projection $\Wb^N\times\bP^1\to \bP^1$ induces a well-defined morphism
\begin{equation*}
\Gamma\backslash (\Wb^N\times {\infty}) \to \bP^1-\{\infty\}
\end{equation*}
that corresponds to $g$, so that the point $\xi$ where $g$ is not regular is forced to belong to $(\Wb^N\times {\infty})\cap \Gamma$.
From the diagram \eqref{eq1.52.proof-thmrelchow}, one sees that \eqref{eq3.proof-thmrelchow} is equivalent to 
\begin{equation*}%\label{eq2.proof-thmrelchow}
\Sigma \subset \pi_{\Wb}^{-1}(\iota_\infty^{-1}(\Vb))=\iota_\infty^{-1}(\Gamma),
\end{equation*}
so that
$\WbNo:=\Wb^N\times_{\Wb}(\Wb\backslash \iota_\infty^{-1}(\Vb)) \subset\Wb^N-\Sigma$, and hence
\[
\Vb\times_{\Wb}\WbNo =\Gamma\times_{\Wb^N}\WbNo \subset \WbNo\times(\bP^1-\{\infty\})
\]
is given by $\Gamma\cap \big((\Wb^N-\Sigma)\times (\bP^1-\{\infty\})\big)$. This is, by definition, the graph of $\psi_g$ and hence it is the divisor of $y-g$ where $y$ is the standard coordinate of $\bP^1$. By the equivalent condition given by Lemma \ref{lem.divisor-functionMC}, this proves that $V$ satisfies the modulus condition of Definition \ref{def-CXD} (and in particular it is closed in $X\times \cub^1$). This completes the proof of Theorem \ref{Thm.relchow}.

%For a generic point $x\in \Sigma$, there is a unique point $y\in \Gamma$ such that $x=pr_{\Gamma}(y)$ ($y$ is always
%of codimension one in $\Gamma$) and we have $v_y(g)=v_x(g)<0$ for $g\in k(\Gamma)=k(\Wb^N)$.

%\newpage

\section{Relative cycles of codimension $1$}\label{Cod1}

Let $\Xb$ and $D$ be again as in Section \ref{cyclecomplex}, with $D$ an effective Cartier divisor on $\Xb$. 
We assume $D\not=\emptyset$.
In this Section we investigate the relative motivic cohomology groups ${\H^{n}_{\cM}(\Xb|D,\bZ(1))}$ in weight $1$. 
\begin{theo}\label{theo:weigh-1-mot-coh}Let $\ol{X}$ be a regular $k$-scheme. There is a quasi-isomorphism of complexes of Zariski sheaves on $\ol{X}$
    \[\mathbb{Z}_{\Xb|D}(1)\xrightarrow{\cong} \cO_{\ol{X}|D}^{\times}[-1],\]
    where $\cO_{\ol{X}|D}^{\times}$ denotes the kernel of the natural surjection $\cO_{\Xb}^\times\to \cO_D^\times$.
    \end{theo}
    \begin{coro}Let $\Xb$ be a regular $k$-scheme and $D$ an effective Cartier divisor on it. Then we have:
\[
\H_{\mathcal{M}}^{p}(\Xb|D, \bZ(1))=
\left.\left\{\begin{gathered}
 0 \\ 
 \Pic(\Xb,D) \\
 \Gamma(\Xb,  \cO_{\ol{X}|D}^{\times}) \;\\
\end{gathered}\right.\quad
\begin{aligned}
&\text{if $p> 2$,}\\
&\text{if $p=2$,}\\
&\text{if $p=1$.}
\end{aligned}\right.
\]
        \end{coro}
\subsubsection{}\label{Not-RelCod1-1}Assume in what follows that $\Xb=\Spec(A)$ is the spectrum of a regular local ring $A$. We write $I_D$ or $I$ for the invertible ideal of $D\subset \Xb$. Let $A[t_1,\dots,t_n]$ be the polynomial ring in the variables $t_i$ on $A$ and write $f\in A[t_1,\dots,t_n]$ as
\[
f=\underset{\lamub \in \Lam}{\sum}\; a_{\lamub} (1-\tub)^{\lamub} \quad (a_{\lamub}\in A),
\]
for the multi-index 
\[
\Lambda=\{\lamub=(\lam_1,\dots,\lam_n)\;|\; \lam_i\in \bZ_{\geq 0}\},\quad
(1-\tub)^{\lamub} =\underset{1\leq i\leq}{\prod}\; (1-t_i)^{\lam_i}.
\]
We say that 
$f$ is \it admissible for $I_D$ \rm if $a_{(0,\cdots,0)}\in A^\times$ and 
\[
a_{\lamub}\;\in I_D^{|\lam|} \qfor \lamub\not=(0,\cdots,0),
\]
where $|\lamub|=\max\{\lam_i|1\leq i\leq n\}$ for $\lamub=(\lam_1,\cdots,\lam_n)$.
We let $\PAt n$ denote the set of $f\in A[t_1,\dots,t_n]$ which are admissible for $I_D$.
It's easy to check that that $\PAt n$ forms a monoid under multiplication.

\subsubsection{}\label{Not-RelCod1-2}Let $y = Y_0/Y_1$ be the  rational coordinate function on $\bP^1$ of \ref{StandardCube}. %Recall that we have chosen a standard coordinate $y$ on $\cub=\bP^1-\{\infty\}$.
We fix the affine coordinate 
$\displaystyle{t= 1-\frac{1}{1-y}}$ on $\cub = \bP^1\setminus \{1\}$, so that $\cub=\Spec(k[t])$. Similarly, we choose a coordinate system $t_1,\ldots, t_n$ on $\cub^n$ so that $X\times \cub^n = \Spec{A[t_1,\ldots, t_n]}$.
\begin{lem}\label{lem-cyclecondition1}
We keep the notations of \ref{Not-RelCod1-1} and \ref{Not-RelCod1-2}. %We assume that $X=\Spec(A)$ is affine and let $I\subset A$ be the ideal for $D\subset X$.
Let $V\subset X\times \cubb n$ be an integral closed subscheme of codimension $1$.
Then $V\in \zzXD 1 n$ if and only if there exists $f\in \PAt n$ such that $V=\div(f)$ on $X\times \cubb n$. 
%the condition \ref{lem-cyclecondition0}(3)' is equivalent to the condition that there is an
%irreducible polynomial $f\in A[t_1,\dots,t_n]$ which is admissible for $I$, 
\end{lem}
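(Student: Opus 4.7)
My plan is to deduce both implications from Lemma~\ref{lem.divisor-functionMC}, which characterizes the modulus condition via a local polynomial equation in each coordinate direction. The key input is the coordinate change $1-t_i = 1/(1-y_i)$: a polynomial $(1-y_i)^{m_i} + \sum_\nu a^{(i)}_\nu (1-y_i)^{m_i-\nu}$ with $a^{(i)}_\nu \in I^\nu$ becomes, after multiplying by $(1-t_i)^{m_i}$, the polynomial $1 + \sum_\nu a^{(i)}_\nu (1-t_i)^\nu$ whose coefficients satisfy the same admissibility condition.

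\emph{Backward direction.} Assume $f = a_{\bar 0} + \sum_{\lamub \neq \bar 0} a_\lamub (1-\tub)^\lamub \in \PAt n$ and set $V = \div(f)$. Since $f \equiv a_{\bar 0} \pmod I$ with $a_{\bar 0} \in A^\times$, $f$ is a unit along $D$, hence $V$ does not meet $D \times \cubb n$. To verify the modulus condition via Lemma~\ref{lem.divisor-functionMC}(3)'' I fix an index $i$ and regroup $f$ as a polynomial in $(1-t_i)$ over $A[t_1, \dots, \hat t_i, \dots, t_n]$. The coefficient of $(1-t_i)^\nu$ is $c^{(i)}_\nu := \sum_{\lamub:\, \lam_i = \nu} a_\lamub \prod_{j \neq i}(1-t_j)^{\lam_j}$; since $|\lamub| \geq \lam_i = \nu$, each $a_\lamub \in I^{|\lamub|} \subseteq I^\nu$, so $c^{(i)}_\nu \in I^\nu$. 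Dividing by the unit $c^{(i)}_0$ and applying the coordinate change above then produces the equation of the form required by Lemma~\ref{lem.divisor-functionMC}(3)''.

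\emph{Forward direction.} Assume $V \in \zzXD 1 n$. By Remark~\ref{def-CXD.rem}, $V$ is closed in $\Xb \times \cubb n = \Spec A[t_1,\dots,t_n]$ and disjoint from $D \times \cubb n$, so it corresponds to a height one prime $\p$ of $A[t_1,\dots,t_n]$ with $\pi \notin \p$. Applying Lemma~\ref{lem.divisor-functionMC}(3)'' in each direction $i$ and translating from $y_i$ to $t_i$ produces a local generator of $\p$ of the form $1 + \sum_{\nu=1}^{m_i} a^{(i)}_\nu (1-t_i)^\nu$ with $a^{(i)}_\nu \in I^\nu$, defined on an open subset of $\Xb \times \cubb{n-1}$. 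These per-direction local generators are to be assembled into a single polynomial $f = \sum_\lamub a_\lamub (1-\tub)^\lamub \in A[t_1,\dots,t_n]$ with $\div(f) = V$ on $X \times \cubb n$. The admissibility $a_\lamub \in I^{|\lamub|}$ is then combinatorial: direction $i$ imposes $a_\lamub \in I^{\lam_i}$ on the coefficient of $(1-\tub)^\lamub$, so combining over all $i$ yields $a_\lamub \in \bigcap_{i} I^{\lam_i} = I^{\max_i \lam_i} = I^{|\lamub|}$.

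The hard part is the patching step in the forward direction: passing from the per-direction local descriptions to a single global polynomial $f \in A[t_1,\dots,t_n]$ generating $\p$. This will rely on the normality of $A$ to extend the locally defined coefficients across $\partial^\infty V$, and on the irreducibility of $V$ together with the uniqueness of the minimal polynomial in each direction to ensure the various local generators come from one and the same global polynomial, up to a unit absorbed into $a_{\bar 0}$.
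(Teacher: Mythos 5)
Your backward direction has a gap. Verifying condition (3)'' of Lemma~\ref{lem.divisor-functionMC} in each direction $i$ only yields condition (3)' of Lemma~\ref{lem-cyclecondition0}, and for $n>1$ condition (3)' does \emph{not} in general imply the modulus condition (3) of Definition~\ref{def-CXD}: the converse implication in Lemma~\ref{lem-cyclecondition0} requires in addition that no component of $\Vb\cap(D\times \Pp n)$ be contained in $\underset{1\leq i\leq n}{\cap}\,\Xb\times\Fni$. The paper checks exactly this extra condition for $V=\div(f)$, $f\in\PAt n$, before invoking Lemma~\ref{lem-cyclecondition0}; without it your argument stops at the closures $\Vb_i$ in $\Xb\times\cubbb n i$ and never reaches the closure in $\Xb\times\Pp n$ that Definition~\ref{def-CXD} is actually about. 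The coefficient regrouping $c^{(i)}_\nu\in I^\nu$ is fine, as is the coordinate change.

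The forward direction is where the substance of the lemma lies, and you have deferred it (``the hard part is the patching step'') rather than carried it out. Two things are missing. First, Lemma~\ref{lem.divisor-functionMC}(3)'' produces coefficients $a^{(i)}_\nu$ that are sections of $I^\nu$ only over the complement of $\partial_i^\infty V$ in $X\times\cubb{n-1}$, not elements of $A[t_1,\dots,\hat{t_i},\dots,t_n]$; your combinatorial step $a_{\lamub}\in\bigcap_i I^{\lam_i}=I^{|\lamub|}$ presupposes a global polynomial with honest polynomial coefficients, which is precisely what has to be produced. The paper's mechanism for this is an induction on $n$: the inductive hypothesis gives $\partial_i^\infty V=\div(g)$ with $g\in\PAt{n-1}$, so the open set where the coefficients live is the localization at $g$, and multiplying the direction-$i$ minimal polynomial by a suitable power $g^N$ clears the denominators while remaining in the multiplicative monoid $\PAt n$. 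Your proposed substitute (normality of $A$ plus uniqueness of minimal polynomials) does not by itself extend the coefficients across $\partial_i^\infty V$, since they genuinely can have poles there. Second, note that normality of $A$ does not make height-one primes of $A[t_1,\dots,t_n]$ principal; the existence of a single global equation for $V$ is itself extracted from the modulus condition via the finiteness statement of Lemma~\ref{lem-cyclecondition} (here using that $V$ has codimension $1$, so the projection $\Vb_i\to \Xb\times\cubb{n-1}$ is surjective onto a normal base), and this should be said rather than assumed.
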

\begin{proof}
%Let the notation be as in the proof of \ref{lem-cyclecondition0}.
%Since $X$ is local, the open subset $U\subset X$ containing $D$ is $X$.
Assume $V\in \zzXD 1 n$. Then $V$ satisfies the conditions of Lemma \ref{lem-cyclecondition}, and in particular it has proper intersection with $\ini \infty(X \times \cubb {n-1})$ for every $i=1,\ldots, n$. 
We put $\partial_i^\infty V =(\ini \infty)^{-1}(V)$.
Since $\Vb_i$ is of codimension $1$ in $\Xb\times \cubbb ni$, the restriction to $\Vb_i$ of the projection \[X\times \cubbb ni\to X\times \cubb {n-1} \]   is surjective (see Lemma \ref{lem-cyclecondition}). 
For $n=1$, we have $\CXD 1 0=\emptyset$ since $\Xb$ is local and $D\not=\emptyset$ 
(cf. Definition \ref{def-CXD}(2)). This implies $\partial_1^\infty V=\emptyset$ and the desired assertion follows from Lemma \ref{lem.divisor-functionMC}.
For $n>1$ we proceed by induction on $n$ and assume that there exists $g\in \PAt {n-1}$ such that
\[
\partial_i^\infty V =\div(g(t_1,\dots,t_{i-1},t_{i+1},\dots,t_n)) \subset X\times \cubb {n-1}.
\]
%its image $\Wb_i$ in $X\times\cubb {n-1}$ is precisely 
%$\Xb\times\cubb {n-1}$ . %note that we are using a slightly different notation from  Lemma \ref{lem-cyclecondition}.
Then, by Lemma \ref{lem.divisor-functionMC}, the Modulus condition for $V$ is equivalent to the condition that $V=\div(f)$ with
$f\in A[t_1,\dots,t_n]$ of the form
%such that for all $1\leq i\leq n$, $f$ is written as
\[
f= \big(1+ \underset{1\leq \nu \leq m}{\sum}\; a_\nu (1-t_i)^{\nu}\big)\cdot g^N,
\]
where $N\geq 0$ is some integer and 
\[
a_\nu\in \Gamma(X\times \cubb {n-1}-\partial_i^\infty V,I^\nu\cO)=
I^\nu\cdot A[t_1,\dots,\overset{\vee}{t_i},\dots,t_n][g^{-1}] \quad \text{ for }\nu \geq 1.
\] 
It is easy to see that this implies $f\in \PAt n$.
%the last condition is equivalent to that $f$ is admissible for $\pi\in A$.

Conversely assume $V=\div(f)$ for $f\in \PAt n$. It is easy to see that
$V$ and $\partial_i^\infty V$ satisfy the conditions of Lemma \ref{lem-cyclecondition} and Lemma \ref{lem.divisor-functionMC}.
%By \ref{lem-cyclecondition} and \ref{lem-cyclecondition0.rem},
%$V$ satisfies the condition \ref{lem-cyclecondition0} (3)'.
One can also check that none of the components of $\Vb\cap (D\times\Pp n)$ is contained in 
$\underset{1\leq i\leq n}{\cap}\ X\times \Fni$. 
Hence, by Lemma \ref{lem-cyclecondition0}, $V$ satisfies the condition (3) of \ref{def-CXD}. The good position condition with respect to the faces is clear for every cycle of the form $\div(f)$ and we conclude that $V\in \zzXD 1 n$.
\end{proof}

\begin{coro}\label{cor-cyclecondition1}
We keep the notations of \ref{Not-RelCod1-1} and \ref{Not-RelCod1-2}.
Let $\zzXDeff r n\subset \zzXD rn$ be the submonoid of effective relative cycles and let $\PA n = (\PAt n)/A^\times$. 
For $n\geq 1$ there is an isomorphism of monoids
\[
V: {\PA n }\isom \zzXDeff 1n\;;\; f \to V(f):=\div(f),
\]
and an isomorphism of groups
\[
V: {\PAgr n} \isom \zzXD 1n\;;\; f/g \to V(f)-V(g),
\]
where 
\[
\PAgr n =\{f/g\;|\; f,g\in \PA n\}.%\subset \Frac(A[t_1,\dots,t_n]).
\]
%and $\PA n = (\PAt n)/A^\times$. 
\end{coro}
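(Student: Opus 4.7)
The plan is to deduce both isomorphisms from Lemma \ref{lem-cyclecondition1} together with the elementary fact that on a normal Noetherian domain a principal divisor determines its defining element up to a global unit. First, the assignment $V\colon f\mapsto \div(f)$, computed on $\Xb \times \cubb n = \operatorname{Spec} A[t_1,\ldots,t_n]$ and restricted to $X \times \cubb n$, defines a monoid homomorphism $\PAt n \to \zzXDeff 1 n$: the image lands in $\zzXDeff 1 n$ by Lemma \ref{lem-cyclecondition1}, and $V$ is multiplicative because $\div(fg)=\div(f)+\div(g)$. For surjectivity, I would write an effective cycle as a positive combination $\sum n_i V_i$ of integral cycles, realize each $V_i = \div(f_i)$ with $f_i \in \PAt n$ via Lemma \ref{lem-cyclecondition1}, and observe that $\prod f_i^{n_i}\in \PAt n$ since $\PAt n$ is closed under multiplication (noted in \ref{Not-RelCod1-1}).

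The main step will be to identify the kernel of $V$. Suppose $\div(f)=\div(g)$ as cycles on $X\times\cubb n$ for $f,g\in \PAt n$. A priori the equality lives only on the open $X\times\cubb n \subset \Xb\times \cubb n$, so I would first upgrade it to $\Xb\times\cubb n$ using admissibility: reducing modulo $I_D$, the constant coefficients of $f$ and $g$ are units in $A/I_D$, so neither $f$ nor $g$ vanishes on any irreducible component of $D\times \cubb n$, and hence neither $\div(f)$ nor $\div(g)$ contains a component of $D\times\cubb n$ in $\Xb\times\cubb n$. Therefore the equality extends. Since $A$ is a normal Noetherian domain, so is $A[t_1,\ldots,t_n]$, and in this Krull domain an equality of principal Weil divisors $\div(f)=\div(g)$ forces $f/g$ to be a unit at every height one prime, hence $f/g\in A[t_1,\ldots,t_n]^\times = A^\times$, where the last identification uses that $A$ is a domain. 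Conversely $A^\times$ clearly lies in the kernel, so we obtain the first isomorphism $V\colon \PA n \isom \zzXDeff 1 n$.

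For the group statement, I would pass to Grothendieck groups on both sides. Every element of $\zzXD 1 n$ admits a canonical decomposition $V^+-V^-$ into effective cycles with disjoint supports, so $\zzXD 1 n$ is the group completion of the cancellative monoid $\zzXDeff 1 n$. Similarly, since $\PAt n$ sits inside the domain $A[t_1,\ldots,t_n]$, the quotient $\PA n$ is cancellative, and $\PAgr n=\{f/g : f,g\in \PA n\}$ is by construction its group completion. Group completion being functorial on cancellative monoids, the monoid isomorphism of the previous paragraph yields the desired group isomorphism $V\colon \PAgr n \isom \zzXD 1 n$. The one genuinely delicate point in the whole argument is the extension of $\div(f)=\div(g)$ from $X\times\cubb n$ to $\Xb\times\cubb n$, which is precisely what the condition $a_{(0,\ldots,0)}\in A^\times$ in the admissibility definition is designed to permit.
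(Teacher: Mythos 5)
Your proof is correct and follows the route the paper intends: the corollary is stated as an immediate consequence of Lemma \ref{lem-cyclecondition1}, and your argument supplies exactly the implicit steps — multiplicativity of $f\mapsto\div(f)$, surjectivity by factoring an effective cycle into integral ones, injectivity via the extension of $\div(f)=\div(g)$ across $D\times\cubb n$ (using $a_{(0,\dots,0)}\in A^\times$) and normality of $A[t_1,\dots,t_n]$, and passage to group completions of cancellative monoids. No gaps.
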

\subsubsection{}\label{cubObjP}
We follow the notation of \cite[\S1.1]{KL}). The assignment
\[
\ul n \mapsto \PA n\quad (\ul n =\{0,\infty\}^n,\; n=0,1,2,3...)
\]
defines an extended cubical object of monoids (see \cite[1.5]{L}) in the following way. For the inclusions 
$\eta_{n,i,\ep}:\ul {n-1} \to \ul {n}$ ($\ep=0,\infty,\; i=1,...,n$), we define boundary maps 
\begin{equation}
\eta_{n,i,\ep}^*: A[t_1,\dots,t_{n}] \to A[t_1,\dots,t_{n-1}] \text{ for } \ep \in \{0, \infty\}
\end{equation}
by
\[
\begin{aligned}
 \eta_{n,i,0}^*(f(t_1,\dots,t_{n})) &= f(t_1,\dots,t_{i-1},0,t_i,\dots, t_{n-1}),\\
 \eta_{n,i,\infty}^*(f(t_1,\dots,t_{n}))&=f(t_1,\dots,t_{i-1},1,t_i,\dots, t_{n-1}).
\end{aligned}
\]
For projections $pr_i: \ul n \to \ul {n-1}$ ($i=1,...,n$), we define 
\[
pr_{n,i}^*: A[t_1,\dots,t_{n-1}] \to A[t_1,\dots,t_{n}]
\] 
by
\[
pr_{n,i}^*(f(t_1,\dots,t_{n-1}))= f(t_1,\dots,t_{i-1},t_{i+1},\dots, t_{n}).
\]
They all induce corresponding maps on $\PA n$, denoted with the same letters.
Permutation of factors are defined in an obvious way and involutions $\tau_{n,i}^*$ are defined as the maps $\PA n \to \PA n$ induced by $t_i \to 1-t_i$.
For the multiplications
\[
\mu:\{0,\infty\}^2 \to \{0,\infty\}\;;\; 
\mu(\infty,\infty)=\infty; \mu(a,b)=0\text{  for } (a,b)\not= (\infty,\infty),
\] 
we define $\mu^*:\PA 1 \to \PA 2$ as the map induced by $1-t \to (1-t_1)(1-t_2)$.
The isomorphisms in Corollary \ref{cor-cyclecondition1} are compatible with cubical structure.
\begin{theo}\label{thm.codim1}
Under the above assumptions, we have $\CHXD 1 n =0$ for $n\not=1$ and there is a natural isomorphism
\[
\delta: \CHXD 1 1 \isom (1+I)^\times,
\]
where $(1+I)^\times=(1+I)\cap A^\times$ viewed as a subgroup of $A^\times$.
\end{theo}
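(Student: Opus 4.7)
The plan is to use Corollary \ref{cor-cyclecondition1} to translate the cycle complex $\zzXD 1 \bullet$ (in degrees $\geq 1$) into the polynomial complex $\PAgr \bullet$, and then compute the homology of the latter. Under the identification $V\colon \PAgr n \isom \zzXD 1 n$ for $n\geq 1$, the cycle complex becomes the associated non-degenerate complex of the extended cubical abelian group $n\mapsto \PAgr n$ described in \ref{cubObjP}. Note $\PAgr 0 = A^\times/A^\times = 0$, so the complex starts in degree $1$.

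First, I dispose of $\CHXD 1 0 = 0$ directly. A generator of $\zzXD 1 0$ is a codimension-$1$ integral closed subscheme $V\subset X$ whose closure $\bar V\subset\Xb = \Spec A$ avoids $D$; since $A$ is local with $I\subset\fm$, the closure $\bar V$ always contains the closed point $\fm$, which lies in $D$ whenever $D\neq 0$. Hence $\zzXD 1 0 = 0$ in that case, and when $D=0$ the statement reduces to Bloch's $\CHX 1 0 = \Pic(A) = 0$ for local $A$.

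Next, I construct the map $\delta$. For $f = a_0 + a_1(1-t)+\dots+a_m(1-t)^m \in \PAt 1$, set
\[
\delta(f) \;=\; \frac{f(0)}{f(1)} \;=\; \frac{a_0+a_1+\dots+a_m}{a_0} \;=\; 1+a_0^{-1}(a_1+\dots+a_m) \;\in\; (1+I)^{\times},
\]
using that $a_0\in A^\times$ and $a_i\in I^i \subset I$ for $i\geq 1$. This factors through $\PA 1=\PAt 1/A^\times$ and extends multiplicatively to $\delta\colon \PAgr 1 \to (1+I)^{\times}$. Surjectivity at the chain level is immediate from $1+a \mapsto 1 + a(1-t)$. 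To check that $\delta$ kills the image of $\partial\colon \PAgr 2\to \PAgr 1$, note that for $F\in\PAt 2$ one has $\partial F(t) = F(0,t)F(t,1)/(F(1,t)F(t,0))$ in $\PAgr 1$, and evaluating the ratio at $t=0$ and $t=1$ yields the trivial product $F(0,0)F(0,1)F(1,1)F(1,0)/(F(0,1)F(0,0)F(1,0)F(1,1))=1$.

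The main technical step is injectivity of $\delta$ on $\CHXD 1 1$ together with the vanishing $\CHXD 1 n=0$ for $n\geq 2$. I would prove both simultaneously by constructing an explicit chain-level section $\sigma\colon (1+I)^\times \to \PAgr 1$, $\sigma(1+a) = 1+a(1-t)$, and then a cubical contracting homotopy showing that $\sigma\circ\delta$ is homotopic to the identity on $\PAgr \bullet$ in degrees $\geq 1$. The natural candidate is, for $f\in \PAt n$, an auxiliary polynomial such as $H(f)(s, t_1,\dots, t_n) = f(1-s(1-t_1), t_2,\dots, t_n) \in \PAt{n+1}$, whose admissibility follows because the coefficient of $(1-s)^j(1-t_1)^{\lambda_1}(1-t_2)^{\lambda_2}\cdots$ is $\binom{\lambda_1}{j}(-1)^j a_{\lambda}\in I^{|\lambda|}$ with $j\leq \lambda_1$, so the maximum-index bound is preserved. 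One then computes the boundaries of $H(f)$: the face $s=\infty$ returns $f$, the face $s=0$ returns $\partial_1^\infty f$, and the remaining faces either cancel in pairs or reproduce $H$ applied to lower-dimensional faces of $f$; combined with a degree induction on the expansion of $f$ in $(1-t_i)$, this yields the required homotopy, reducing in codimension $1$ to the statement that any $f\in \PAt 1$ with $\delta(f)=1$ is $\partial G$ for some $G\in\PAt 2$.

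The hardest step will be the last one: writing down an \emph{admissible} chain homotopy. The difficulty is combinatorial bookkeeping—one must simultaneously preserve the modulus condition $a_\lambda\in I^{|\lambda|}$, keep track of signs in the cubical boundary, and verify that the unwanted boundary terms are genuinely degenerate (pullbacks via projections) rather than merely independent of a variable for trivial reasons. A degree-induction on the highest power of $(1-t)$ occurring in $f$ reduces the problem to eliminating one term at a time, for which a short explicit $G\in \PAt 2$ can be produced by hand, but verifying that the procedure terminates without introducing new high-degree contributions is the principal technical hurdle.
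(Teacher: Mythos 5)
Your overall strategy --- transport the problem to the polynomial complex $\PAgr \bullet$ via Corollary \ref{cor-cyclecondition1}, define $\delta(f)=f(0)/f(1)$, check surjectivity and $\delta\circ\partial=1$ at the chain level, and then kill everything else by an explicit admissible homotopy --- is the paper's strategy, and everything up to the homotopy matches the paper (your $\delta$ is the paper's $\delta$, the section $1+a\mapsto 1+a(1-t)$, the vanishing of $\CHXD 1 0$, and the boundary check are all fine). The gap is in the one step you yourself flag as the crux: your candidate homotopy does not contract the complex, and your description of its faces is wrong. For $H(f)(s,t_1,\dots,t_n)=f(1-s(1-t_1),t_2,\dots,t_n)$, the face $s=1$ gives $f$ and the face $s=0$ gives the degenerate element $pr^*\eta^*_{n,1,\infty}f$, as you say; the face $t_1=1$ gives another degenerate element; but the face $t_1=0$ gives $f(1-s,t_2,\dots,t_n)=\tau_{n,1}^*f$, the involution of $f$ in its first variable. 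This term is neither degenerate nor cancelling, so modulo degenerates and $H$ of the faces of $f$ one gets $\partial H(f)\equiv f+\tau_{n,1}^*f$: your $H$ proves that the involution acts by $-1$ on homology, which is true but does not contract anything. A degree induction cannot repair this, because the defect is structural --- a substitution $t_1\mapsto \varphi(s,t_1)$ into $f$ always leaves a second non-constant face on which $H(f)$ is $f$ precomposed with a reparametrization, not a degenerate element.

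The paper's homotopy is genuinely different and is the actual content of the proof: $H(f)=1+\bigl(f(\underline 1)^{-1}f(t_2,\dots,t_{n+1})-1\bigr)(1-t_1)$, an affine interpolation between the constant $1$ at $t_1=1$ and $f$ (up to a unit) at $t_1=0$; admissibility holds because the nonconstant coefficients of $f(\underline 1)^{-1}f-1$ already lie in $I^{|\lam|}$ with $|\lam|\ge 1$. Equally important, the paper does not attempt a full chain homotopy on the non-degenerate quotient: it first replaces $\PAgr \bullet/\PAgrdeg \bullet$ by the normalized subcomplex $\NPAgr \bullet$ (all faces trivial except $\eta^*_{n,1,0}$) using \cite[Lemma 1.6]{L}, where the differential collapses to $\pm\eta^*_{n,1,0}$ and $H$ only needs to satisfy the single identity $\eta^*_{n+1,1,0}(H(\phi))=\phi$ on cycles; exactness at $n=1$ is then obtained by applying $H$ to a quotient $f/g$ with $\delta(f)=\delta(g)$. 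You would need both of these devices to complete your argument; as written, the bookkeeping you defer to the end is not merely tedious but cannot succeed with the homotopy you propose.
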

\begin{proof}
\def\ul#1{\underline{#1}}
The vanishing of $\CHXD 1 n $ for $n=0$ is a direct consequence of the definition, since $X$ is local.
In what follows we show the assertion for $n\geq 1$. 
By \ref{cubObjP} we are reduced to compute the homotopy groups $H_i(\PAgr \bullet)$ of the cubical objects of abelian group
\[
\ul n \to {\PAgr n}.
\]
Recall that these are homology groups of the complex
\begin{align*}
\cdots \rmapo{\partial} \PAgr n/\PAgrdeg n \rmapo{\partial}  \PAgr {n-1}/\PAgrdeg {n-1}
\rmapo{\partial} \\
\cdots \rmapo{\partial} \PAgr {1}/\PAgrdeg {1},
\end{align*}
where
\[
\PAgrdeg n=\sum_{i=1}^{n} pr_{n,i}^*(\PAgr {n-1}),\quad
 \partial = \sum_{i=1}^{n} (-1)^i (\eta_{n,i,0}^*-\eta_{n,i,\infty}^*).
\]
%We note that this is an extended cubical object in the sense of \cite[\S1.5]{L}.
Let
\[
\NPAgr n= \underset{2\leq i\leq n}{\cap}\Ker(\eta_{n,i,0}^*) \cap 
\underset{1\leq i\leq n}{\cap}\Ker(\eta_{n,i,\infty}^*)
\]
and consider the complex
\[
\cdots \rmapo{\eta_{n+1,1,0}^*} \NPAgr n \rmapo{\eta_{n,1,0}^*} \NPAgr {n-1} \rmapo{\eta_{n-1,1,0}^*} 
\cdots\to \NPAgr 1.
\]
By \cite[Lemma 1.6]{L}, we have a natural isomorphism
\[
H_i(\NPAgr \bullet)\isom H_i(\PAgr \bullet / \PAgrdeg \bullet)
\]
and we are reduced to show the existence of isomorphisms
\begin{equation}\label{eq.NPgr}
H_i(\NPAgr \bullet)\simeq 
\left.\left\{\begin{gathered}
 1+I\; \\ 
 0 \\
\end{gathered}\right.\quad
\begin{aligned}
&\text{if $n=1$,}\\
&\text{if $n>1$.}
\end{aligned}\right.
\end{equation}
Consider
\[
H : \Frac(A[t_1,\dots,t_n]) \to \Frac(A[t_1,\dots,t_{n+1}])
\]
defined by
\[
H(f(t_1,\dots,t_{n}))= 1+\big(f(\underline{1})^{-1}f(t_2,\dots,t_{n+1})-1\big)(1-t_{1}),
\]
where $(\underline{1}) = (1, \ldots, 1)$. One easily checks that this induces the maps (of sets)
\[
H :\PA n \to \PA {n+1},\quad H :\PAgr n \to \PAgr {n+1}
\]
and we have, for $\phi\in \PAgr n$
\begin{equation}\label{eq1-1}
\eta_{n+1,i,\ep}^*(H(\phi))=
\left.\left\{\begin{gathered}
 H(\eta_{n,i-1,\ep}^*(\phi))\; \\ 
 1 \\
 \phi \pmod{A^\times} \;\\
\end{gathered}\right.\quad
\begin{aligned}
&\text{if $2\leq i\leq n+1$,}\\
&\text{if $i=1$ and $\epsilon=\infty$,}\\
&\text{if $i=1$ and $\epsilon=0$.}
\end{aligned}\right.
\end{equation}
Hence $H$ induces a map $\NPAgr n \to \NPAgr {n+1}$, and if $n>1$ we have
\[
\eta_{n+1,1,0}^*(H(\phi)) =\phi \qfor \phi\in \Ker(\NPAgr n\rmapo{\eta_{n,1,0}^*} \NPAgr {n-1}).
\]
This proves \eqref{eq.NPgr} for $n>1$. To show \eqref{eq.NPgr} for $n=1$, 
we define a map
\[
\delta: \PAgr 1 \to (1+I)^\times\;;\; f/g \to f(0)g(1)/g(0)f(1)\quad (f,g\in \PA 1).
\]
It is easy to see that this is a well-defined group homomorphism and that 
\begin{equation}\label{NPAGRexact}
\NPAgr 2\rmapo{\eta_{2,1,0}^*} \NPAgr {1} \rmapo{\delta} 1+I
\end{equation}
is a complex (note that $\NPAgr 1 =\PAgr 1$).
To show that it is exact, we compute the boundary for $f\in \PA 1$
\[
\eta_{2,i,\ep}^*(H(f))=
\left.\left\{\begin{gathered}
  1 \\
  1+(\frac{f(0)}{f(1)} - 1)(1-t_1) \;\\
  f \pmod{A^\times} \;\\
\end{gathered}\right.\quad
\begin{aligned}
&\text{if $\epsilon=\infty$,}\\
&\text{if $i=2$ and $\epsilon=0$,}\\
&\text{if $i=1$ and $\epsilon=0$.}
\end{aligned}\right.
\]
Hence, for $f,g\in \PA 1$ with $f(0)/f(1)=g(0)/g(1)$, we have
\[
H(f)/H(g)\in \NPAgr 2\qaq \eta_{2,1,0}^*(H(f)/H(g))= f/g.
\]
This proves the exactness of \eqref{NPAGRexact} and completes the proof of Theorem \ref{thm.codim1}.
\end{proof}

\section{Fundamental class in relative differentials}\label{clOmega}
\def\Yp{Y^{\prime}}
\def\Fp{F^{\prime}}
\def\Dp{D^{\prime}}
\def\YFDp{(\Yp, \Fp, \Dp)}

In \cite{ElZ}, El Zein gave an explicit construction of Grothendieck's ``fundamental class'' \cite{Gr58} of a cycle on a smooth scheme $Y/k$ in Hodge cohomology, defining a morphims from the Chow ring of $Y$ to $\H^*(Y, \Omega^{*}_{Y/k})$. It turns out that this approach can be partially followed and extended to construct a relative version of El Zein's fundamental class.

In this section, we consider an integral scheme $Y$ of pure dimension $n$, smooth and separated over a field $k$. We write  $\Omega^1_{Y/k}$ for the Zariski sheaf of relative K\"ahler differentials on $Y$ and  $\Omega^1_{Y} = \Omega^1_{Y/\Z}$ for the sheaf of absolute differentials. % Note that $\Omega^1_{Y/k}$ is a coherent sheaf on $Y$, while $\Omega^1_Y$ is just quasi-coherent, in general.
For $r\geq0$, we let $C^r(Y)$ be the set of integral closed subschemes of codimension  $r$ on $Y$. %Let $k$ be a field. In this section $Y$ denotes a smooth variety over $k$ and $C^r(Y)$ denotes the set of integral closed subschemes 
%of codimension $r$ on $Y$.

\subsection{El Zein's fundamental class}\label{reviewELZ}

\subsubsection{}\label{reviewElZ}
For $W\in C^r(Y)$ let
%and let $\iota: W\to Y$ be the inclusion. 
%\[\ext^r_{\cO_Y}(\iota_* \cO_W, \Omega^r_{Y/k})\xrightarrow{\sim} \H^r_W(Y, \Omega^r_{Y/k}),\]
 \[{\clOmega r W}_k \in \H^r_W(Y, \Omega^r_{Y/k})\]
%= \indlim n\ext^r_{\cO_Y}(\cO_{W_n}, \Omega^r_{Y/k}) ,\]
be the fundamental class of $W$ constructed in \cite{ElZ}.
Using the technique of \cite[A.6]{KSY}, one can construct a cycle class 
\[\clOmega r W \in \H^r_W(Y,\WW r Y)\]
in the absolute Hodge cohomology group with support starting from ${\clOmega r W}_k$. We quickly recall the argument. Let $k_0\subset k$ be the prime subfield of $k$ and let $\cI$ be the  set of smooth $k_0$-subalgebras of $k$, partially ordered by inclusion: it is a filtered set. We fix a $k_0$-algebra $A$ and a smooth separated $A$-scheme $Y_A$ together with a closed integral subscheme $W_A$, flat over $A$, such that
 \begin{align*} Y_A \tensor_A k = Y \text{ and } W_A\tensor_A k = W.
	 \end{align*}
	 For every $B\in \cI$ containing $A$, we write $Y_B$ (resp. $W_B$) for the base change $Y_A\tensor_A B$ (resp. $W_A\tensor_A B$). A \v{C}ech cohomology computation shows  then that
	 \[\H^r_W(Y, \Omega^r_{Y}) =\H^r_W(Y, \Omega^r_{Y/k_0}) \xrightarrow{\sim} \indlim {B\in\cI} \H^r_{W_B}(Y_B, \Omega^r_{Y_B/k_0}).\]
 Note that, by construction, $\codim_Y W =\codim_{Y_B} W_B$ for every $B\in \cI$ containing $A$.
We then define 
\[ \clOmega r W = \indlim {B\in\cI} {\clOmega r {W_B}}_{k_0}.\] 
 
\begin{rem}\label{rem-CC1}One can show (see again \cite[Thm. 3.1]{ElZ}) that ${\clOmega r W}_k$ lies in the image of $\H^r_W(Y,\Omegacl r {Y/k})$, where
$\Omegacl r {Y/k}\subset \WW r {Y/k}$ is the subsheaf of closed forms.
This implies that $\clOmega r W$ lies in the image of $\H^r_W(Y,\Omegacl r {Y})$
\end{rem}
 
We now give a description of $\clOmega r W$ by an explicit \v{C}ech cocycle.
Let $V\subset Y$ be an affine open neighborhood of the generic point of $W$ such that there exist
a regular sequence $f_1,\dots f_r\in \Gamma(V,\cO)$ which defines $W\cap V$ in $V$. Let $\cU$ be the covering of $V\setminus W$ given by the open subsets $\{U_i = D(f_i)\}_{i=1,\ldots, r}$ and write $\check{C}^{\bullet}(\cU, \Omega^{r}_{V})$ for the \v{C}ech complex of $ \Omega^{r}_{V}$ with respect to the covering $\cU$. Then the cohomology class of the \v{C}ech cocyle
\begin{equation}\label{affineDescrClass}
%\label{clOmega-eq0}
\dlog {f_1}\wedge \cdots \wedge \dlog {f_r} = \frac{{d}f_1}{f_1}\wedge \cdots \wedge  \frac{{d}f_r}{f_r}\in \H^0(U_1\cap \cdots U_r,\WW r {V})
\end{equation}
gives an element in $ \H^{r-1}(V\backslash W,\WW r {V})$  that maps to the class ${\clOmega r W}_{|V}$ in $\H^r_{W\cap V}(V,\WW r {V})$ via the boundary morphism
(This is a consequence of the Trace formula, see \cite[A.2]{Conr}, in particular Lemma A.2.1, and \cite[p.37]{ElZ}).
By the localization exact sequence and \eqref{clOmega-eq1} below the restriction map 
\begin{equation}\label{clOmega-eq3}
\H^r_{W}(Y,\WW r Y)\hookrightarrow \H^r_{W\cap V}(V,\WW r V)
\end{equation}
is injective. Hence the affine description \eqref{affineDescrClass} characterizes $\clOmega r W$ (as well as ${\clOmega r W}_k$).

%We have the fundamental class associated to $W\in C^r(Y)$:
%\begin{equation}\label{clOmega-eq-1}
%\clOmega r W \in H^r_W(Y,\WW r Y),
%\end{equation}
%where $\WW r Y$ is the sheaf of absolute K\"ahler differentials. It satisfies the following conditions: 
%\begin{itemize}
%\item[$(CC1)$]
%$\clOmega r W$ lies in the image of $H^r_W(Y,\Omegacl r Y)$, where
%$\Omegacl r Y\subset \WW r Y$ is the subsheaf of closed forms.
%\item[$(CC2)$]
%Let $V\subset Y$ be an affine open neighborhood of the generic point $\eta$ of $W$ and 
%$f_1,\dots,f_r\in \Gamma(V,\cO)$ be a regular sequence which define $W\cap V$ in $V$.
%Let $U_i\subset V$ be the complement of the support of the divisor of $f_i$.
%Then $\{U_i\}_{1\leq i\leq r}$ is an open covering of $V$ and 
%\[
%{\clOmega r W}_{|V}\in   H^r_{W\cap V}(V,\WW r V)
%\simeq H^{r-1}(V\backslash W,\WW r V)
%\]
%is given by the $\v{C}$ech cocyle
%\begin{equation*}
%\label{clOmega-eq0}
%\dlog {f_1}\wedge \cdots \wedge \dlog {f_r}\in H^0(U_1\cap \cdots U_r,\WW r V).
%\end{equation*}
%\end{itemize}

\begin{lem}\label{clOmega-lem-Vanishing}For a closed subscheme $T\subset Y$ of pure codimension $a$, 
\begin{equation}\label{clOmega-eq1}
\H^q_{T}(Y,\WW j Y) =0\qfor q<a.
\end{equation}\end{lem}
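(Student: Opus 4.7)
The plan is to combine the limit argument of \S\ref{absoluteCase} with the standard depth-based vanishing of local cohomology for locally free coherent sheaves on a Cohen--Macaulay scheme.

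First, following \S\ref{absoluteCase}, I would let $k_0\subset k$ be the prime subfield and $\mathcal{I}$ the filtered poset of smooth $k_0$-subalgebras of $k$. Choose $A\in\mathcal{I}$ together with a smooth separated $A$-scheme $Y_A$ and a closed subscheme $T_A\subset Y_A$ of pure codimension $a$ such that base change along $A\hookrightarrow k$ recovers the pair $(Y,T)$. For $B\in\mathcal{I}$ with $B\supset A$, put $Y_B=Y_A\tensor_A B$ and $T_B=T_A\tensor_A B$, so that $Y=\varprojlim_B Y_B$ with flat affine transition maps, and each $T_B\subset Y_B$ has pure codimension $a$. Since local cohomology commutes with such filtered limits of schemes, and $\WW j Y$ is the corresponding filtered colimit of the pullbacks of $\Omega^j_{Y_B/k_0}$ (the same \v{C}ech reasoning as in \S\ref{absoluteCase} works for any exterior power of absolute differentials), one obtains
\[\H^q_T(Y,\WW j Y)\;\cong\;\indlim_{B\in\mathcal{I},\,B\supset A}\H^q_{T_B}\bigl(Y_B,\Omega^j_{Y_B/k_0}\bigr).\]
It therefore suffices to prove the vanishing for each $(Y_B,T_B,\Omega^j_{Y_B/k_0})$.

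Since $B$ is smooth over $k_0$ and $Y_B$ is smooth over $B$, by transitivity $Y_B$ is itself smooth over $k_0$. Hence $\Omega^1_{Y_B/k_0}$, and therefore each exterior power $\Omega^j_{Y_B/k_0}$, is a locally free coherent $\mathcal{O}_{Y_B}$-module of finite rank.

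Finally, I would invoke the classical depth-based vanishing: on a regular Noetherian scheme $Z$, for any closed subscheme $T\subset Z$ of pure codimension $a$ and any locally free coherent sheaf $\mathcal{F}$, one has $\H^q_T(Z,\mathcal{F})=0$ for $q<a$. This follows from $\H^q_T(Z,\mathcal{F})=\indlim_n\ext^q_{\mathcal{O}_Z}(\mathcal{O}_Z/\mathcal{I}_T^n,\mathcal{F})$ together with the fact that $\mathcal{I}_T$ is locally generated by a regular sequence of length $\geq a$. Applied to $(Y_B,T_B)$ with $\mathcal{F}=\Omega^j_{Y_B/k_0}$, this yields the required vanishing; passing to the colimit over $B$ gives the lemma. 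The only delicate step is really the first one — the compatibility of absolute differentials and local cohomology with the limit $Y=\varprojlim_B Y_B$ — which is handled exactly as in \S\ref{absoluteCase} via the variant of \cite[A.6]{KSY} invoked there. Note incidentally that the hypothesis $j>0$ plays no role in the argument (the case $j=0$, $\mathcal{F}=\mathcal{O}_Y$, is the classical depth inequality), although only $j>0$ is used in the sequel.
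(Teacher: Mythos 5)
Your proof is correct, and its first half --- the reduction via the filtered family of smooth $k_0$-subalgebras $B\subset k$ to the coherent, locally free sheaf $\Omega^j_{Y_B/k_0}$ on the $k_0$-smooth scheme $Y_B$ --- is exactly the reduction the paper performs by appealing to \S\ref{absoluteCase}. Where you diverge is in the vanishing step itself. The paper localizes at a generic point $\eta$ of $T$, applies Grothendieck local duality over the Gorenstein local ring $\cO_{Y,\eta}$ to identify $\H^q_\eta(\Spec(\cO_{Y,\eta}),-)$ with a dual of an $\ext^{a-q}$ group that vanishes for $q\neq a$ because the sheaf is free, and then propagates this by a descending induction on $\codim_Y T$ using the localization sequence over all proper closed subsets of $T$. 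You instead invoke in one stroke the classical grade criterion: $\H^q_T(Z,\mathcal{F})$ is the colimit over $n$ of $\ext^q_{\cO_Z}(\cO_Z/\mathcal{I}_T^n,\mathcal{F})$, which vanishes for $q<\operatorname{grade}(\mathcal{I}_T,\mathcal{F})=a$. Both arguments rest on the same Cohen--Macaulay input and both are standard; yours is more direct (no induction, no localization sequence), while the paper's duality-plus-induction formulation is the one it reuses verbatim for the relative sheaves $\WYFD{r}$ in Lemma \ref{clOmega-lem-Vanishing-Relative}, so either would serve. One small correction to your justification: the ideal $\mathcal{I}_T$ of an arbitrary closed subscheme of pure codimension $a$ is in general \emph{not} locally generated by a regular sequence of length $a$ (that would force $T$ to be a local complete intersection); what your argument actually needs, and what is true, is that $\mathcal{I}_T$ locally \emph{contains} a regular sequence of length $a$, i.e.\ $\operatorname{grade}(\mathcal{I}_{T,z},\cO_{Y_B,z})=\operatorname{ht}(\mathcal{I}_{T,z})=a$ because the local rings of the regular scheme $Y_B$ are Cohen--Macaulay and all minimal primes of $\mathcal{I}_{T,z}$ have height $a$. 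Your closing observation that the hypothesis $j>0$ is never used is accurate and applies to the paper's proof as well.
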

\begin{proof}
Arguing as in \ref{reviewElZ} we may replace $\WW j Y$ by $\WWk j Y$ which is a locally free $\cO_Y$-module. 
By the localization sequence one is reduced to the case where $Y$ is local and $T$ is the closed point.
Then the assertion follows from the fact that a regular local ring is Gorenstein.
\end{proof}

%This follows from \cite[Th.6.3]{Grot} and the fact that there is a filtration $F^a\WW r Y \subset \WW r Y$ such that
%\[
%F^a\WW r Y/F^{a+1}\WW r Y \simeq \WWk {r-a} Y\otimes_k \WW a k.
%\]

\subsection{Relative version of El Zein's fundamental class}\label{relELZ}
%In what follows we give a refinement of the fundamental class in a relative setting with modulus (see Theorem \ref{thm-purityOmega}).

\subsubsection{}\label{Setting_YFD}Let $Y$ be again a smooth variety over $k$. We fix now  a (reduced) simple normal crossing divisor $F$  and an effective Cartier divisor $D$ on $Y$.  In what follows, we will assume that $F$ and $D$ satisfy the following condition:
\begin{enumerate}
\item[$(\bigstar)$]
There is no common component of $D$ and $F$, and 
$D_{red}+F$ is a (reduced) simple normal crossing divisor on $Y$
\end{enumerate}

Write
\[
X=Y-(F+D)\;{\hookrightarrow}\; Y-F \;{\hookrightarrow}\; Y 
\]
for the open complement of $F+D$ in $Y$ and $\iota_X$ for the open immersion $X{\hookrightarrow}\; Y$.
\begin{rem}In section \ref{cyclemapDR}, we will work in a situation where $(X,Y-F,Y)=(X_n,\Xb_n,Y_n)$ with
\[
X_n = X\times \cubb n {\hookrightarrow} \Xb_n=\Xb \times \cubb n 
{\hookrightarrow} Y_n =\Xb \times \Pp n \supset D_n=D \times \Pp n
\] 
where $X\subset \Xb\supset D$ are as in \S\ref{cyclecomplex} and $\Xb$ is smooth over $k$ and 
the reduced part of $D$ is a simple normal crossing divisor.
\end{rem}
\begin{defi}\label{def-CYFD}[see Definition \ref{def-CXD}]
Let $X,Y, F, D$ be as above and let $W$ be an integral closed subscheme of codimension $r$ on $X$. Let $\Wb$ be the closure of $W$ in $Y$, $\Wb^N$ its normalization and $\phiW:\Wb^N\to Y$  the natural map. We say that $W$ {\it satisfies the modulus condition (with respect to the divisor $D$ and the face $F$)} if the following inequality as Cartier divisors on $\Wb^N$ holds
% for some $1\leq i\leq n$:
\begin{equation}\label{moduluscond}
\phiW^*(D)\leq \phiW^*(F).
\end{equation}
We denote by $\CYFDr$  the set of integral closed subschemes $W$ of codimension $r$ on $X$ that do satisfy the modulus condition.
%which satisfies the following \textit{modulus} condition:
%\begin{enumerate}
%\item[$(\sharp)$]
%Let $\Wb$ be the closure of $W$ in $Y$ and $\Wb^N$ be its normalization and $\phiW:\Wb^N\to Y$ be 
%the natural map. The following inequality as Cartier divisors on $\Wb^N$ holds:
% for some $1\leq i\leq n$:
%\begin{equation}\label{moduluscond}
%\phiW^*(D)\leq \phiW^*(F).
%\end{equation}
%\end{enumerate}

Note that the condition implies that $\Wb \cap (Y-F) \cap D=\emptyset$ and that $W$ is closed in $Y-F$. 
%$\Wb \cap (Y-F)\cap D=\emptyset$ and 
\end{defi}
%Let $\WW 1 Y(\log F+D)$ be the sheaf of logarithimic differentials for the standard log structure on $Y$ defined by the divisor $D_{red}+F$ (see \cite[1.5-1.7]{K}): it is the subsheaf of $(\iota_{X})_*\Omega^1_{X}$ of differential forms with logarithmic poles along $D_{red}+F$. We write $\WW r Y(\log F+D)$ for its $r$-th external product and set
\begin{defi}\label{defi-log-diff} Let $\WW 1 Y(\log F+D)$ be the sheaf of absolute differential forms with logarithmic poles along $D_{red}+F$. We write $\WW r Y(\log F+D)$ for its $r$-th external product and set
\[
 \WYFD r =\WW r Y(\log F+D)\otimes_{\cO_Y} \cO_Y(-D). % \WW r Y(\log F+D)(-D)
\]
\end{defi}
By the same argument used to prove Lemma \ref{clOmega-lem-Vanishing}, we have the following
\begin{lem}\label{clOmega-lem-Vanishing-Relative}For a closed subscheme $T\subset Y$, 
	\begin{equation}\label{clOmega-eq4}
	\H^q_{T}(Y,\WYFD r) =0\qfor q<\codim_Y(T).
	\end{equation}
\end{lem}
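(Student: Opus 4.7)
The plan is to adapt the proof of Lemma~\ref{clOmega-lem-Vanishing} step by step, the only new ingredient being the local freeness of $\WYFD r$ under hypothesis~$(\bigstar)$.

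First I would reduce to the relative case over $k$: exactly as in \S\ref{absoluteCase}, spread the triple $(Y,F,D)$ out over a smooth $k_0$-subalgebra $A\subset k$ and pass to the filtered colimit over the directed system $\cI$ of smooth $k_0$-subalgebras of $k$ containing $A$. Since log differentials, twists by $\cO(-D)$, cohomology with support and codimension all behave well under faithfully flat base change and filtered colimits, this reduces the problem to showing
\[
\H^q_T\bigl(Y,\,\WWk r Y(\log F+D)\otimes_{\cO_Y}\cO_Y(-D)\bigr)=0\qfor q<\codim_Y(T),
\]
where now the differentials are taken relative to~$k$.

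Next I would invoke the key structural fact that, under $(\bigstar)$, the pair $(Y,D_{red}+F)$ is log smooth over $k$, so that $\WWk 1 Y(\log F+D)=\WWk 1 Y(\log(D_{red}+F))$ is locally free of finite rank on $Y$ (locally generated by the $\dlog$'s of local equations for the branches of $D_{red}+F$ together with a transverse regular system of parameters). Exterior powers of this sheaf remain locally free, and tensoring with the invertible sheaf $\cO_Y(-D)$ preserves local freeness, so $\WYFD r$ is a locally free coherent $\cO_Y$-module.

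With this in hand, the argument of Lemma~\ref{clOmega-lem-Vanishing} goes through verbatim. At a generic point $\eta\in T$ of codimension $a$, the stalk $(\WYFD r)_\eta$ is finite free over the regular (hence Gorenstein) local ring $\cO_{Y,\eta}$, so local duality \cite[Th.~6.3]{Grot} yields
\[
\H^q_\eta\bigl(\Spec(\cO_{Y,\eta}),(\WYFD r)_\eta\bigr)\;\cong\;\hom\bigl(\ext^{a-q}_{\cO_{Y,\eta}}((\WYFD r)_\eta,\cO_{Y,\eta}),\,I\bigr)
\]
with $I$ a dualizing module; the right-hand side vanishes for $q\neq a$ by freeness. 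The case of a closed point already gives the lemma there, and the general case follows by descending induction on $\codim_Y(T)$ via the localization long exact sequence
\[
\cdots\to\indlim{T'\subsetneq T}\H^q_{T'}(Y,\WYFD r)\to\H^q_T(Y,\WYFD r)\to\H^q_\eta\bigl(\Spec(\cO_{Y,\eta}),(\WYFD r)_\eta\bigr)\to\cdots
\]
exactly as before.

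I do not expect a serious obstacle. The only point requiring care is the first step: one must choose a model $(Y_A,F_A,D_A)$ for which $D_{red}+F$ still defines a reduced simple normal crossing divisor, so that the local freeness of the log differentials is preserved. This is harmless, as $(\bigstar)$ is a finite-type condition which spreads out over some suitable $A$.
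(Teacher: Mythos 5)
Your proposal is correct and follows exactly the route the paper intends: the paper's proof of Lemma \ref{clOmega-lem-Vanishing-Relative} is literally ``by the same argument used to prove Lemma \ref{clOmega-lem-Vanishing}'', and you have filled in that argument faithfully (reduction to differentials relative to $k$ by spreading out over a smooth $k_0$-subalgebra, local freeness of $\WYFD r$ under $(\bigstar)$, local duality over the Gorenstein local ring at a generic point of $T$, and descending induction via the localization sequence). No gaps.
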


\begin{rem}Let $W\in C^r(X)$ and $\Wb\subset Y$ be its closure.
The long exact localization sequence, together with \eqref{clOmega-eq4}, implies the injection
\begin{equation}\label{clOmega-eq5}
\H^r_{\Wb}(Y,\WYFD r)\hookrightarrow \H^r_{W}(X,\WW r X).
\end{equation}
 \end{rem}
 We now come to the main result of this section:
\begin{theo}\label{thm-purityOmega}
For $W\in \CYFDr$ there is an element
\[
 \clOmega r W \in \H^r_{\Wb}(Y,\WYFD r)
\]
which maps to the fundamental class $\clOmega r W \in \H^r_{W}(X,\WW r X)$ under the map \eqref{clOmega-eq5}.
%(Note that by the injectivity of \eqref{clOmega-eq5} such a class is uniquely determined).
\end{theo}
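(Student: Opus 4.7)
The plan is to construct the lift $\clOmega r W \in \H^r_\Wb(Y, \WYFD r)$ Zariski-locally on $Y$, via an explicit \v{C}ech representative, and to glue using the uniqueness provided by the injection \eqref{clOmega-eq5}.

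First, I would localize: fix a point $y \in \Wb$ and work on an affine open $V' \subseteq Y$ containing $y$. By the SNC hypothesis $(\bigstar)$, choose coordinates $x_1, \ldots, x_n$ on $V'$ adapted to $F + \Dred$, so that $F \cap V' = V(x_1 \cdots x_a)$ and the local equation $\pi$ of $D$ on $V'$ is a monomial in $x_{a+1}, \ldots, x_b$. After shrinking $V'$ so that $\Wb \cap V'$ is cut out by a regular sequence $(f_1, \ldots, f_r) \in \Gamma(V', \OO_Y)$, the affine description \eqref{affineDescrClass} represents $\clOmega r W$ on $V' \cap X$ by the \v{C}ech cocycle $\dlog f_1 \wedge \cdots \wedge \dlog f_r \in \Gamma\bigl(\bigcap_i D(f_i), \WW r Y\bigr)$.

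The heart of the argument is to show that this cocycle, viewed inside the larger sheaf $j_* \WW r X$ (with $j : X \hookrightarrow Y$ the open inclusion), admits a representative lying in the subsheaf $\WYFD r$. Explicitly, I would seek a section $\omega \in \Gamma(\bigcap_i D(f_i), \WYFD r)$ differing from $\dlog f_1 \wedge \cdots \wedge \dlog f_r$ by coboundary terms built from $X$-global forms of the shape $\dlog u$ with $u \in \OO_X^\times$; such a modification does not change the cohomology class on $X$. The modulus inequality $\phiW^*(D) \leq \phiW^*(F)$ on $\Wb^N$ is exactly what makes this possible: at every codimension-one point of $\Wb^N$ lying over a component of $\Dred$, it bounds the vanishing order of $\pi$ by that of the corresponding local equation of $F$. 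Combined with the parametric description of relative cycles in Lemma \ref{lem.divisor-functionMC}, this lets one choose $\omega$ as $\dlog f_1 \wedge \cdots \wedge \dlog f_r$ minus explicit monomial log-differentials in the $x_j$ for $j \leq a$, and to verify that the result lies in $\OO_Y(-D) \cdot \WW r Y(\log F + \Dred) = \WYFD r$.

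Finally, once the modified representative is constructed on each $V'$, uniqueness from \eqref{clOmega-eq5} forces the local classes to agree on overlaps, yielding the desired global class in $\H^r_\Wb(Y, \WYFD r)$, which restricts to $\clOmega r W$ on $X$ by construction. The main obstacle is the explicit construction of $\omega$ in the third step: the modulus condition is phrased on the normalization $\Wb^N$, which can differ substantially from $\Wb$ when $\Wb$ is non-normal along $F + \Dred$, so transferring the inequality into an actual local-cohomology identity on $Y$ demands a careful bookkeeping argument---likely via induction on $r$ using the codimension-one case of Theorem \ref{thm.codim1}, or via a direct residue computation on $\Wb^N$ exploiting the SNC structure.
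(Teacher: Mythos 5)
Your overall strategy---represent $\clOmega r W$ by an affine \v{C}ech cocycle $\dlog {f_1}\wedge\cdots\wedge\dlog {f_r}$, correct it by logarithmic forms attached to $F$ so that it lands in $\WYFD r$, and glue using the injectivity of \eqref{clOmega-eq5}---is the right one, and it is essentially what the paper does \emph{when $\Wb$ is normal}. But there are two genuine gaps. First, you never extract from the modulus condition the concrete algebraic statement that drives the computation. In the paper this is Lemma \ref{purityOmega-lem1}: after localizing so that $Z=(\Wb\times_Y F)_{red}$ is irreducible of codimension $r+1$, the inequality $\phiW^*(D)\leq\phiW^*(F)$ forces $\Wb\times_X D$ into a single component $F_i=V(s_i)$, hence $s_i\in I_{\Wb}+(\pi)$, i.e.\ some $f=s_i+\pi a$ lies in $I_{\Wb}$. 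Taking $f_1=f$ and replacing $\dlog f$ by $\dlog (f/s_i)$ (a modification by the cohomologically trivial term $\dlog {s_i}\wedge\cdots$), one computes
$\dlog (f/s_i)=\frac{\pi}{f}\big(-a\,\dlog {s_i}+da+a\,\dlog \pi\big)$,
which visibly lies in $\WYFD 1$. Without this identity your proposed subtraction of ``monomial log-differentials in the $x_j$'' has nothing to latch onto: the modulus inequality, stated on $\Wb^N$, does not by itself produce a correction term defined on $Y$.

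Second, and more seriously, your local setup requires $I_{\Wb}$ to be generated by a regular sequence near the generic points of $\Wb\cap F$ (the only points that matter, by Lemma \ref{clOmega-lem-Vanishing-Relative} and excision \eqref{purityOmega-eq6}). These points have codimension one in $\Wb$, so this holds when $\Wb$ is normal (regular in codimension one), but it fails for general $\Wb$ --- precisely the case in which the modulus condition lives only on the normalization. You flag this as ``the main obstacle,'' but the remedies you sketch do not work: Theorem \ref{thm.codim1} concerns $\CH^1(\Xb|D,n)$ and gives no handle on fundamental classes of higher-codimension cycles, and a residue computation ``on $\Wb^N$'' produces no class on $Y$ without a pushforward mechanism. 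The paper's resolution is Bloch's trick: since $\phiW$ is finite, $\WbN$ embeds as a closed subscheme of some $\bP^M_Y$; the normal case applied there yields a class in $\H^{r+M}_{\WbN}(\bP^M_Y,\Omega^{r+M}_{\bP^M_Y|\bP^M_D}(\log \bP^M_F))$, which is transported to $\H^r_{\Wb}(Y,\WYFD r)$ by the trace map of Lemma \ref{pushforDerCat}, using the compatibility of El Zein's class with proper pushforward. This step is an essential ingredient, not bookkeeping, and its absence leaves the proof incomplete.
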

The proof is divided into several steps. We start with the following reduction
\begin{claim}\label{thm-purityOmega-claim}
Let $F_1,\dots,F_n$ be the irreducible components of $F$ and let $Z$ be the reduced part of $\Wb\times_Y F$. %Let $Z$ be the reduced part of $\Wb\times_Y F$. 
We may assume the following conditions:
\begin{enumerate}
\item[$(\clubsuit 1)$]
$Z$ is irreducible of pure codimension $r+1$ in $Y$,
% and the reduce part $\Zred$ of $Z$ is regular,
\item[$(\clubsuit 2)$]
$Y=\Spec(A)$ is affine equipped with
$\pi\in A$ and $s_i\in A$ with $1\leq i\leq n$ such that $D=\Spec(A/(\pi))$ and $F_i=\Spec(A/(s_i))$. 
% For any component $G$ of $D+F$, either $Z\subset G$ or $Z\cap G=\emptyset$ (note $Z\subset F_i$ by the assumption).
\end{enumerate}
\end{claim}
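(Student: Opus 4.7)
The claim is a Zariski-localization step for the proof of Theorem \ref{thm-purityOmega}. The plan is to exploit the local-to-global nature of $\H^r_{\Wb}(Y, \WYFD{r})$: by Lemma \ref{clOmega-lem-Vanishing-Relative}, this group equals the global sections of the local cohomology sheaf $\underline{\mathcal{H}}^r_{\Wb}(\WYFD{r})$, and by the injection \eqref{clOmega-eq5} the lift of $\clOmega{r}{W}$, if it exists, is uniquely determined by the fundamental class. Hence constructing the lift is a local problem on $Y$, and local lifts will glue uniquely.

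Condition $(\clubsuit 2)$ follows from covering $Y$ by finitely many affine Zariski opens $Y'_\alpha = \Spec A_\alpha$ and shrinking each so that $D$ and each component $F_i$ meeting $Y'_\alpha$ are principal Cartier divisors defined by $\pi_\alpha \in A_\alpha$ and $s_{i,\alpha} \in A_\alpha$ (using smoothness of $Y$), while components of $F$ missing $Y'_\alpha$ are discarded. For $(\clubsuit 1)$: since $\Wb \not\subset F$ (because $W = \Wb \cap X \neq \emptyset$ and $X \cap F = \emptyset$), Krull's height theorem applied to the Cartier divisor $F$ intersected with $\Wb$ shows that every irreducible component of $\Wb \cap F$ has codimension exactly $r+1$ in $Y$. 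Thus $Z = (\Wb \times_Y F)_{red}$ is a finite union of irreducible codim-$(r+1)$ components. Refine the cover so that each $Y'_\alpha$ meets at most one component of $Z$. On those $Y'_\alpha$ disjoint from $Z$, the modulus condition (giving $\Wb \cap D \subset \Wb \cap F$) makes $\Wb \cap Y'_\alpha$ disjoint from $F + D$, so $\WYFD{r}$ coincides with $\Omega^r_Y$ near $\Wb \cap Y'_\alpha$ and the lift exists trivially by excision; on the remaining $Y'_\alpha$, both $(\clubsuit 1)$ and $(\clubsuit 2)$ hold.

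The main obstacle is to justify the gluing. On any overlap $Y'_\alpha \cap Y'_\beta$, both local lifts $c_\alpha$ and $c_\beta$ restrict to the same class, namely the restriction of the global fundamental class $\clOmega{r}{W}$ in $\H^r_W(X, \Omega^r_X)$, so by the injection \eqref{clOmega-eq5} applied over the overlap they must agree. A minor technical point is that distinct irreducible components of $Z$ can intersect in $Y$, preventing perfect separation by affines; however, any such intersection has codimension $\geq r+2$ in $Y$, so by Lemma \ref{clOmega-lem-Vanishing-Relative} (applied via the spectral sequence for local cohomology with support) together with excision, the group $\H^r_{\Wb}(Y, \WYFD{r})$ is unchanged upon removing these intersection loci. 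The compatible local data then glues to a unique global section of $\underline{\mathcal{H}}^r_{\Wb}(\WYFD{r})$, yielding the desired lift in $\H^r_{\Wb}(Y, \WYFD{r})$ and completing the reduction.
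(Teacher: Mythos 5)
Your proposal is correct and follows essentially the same route as the paper: both reductions rest on Lemma \ref{clOmega-lem-Vanishing-Relative}, first to excise closed subsets of $\Wb$ of codimension $\geq r+2$ and then to show that classes in $\H^r_{\Wb}(Y,\WYFD r)$ are determined by and glued from compatible local data (the paper phrases this via the Mayer--Vietoris spectral sequence \eqref{purityOmega-eq6Sp}, you via the local-to-global spectral sequence for $\underline{\mathcal{H}}^r_{\Wb}$, which is the same mechanism), with agreement on overlaps checked through the injection \eqref{clOmega-eq5} into El Zein's class. Your explicit removal of the pairwise intersections of the components of $Z$ before separating them by affines is a point the paper handles only implicitly through its initial shrinking around the generic points of $Z$, and is a welcome clarification.
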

\begin{proof}
Lemma \ref{clOmega-lem-Vanishing-Relative} together with the localization sequence implies that we have
\begin{equation}\label{purityOmega-eq6}
 \H^r_{\Wb}(Y,\WYFD r)\xrightarrow{\sim} \H^r_{\Wb-T}(Y-T,{\WYFD r}_{|Y-T})
\end{equation}
for every closed subscheme $T\subset \Wb$ of codimension strictly larger then $r+1$ in $Y$. Therefore we can disregard the irreducible components $Z_i$ of $Z$ of with $\codim_Y(Z_i)>r+1$ and, by shrinking $Y$ around the generic points of $Z$ of codimension $r+1$ in $Y$, we can 
%Hence we may replace $Y$ by an open neighbourhood of the generic points of $Z$ of codimension $r+1$ in $Y$
 assume the conditions of Claim \ref{thm-purityOmega-claim} except for
the irreducibility of $Z$. 

This last reduction can be shown as follows: take a finite open covering $Y=\underset{i\in I}{\cup} U_i$ such that
each $U_i$ contains at most one irreducible component of $Z$. 
Fixing a total order on $I$, let $I^{(a)}$ for $a\in \bZ_{\geq 0}$ be the set of tuples $\alpha=(i_0,\dots,i_a)$ in $I$
with $i_0<\cdots< i_a$. For $(i_0,\dots,i_a)\in I^{(a)}$, put
$U_\alpha=U_{i_0}\cap \cdots \cap U_{i_a}$. We have the Mayer-Vietoris spectral sequence associated to the the covering $\underset{i\in I}{\cup} U_i$
\begin{equation}\label{purityOmega-eq6Sp}
E_1^{a,b} =\underset{\alpha\in I^{(a)}}{\bigoplus}\; \H^b_{\Wb\cap U_\alpha}(U_\alpha,{\WYFD r}_{|U_\alpha}) \; \Rightarrow\;
\H^{a+b}_{\Wb}(Y,\WYFD r).
\end{equation}
Putting $V_i=U_i\cap X$ we have the induced covering $X= \underset{i\in I}{\cup} V_i$ and the analogue of \eqref{purityOmega-eq6Sp}
\begin{equation}\label{purityOmega-eq6Sp2}
E_1^{a,b} =\underset{\alpha\in I^{(a)}}{\bigoplus}\; \H^b_{W\cap V_\alpha}(V_\alpha,{\WW r X}_{|V_\alpha}) \; \Rightarrow\;
\H^{a+b}_{W}(X,\WW r X).
\end{equation}
By \eqref{clOmega-eq4}, \eqref{purityOmega-eq6Sp} gives rise to an exact sequence
\[
0 \to \H^r_{\Wb}(Y,\WYFD r) \to \underset{i\in I}{\bigoplus}  \H^r_{\Wb\cap U_i}(U_i,\WYFD r)
\to \underset{(i,j)\in I^{(1)}}{\bigoplus}  \H^r_{\Wb\cap U_i\cap U_j}(U_i\cap U_j,\WYFD r)
\]
and \eqref{purityOmega-eq6Sp2} gives the similar exact sequence for $\H^r_{W}(X,\WW r X)$. We can therefore replace $Y$ by $U_i$ and assume that $Z$ is irreducible.
\end{proof}

\subsection{The case $\Wb$ is a normal variety}\label{NormalCase} We first prove the following
\begin{lem}\label{purityOmega-lem1}
Let $I_{\Wb}\subset A$ be the ideal of definition for $\Wb\subset Y=\Spec(A)$. 
There exist $f\in I_{\Wb}$ and $a\in A$ such that $f=s_i+\pi a$ for some $1\leq i\leq n$.
\end{lem}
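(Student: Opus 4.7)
I plan to reduce the claim to a local check at a single codimension-one point of $\Wb$ via normality, then invoke the modulus condition together with the SNC hypothesis to extract the required index. The conclusion ``$f = s_i + \pi a \in I_{\Wb}$ for some $i$'' is equivalent to saying that the image $\bar s_i$ in $\cO_{\Wb} = A/I_{\Wb}$ lies in the principal ideal $(\bar\pi)$ for some $i$. Since $\Wb$ is normal, Serre's criterion $(R_1)+(S_2)$ identifies $\cO_{\Wb}$ with the intersection of its localizations $\cO_{\Wb,\eta}$ over the codimension-one points $\eta$ of $\Wb$, so $\bar s_i \in (\bar\pi)\cO_{\Wb}$ if and only if $v_\eta(\bar s_i)\ge v_\eta(\bar\pi)$ at every such $\eta$.

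At codimension-one $\eta\notin D$, $\bar\pi$ is a unit and the inequality is automatic. By the remark following Definition \ref{def-CYFD}, the modulus condition forces $\Wb\cap D\subseteq \Wb\cap F$ set-theoretically, so any codimension-one point of $\Wb$ meeting $D$ lies in $Z=(\Wb\cap F)_{red}$. After the reductions of Claim \ref{thm-purityOmega-claim}, $Z$ is irreducible of codimension $r+1$ in $Y$, hence codimension one in $\Wb$, and has a single generic point $\eta_Z$. So it suffices to find $i$ with $v_{\eta_Z}(\bar s_i) \ge v_{\eta_Z}(\bar\pi)$. Since $Z$ is irreducible and contained in $F = F_1\cup\cdots\cup F_n$, some $F_i$ must contain $Z$; this index $i$ is the candidate, and it clearly satisfies $v_{\eta_Z}(\bar s_i)\ge 1$.

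The hardest step will be to deduce from the modulus inequality
\[
v_{\eta_Z}(\bar\pi) \;\le\; \sum_{j=1}^n v_{\eta_Z}(\bar s_j),
\]
supplied by $\phi_W^\ast(D)\le \phi_W^\ast(F)$ evaluated in the DVR $\cO_{\Wb,\eta_Z}$, the individual-index inequality $v_{\eta_Z}(\bar s_i) \ge v_{\eta_Z}(\bar\pi)$ for the chosen $i$. This is the main obstacle: it requires using the SNC hypothesis on $F+D_{red}$ at $\eta_Z$ --- under which the $s_j$ with $\eta_Z\in F_j$ together with the local equations of the components of $D_{red}$ through $\eta_Z$ are part of a regular system of parameters of $\cO_{Y,\eta_Z}$ --- to rule out the possibility that the modulus is achieved only as a combination of several $\bar s_j$'s and to pin down the valuations in the DVR $\cO_{\Wb,\eta_Z}$ using the concrete local shape of $I_{\Wb}$. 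Once the sharp inequality is established at $\eta_Z$, normality of $\Wb$ produces $\bar s_i/\bar\pi\in\cO_{\Wb}$ globally, which yields an $a\in A$ with $f := s_i + \pi a\in I_{\Wb}$, completing the proof.
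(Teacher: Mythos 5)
Your reduction is essentially a more careful version of the paper's argument, and up to the point where you stop it is correct: the modulus condition gives $\Wb\cap D\subset \Wb\cap F$, normality of $\Wb$ reduces everything to the valuation at the unique codimension-one point $\eta_Z$ of $\Wb$ lying on $F$, and irreducibility of $Z$ singles out one component $F_i\supset Z$ — this is exactly the index the paper uses. So the skeleton matches.

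But the proposal is not a proof: the step you yourself flag as ``the main obstacle'' — passing from $v_{\eta_Z}(\bar\pi)\le\sum_j v_{\eta_Z}(\bar s_j)$ to $v_{\eta_Z}(\bar s_i)\ge v_{\eta_Z}(\bar\pi)$ for a single $i$ — is where the entire content of the lemma lives, and it is left undone. Moreover, it cannot be done from the stated hypotheses, and the SNC condition does not rescue it. Take $Y=\bA^3$ with coordinates $x,y,z$, $F_1=\{x=0\}$, $F_2=\{y=0\}$, $D=\{z=0\}$ (so $F+D_{red}$ is simple normal crossing), and let $\Wb$ be the smooth curve $\{y=x,\ z=x^2\}$, parametrized by $t\mapsto(t,t,t^2)$. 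Then $Z$ is the origin, irreducible of codimension $r+1=3$ in $Y$; the modulus condition holds with equality, since $\phi_{\Wb}^*(D)=\mathrm{div}(t^2)=\phi_{\Wb}^*(F)$; yet $v_{\eta_Z}(\bar x)=v_{\eta_Z}(\bar y)=1<2=v_{\eta_Z}(\bar\pi)$, and indeed neither $x$ nor $y$ lies in $I_{\Wb}+(z)=(y-x,\ z-x^2,\ z)$, so no $f=s_i+\pi a$ exists. This shows your missing step is genuinely false in this generality. For what it is worth, the paper's own proof makes the same unjustified jump: it derives the \emph{set-theoretic} containment $\Wb\times_Y D\subset F_i$ from irreducibility of $Z$ and then asserts the \emph{ideal-theoretic} conclusion $s_i\in I_{\Wb}+(\pi)$, which is exactly the scheme-theoretic containment the example above violates. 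So you have correctly located the crux, but closing it requires either additional hypotheses on $(Y,F,D,W)$ or a genuinely different argument, not just a more clever use of transversality.
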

\begin{proof}
Indeed, the modulus condition \eqref{moduluscond} implies that $\Wb\times_X D\subset \Wb\times_X F \subset F$.
Since $\Wb\times_X F$ is assumed to be irreducible, $\Wb\times_X D\subset F_i$ for some $1\leq i\leq n$ 
so that $s_i\in I_{\Wb}+(\pi)$. % This proves the lemma.
\end{proof}
\medbreak

Since $Z$ is contained in $F_i$ by the proof of Lemma \ref{purityOmega-lem1},
$f=s_i+\pi a$ is a regular parameter of the local rings of $Y$ at points of $Z$.
By the normality of $\Wb$, $\Wb$ is regular at the generic point of $Z$ and 
we may assume by \eqref{purityOmega-eq6} that, after shrinking $Y$ around the generic point of $Z$, 
we can complete $f$ to  a regular sequence $f_1=f,f_2,\dots,f_r$ in $A$ such that $I_{\Wb}=(f_1,f_2,\dots,f_r)$.
Put $U_j=\Spec(A[1/f_i])$ for $1\leq j\leq r$.
By the local description \eqref{affineDescrClass}, we have that $\clOmega r W \in \H^r_{W}(X,\WW r X)$ is given by 
the cohomology class of the \v{C}ech cocyle
\[%\label{purityOmega-eq6.5}
\omega^\prime=\dlog {f}\wedge \dlog{f_2}\wedge\cdots\wedge \dlog {f_r}\in
\H^0(X\cap U_1\cap \cdots\cap  U_r,\WW r X)
= \H^0(X\setminus W,  {\Omega^r_{X}}_{|_{X\setminus W}}).
\]
%The commutativity of the diagram
%\[\xymatrix{ \H^0(X\setminus W,\WW r X ) \ar[r]^{\partial} \ar[rd]^{\partial} & \H^r_{(\Wb \cup F_i)}\ar[d]^{res}\\
%&\H^r_{\Wb}(X,\WW r X )
%}
%\]
Since the cohomology class of $\dlog s\wedge \dlog {f_2}\wedge\cdots\wedge \dlog {f_r}$ vanishes, we see that $\clOmega r W \in \H^r_{W}(X,\WW r X)$ can be also represented by the cocycle  
\begin{equation}\label{purityOmega-eq6.5}
\omega=\dlog\; \frac{f}{s}\wedge \dlog\; {f_2}\wedge\cdots\wedge \dlog\; {f_r}\;\in
\H^0(X\cap U_1\cap \cdots\cap  U_r,\WW r X).
\end{equation}
%It suffices then to show that $\omega$ is a restriction of an element of $\H^0(U_1\cap \cdots\cap  U_r,\WYFD r)$, which is true by the local description of $f$ given by Lemma \ref{purityOmega-lem1}.% implies

It suffices then to show that $\dlog \frac{f}{s}$ is a restriction of an element of 
$\H^0(U_1\cap \cdots\cap  U_r,\Omega^1_{Y|D}(\log F))$, which is true by the local description of $f$ given by Lemma \ref{purityOmega-lem1}.
%\[
%\dlog\; {\frac{f}{s_i}}=\dlog {\Big(1+\frac{a}{s_i}\pi\Big)}=
%\frac{\pi}{f}\Big(-a\dlog\; {s_i} + da + a \dlog\; \pi\Big).
%\]
%which proves the desired assertion.
%\bigskip

\subsection{The case of an arbitrary $\Wb$}%We now treat the general case where $\Wb$ is not necessarily normal.
Let $\phi_{\Wb}\colon\WbN \to \Wb$ be the normalization morphism.
Since it is finite, there exist an integer $M$ and a closed immersion
\[{i_{\WbN}}\colon \WbN \hookrightarrow \bP^M_Y=Y\times \bP^M\]
 which fits into the commutative square
\[
\xymatrix{
\WbN \ar[r]^{i_{\WbN}} \ar[d]_{\phi_{\Wb}} & \bP^M_Y \ar[d]^{p_{Y}}\\
\Wb \ar[r]^{i_{\Wb}} & Y\\
}\]
where $p_{Y}$ is the projection (this is an idea due to Bloch, taken from \cite[Appendix]{EWB}).
Noting $\phiW=i_{\Wb}\circ p_{\Wb}=p_Y\circ i_{\WbN}$, the modulus condition \eqref{moduluscond} implies
\[
\WbN\cap \bP^M_X \in C^{r+M}(\bP^M_Y,\bP^M_F,\bP^M_D) \quad \text{(cf. Definition \ref{def-CYFD}).}
\]
By the normal case \ref{NormalCase}, the fundamental class
\[
\clOmega {r+M} {\WbN\cap \bP^M_X}\in \H^{r+M}_{\WbN\cap \bP^M_X}(\bP^M_X,\Omega^{r+M}_{\bP^M_X})
\]
arises from an element of 
$\H^{r+M}_{\WbN}(\bP^M_Y,\Omega^{r+M}_{\bP^M_Y|\bP^M_D}(\log \bP^M_F))$.
Now Theorem \ref{thm-purityOmega} follows from the commutativity of the following diagram
\begin{equation}\label{purityOmega-eq7}
\xymatrix{
\H^{r+M}_{\WbN}(\bP^M_Y,\Omega^{r+M}_{\bP^M_Y|\bP^M_D}(\log \bP^M_F)) \ar@{^{(}->}[r]^-{\eqref{clOmega-eq5}} \ar[d]_{(p_Y)_*}
& \H^{r+M}_{\WbN\cap \bP^M_X}(\bP^M_X,\Omega^{r+M}_{\bP^M_X}) \ar[d]^{(p_X)_*} \\
\H^{r}_{\Wb}(Y,\WYFD r)  \ar@{^{(}->}[r]^-{\eqref{clOmega-eq5}} & \H^{r}_{W}(X,\WW r X)  \\
}
\end{equation}
and from the fact that  $(p_X)_*(\clOmega {r+M} {\WbN\cap \bP^M_X})=\clOmega {r} {W}\in \H^{r}_{W}(X,\WW r X)$, which follows from the compatibility with proper push forward of El Zein's fundamental class (see \cite[III.3.2]{ElZ} and Section \ref{ProperpushclOmega} below).
Here the vertical maps are induced by the trace maps 
\begin{equation}\label{eq.trace}
\bR(p_X)_*\Omega^{r+M}_{\bP^M_X} \to \WW r X \qaq
\bR(p_Y)_*\Omega^{r+M}_{\bP^M_Y|\bP^M_D}(\log \bP^M_F) \to \WYFD r.
\end{equation}
which come from Lemma \ref{pushforDerCat} below where one takes $\YFDp=(\bP^M_Y,\bP^M_F,\bP^M_D)$.
%Note that the commutativity of  \eqref{purityOmega-eq7} follows from the functoriality of the trace maps \eqref{eq.trace}, so that the only thing which is possibly left to be checked is the identity
%\[(p_X)_*(\clOmega {r+M} {\WbN\cap \bP^M_X})=\clOmega {r} {W}\in \H^{r}_{W}(X,\WW r X)\]

\subsection{Compatibility with proper push forward}\label{ProperpushclOmega}

Let $(Y,F,D)$ and $\YFDp$ be two triples satisfying the condition $(\bigstar)$ of \ref{Setting_YFD} and let $f\colon \Yp\to Y$ be a proper morphism. We say that $f$ is \emph{admissible} if the following condition holds:
\begin{enumerate}
\item[$(\clubsuit)$] The pullback of the Cartier divisors $F$ and $D$ along $f$ are defined and 
satisfy $f^*(F) = \Fp$ and $\Dp  \geq f^*(D)$ and $|f^{-1}(D)|=|D|$.
\end{enumerate}

{Note that the condition implies that $\Dp - \Dp_{red} \geq f^*(D-D_{red})$ and $X'=f^{-1}(X)$ so that 
the restriction $f_{|{X^\prime}}$ of $f$ to $X^\prime$ is proper.} %The proof of the following Lemma is standard.

\begin{lem}\label{pushforwCyc}Let $f\colon\YFDp\to(Y,F,D)$ be an admissible proper morphism between the triples $\YFDp$ and $(Y,F,D)$. Let $X=Y-(F+D)$ and $X^\prime= \Yp-(\Fp+\Dp)$. Then the proper pushforward of cycles by $f_{|{X^\prime}}$ induces a homomorphism
	\[f_*\colon C^{r+\dim \Yp - \dim Y}\YFDp\to C^r(Y,F,D)\quad r\geq 0.\]
 	\end{lem}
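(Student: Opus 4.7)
The plan is to reduce the claim to the question of how the modulus condition transforms under proper pushforward of an individual integral cycle, and then to compare it via the commutative square of normalizations of $\bar{W}'$ and of its image $\bar{W}=f(\bar{W}')$.

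First I would verify the codimension and support statement. Since $f$ is proper and $f^{-1}(X)=X'$ by the admissibility hypothesis $|f^{-1}(D)|=|D|$ (together with $f^*F=F'$), the restriction $f|_{X'}\colon X'\to X$ is proper; thus for an integral $W'\subset X'$, its image $W=f(W')\subset X$ is an integral closed subscheme. If $f|_{W'}$ is not generically finite, then by the standard convention $f_*[W']=0$ and there is nothing to check. Otherwise $\dim W=\dim W'$, so $\codim_X W=r$, matching the target indexing. Thus the only nontrivial point is to show that $W$ satisfies the modulus condition of Definition \ref{def-CYFD} with respect to $(Y,F,D)$.

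Next I would set up the comparison diagram. Let $\bar{W}'$ and $\bar{W}$ denote the closures in $Y'$ and $Y$, and let $\phi_{\bar{W}'}\colon\bar{W}'^N\to Y'$, $\phi_{\bar{W}}\colon\bar{W}^N\to Y$ be the natural maps from the normalizations. The composite $\bar{W}'^N\to\bar{W}'\to\bar{W}$ is proper and generically finite (since $f|_{W'}$ is generically finite onto $W$), and $\bar{W}'^N$ is normal, so by the universal property of normalization it factors uniquely through a morphism $g\colon\bar{W}'^N\to\bar{W}^N$. Because $\bar{W}'^N\to\bar{W}$ is proper and quasi-finite (properness plus generic finiteness between integral schemes of equal dimension forces quasi-finiteness by fiber-dimension semicontinuity), and $\bar{W}^N\to\bar{W}$ is separated, $g$ itself is finite, and it is surjective since $f(\bar{W}')=\bar{W}$.

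Now I would use the admissibility $f^*F=F'$ and $f^*D\leq D'$ to transport the modulus inequality through $g$. We have the identities of Cartier divisors on $\bar{W}'^N$
\[
g^*\phi_{\bar{W}}^*(F)=\phi_{\bar{W}'}^*f^*(F)=\phi_{\bar{W}'}^*(F'),\qquad g^*\phi_{\bar{W}}^*(D)=\phi_{\bar{W}'}^*f^*(D)\leq\phi_{\bar{W}'}^*(D').
\]
Combining these with the assumed modulus condition $\phi_{\bar{W}'}^*(D')\leq\phi_{\bar{W}'}^*(F')$ for $W'$ yields
\[
g^*\bigl(\phi_{\bar{W}}^*(F)-\phi_{\bar{W}}^*(D)\bigr)\geq 0\quad\text{on }\bar{W}'^N.
\]
Finally, applying \cite[Lemma 2.2]{KP} to the finite surjective morphism $g$ between normal integral varieties, I conclude that already $\phi_{\bar{W}}^*(F)-\phi_{\bar{W}}^*(D)\geq 0$, i.e.\ $W$ satisfies the modulus condition and hence $f_*[W']\in C^r(Y,F,D)$. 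Extending $\Z$-linearly gives the desired homomorphism $f_*$. The main obstacle is the verification that $g$ is finite (not merely proper and generically finite), and this is the only place where one must check that the admissibility condition is strong enough; once $g$ is finite and surjective between normal varieties, the descent-of-effectivity lemma of \cite{KP} does the rest.
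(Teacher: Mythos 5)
Your proof follows the paper's argument essentially step for step: reduce to a single integral cycle $W'$, discard the case where $f|_{W'}$ is not generically finite, produce the induced map $g\colon \overline{W'}^N\to \overline{W}^N$ between the normalizations of the closures of $W'$ and of $W=f(W')$, push the chain of inequalities $g^*\phi_{\overline{W}}^*(D)=\phi_{\overline{W'}}^*(f^*D)\le \phi_{\overline{W'}}^*(D')\le \phi_{\overline{W'}}^*(F')=g^*\phi_{\overline{W}}^*(F)$ through, and conclude with the descent-of-effectivity lemma \cite[Lemma 2.2]{KP}. This is exactly the paper's proof.

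The one point where you supply a justification the paper omits is also the one point that is wrong. Properness plus generic finiteness between integral schemes of equal dimension does \emph{not} force quasi-finiteness: the blow-up of a smooth surface at a point is proper, birational and equidimensional, yet has a one-dimensional fibre; upper semicontinuity of fibre dimension only says that the locus of positive-dimensional fibres is closed, not that it is empty. So your argument does not establish that $g$ is finite, and in general it need not be. Fortunately, finiteness is not what is really used: descent of effectivity of a Cartier divisor along $g$ already holds when $g$ is a proper surjective morphism of normal integral varieties of the same dimension. Indeed, given a codimension-one point $x$ of $\overline{W}^N$, surjectivity provides a point $y$ of $\overline{W'}^N$ whose local ring dominates the discrete valuation ring $\mathcal{O}_{\overline{W}^N,x}$; a local equation for $\phi_{\overline{W}}^*(F)-\phi_{\overline{W}}^*(D)$ near $x$ becomes regular after pullback, hence already lies in $\mathcal{O}_{\overline{W}^N,x}$ because a DVR is maximal for domination among local subrings of its fraction field. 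Replace the quasi-finiteness claim by this observation (or by a correct proof that $g$ is finite in your situation); the rest of your argument is fine.
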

	The proof is a simple exercise using Lemma \ref{Containment_YFD} and left to the readers.
	%\begin{proof} Let $W$ be a closed integral subscheme of $X^\prime$ satisfying the modulus condition \eqref{moduluscond} with respect to $\Dp$ and $\Fp$. Let $f(W)$ denote the image of $W$ via $f_{|{X^\prime}}$, endowed with the structure of closed integral subscheme. 
%We need check $f(W)$ satisfies \eqref{moduluscond} with respect to $D$ and $F$. 
%We may suppose that $\dim W = \dim f(W)$. 
%Let $\overline{W}$ (resp. $ \overline{f(W)}$) denote the closure of $W$ in $\Yp$ (resp. $f(W)$ in $Y$) and let $\phi_{\overline{W}}\colon \overline{W}^N \to \Yp$ (resp. $\phi_{\overline{f(W)}}\colon \overline{f(W)}^N \to Y$) be the normalization morphism. Note that, by construction, $f(\overline{W}) = \overline{f(W)}$ intersects properly both $D$ and $F$. Then there exists a finite and surjective map $h:\overline{W}^N\to \overline{f(W)}^N$ making the diagram
%		\[\xymatrix{ \overline{W}^N  \ar[r]^{\phi_{\overline{W}}}\ar[d]^{h} & \Yp \ar[d]^f\\ 
%		\overline{f(W)}^N \ar[r]_{\phi_{\overline{f(W)}}} & Y
%		}
%		\]
%	commutative. 
%	The conditon $(\clubsuit)$ implies
%	\begin{align*}
%	h^*( \phi_{\overline{f(W)}}^*(D))  & =  \phi_{\overline{W}}^*( f^*(D))  \leq \phi_{\overline{W}}^*(D^\prime)\leq  \phi_{\overline{W}}^*(\Fp) = \phi^*_{\overline{W}}(f^*(F))=h^*(\phi_{\overline{f(W)}}^*(F)).
%	\end{align*}
%	By \cite[Lemma 2.2]{KP} (see also Lemma \ref{Containment_YFD}), this implies
%$ \phi_{\overline{f(W)}}^*(D) \leq \phi_{\overline{f(W)}}^*(F)$, completing the proof.
%	\end{proof}
	
\begin{rem}Noting that the pushforward defined at the level of cycles commutes with the boundary maps as in the case of Bloch's higher Chow groups, Lemma \ref{pushforwCyc} proves the covariant functoriality of Lemma \ref{lem-zXD}, giving a map of complexes
    \begin{equation*}
  f_* \colon \bZ(s)_{\Xb'|D'} \to \bZXDr\qwith s=r+\dim X' - \dim X.
    \end{equation*}
	\end{rem}
	
\subsubsection{} 

Let $g\colon X^\prime \to X$ be a proper morphism between smooth schemes over $k$.
Put $\delta=\dim X - \dim X^\prime$.
For integers $r,s\geq 0$ with $s-r= \delta$, we can construct a trace map
in the bounded derived category $D^b(X)$ of complexes of $\cO_X$-modules
\begin{equation}\label{eq.Trf} 
Tr_g: {R}g_* \Omega^{s}_{X^\prime}[\delta]\to \Omega^{r}_{X}.
\end{equation}
Indeed, arguing as in \ref{reviewElZ}, it suffices to construct it replacing the absolute differentials 
by the differentials over $k$. It follows then from \cite[VI, 4.2; VII, 2.1]{RD} or \cite[3.4]{Conr}. 

	\begin{lem}\label{pushforDerCat}Let $f\colon\YFDp\to(Y,F,D)$ be a morphism satisfying the condition ($\clubsuit$).
Let $g\colon X'\to X$ be the induced morphism and $\tau:X\to Y$ and $\tau^\prime:X^\prime \to Y^\prime$ be the open immersions. Then, for integers $r,s\geq 0$ with $s-r= \delta:=\dim Y - \dim Y^\prime$, 
there exists a natural map in $D^b(Y)$:
		\begin{equation}\label{eq.PushforDerCat} 
Tr_f: {R}f_* \Omega^{s}_{\Yp|\Dp}(\log \Fp)[\delta]\to \Omega^{r}_{Y|D}(\log F)
        \end{equation}
        which fits into the commutative diagram 
\[
\xymatrix{
{R}f_* \Omega^{s}_{\Yp|\Dp}(\log \Fp)[\delta] \ar[r] \ar[d]^{Tr_f} 
&  {R}f_* R\tau^\prime_*\Omega^{s}_{X^\prime}[\delta] \ar[r]^{\cong}
&  R\tau_*{R}g_* \Omega^{s}_{X^\prime}[\delta] \ar[ld]^{R\tau_*Tr_g} \\
\Omega^{r}_{Y|D}(\log F) \ar[r] & R\tau_*\Omega^{r}_{X}
}\]   
		% Moreover, the map \eqref{eq.PushforDerCat} is compatible with the exterior derivative $d$.
	\end{lem}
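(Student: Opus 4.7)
The plan is to construct $Tr_f$ by mimicking the construction of $Tr_g$ in \eqref{eq.Trf}-\eqref{eq2.Trf}, adapting each ingredient to the log-with-modulus setting, and using condition $(\clubsuit)$ to produce the analogue of the key identification $g^!\Omega^d_X \cong \Omega^{d'}_{X'}[\delta']$ for the morphism $f$.

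First I would build a top-degree trace
\[Rf_*\Omega^{d'}_{Y'|D'}(\log F')[\delta] \to \Omega^d_{Y|D}(\log F),\]
where $d=\dim Y + t$ and $d'=\dim Y' + t$, as in the argument of \ref{absoluteCase}. By adjunction this is equivalent to a morphism $\Omega^{d'}_{Y'|D'}(\log F')[\delta]\to f^!\Omega^d_{Y|D}(\log F)$. Since $\Omega^d_{Y|D}(\log F)=\omega_Y\otimes\cO_Y(D_{red}+F-D)$ is a line bundle and $f^!\omega_Y=\omega_{Y'}[-\delta]$, using $f^*F=F'$ (from $(\clubsuit)$) we compute
\[f^!\Omega^d_{Y|D}(\log F)=\omega_{Y'}\bigl(f^*D_{red}+F'-f^*D\bigr)[-\delta].\]
Thus the required morphism reduces, after the shift, to an inclusion of line bundles
\[\omega_{Y'}(D'_{red}+F'-D')\hookrightarrow \omega_{Y'}(f^*D_{red}+F'-f^*D),\]
i.e.\ to the divisorial inequality $D'_{red}-D'\leq f^*D_{red}-f^*D$. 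This is precisely where $(\clubsuit)$ enters: $D'\geq f^*D$ gives $f^*D-D'\leq 0$, while $|D'|=|f^{-1}(D)|$ forces $f^*D_{red}$ and $D'_{red}$ to share support, so that $f^*D_{red}\geq D'_{red}$; the two inequalities combine to give the claim.

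Next I would pass to arbitrary degrees by the wedge-product duality and Grothendieck-Verdier, following \eqref{eq1.Trf}-\eqref{eq2.Trf}. The wedge pairing yields isomorphisms
\[\Omega^r_{Y|D}(\log F)\cong \uhom_{\cO_Y}\bigl(\Omega^{d-r}_Y(\log D_{red}+F),\,\Omega^d_{Y|D}(\log F)\bigr),\]
and the analogue on $Y'$, using the local freeness of the log differentials (without modulus). Since $s-r=\delta=d-d'$ forces $d-r=d'-s$, these descriptions match across $f$, and one assembles
\[
\begin{aligned}
Rf_*\Omega^s_{Y'|D'}(\log F')[\delta]
&\cong Rf_*R\uhom\bigl(\Omega^{d-r}_{Y'}(\log D'_{red}+F'),\,\Omega^{d'}_{Y'|D'}(\log F')[\delta]\bigr)\\
&\to Rf_*R\uhom\bigl(\Omega^{d-r}_{Y'}(\log D'_{red}+F'),\,f^!\Omega^d_{Y|D}(\log F)\bigr)\\
&\cong R\uhom\bigl(Rf_*\Omega^{d-r}_{Y'}(\log D'_{red}+F'),\,\Omega^d_{Y|D}(\log F)\bigr)\\
&\to R\uhom\bigl(\Omega^{d-r}_Y(\log D_{red}+F),\,\Omega^d_{Y|D}(\log F)\bigr)\cong \Omega^r_{Y|D}(\log F),
\end{aligned}
\]
where the next-to-last arrow comes by adjunction from the natural pullback $f^*\Omega^{d-r}_Y(\log D_{red}+F)\to \Omega^{d-r}_{Y'}(\log D'_{red}+F')$, available because $(\clubsuit)$ gives $|f^*(D_{red}+F)|\subset|D'_{red}+F'|$. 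The composite defines $Tr_f$.

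Finally, the commutativity of the displayed diagram with $Tr_g$ follows because restriction to the open $X\subset Y$ (and $X'\subset Y'$) kills all log poles and modulus contributions ($\Omega^j_{Y|D}(\log F)_{|X}=\Omega^j_X$ and similarly on $Y'$), and every ingredient above—the top-degree trace, the wedge duality, and Grothendieck-Verdier—restricts compatibly to its counterpart in \eqref{eq.Trf}-\eqref{eq2.Trf}. The only technically substantive step, and the only place where $(\clubsuit)$ plays a non-formal role, is the line-bundle inclusion of the first paragraph; once that is in place, everything else is a formal reprise of the smooth open case.
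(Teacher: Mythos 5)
Your proposal is correct and follows essentially the same route as the paper: the paper likewise reduces everything to the divisorial inequality $D'-D'_{\mathrm{red}}\geq f^*(D-D_{\mathrm{red}})$ coming from $(\clubsuit)$ (it phrases this as $D''=f^*(D-D_{\mathrm{red}})+D'_{\mathrm{red}}\leq D'$, and your line-bundle inclusion $\omega_{Y'}(\Sigma'-D')\hookrightarrow\omega_{Y'}(f^*\Sigma-f^*D)$ is exactly the inclusion $\cO(-D')\hookrightarrow\cO(-D'')$), and then runs the same Grothendieck--Verdier argument with the perfect wedge pairing on log differentials and the identification of $f^!$ of the top twist via $f^*F=F'$. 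The only differences are cosmetic --- you isolate a top-degree trace first and place the $\cO(\pm D)$-twist on the other factor of the $\uhom$ --- together with the same looseness about the sign of $\delta$ that is already present in the paper's own formulas.
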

	\begin{proof}Let $\Sigma=D_{red}+F$ and $\Sigma'=D'_{red}+ F'$.
We are ought to construct a natural map
\begin{equation*}
Tr_f : Rf_*\Omega_{Y'}^s(\log \Sigma')(-D')[\delta]\to  \Omega_{Y}^r(\log \Sigma)(-D)
\end{equation*}
Let $D''=f^*(D-D_{red})+D'_{red}$. By ($\clubsuit$), we have $D''\leq D'$ and therefore it is enough to show the existence of a natural map
\begin{equation}\label{purityOmega-lem3.eq3}
Tr_f : Rf_*\Omega_{Y'}^s(\log \Sigma')(-D'')[\delta]\to  \Omega_{Y}^r(\log \Sigma)(-D).
\quad\text{in } D^b(Y).
\end{equation}
Arguing as in \ref{reviewElZ}, we may assume that the base field $k$ is finitely generated over its prime subfield $k_0$,
%As before, we may assume that $k$ is finitely generated over its prime subfield $k_0$ 
with $t=\trdeg_{k_0}(k)$ and put $d = t+ \dim Y$ and $d' = t+ \dim X^\prime$. 
We have isomorphisms %(cf. \eqref{eq1.Trf} and \eqref{eq2.Trf})
\begin{align}\label{purityOmega-lem3.eq3.4}
\Omega_{Y}^{r}(\log \Sigma)(-D)\xrightarrow{\simeq} 
\uhom_{D(Y)}(\Omega_{Y}^{d-r}(\log \Sigma)(D)),\Omega_{Y}^{d}(\Sigma)),
\end{align}
\begin{equation}\label{purityOmega-lem3.eq3.5}
\begin{aligned}
Rf_*\Omega_{Y'}^{s}&(\log \Sigma')(-D'')[\delta] \\
&\simeq Rf_*R\uhom_{D(Y')}(\Omega_{Y'}^{d-r}(\log \Sigma'),\Omega_{Y'}^{d'}(\Sigma'- D'')[\delta])\\
&\simeq Rf_*R\uhom_{D(Y')}(\Omega_{Y'}^{d-r}(\log \Sigma')(f^*D),\Omega_{Y'}^{d'}(\Sigma'+f^*D_{red}- D'_{red})[\delta])\\
&\simeq R\uhom_{D(Y)}(Rf_*(\Omega_{Y'}^{d-r}(\log \Sigma')(f^*D)),\Omega_{Y}^{d}(\Sigma))\\
\end{aligned}
\end{equation}
where the last isomorphism follows from the Verdier duality and the isomorphism 
\[
f^!(\Omega_{Y}^{d}(\Sigma))\cong f^!\Omega_{Y}^{d}(f^*\Sigma) 
\cong \Omega_{Y'}^{d'}(\Sigma'+f^*D_{red}- D'_{red})[\delta]
\]
using the assumption $F'=f^*F$.
By  adjunction, the natural map
\[
f^* \Omega_{Y}^{d-r}(\log \Sigma)(D) \to \Omega_{Y'}^{d-r}(\log \Sigma')(f^*D)
\]
induces
\[
\Omega_{Y}^{d-r}(\log \Sigma)(D) \to Rf_*(\Omega_{Y'}^{d-r}(\log \Sigma')(f^*D)),
\]
which induces the desired map \eqref{purityOmega-lem3.eq3} via \eqref{purityOmega-lem3.eq3.4} and \eqref{purityOmega-lem3.eq3.5}. 
%We have now to verify the compatibility of \ref{eq.PushforDerCat} with the exterior derivative $d$.
\end{proof}
\medbreak

Now take $W \in C^{s}\YFDp$. Under the assumption of Lemma \ref{pushforDerCat}, we have 
a commutative diagram
		\begin{equation}\label{eq.diagRestr}\xymatrix{ \H^{s}_{\overline{W}}(Y', \Omega^{s}_{\Yp|\Dp}(\log f^*F))  \ar@{^{(}->}[d]^{\eqref{clOmega-eq5}} \ar[r]^{f_*} & \H^r_{\overline{f(W)}}(Y, \WYFD r) \ar@{^{(}->}[d]^{\eqref{clOmega-eq5}} \\
		\H^{s}_{W}(X', \Omega^{s}_{X'} ) \ar[r]^{g_*} & \H^r_{f(W)}(X, \Omega^{r}_{X})
		}
		\end{equation}
		where $f_*$ (resp. $g_*$) are induced by $\Tr_f$ (resp. $Tr_g$).

\begin{lem}\label{pushforDerCat2}
Let 
\[
cl^{s}_{\Omega}(W/\Yp)\in \H^{s}_{\overline{W}}(Y', \Omega^{s}_{\Yp|\Dp}(\log f^*F)) \qaq
cl^r_{\Omega}(f_*(W)/Y) \in \H^r_{\overline{f(W)}}(Y, \WYFD r)
\]
 denote the fundamental classes of ${W}$ and $f_*([W])$ from Theorem \ref{thm-purityOmega}. %
%(the latter makes sense by Lemma \ref{pushforwCyc}).
Then we have
		\begin{equation}\label{eq.CompClOmega}cl^r_{\Omega}(f_*(W)/Y) = f_*cl^{s}_{\Omega}(W/\Yp) \in \H^r_{\overline{f(W)}}(Y, \WYFD r). \end{equation}
		\end{lem}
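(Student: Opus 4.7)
The plan is to reduce the identity \eqref{eq.CompClOmega} to the analogous compatibility for El Zein's absolute fundamental classes on the open parts $X$ and $X'$, which is known by \cite[III.3.2]{ElZ}.

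First, I would invoke the commutative square \eqref{eq.diagRestr}. The key point is that the right-hand vertical arrow
\[
\H^r_{\overline{f(W)}}(Y, \WYFD r) \hookrightarrow \H^r_{f(W)}(X, \Omega^{r}_{X})
\]
is injective: this is the content of \eqref{clOmega-eq5}, which itself comes from Lemma \ref{clOmega-lem-Vanishing-Relative} together with the localization long exact sequence. Consequently, in order to check the desired equality in $\H^r_{\overline{f(W)}}(Y, \WYFD r)$, it is enough to check that the two classes become equal after restriction to $\H^r_{f(W)}(X, \Omega^r_X)$.

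Next, by the defining property of the refined fundamental class from Theorem \ref{thm-purityOmega}, the image of $cl^{s}_{\Omega}(W/\Yp)$ under the left vertical map of \eqref{eq.diagRestr} is exactly El Zein's absolute fundamental class $\clOmega s W \in \H^s_W(X',\Omega^s_{X'})$, and similarly the image of $cl^r_\Omega(f_*(W)/Y)$ is $\clOmega r {f(W)} \in \H^r_{f(W)}(X,\Omega^r_X)$. The compatibility of $Tr_f$ with $Tr_g$ established in Lemma \ref{pushforDerCat} (i.e.\ the commutative triangle in its statement) ensures that the bottom arrow $g_*$ in \eqref{eq.diagRestr}, induced by $Tr_g$, is compatible via the vertical restriction maps with $f_*$ induced by $Tr_f$; so the diagram \eqref{eq.diagRestr} genuinely commutes.

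Therefore the problem reduces to proving
\[
\clOmega r {f(W)} = g_*\, \clOmega s W \quad\text{in } \H^r_{f(W)}(X,\Omega^r_X),
\]
which is the classical compatibility of El Zein's fundamental class with proper push-forward along the proper morphism $g=f_{|X'}\colon X'\to X$ between smooth $k$-schemes, proved in \cite[III.3.2]{ElZ}. The mild subtlety is that our trace map $Tr_g$ was defined via Grothendieck--Verdier duality as in \eqref{eq.Trf}, so I would briefly verify that this trace coincides with El Zein's, but this is standard (both are characterized by the usual adjunction from $g^*\Omega^{d-r}_X\to \Omega^{d-r}_{X'}$). Combining the three steps---injectivity of the vertical arrow, matching with absolute classes, and El Zein's theorem---gives \eqref{eq.CompClOmega}. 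The only real obstacle is the trace-compatibility check relating $Tr_f$ and $Tr_g$, but this is exactly what Lemma \ref{pushforDerCat} was set up to provide.
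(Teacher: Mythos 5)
Your proposal is correct and follows essentially the same route as the paper: both use the injectivity of the restriction maps \eqref{clOmega-eq5}, the commutativity of \eqref{eq.diagRestr}, and the fact that the relative classes restrict to El Zein's absolute classes, thereby reducing everything to the known compatibility of El Zein's fundamental class with proper push-forward (the paper cites \cite[Prop. 4.2.3]{Gr} for the crystalline analogue alongside \cite[III.3.2]{ElZ}). Your extra remark about matching the duality-theoretic trace $Tr_g$ with El Zein's is a sensible point that the paper leaves implicit.
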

\begin{proof}
Since the relative fundamental classes restrict to El Zein's fundamental classes under the maps \eqref{clOmega-eq5}
in \eqref{eq.diagRestr}, the lemma follows from the compatibility of  El Zein's fundamental class for proper morphisms. % See \cite[Prop. 4.2.3]{Gr} for an explicit proof in the case of the crystalline fundamental class. The proof in {\it loc.~cit.}~can be easily adapted to our situation.
\end{proof}
%\newpage 

\section{Lemmas on cohomology of relative differentials}
\def\fn{\frak{n}}

%In this section we prove two lemmas which will be used in the construction of regulator maps to the relative de Rham cohomology and relative Deligne cohomology in \S\ref{clDR} and \S\ref{clDe}. 

\subsection{Independence of relative de Rham complex from the multiplicity of $D$}

Let $(Y,F,D)$ be as in \ref{Setting_YFD}, $X= Y-(F+D)$ and write $\Dred$ for the reduced part of $D$.
Let $D_1,\ldots, D_n$ be the irreducible components of $D$ and $e_i$ be the multiplicity of $D_i$ in $D$.
We have the relative de Rham complex
\begin{equation*}%\label{WXFD}
\WYFD \bullet \colon \cO_Y(-D)\rmapo d \WYFD 1  \rmapo d \WYFD 2 \to \cdots \to\WYFD r \to \cdots 
\end{equation*}
%The following lemma shows that, in some cases (notably in characteristic $0$), $\WYFD \bullet$ does not depend on $e_i$ for $i\in I:=\{1,\ldots, n\}$.

\begin{lem}\label{lemma-Dred-qiso} In the above setting, 
assume further that $e_i< p$ for all $i \in I$ if $p=\ch(k)>0$. Then the natural map
\[
\WYFD \bullet  = \WY \bullet (\log F + D)(-D) \to \WY \bullet (\log F + D)(-\Dred)
\]  
is a quasi-isomorphism {(see Definition \ref{defi-log-diff})}.
\end{lem}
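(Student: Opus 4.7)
The plan is to prove that the cokernel of the stated inclusion is acyclic. Writing $D = \Dred + \sum_i (e_i-1) D_i$, I will factor the inclusion as a chain of intermediate inclusions $\Omega^\bullet_Y(\log F+D)(-D^{(k)}) \subset \Omega^\bullet_Y(\log F+D)(-D^{(k+1)})$, where each $D^{(k+1)} = D^{(k)} - D_{i_k}$ drops the multiplicity of a single component by one, starting from $D^{(0)} = D$ and ending at $D^{(N)} = \Dred$. Since a composition of quasi-isomorphisms is a quasi-isomorphism, it suffices to show each step is a quasi-isomorphism, i.e.\ that for every effective $D'$ with $\Dred \leq D' \leq D$ and every $i$ whose multiplicity $m$ in $D'$ is at least $2$, the cokernel of
\[
\Omega^\bullet_Y(\log F+D)(-D') \;\hookrightarrow\; \Omega^\bullet_Y(\log F+D)(-(D'-D_i))
\]
is acyclic. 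Observe that $\Omega^\bullet_Y(\log F+D) = \Omega^\bullet_Y(\log F+\Dred)$, since the log de Rham complex depends only on the support of the divisor, so the ambient complex is the same throughout the chain.

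Acyclicity is a local property, so I will work in Zariski-local coordinates at a point of $D_i$ in which $\pi := \pi_i$ is a local equation for $D_i$ and the other components of $F+\Dred$ are coordinate hyperplanes. Let $J \subset \cO_Y$ be a local generator of the ideal of $D' - mD_i$, which is supported away from $D_i$, and set $\Omega^\bullet := \Omega^\bullet_Y(\log F+D)$. The two subcomplexes are $\pi^m J\,\Omega^\bullet$ and $\pi^{m-1}J\,\Omega^\bullet$, and the $p$-th term of the quotient $C^\bullet$ is isomorphic to $J \otimes_{\cO_Y} (\Omega^p/\pi\Omega^p)$. A product-rule computation gives, for $\omega \in \Omega^p$,
\[
d(\pi^{m-1}J\omega) \;=\; \pi^{m-1}J\bigl(\gamma \wedge \omega + d\omega\bigr), \qquad \gamma \;:=\; (m-1)\tfrac{d\pi}{\pi} + \tfrac{dJ}{J} \;\in\; \Omega^1_Y(\log F+D),
\]
so the induced differential on $C^\bullet$ is $d_C(\omega) = \gamma \wedge \omega + d\omega$, reduced modulo $\pi$.

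The heart of the argument is the construction of a local contracting homotopy on $(C^\bullet, d_C)$. In the chosen local frame, every $\omega \in \Omega^p$ admits a unique decomposition $\omega = \alpha + (d\pi/\pi)\wedge \beta$ in which $\alpha, \beta$ do not involve $d\pi/\pi$, and I will define
\[
h(\omega) \;:=\; \tfrac{1}{m-1}\,\beta \pmod \pi.
\]
The scalar $m-1$ is a unit in $\cO_Y$: automatically in characteristic $0$, and when $p = \ch(k) > 0$ because $1 \leq m-1 \leq e_i - 1 < p$ by hypothesis. A short calculation, using $(d\pi/\pi)\wedge (d\pi/\pi) = 0$, $d(d\pi/\pi) = 0$, and the anticommutation of the two $1$-forms $d\pi/\pi$ and $dJ/J$, and then separating the terms in $d_C\omega$ according to whether they contain $d\pi/\pi$, yields $d_C h + h d_C = \mathrm{id}_{C^\bullet}$. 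This gives local, hence global, acyclicity of $C^\bullet$, and iterating the reduction step completes the proof.

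The main obstacle will be the careful bookkeeping in verifying the homotopy identity, since the decomposition $\omega = \alpha + (d\pi/\pi)\wedge \beta$ depends on the choice of local frame. The characteristic hypothesis enters the argument only through the need to invert the integer $m-1$ at each step, and in particular the lemma requires no restriction on the multiplicities $e_i$ in characteristic $0$.
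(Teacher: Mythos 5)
Your proof is correct and takes essentially the same route as the paper: the paper likewise filters by lowering one multiplicity at a time and shows each graded piece acyclic via the homotopy identity $d\circ\mathrm{Res}+\mathrm{Res}\circ d = m_\nu\cdot \mathrm{id}$, where $\mathrm{Res}$ is the residue along the component in question --- this is exactly your $h$, since $\beta\bmod\pi$ is the residue of $\omega$ and your $m-1$ is the paper's $m_\nu$. The only cosmetic difference is that the paper encodes the decomposition $\omega=\alpha+\frac{d\pi}{\pi}\wedge\beta$ through the residue exact sequence rather than a choice of local frame, which also disposes of your frame-dependence worry, as only $\beta\bmod\pi$ enters.
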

\begin{proof}
%We endow $\bN^I$ with a semi-order as follows:
For $\fm= (m_i)_{i\in I}$ and $\fn = (n_i)_{i\in I}$ in $\bN^I$, we say that $\fm \leq \fn$ if $m_i\leq n_i$ for every $i \in I$. 
%\[
%(m_\lam)_{\lam\in \Lam}\leq (n_\lam)_{\lam\in \Lam}\; \Leftrightarrow\; m_\lam\leq n_\lam\qfor \forall \lam\in \Lam.
%\]
For every multi-index $\fm=(m_i)_{i \in I}\in \bN^I$, we set
\begin{equation}\label{eq-filtr-lemmDred}
\Dm=\underset{i \in I}{\sum} m_i D_i \qaq \Im=\cO_Y(-\Dm).
\end{equation}
Let $\fm_{max} = (e_1, \ldots, e_n)$.  For $\fm\in \bN^I$ with $\fm_{max}\geq \fm\geq (1,\dots,1)$ we have a filtration %The assignment \eqref{eq-filtr-lemmDred} gives rise to a filtration on the de Rham complex indexed by $\fm\in \bN^I$ with $\fm_{max}\geq \fm\geq (1,\dots,1)$:
\[
\WY \bullet (\log F + D)(-\Dred)\supset  \WY \bullet (\log F + D)(-\Dm).
\]
Fix now $\nu\in I$, an integer $q\geq 0$ and $\fm = (m_1,\ldots, m_n) \in \bN^I$. We define a sheaf $\Wmn q$ on $Y_{Zar}$ as
\[
\Wmn q =\WY \bullet (\log F + D)(-\Dm)/\WY \bullet (\log F + D)(-D_{\fm+\delta_\nu})  ( =  \Im\otimes_{\cO_Y} \Omega^q_Y(\log F +D)_{|D_\nu} ),
\]
where $\delta_\nu$ denotes the multi-index $(\delta_{i}^\nu)$ with $\delta_{\nu}^\nu = 1$ and $\delta_i^\nu=0$ for $i\neq \nu$.
The exterior derivative on $\WY \bullet (\log F + D)$ induces  a map
\[
\dmn q:\Wmn q \to \Wmn {q+1}
\]
%locally defined by 
%\[\prod_{i =1}^n \pi_i^{m_i} \otimes \omega \mapsto \prod_{i=1}^n \pi_i^{m_i} \otimes \left(d\omega + \sum_{i=1}^n m_i\cdot\dlog(\pi_i) \wedge \omega \right),
%\]
%where $\pi_i\in \cO_Y$ denotes a local uniformizer of $D_i$, for each $i \in I$. Thus we get a complex
giving a complex
\[
\Wmn \bullet: \Im\otimes \cO_{{ D_\nu}} \rmapo{\dmn 0} \Wmn 1 \rmapo{\dmn 1} \Wmn 2 \rmapo{\dmn 2} \cdots.
\]
Lemma \ref{lemma-Dred-qiso} follows then from a repeated application of the following result:

\begin{lem}\label{lem-omegaacyc} Assume that $\ch(k)=0$ or that  $(m_\nu,p)=1$ if $p=\ch(k)>0$.
Then the complex $\Wmn \bullet$ is acyclic.
\end{lem}
\begin{proof}
This is shown in \cite[Theorem 3.2]{KSS}. We include a sketch of the proof here. Let $\nu \in \{1,\ldots, n\}$ and write
\[
\WDn q= \Omega^q_{D_\nu}(\log(F+ \underset{i\in I-\{\nu\}}{\sum} D_i)_{|D_\nu}).
\]
%for the sheaf of $q$-differential forms on $D_\nu$ with logarithmic poles along the restriction of the divisor  $F+\sum_{i\neq \nu} D_i$  to $D_\nu$.
We have an exact sequence
\[
0 \to \Omega^q_{Y}(\log(F+\underset{i\in I-\{\nu\}}{\sum} D_i)) \to 
\Omega^q_{Y}(\log(F+\underset{i\in I}{\sum} D_i))  \rmapo{\Res_\nu^q}  \WDn {q-1}\to 0,
\]
where $ \Res_\nu^q$ is the residue homomorphism along $D_\nu$. % (see e.g.~\cite[2.3]{EV2}).
This induces an exact sequence
\begin{equation}\label{lemma.eq1}
0 \to \Im\otimes \WDn q \to \Wmn q \rmapo{\Resmn q} \Im\otimes\WDn {q-1}\to 0,
\end{equation}
where $\Resmn q=id_{\Im}\otimes \Res_\nu^q$. Now a direct computation shows
\[
\dmn{q-1}\circ\Resmn q + \Resmn {q+1}\circ\dmn q = m_\nu\cdot id_{\Wmn q},
\]
where $ \Im\otimes \WDn q$ is viewed as a subsheaf of $\Wmn q$ via \eqref{lemma.eq1}. This gives the contracting homotopy of the complex $\Wmn \bullet$,  completing the proof of the lemma.% under the assumption  $(m_\nu,p)=1$ (if $p=\ch(k)>0$), the contracting homotopy of the complex $\Wmn \bullet$, completing the proof of the lemma.
\end{proof}
\end{proof}

\subsection{Analogue of homotopy invariance for relative differentials}\label{lem.HI}
%Let $\Xb$ be a smooth variety over $k$ and $D\subset \Xb$ be an effective Cartier divisor such that $\Dred$
%is a simple normal crossign divisor. Put $X=\Xb-D$. 
%We use the notation in \S\ref{cyclecomplex}.
%For an integer $n\geq 0$ write
%$$
%X_n = X\times \cubb n {\hookrightarrow} \Xb_n=\Xb \times \cubb n 
%{\hookrightarrow} Y_n =\Xb \times \Pp n \hookleftarrow D_n=D \times \Pp n.
%$$ 
%Let $\pi_n:Y_n=\Xb \times \Pp n\to \Xb$ be the projection. Put $\Fn=\Pp n -\cubb n$.
%=\underset{1\leq i\leq n}{\sum} \Fni $.

%\begin{lem}\label{lem-HIOmega}
%The natural map
%\[
%\WXlogDD r \to R(\pi_n)_* \WYFDDlogn r
%\]
%is an isomorphism, where $\Fn$ denotes $X\times \Fn$ for simplicity.
%\end{lem}
%\begin{proof}
%Let $\phi:\Pp n \to \Pp {n-1}$ be the projection to the last $(n-1)$ factors. Then
%$D_n=\phi^{-1}(D_{n-1})$ and $\Fn=\Fna+\phi^{-1}(\Fnp)$.
%Hence, by the induction on $n$, the lemma is reduced to the following proposition
%(where we take $m=1$ and $H$ is a $k$-rational point):

\begin{prop}\label{prop-projformula}
Let $\Xb$ be a smooth variety over a field $k$ and let $D\subset \Xb$ be an effective divisor such that $\Dred$ is simple normal crossing.
Let $\bP=\bP^m_k$ be the projective space of dimension $m$ with $H\subset \bP^m_k$, 
a hyperplane. Let $\pi: \bP \times \Xb \to \Xb$ be the projection. Then the natural map
\[
\pi^*\colon \WXlogDD r \to  \bR\pi_* \Omega^{r}_{\Xb\times \bP}(\log \tilde{H} + \tilde{D})(-\tilde{D})
\]
is an isomorphism for every $r>0$ in the bounded derived category $D(\Xb_{Zar})$ of Zariski sheaves on $\Xb$. Here, in the second term, we let $\tilde{H}$ (resp. $\tilde{D}$) denote $\Xb\times_k H$ (resp. $D\times_k \bP^m_k$)
for simplicity.
\end{prop}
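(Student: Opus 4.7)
The strategy is to combine a K\"unneth-type splitting of the logarithmic differential sheaf on the product with the projection formula for $\bR\pi_\ast$ and a vanishing calculation on $\bP$.

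First I would observe that, since $\tilde H = \Xb\times H$ and $\tilde D = D\times \bP$ are pulled back from independent factors and the components meet transversally (so that $\tilde H + \tilde D_{red}$ remains simple normal crossing on $\Xb\times \bP$), the logarithmic cotangent bundle of the product splits as $\pi^\ast \Omega^1_{\Xb}(\log D)\oplus pr_\bP^\ast\Omega^1_\bP(\log H)$. Passing to $r$-th exterior powers gives
\[
\Omega^r_{\Xb\times\bP}(\log \tilde H+\tilde D)\;\cong\;\bigoplus_{a+b=r}\pi^\ast \Omega^a_{\Xb}(\log D)\otimes pr_\bP^\ast\Omega^b_{\bP}(\log H).
\]
Since $\cO_{\Xb\times\bP}(-\tilde D)=\pi^\ast\cO_{\Xb}(-D)$, tensoring and applying $\bR\pi_\ast$ together with the projection formula (and flat base change along the smooth projection $\pi$) yields
\[
\bR\pi_\ast \Omega^r_{\Xb\times\bP}(\log \tilde H+\tilde D)(-\tilde D)\;\cong\;\bigoplus_{a+b=r}\WXlogDD{a}\otimes_k \bR\Gamma(\bP,\Omega^b_{\bP}(\log H)).
\]

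The remaining task is to show that $\bR\Gamma(\bP^m,\Omega^b_{\bP}(\log H))$ equals $k$ in degree $0$ for $b=0$ and vanishes otherwise; granting this, only the summand $(a,b)=(r,0)$ survives and one checks that the resulting identification with $\WXlogDD{r}$ coincides with the map induced by $\pi^\ast$, proving the claim. For the vanishing I would use the residue exact sequence
\[
0\to \Omega^b_{\bP}\to \Omega^b_{\bP}(\log H)\to \Omega^{b-1}_{H}\to 0
\]
together with Bott's formula $H^j(\bP^m,\Omega^i_{\bP})=k$ if $i=j$ and $0$ otherwise, which is valid in any characteristic. For $b\geq 1$ the long exact sequence collapses to a single connecting map $H^{b-1}(H,\Omega^{b-1}_{H})\to H^{b}(\bP,\Omega^b_{\bP})$ between one-dimensional $k$-vector spaces, and this is the Gysin map sending $1$ to the hyperplane class $c_1(\cO(H))^{b}$ in the Hodge ring $\bigoplus_i H^i(\bP,\Omega^i_{\bP})$, hence an isomorphism.

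\emph{Main obstacle.} The decomposition and projection formula steps are formal; the real content is the cohomology calculation on $\bP^m$. In characteristic zero one could shortcut the argument via Deligne's theorem that $\Omega^\bullet_{\bP}(\log H)$ computes the de Rham cohomology of $\bP\setminus H=\bA^m$, forcing degeneration and $k$ concentrated in bidegree $(0,0)$; in arbitrary characteristic the Bott-plus-cycle-class route sketched above is the robust path, and verifying that the connecting map is really identified with the Gysin/cycle class map requires a brief check.
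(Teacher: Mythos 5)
Your argument is correct in substance, but it follows a genuinely different route from the paper's. The paper first uses the derived projection formula to strip off $\cO(-\tilde D)$, then proves $\WXlogD r \xrightarrow{\sim} \bR\pi_*\Omega^r_{\Xb\times\bP}(\log \tilde H+\tilde D)$ by a double d\'evissage: the case $D=\emptyset$ is handled via the residue triangle along $\tilde H$ together with the projective bundle formula for $\bR\pi_*\Omega^r_{\Xb\times\bP}$ (cup product with powers of $\xi=c_1(\cO(H))$), and the general case follows by induction on the number of components of $D$ using the stratification complexes built from the strata $D^{[a]}$ and their residue maps. You instead exploit from the outset that $\tilde D$ and $\tilde H$ are pulled back from the two separate factors, decompose the logarithmic differentials on the product accordingly, and reduce everything to the single computation that $\bR\Gamma(\bP^m,\Omega^b_{\bP/k}(\log H))$ is $k$ concentrated in degree $0$ for $b=0$ and vanishes for $b\geq 1$. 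That computation (connecting map equals the Gysin map, which hits $\xi^b\neq 0$) is exactly the content of the paper's base case, so the two proofs share their analytic core; what your approach buys is the elimination of the induction on the components of $D$, which is possible precisely because $\tilde D$ is a product divisor. The paper's more laborious induction is the argument one would need for a divisor on $\Xb\times\bP$ not pulled back from $\Xb$.

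One caveat: the paper's convention is that $\Omega^1_Y$ denotes \emph{absolute} K\"ahler differentials $\Omega^1_{Y/\Z}$, and for absolute differentials the direct-sum K\"unneth splitting $\Omega^1_{\Xb\times\bP}(\log \tilde H+\tilde D)\cong \pi^*\Omega^1_{\Xb}(\log D)\oplus pr_{\bP}^*\Omega^1_{\bP}(\log H)$ is not literally true: both summands contain $\cO\otimes_k\Omega^1_{k/\Z}$, so the sum map is surjective with nontrivial kernel. The repair is standard and does not affect the rest of your argument: use instead the short exact sequence
\begin{equation*}
0\to \pi^*\Omega^1_{\Xb}(\log D)\to \Omega^1_{\Xb\times\bP}(\log \tilde H+\tilde D)\to pr_{\bP}^*\Omega^1_{\bP/k}(\log H)\to 0,
\end{equation*}
whose induced filtration on the $r$-th exterior power has graded pieces $\pi^*\Omega^a_{\Xb}(\log D)\otimes pr_{\bP}^*\Omega^b_{\bP/k}(\log H)$ with the second factor \emph{relative} over $k$; applying $\bR\pi_*$ and the projection formula to the graded pieces gives $\WXlogDD a\otimes_k \bR\Gamma(\bP,\Omega^b_{\bP/k}(\log H))$, and your vanishing statement (Bott plus the Gysin computation, valid in any characteristic) collapses the filtration to the single piece $(a,b)=(r,0)$, which is identified with $\pi^*$. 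With this adjustment your proof is complete.
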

\begin{proof}By the derived projection formula, it is enough to show that the natural map
\begin{equation}\label{projformula.eq3}
\pi^*:\; \WXlogD r \to  \bR\pi_* \Omega^{r}_{\Xb\times \bP}(\log \tilde{H} + \tilde{D})
\end{equation}
is an isomorphism for every $r>0$. Since the logarithmic structure on the left side (resp. on the right side) is taken, by definition, with respect to the reduced structure of $D$ (resp. of $\tilde{D}+ \tilde{H}$), we may assume that $D$ is reduced (see Definition \ref{defi-log-diff}). 

Write $D_1,\ldots, D_n$ for the irreducible components of $D$. We prove \eqref{projformula.eq3} by induction on  $n$. If $n=0$, the assertion is well-known and follows from
the projective bundle formula for sheaves of differential forms. %: we recall the argument.
Suppose now $n\geq 1$ and write $I=\{1,\ldots, n\}$. Following \cite[2]{Sa2}, for each  $1\leq a\leq n$ we define
\[
D^{[a]}= \underset{\{i_1, \ldots, i_a\}\subset I}{\coprod}\; D_{i_1}\cap\cdots\cap D_{i_a},
\]
where { $\{i_1, \ldots, i_a\}\subset I$ range over all pairwise distinct indices.} 
Note that $D^{[a]}$ is the disjoint union of smooth varieties and that we have a canonical finite morphism
\[i_{a}\colon D^{[a]}\to \Xb \qfor a \geq 1.\]
On each $D^{[a]}$, we have a divisor with simple normal crossings
\[
E_a= \underset{1\leq i_1<\cdots< i_a\leq n}{\coprod}\;\Big( 
(D_{i_1}\cap\cdots\cap D_{i_a})\cap (\underset{j\not\in \{i_1,\dots,i_a\}}{\sum} D_j)\Big) 	\subset D^{[a]}.
\]
%or example, if $n=2$ then $D^{[1]}$ is the disjoint union of $D_1$ and $D_2$, and $E_1$ is given by two copies of the divisor $D_1\cap D_2$, one on each component of $D^{[1]}$. 
Then there is an exact sequence of sheaves on $\Xb$
\begin{multline*}%\label{projformula.eq4}
0\to \WX r \xrightarrow{\epsilon_{\Xb}} \WXlogD r \xrightarrow{\rho_1} i_{*} \Omega_{D^{[1]}}^{r-1}(\log E_1) \xrightarrow{\rho_2} i_{*} \Omega_{D^{[2]}}^{r-2}(\log E_2) \to \\
\to  \ldots \to i_{*}\Omega_{D^{[a]}}^{r-a}(\log E_a) \xrightarrow{\rho_{a+1}}  i_{*}\Omega_{D^{[a+1]}}^{r-a-1}(\log E_{a+1})\to \cdots 
\end{multline*}
where  $\epsilon_{\Xb}$ is the canonical inclusion, 
$i_*$ denote for simplicity the pushforwards by $i_a$ for all $a\geq 1$,
and the maps $\rho_a$ are given by the alternating sums of the residues (see \cite[Proposition 2.2.1]{Sa2}). 
%More precisely, let $C$
Similarly we have an exact sequence of sheaves on $\Xb \times \bP$
\begin{multline*}
0\to \Omega^{r}_{\Xb\times \bP}(\log \tilde{H}) \xrightarrow{\epsilon_{X\times \bP}} \Omega^{r}_{\Xb\times \bP}(\log \tilde{H} + \tilde{D}) \to i_{*} \Omega_{D^{[1]}\times \bP}^{r-1}(\log \tilde{H}+ E_1\times \bP) \to\cdots \\
%i_{2,*} \Omega_{D^{[2]}\times \bP}^{r-2}(\log \tilde{H}+ E_2)  
\cdots \to i_{ *}\Omega_{D^{[a]}\times \bP}^{r-a}(\log \tilde{H} + E_a\times \bP)
\to i_{*}\Omega_{D^{[a+1]}\times \bP}^{r-a-1}(\log \tilde{H} + E_{a+1}\times \bP)\to \cdots 
\end{multline*}
%where  for simplicity we write $i_*$ for the pushforwards by $D^{[a]}\times \bP \to \Xb \times \bP$
%and $E_{a}$ for $E_a\times \bP$ for all $a\geq 1$.
By induction assumption, we have the isomorphisms
\[
\WX r  \xrightarrow{\sim} \bR \pi_* \Omega^{r}_{\Xb\times \bP}(\log \tilde{H}) \qaq
i_{*}\Omega_{D^{[a]}}^{r-a}(\log E_a) \xrightarrow{\sim} \bR \pi_* i_{ *}\Omega_{D^{[a]}\times \bP}^{r-a}(\log \tilde{H} + E_a),
\]
which imply the desired assertion \eqref{projformula.eq3} by a standard argument from homological algebra.
%Set $A_a =\Omega_{D^{[a]}\times \bP}^{r-a}(\log \tilde{H} + E_a)$ and $B_a = \Omega_{D^{[a]}}^{r-a}(\log E_a)$ for $a=1,\ldots, n$, and let $F_A$ (resp. $F_B$) be the cokernel of $\epsilon_{\Xb\times \bP}$ (resp. of $\epsilon_{\Xb}$). Breaking down the sequence 
%\begin{multline*}%\label{projformula.equ.long}
%0\to F_A\to i_{1,*} \Omega_{D^{[1]}\times \bP}^{r-1}(\log \tilde{H}+ E_1) \to\cdots 
%i_{2,*} \Omega_{D^{[2]}\times \bP}^{r-2}(\log \tilde{H}+ E_2)  
%\to i_{a, *}\Omega_{D^{[a]}\times \bP}^{r-a}(\log \tilde{H} + E_a) \to \\
%\to i_{a+1, *}\Omega_{D^{[a+1]}\times \bP}^{r-a-1}(\log \tilde{H} + E_{a+1})\to \cdots 
%\end{multline*}
%into short exact sequences and applying the direct image functor, we get in the standard way an exact couple, giving rise to the  spectral sequence
%\[E_{1}^{a, q}= \bR^{a+q}\pi_{*} (i_{a+1, *} A_{a+1}) \Rightarrow \bR^{a+q}\pi_* F_A.\]
%By induction assumption, we have the isomorphism
%\[i_{a+1, *} B_{a+1} \xrightarrow{\sim} \bR \pi_* (i_{a+1, *} A_{a+1}) \qfor a\geq 0\] 
%from which we deduce that $F_B \xrightarrow{\sim } \bR\pi_* F_A$ in the bounded derived category of sheaves on $\Xb$. The proposition then follows from the $5$-lemma applied to the triangles
%\[\Omega^{r}_{\Xb}\xrightarrow{\epsilon_{\Xb}} \Omega^{r}_{\Xb} (\log D) \to F_B \rmapo{+}\]
%and
%\[\bR\pi_*\Omega^r_{\Xb\times\bP}(\log \tilde{H}) \xrightarrow{\epsilon_{\Xb\times \bP}} \bR\pi_*\Omega^r_{\Xb\times\bP}(\log \tilde{H}+D)  \to  \bR\pi_* F_A \rmapo{+}. \]
\end{proof}

\begin{rem}In the notations of Proposition \ref{prop-projformula}, let $U$ denote the open complement of $\Xb\times H$ in $\Xb\times \bP$. %It is isomorphic to a $m$-dimensional affine space $\mathbb{A}^m_{\Xb}$.  
Let ${ j }\colon U\to X\times \bP$ be the open immersion. Then we have a canonical injective map
	\[\Omega^{r}_{\Xb\times \bP}(\log \tilde{H}) \to j_{*} \Omega^{r}_{ U }\]
that allows us to identify { the sheaf of } $r$-differential forms with logarithmic poles along $H$ with a subsheaf of (the push forward of)  the sheaf of $r$-differential forms on an affine space over $\Xb$. The isomorphism of Proposition \ref{prop-projformula} induced by the pullback along the projection $\pi$ can be therefore interpreted as a weak homotopy invariance property. %, justifying the title of this section.
\end{rem}

%\newpage
\section{Regulator maps to relative de Rham cohomology}\label{clDR}

\subsection{Preliminary lemmas}
We resume the assumptions and the notations of \ref{Setting_YFD}.
%	\begin{defi} Let $E$ be a (reduced) simple normal crossing divisor on $Y$ and let $E_1,\ldots, E_s$ be its irreducible components. For the multi-index $I=(i_1, \ldots, i_t)$, write $E_I$ for the intersection 
%		\[E_{i_1}\cap \ldots\cap E_{i_t}.\]
%		Let $Z$ be a smooth  integral closed subscheme of $Y$ of pure codimension $q$. We say that $Z$ is in good position with respect to $E$ (or that $Z$ intersects transversally $E$) if  $Z\cap E_I$ has pure codimension $q$ in $E_I$ for each multi-index $I$. 	\end{defi}

	\def\FYDr{\cF^r_{Y|D}}
	\def\FZDr{\cF^r_{Z|D_Z}}
	
	\subsubsection{}\label{Setting-Pullback}
			{ Let $i\colon Z\;\hookrightarrow \;Y$ be a smooth integral closed subscheme of $Y$ which is transversal with
$D_{red}+F$. By definition, for any irreducible components $E_1,\ldots, E_s$ of $D_{red}+F$, the scheme-theoretic intersection
\[
Z \times_Y E_1\times_Y \times \cdots \times_Y E_s
\]
is smooth. Letting $D_Z=D\times_Y Z$ and $F_Z=F\times_Y Z$, this means that $(\bigstar)$ in \ref{Setting_YFD}
is satisfied for $(Z,D_Z,F_Z)$ instead of $(Y,D,F)$.}
Let $W\in \CYFDr$ and let $W_1, 	\ldots, W_n$ be the irreducible components of the intersection $W\cap (Z\cap X)$, so that each $W_i$ is closed in $Z\cap X$. Suppose that, for $i=1,\ldots, n$, $W$ and $Z\cap X$ intersect properly  at $W_i$ (i.e. that $W_i$ has codimension $r$ in $Z\cap X$ for every $i$). 
	As cycle on $Z\cap X$ we can then write the intersection of $W$ and $Z\cap X$ as
	\[
	i^*W= \underset{1\leq i\leq n}{\sum}\; n_i [W_i]. % \quad\text{where $n_i\in \bZ$ }. % and $W_i\subset Z$ is closed and integral}.
	\]
	%where $n_i\in \bZ$ are the intersection multiplicities of $W_i$ for $i=1,\ldots, n$.
	%	Let $D_Z=D\times_Y Z$ and $F_Z=F\times_Y Z$ be the scheme-theoretic intersections of $Z$ with $D$ and $F$ respectively.
			Lemma \ref{Containment_YFD} shows then that we have  $W_i\in \CZFDr$ for all $i$. 
%Indeed, $W_i$ is closed in $W$ and in $X$ for every $i$ and the closure ${\Wb}_i$ in $Z$ is in good position with respect to $F_Z$ and $D_Z$ by construction.  	%Assume that  $V$ intersects properly with $Z$ and $V\cap Z$ intersects properly with $D$.
			
	\begin{lem}\label{lemma-pullback}Let $Z$ and $W$ be as in \ref{Setting-Pullback}. Let  $\clOmega r W \in \H^r_{\Wb}(Y,\WYFD r)$ be the relative fundamental class of $W$ of \ref{thm-purityOmega}.  We have
	\[
			i^* \clOmega r W  = \underset{1\leq i\leq n}{\sum}\;n_i\clOmega r {W_i},
	\] 
			where $i^*\colon \H^r_{\Wb}(Y,\WYFD r) \to \H^r_{\Wb \cap Z}(Z, \Omega^{r}_{Z|D_Z} (\log F_Z))$ is the pullback along $i$ and $\clOmega r {W_i}$ for $i=1,\ldots,n$ are the relative fundamental classes of $W_i$
{  with respect to $(Z,D_Z,F_Z)$.}
	\end{lem}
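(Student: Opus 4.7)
The plan is to reduce the statement to the classical compatibility of El Zein's fundamental class with pullback under a transversal closed immersion, exploiting the injectivity of \eqref{clOmega-eq5} that \emph{characterizes} $\clOmega{r}{W}$ among classes on the ambient relative differential sheaf.

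First I would construct the pullback map. Since $Z$ is transversal to $D_{red}+F$, the triple $(Z,D_Z,F_Z)$ satisfies condition $(\bigstar)$ of \ref{Setting_YFD}, so that $\Omega^{r}_{Z|D_Z}(\log F_Z)$ is defined (see Definition \ref{defi-log-diff}). Functoriality of logarithmic differentials under transversal pullback yields a morphism $i^{*}\Omega^{r}_{Y}(\log F+D) \to \Omega^{r}_{Z}(\log F_Z+D_{Z,red})$, and the identity $i^{*}\cO_Y(-D)=\cO_Z(-D_Z)$ gives, after tensoring, a natural map of sheaves
\begin{equation*}
i^{*}\Omega^{r}_{Y|D}(\log F) \longrightarrow \Omega^{r}_{Z|D_Z}(\log F_Z).
\end{equation*}
Taking cohomology with support in $\Wb$ and observing that $i^{-1}(\Wb) = \Wb\cap Z$ set-theoretically, I obtain a pullback
\begin{equation*}
i^{*}\colon \H^{r}_{\Wb}(Y,\WYFD{r}) \longrightarrow \H^{r}_{\Wb\cap Z}(Z,\Omega^{r}_{Z|D_Z}(\log F_Z)).
\end{equation*}
By Lemma \ref{clOmega-lem-Vanishing-Relative} applied to $(Z,D_Z,F_Z)$, components of $\Wb\cap Z$ of codimension greater than $r$ in $Z$ contribute nothing, so the target decomposes through $\bigoplus_{i=1}^{n}\H^{r}_{\overline{W_i}}(Z,\Omega^{r}_{Z|D_Z}(\log F_Z))$ picking out only the codimension-$r$ components $\overline{W_1},\dots,\overline{W_n}$.

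Second, I would set up the commutative square expressing that restriction to the open complement is compatible with pullback,
\begin{equation*}
\xymatrix{
\H^{r}_{\Wb}(Y,\WYFD{r}) \ar@{^{(}->}[d]_{\eqref{clOmega-eq5}} \ar[r]^-{i^{*}}
& \bigoplus_{i=1}^{n}\H^{r}_{\overline{W_i}}(Z,\Omega^{r}_{Z|D_Z}(\log F_Z)) \ar@{^{(}->}[d]^{\eqref{clOmega-eq5}} \\
\H^{r}_{W}(X,\WW{r}{X}) \ar[r]_-{j^{*}}
& \bigoplus_{i=1}^{n}\H^{r}_{W_i}(Z\cap X,\WW{r}{Z\cap X})
}
\end{equation*}
where $j = i|_{Z\cap X}$. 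The commutativity is a direct consequence of the naturality of the localization sequences. Both vertical maps are injective by \eqref{clOmega-eq5}. By Theorem \ref{thm-purityOmega}, the relative fundamental classes $\clOmega{r}{W}$ and the $\clOmega{r}{W_i}$ restrict, through the vertical arrows, to the classical El Zein fundamental classes on the open parts. Hence the desired equality in the top-right group is equivalent to the identity
\begin{equation*}
j^{*}\clOmega{r}{W} = \sum_{i=1}^{n} n_{i}\,\clOmega{r}{W_{i}} \quad\text{in } \bigoplus_{i=1}^{n}\H^{r}_{W_{i}}(Z\cap X,\WW{r}{Z\cap X}).
\end{equation*}

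Third, this latter identity is the well-known compatibility of El Zein's (absolute) fundamental class with transversal pullback, proved in \cite[III.3]{ElZ}; alternatively it can be verified directly from the local \v{C}ech description \eqref{affineDescrClass}: a regular sequence $f_1,\dots,f_r$ cutting out $W$ around its generic point pulls back, by transversality, to a sequence whose associated divisor on $Z\cap X$ recovers $i^{*}W$ with the correct intersection multiplicities, and $\dlog f_1\wedge\cdots\wedge\dlog f_r$ behaves accordingly. The main obstacle in the plan is the construction of $i^{*}$ at the level of the relative logarithmic complex, since one must know that the assumption $(\bigstar)$ is inherited by $(Z,D_Z,F_Z)$ and that both the log-differentials and the twist by $\cO(-D)$ pull back compatibly; once this is in place the argument is formal.
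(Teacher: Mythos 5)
Your proposal is correct and follows essentially the same route as the paper: reduce, via the injections \eqref{clOmega-eq5} and the fact that the relative classes restrict to El Zein's classes, to the absolute identity $i^*cl_Y(W)=cl_Z(i^*W)$ on the open part, and then invoke El Zein. The paper only differs in spelling out that last absolute step carefully — it deduces it from the injectivity of the Gysin map $\alpha\mapsto\alpha\cup cl_Y(Z)$ \cite[III.3, Lemme 1]{ElZ} together with the compatibility of fundamental classes with intersection products \cite[III Theorem 1]{ElZ}, which is safer than your alternative \v{C}ech-cocycle verification (a regular sequence cutting out $W$ near its generic point need not remain one near the generic points of the $W_i$).
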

	\begin{proof} By the  commutative diagram
		\[\xymatrix{ \H^r_{\Wb}(Y,\WYFD r)  \ar@{^{(}->}[d]^{\eqref{clOmega-eq5}} \ar[r]^{i^*} & \H^r_{\Wb\cap Z}(Z, \Omega^{r}_{Z|D_Z} (\log F_Z)) \ar@{^{(}->}[d]^{\eqref{clOmega-eq5}} \\
		\H^r_{W}(X, \Omega^r_X ) \ar[r]^{i^*} & \H^r_{W\cap (Z\cap X)}(Z\cap X, {\Omega^{r}_{Z}}_{|_{Z\cap X}}),
		}
		\]
		%Since the relative fundamental class of $W$ restricts to El Zein's fundamental class under the map \eqref{clOmega-eq5},
		we   reduce to the case $Y=X$. %, i.e. $D= \emptyset$, with $W$ integral closed subscheme of codimension $r$ in $Y$ and $Z$ smooth integral closed subscheme of codimension $p$ in $Y$, properly intersecting $W$.
		Let $cl_Y(W)\in \H^r_{W}(Y, \Omega^r_Y)$ be the fundamental class of $W$ in $Y$ and let $cl_Z(i^*W) = cl_{Z}(W\cdot Z)$ denote the element 
		\[\underset{1\leq i\leq n}{\sum}\;n_i cl_Z(W_i) \in \H^r_{W	\cap Z} (Z, \Omega^r_Z).\]
	%	where we let again $\{W_i\}_{i}$ be the irreducible components of the intersection $W\cap Z$ and $n_i\in \bZ$ be the intersection multiplicities.
	%	If  $i^*\colon \H^r_{W}(Y,\WW r Y) \to 	\H^r_{W\cap Z}(Z,\WW r Z)$ is the pullback along $i\colon Z \to Y$, 
		We have  to show the following identity
		\begin{equation}\label{compatibility-Gysin}
		i^*cl_Y({  W}) = cl_{Z}(i^*{  W}) \text{ in }\H^r_{W	\cap Z} (Z, \Omega^r_Z).
		\end{equation}
		%The statement then follows from the compatibility of the construction of the fundamental class in \cite{ElZ} with the intersection product and from the corresponding description of the 
	By \cite[III.3, Lemme 1]{ElZ}, the cup product with the fundamental class of the smooth subvariety $Z$ defines an injective Gysin map
	\begin{equation}\label{Gysin}
\iota\colon \H^r_{W\cap Z}(Z,\WW r Z) \to \H^{r+p}_{W\cap Z}(Y,\WW {r+p} Y), \quad \alpha \mapsto \alpha\cup cl_Y(Z)
\end{equation}
that maps, for every $i=1,\ldots, n$, the fundamental class of $W_i$ in $Z$ to the fundamental class of $W_i$ in $Y$. Hence we have  
	\[
\iota(cl_{Z}(i^*W))= cl_{Z}(i^*W)\cup cl_Y(Z) = cl_Y(i^*W).
\]
By \cite[III Theorem 1]{ElZ} (see also \cite[II, 4.2.12]{Gr}), we have the compatibility with the intersection product 
\[
cl_Y(W\cdot Z) = cl_Y(W)\cup cl_Y(Z) \text{ in }\H^{r+p}_{W\cap Z}(Y,\WW {r+p} Y).
\]
Finally, since the composite map
\[
 \H^r_{W}(Y,\WW r Y) \rmapo{i^*} \H^r_{W\cap Z}(Z,\WW r Z)\rmapo{\iota} \H^{r+p}_{W}(Y,\WW {r+p} Y)
\]
is also given by the cup product with $cl_Y(Z)$, we get 
\[
\iota(i^*cl_Y(W)) =cl_Y(W)\cup cl_Y(Z) = cl_Y(W\cdot Z) = \iota(cl_{Z}(i^*W)).
\]
Hence the identity \eqref{compatibility-Gysin}  follows from the injectivity  of the Gysin map \eqref{Gysin}.
\end{proof}

\subsection{Relative de Rham cohomology}\label{fuandclassDR}
\subsubsection{}We resume again the assumptions of \ref{Setting_YFD} and write $\WYFD \bullet$ for the relative de Rham complex. 
%\begin{equation}\label{WXFD}
%\WYFD \bullet \colon \cO_Y(-D)\rmapo d \WYFD 1  \rmapo d \WYFD 2 \to \cdots \to\WYFD r \to \cdots 
%\end{equation}
%It is canonically a subcomplex of the de Rham complex $(\iota_{X,*}\Omega^\bullet_X, d)$.
%For every $r\geq0$, we can consider the (brutal) truncated complex $\FrWYFD = \sigma_{\geq r}(\WYFD \bullet)$, i.e.~the subcomplex of $\WYFD \bullet$ defined by
%\begin{equation}\label{clOmega-eq6-1}
%0 \to \cdots\to 0\to \WYFD r \rmapo d \WYFD {r+1}  \to \cdots 
%\end{equation}
%as well as the truncated de Rham complex $\Omega^{\geq r}_X =\sigma_{\geq r}(\Omega^{\bullet}_X)\subset \Omega^{\bullet}_X$.
Let $T$ be an integral closed subscheme of $Y$ of codimension $c$. For $r\geq 0$, we define the relative de Rham cohomology of $Y$ with support on $T$ as the Zariski hypercohomology with support 
\[\bH^{*}_{T}(Y, \FrWYFD).\]
There is a strongly convergent spectral sequence 
\begin{equation}\label{clOmega-eq7} E_1^{p,q}	\Rightarrow \bH^{p+q}_{T}(Y, \FrWYFD),\end{equation}
	with $E_1^{p,q} = \H^q_T(Y,\WYFD p)$ if $p\geq r$ and $0$ otherwise. %  for $p,q\geq 0$,
%\[
%E_1^{p,q} = 
%\left.\left\{\begin{gathered}
% \H^q_T(Y,\WYFD p)\; \\ 
% 0 \\
%\end{gathered}\right.\quad
%\begin{aligned}
%&\text{if $p\geq r$,}\\
%&\text{if $p<r$.}
%\end{aligned}\right.
%\]
By Lemma \ref{clOmega-lem-Vanishing-Relative}, $E_1^{p,q}=0$ for $q<c$, so that we have
\begin{equation}\label{clOmega-eq8}
\bH^{p+q}_T(Y,\FrWYFD)=0 \qfor p+q<r+c.
\end{equation}
If now  $\codim_Y(T)=r$, \eqref{clOmega-eq7} gives % By Lemma \ref{clOmega-lem-Vanishing-Relative}, %together with the definition of the $E_1$-terms of the spectral sequence, gives us that $E_1^{p,q}\neq 0$ for $p+q=2r$ if and only if $p=q=r$.  We can see similarly that $E_\infty^{r,r} \simeq E_1^{r,r}$, so that 
%we  get
\begin{equation}\label{clOmega-eq9}
	\bH^{2r}_T(Y,\FrWYFD )\xrightarrow{\sim}  \Ker\big(\H^r_T(Y,\WYFD r)\rmapo {d} 	\H^r_T(Y,\WYFD {r+1}) \big)
	\end{equation}
%By \eqref{clOmega-eq8} the natural map $\FrWYFD \to \WYFD r[-r]$ induces an isomorphism
%\begin{equation}
%\bH^{2r}_T(Y,\FrWYFD )\simeq\Ker\big(H^r_T(Y,\WYFD r)\rmapo {d} \H^r_T(Y,\WYFD {r+1}) \big).
%\end{equation}
where $d$ is the map induced by the exterior derivative. 
%\begin{rem}
%Let $V$ be any open subset $V\subset Y$ containing the generic point of $T$. By \eqref{clOmega-eq9}, the localization exact sequence (together with   
%\eqref{clOmega-eq4}) implies the injection
%\begin{equation}\label{clOmega-eq10}
%\bH^{2r}_T(Y,\FrWYFD )\hookrightarrow \bH^{2r}_{T\cap V}(V,{\FrWYFD}_{|V}).
%\end{equation}
%In particular, for  $V=X$ we get from \eqref{clOmega-eq10} the injection
%\begin{equation*}\label{eq-Injection}\iota_X^{*}\colon \bH^{2r}_T(Y,\FrWYFD )\hookrightarrow \bH^{2r}_{T\cap X}(X,\Omega_X^{\geq r}).\end{equation*}
%\end{rem}

We  now give a refinement of Theorem \ref{thm-purityOmega}.

\begin{theo}\label{thm-purityDR}
For $W\in \CYFDr$,
% with closure $\Wb$ in $Y$, 
 there is a unique element, called the fundamental class of $W$ in the relative de Rham cohomology,
\[
 \clDR r W \in \bH^{2r}_{\Wb}(Y,\FrWYFD )
\]
which maps under the map \eqref{clOmega-eq9} to the fundamental class $\clOmega r W \in \H^r_{\Wb}(Y,\WYFD r)$ 
defined in Theorem \ref{thm-purityOmega}.
\end{theo}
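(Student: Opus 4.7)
The plan is to show that the relative fundamental class $\clOmega r W$ lies in the kernel of the differential
\[
d: \H^r_{\Wb}(Y,\WYFD r) \to \H^r_{\Wb}(Y,\WYFD{r+1}),
\]
and then invoke the isomorphism \eqref{clOmega-eq9} to produce a (necessarily unique) lift $\clDR r W \in \bH^{2r}_{\Wb}(Y, \FrWYFD)$. The uniqueness part is automatic from the fact that \eqref{clOmega-eq9} is an isomorphism, so the entire content is in the vanishing of $d \clOmega r W$.

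To prove this vanishing, I would reduce to the absolute case over the open $X$ by means of the injective restriction \eqref{clOmega-eq5}. Since the formation of the de Rham differential commutes with restriction to open subsets, there is a commutative square
\[
\xymatrix{
\H^r_{\Wb}(Y,\WYFD r) \ar[r]^-{d} \ar@{^{(}->}[d]_-{\eqref{clOmega-eq5}} & \H^r_{\Wb}(Y,\WYFD{r+1}) \ar@{^{(}->}[d]^-{\eqref{clOmega-eq5}} \\
\H^r_W(X,\WW r X) \ar[r]^-{d} & \H^r_W(X,\WW{r+1}X)
}
\]
in which both vertical arrows are injective (by \eqref{clOmega-eq5}, itself an immediate consequence of Lemma \ref{clOmega-lem-Vanishing-Relative}). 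By Theorem \ref{thm-purityOmega}, the left-hand lift of $\clOmega r W$ maps to the classical El-Zein class $\clOmega r W \in \H^r_W(X,\WW r X)$ along the left vertical arrow. Hence it suffices to prove $d \clOmega r W = 0$ in the bottom-right corner.

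The latter vanishing is precisely the content of Remark \ref{rem-CC1}, which records El Zein's result \cite[Thm.~3.1]{ElZ} that $\clOmega r W$ comes from $\H^r_W(X,\Omegacl r X)$; a fortiori its image under $d$ vanishes. Strictly speaking, Remark \ref{rem-CC1} was stated for the relative version $\WWk r X$, so one has to transport the closedness statement to the absolute differentials. This is harmless: by the construction recalled in \ref{absoluteCase}, the absolute cycle class is the filtered colimit of the relative cycle classes $\clOmega r {W_B}_{k_0}$ over smooth $k_0$-subalgebras $B \subset k$, each of which is closed for the relative differential $d$; passing to the colimit yields a closed element in $\H^r_W(X,\WW r X)$.

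Putting this together, the vanishing $d\clOmega r W = 0$ in $\H^r_{\Wb}(Y,\WYFD{r+1})$ follows by injectivity of the right vertical arrow, and the lift $\clDR r W$ is then produced uniquely by \eqref{clOmega-eq9}. The only mildly subtle point is the transition from the relative closedness statement in Remark \ref{rem-CC1} to the absolute closedness needed here; everything else is formal manipulation with the truncated de Rham complex and the spectral sequence \eqref{clOmega-eq7}.
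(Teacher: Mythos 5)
Your proposal is correct and follows essentially the same route as the paper: reduce via the injective restriction \eqref{clOmega-eq5} to the absolute setting on $X$, invoke El Zein's closedness result (Remark \ref{rem-CC1}) to see that $d\,\clOmega r W=0$, and then lift uniquely through the isomorphism \eqref{clOmega-eq9}. Your extra care about transporting closedness from $\Omega^r_{Y/k}$ to the absolute differentials via the colimit of \ref{absoluteCase} is a detail the paper leaves implicit, but it changes nothing in the structure of the argument.
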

\begin{proof} The spectral sequence \eqref{clOmega-eq7} has an analogue in the non relative setting
\[E_1^{p,q} \Rightarrow \bH^{p+q}_{W}(X, \Omega^{\geq r}_X) \]
	for $E_1^{p,q}= \H^q_{W}(X, \Omega^p)$ if $p\geq r$ and $0$ otherwise. Using \ref{clOmega-lem-Vanishing} instead of \eqref{clOmega-eq4} we have %the analogue of \eqref{clOmega-eq9}, namely
	\begin{equation}\label{clOmega-eq9-Absolute}
		\bH^{2r}_W(X,\Omega^{\geq r}_X )\xrightarrow{\sim}  \Ker\big(\H^r_W(X,\Omega^r_X)\rmapo {d} 	\H^r_W(X,\Omega^{r+1}_{X}) \big). 
	\end{equation}
	By Remark \ref{rem-CC1}, the fundamental class $cl^r_{\Omega}(W)\in \H^r_W(X,\Omega^r_X)$ is in the kernel of the map induced by the exterior derivative $d$. Therefore by \eqref{clOmega-eq9-Absolute}, the absolute class $cl^r_{\Omega}(W)$ gives rise to an element of $\bH^{2r}_W(X,\Omega^{\geq r}_X )$. By \eqref{clOmega-eq5}, Theorem \ref{thm-purityOmega} and \eqref{clOmega-eq9}, the same holds for the relative fundamental class $\clOmega r W \in \H^r_{\Wb}(Y,\WYFD r)$, giving rise to $\clDR r W \in \bH^{2r}_{\Wb}(Y,\FrWYFD )$ as required. 
%	The last stated property is then clear by construction.
	%The injectivity of the restriction map \eqref{eq-Injection} finally gives that the 
\end{proof}
%\subsubsection{}Let $\FYDr$ denote either the complex $\Omega^r_{Y|D}(\log F)[-r]$ concentrated in degree $r$ or  the relative de Rham complex $\FrWYFD$.
%For $W\in \CYFDr$, we denote by 
%\[\clYD W \in \bH^{2r}_{\Vb}(Y,\FYDr)\]
%  the fundamental class in Theorems \ref{thm-purityOmega} and \ref{thm-purityDR}.

\begin{lem}\label{lem-pullback-faces}Let $Z$ and $W$ be as in \ref{Setting-Pullback}. Let $\clDR r W \in \bH^{2r}_{\Wb}(Y,\FrWYFD )$ be the fundamental class of $W$ in relative de Rham cohomology of Theorem  \ref{thm-purityDR}. Then we have
		\[
				i^* \clDR r W  = \underset{1\leq i\leq n}{\sum}\;n_i\clDR r {W_i},
		\]
	where $i^*\colon \bH^{2r}_{\Wb}(Y,\FrWYFD) \to \bH^{2r}_{\Wb \cap Z}(Z, \Omega^{\geq r}_{Z|D_Z} (\log F_Z))$ is the pullback along $i$ and $\clDR r {W_i}$ for $i=1,\ldots,n$ are the relative fundamental classes of $W_i$ in de Rham cohomology.
\end{lem}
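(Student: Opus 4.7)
The plan is to exploit the uniqueness statement of Theorem \ref{thm-purityDR} to reduce the claim to its analogue in Hodge cohomology, namely Lemma \ref{lemma-pullback}. Note first that the transversality assumption of \ref{Setting-Pullback} guarantees that the triple $(Z, F_Z, D_Z)$ satisfies the condition $(\bigstar)$ of \ref{Setting_YFD}, so the relative de Rham complex $\Omega^{\geq r}_{Z|D_Z}(\log F_Z)$ and the classes $\clDR r {W_i}$ are defined; by Lemma \ref{Containment_YFD}, $W_i \in \CZFDr$ for every $i$, and the proper-intersection hypothesis ensures that each $\overline{W_i}\subset \Wb\cap Z$ has codimension $r$ in $Z$.

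First I would form the commutative square of canonical edge maps coming from the two spectral sequences \eqref{clOmega-eq7}, one for $(Y,F,D)$ with support $\Wb$ and one for $(Z,F_Z,D_Z)$ with support $\Wb\cap Z$:
\[
\xymatrix{
\bH^{2r}_{\Wb}(Y,\FrWYFD) \ar[r]^-{i^*}\ar@{^{(}->}[d]
& \bH^{2r}_{\Wb\cap Z}(Z,\Omega^{\geq r}_{Z|D_Z}(\log F_Z)) \ar@{^{(}->}[d] \\
\H^r_{\Wb}(Y,\WYFD r)\ar[r]^-{i^*}
& \H^r_{\Wb\cap Z}(Z,\Omega^r_{Z|D_Z}(\log F_Z)).
}
\]
Commutativity follows from functoriality of the brutal truncation $\sigma_{\geq r}$ under pullback, and the injectivity of the two vertical arrows is exactly the content of \eqref{clOmega-eq9} applied on each side.

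Next, the very definition of the relative de Rham fundamental class in Theorem \ref{thm-purityDR} says that the left vertical map sends $\clDR r W$ to $\clOmega r W$, while the right vertical map sends $\clDR r {W_i}$ (viewed in $\bH^{2r}_{\Wb\cap Z}$ via support enlargement) to $\clOmega r {W_i}$ for each $i=1,\dots, n$. Therefore the image of $i^*\clDR r W$ along the bottom edge equals $i^*\clOmega r W$, and Lemma \ref{lemma-pullback} gives
\[
i^*\clOmega r W = \sum_{i=1}^n n_i\,\clOmega r {W_i},
\]
which is precisely the image of $\sum_i n_i \clDR r {W_i}$ under the right vertical map. Injectivity of that map then forces the identity at the top of the square, as desired.

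There is no genuine obstacle here: all the heavy geometric input has been absorbed in Theorem \ref{thm-purityOmega}, Theorem \ref{thm-purityDR} and Lemma \ref{lemma-pullback}. The only minor point to verify is the naturality of the edge map of \eqref{clOmega-eq7} with respect to $i^*$, which is purely formal once one knows that $i^*$ preserves the filtration $\sigma_{\geq r}$ on both relative de Rham complexes; this preservation is immediate from the definition of $\Omega^{\geq r}_{Y|D}(\log F)$ and the fact that $i$ is a morphism of log pairs compatible with $D$ and $F$.
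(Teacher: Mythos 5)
Your argument is correct and is essentially the paper's own proof: the authors likewise reduce to Lemma \ref{lemma-pullback} via the injective edge map \eqref{clOmega-eq9} characterizing $cl_{DR}^r$ in terms of $cl_{\Omega}^r$, merely stating this in one line where you have spelled out the commutative square and the functoriality of the truncation. Nothing is missing.
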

\begin{proof}By the construction of $cl_{DR}^r$ (cf. \eqref{clOmega-eq9}), the lemma follows from the same assertion for $cl^r_{\Omega}$, that is proven in Lemma \ref{lemma-pullback}.
\end{proof}
\subsection{The construction of the regulator map}\label{cyclemapDR}
\subsubsection{}Let $\Xb$ be a  smooth variety over a field $k$ and let $D\subset \Xb$ be an effective Cartier divisor on $\Xb$ such that the reduced part $\Dred$
is  simple normal crossing. Let $X=\Xb-D$ be the open complement.
%
%Let $\WX 1(\log D)$ be the Zariski sheaf on $\Xb$  of absolute K\"ahler differentials on $X$ with logarithmic poles along $\Dred$ (see Definition \ref{defi-log-diff}).
For $i\geq 1$, write %We write $\WX i(\log D)$ for its $i$-th external power and set 
\[
\WXD i = \Omega^i_{\Xb}(\log D_{red})\otimes_{\cO_{\Xb}} \cO_{\Xb}(-D)
\]
for the sheaf of relative differentials and $\WXD \bullet$ for the relative de Rham complex (see Definition \ref{defi-log-diff}).
%The exterior derivative gives rise to the {\it relative de Rham complex}
%\begin{equation*}
%\WXD \bullet : \;\;\cO_{\Xb}(-D)\rmapo d \WXD 1  \rmapo d \WXD 2 \to \cdots \to\WXD r \to \cdots 
%\end{equation*}
%For every $r\geq0$, we have the (brutally) truncated complex
%\begin{equation*}
%\WXD {\geq r} : \;\;0\to \cdots \to  \WXD r  \rmapo d \WXD {r+1} \to \cdots 
%\end{equation*}
In this section we show that Theorems \ref{thm-purityOmega} and \ref{thm-purityDR} can be used to construct a cycle map in the derived category $D^-(\Xb_{\zar})$ of bounded {above} complexes of Zariski sheaves on $\Xb$
\begin{equation}\label{clrelDR.eq1}
\reggDR \colon \bZXDr \to \WXD {\geq r} ,
\end{equation}
where $\bZXDr$ is the relative motivic complex introduced in \eqref{eq.ZXD}.
The induced maps
\begin{equation}\label{clrelDR.eq2}
\regDR q r: \HMXDr q \to  \bH^q(\Xb_{\zar},\WXD {\geq r}) %= \H_{dR}^{q}(\Xb|D, r)
\end{equation}
are called the {\it regulator maps to relative de Rham cohomology.}

\subsubsection{}\label{Setting-Constructioncycle}In what follows all the cohomology groups are taken over the Zariski site. We use the notation $A_\star$ to denote a cubical object $A\colon\square^{op}\to \cC$ in an abelian category $\cC$. The associated chain complex is denoted $A_*$ and we write $ (A_{*})_{non-degn} = A_*/(A_*)_{degn} $ for the non-degenerate quotient. In the notations of Section \ref{cyclecomplex}, we write
\[
X_n = X\times \cubb n {\hookrightarrow} \Xb_n=\Xb \times \cubb n 
{\hookrightarrow} Y_n =\Xb \times \Pp n \supset D_n=D \times \Pp n.
\] 
Write $\Fn$ for the divisor $\Xb\times (\Pp n -\cubb n)$, $D_n$ for the divisor $D\times \Pp n$ on $Y_n$ and  $\pi_n:Y_n \to \Xb$ for the projection. The triple $(Y_n,F_n,D_n)$ satisfies the condition $(\bigstar)$ of \ref{Setting_YFD} and we can consider the complex $\FrWYFDn $ on $(Y_n)_{\zar}$.
%Let $\FrWYFDn $ be the sheaf on $(Y_n)_{\zar}$ defined as $\FrWYFD$ for $(Y,F,D)=(Y_n,F_n,D_n)$. Note that the condition $(\bigstar)$ of \ref{Setting_YFD} is satisfied for this choice of $(Y,F,D)$.

\def\zzUbD#1#2{\underline{z}^{#1}(\Ub|D\cap\Ub,#2)}
\def\zUbD#1#2{z^{#1}(\Ub|D\cap\Ub,#2)}
\subsubsection{}
For an open subset $\Ub$ of $\Xb$, we write $U$ for the intersection $\Ub \cap X$, $\Ub_n$ for $\pi_n^{-1}(\Ub) \subset Y_n$ and $U_n$ for $\Ub_n \cap X_n$. % To lighten the notation, we will still denote by $D_n$ the restriction of $D_n$ to $\Ub_n$, namely $D_n\times_{\Xb} U = D_n\cap \pi_n^{-1}(\Ub)$.
Let $\SrnU$ be the set of closed subsets of $U_n$ of pure codimension $r$ whose irreducible components
are in $C^{r}(\Ub|D\cap \Ub, n)$ (cf. Definition \ref{def-CXD}). In particular, for every $V\in C^{r}(\Ub|D\cap \Ub, n)$ we have 
\[\phiV^*(D\cap \Ub \times \Pp n)\leq \phiV^*(\Ub\times \Fn)\]
where $\Vb$ denotes the closure of $V$ in $\Ub_n\times \Pp n$.
We apply Theorem \ref{thm-purityDR} to get  a natural map
\begin{equation*}\label{cyclemap.eq1}
cl^{r,n}_U: \zzUbD r n \to \indlim {W\in \SrnU} \bH^{2r}_{\Wb}(\Ub_n,\FrWYFDn )
\end{equation*}
sending a cycle $\underset{1\leq i\leq r}{\sum}\; m_i[W_i]$ with $W_i\in C^{r}(\Ub|D\cap \Ub, n)$ and $m_i\in\bZ$ to
\[
\underset{1\leq i\leq r}{\sum}\; m_i\cdot cl_{DR}^r {W_i} \in \bH^{2r}_{\Wb}(U_n,\FrWYFDn ),
\]
where $\Wb$ is the Zariski closure of $W=\underset{1\leq i\leq r}{\cup}\; W_i$ in $\Ub_n$.
\subsubsection{}\label{cyclemap-cube}
For $i=1, \ldots, n$, $\epsilon \in \{0, \infty\}$, let $\inie$ denote the inclusion of the face of codimension $1$ in $U\times \cubb n$ given by the equation $y_i=\epsilon$. %By the projective bundle formula for K\"ahler differentials we have
%\[ {\Omega^{\geq r}_{Y_{n-1}|D_{n-1}}(\log F_{n-1})} \cong {\inie}^* \FrWYFDn\]
%and
Lemma \ref{lem-pullback-faces} shows then that the diagram
\begin{equation*}\label{eq.contrav-face-maps}\xymatrix{ \zzUbD r n \ar[r] \ar[d]^{{\inie}^*} &\indlim {W\in \SrnU} \bH^{2r}_{\Wb}(\Ub_n,\FrWYFDn ) \ar[d]^{{\inie}^*} \\
\zzUbD r {n-1} \ar[r] &\indlim {W\in \cS^{r,n-1}_U} \bH^{2r}_{\Wb}(\Ub_{n-1},{\Omega^{\geq r}_{Y_{n-1}|D_{n-1}}(\log F_{n-1})} )
}
\end{equation*}
is commutative. %, where the left vertical map ${\inie}^*$ denotes the pullback of cycles along $\inie$ and the right vertical ${{\inie}^*}$ denotes the pullback of differentials along the same map.
The map $cl^{r,n}_U$ is then contravariant for face maps, giving rise to a natural map of cubical objects of complexes 
\begin{equation}\label{eqMapcl} \zzUbD r \star [-2r] \xrightarrow{cl^{r,\star}_U }\indlim {W\in \SrstarU} \bH^{2r}_{\Wb}(U_\star,\FrWYFDstar)[-2r].\end{equation}
\begin{lem}\label{lem-HIOmega}
Let $i$ be a positive integer. The natural map
\[
\Omega^i_{\Xb|D} = \WXlogDD i \to \bR(\pi_n)_*\Omega^{i}_{Y_n|D_n}(\log F_n)
\]
is an isomorphism in $D^-(\Xb_{\zar})$.
\end{lem}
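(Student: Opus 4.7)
The plan is to establish the lemma by induction on $n$, factoring $\pi_n$ through $n$ successive projections that each remove one $\bP^1$-factor and applying Proposition~\ref{prop-projformula} at every step. First, since $D_n=\pi_n^{*}D$, we have $\cO_{Y_n}(-D_n)=\pi_n^{*}\cO_{\Xb}(-D)$, and the projection formula (for the locally free sheaf $\cO_{\Xb}(-D)$) reduces the assertion to showing that the natural map
\[
\Omega^i_{\Xb}(\log D)\longrightarrow \bR(\pi_n)_*\Omega^i_{Y_n}(\log F_n + D_n)
\]
is an isomorphism in $D^-(\Xb_{\zar})$.

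Factor $\pi_n$ as $Y_n\xrightarrow{p_n}Y_{n-1}\xrightarrow{\pi_{n-1}}\Xb$, where $p_n$ drops the last $\bP^1$-factor. We have $D_n=p_n^{*}D_{n-1}$ and $F_n=p_n^{-1}(F_{n-1})+p_n^{*}(F^n_n)$, where the divisor $F^n_n$ on $Y_n$ is cut out by $y_n=1$ and comes, via $p_n$, from the hyperplane $\{1\}\subset \bP^1$. By condition $(\bigstar)$, the divisor $F_{n-1}+(D_{n-1})_{red}$ is a simple normal crossing divisor on $Y_{n-1}$. Proceeding by induction on $n$, with trivial base case $n=0$, it suffices to establish the single-step isomorphism
\[
\Omega^i_{Y_{n-1}}(\log(F_{n-1}+D_{n-1}))\xrightarrow{\sim}\bR(p_n)_*\Omega^i_{Y_n}(\log(F_n+D_n))
\]
for $p_n$; combining it with the inductive hypothesis for $\pi_{n-1}$ and the Leray identity $\bR(\pi_n)_*=\bR(\pi_{n-1})_*\circ \bR(p_n)_*$ then yields the required isomorphism in the first displayed equation.

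The main obstacle is to justify this single-step isomorphism as an instance of Proposition~\ref{prop-projformula}. As stated, the proposition uses a single divisor $D$ simultaneously for the logarithmic structure and for the modulus, whereas in our setting the log structure is along the enlarged SNCD $F_{n-1}+(D_{n-1})_{red}$, which is strictly larger than the support of any natural modulus one might put on $Y_{n-1}$. However, inspecting the proof of Proposition~\ref{prop-projformula}, one sees that after its own initial reduction via the projection formula to the log-only statement (i.e., dropping the $\cO(-D)$ twist), the argument uses only the SNCD log structure: it proceeds by induction on the number of irreducible components of the SNCD via the standard residue exact sequence, with base case the classical projective bundle formula. This argument applies verbatim when the SNCD carrying the log structure is an arbitrary SNCD on the base, not just the support of a modulus divisor. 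Applying this ``log-only'' variant with $\bP=\bP^1$, $H=\{1\}$, base $Y_{n-1}$, and log divisor $F_{n-1}+(D_{n-1})_{red}$ produces exactly the single-step isomorphism above, completing the inductive step.
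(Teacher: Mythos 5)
Your proof is correct and follows essentially the same route as the paper: induction on $n$, factoring $\pi_n$ through a projection that removes one $\bP^1$-factor, and applying Proposition~\ref{prop-projformula} (in its ``log-only'' form, with the twist by $\cO(-D)$ handled separately by the derived projection formula) at each step. The point you make explicitly --- that the proposition's inductive residue-sequence argument only needs the log divisor to be an SNCD on the base, so it applies with the enlarged divisor $F_{n-1}+(D_{n-1})_{\mathrm{red}}$ --- is precisely what the paper's phrase ``together with the derived projection formula'' is implicitly invoking.
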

\begin{proof}
The proof is by induction on $n$. The case $n=1$ follows from Proposition \ref{prop-projformula}. %, applied in the case $m=1$ and $H$ the $k$-rational point $y=1$ in $\bP^1$.
For $i=1, \ldots, n$, we denote by $F^{n}_{i,1}$ the face $y_i=1$ of  $\Pp n$. Let $\phi:\Pp n \to \Pp {n-1}$ be the projection to the last $(n-1)$ factors. %Note that
%$D_n=\phi^*(D_{n-1})$ and $\Fn=F^{n}_{1,1}+ \phi^* (F_{n-1})$.
By Proposition \ref{prop-projformula} applied to $Y_{n-1} \times \bP^1 \xrightarrow{\phi} Y_{n-1}$, together with the derived projection formula, we get then
\begin{equation}\label{eq.Lem-HIOmega}\Omega^{i}_{Y_{n-1}}(\log D_{n-1} + F_{n-1})(-D_{n-1})
	\xrightarrow{\sim} \bR (\phi)_* (\Omega^{i}_{Y_n} (\log (\phi^* (D_{n-1} + F_{n-1}) +F^{n}_{1,1})) (-D_n)). \end{equation} %\tensor_{\cO_{Y_{n-1}}} \cO(-F_{n-1})
Applying $\bR (\pi_{n-1})_*$ to \eqref{eq.Lem-HIOmega}, the claim follows from the induction assumption.
\end{proof}
\subsubsection{}
Let $\cI_n(r)^{\bullet}$ be the Godement resolution of the complex $\FrWYFDn$ on $(Y_n)_{\zar}$. Note that
%The assignment $ \FrWYFDn  \to I_n(r)^{\bullet}$ defines a functor from the category of complexes of Zariski sheaves to the full subcategory of flasque sheaves, so that 
%{  By functoriality of Godement resolutions,} 
$\cI_\star(r)^{\bullet}$ has a natural structure of cubical object, making the canonical map $\FrWYFDn \to \cI_n(r)^{\bullet}$ a morphism of cubical objects. 
Let $\Wb$ be a closed subscheme of $Y_n$ of pure codimension $r$ and let $\tau_{\leq 2r}\Gamma_{\Wb}(\Ub_n,\cI_n(r)^\bullet)$ be the canonical (good) truncation of $\Gamma_{\Wb}(\Ub_n,\cI_n(r)^\bullet)$. By \eqref{clOmega-eq8}, we have 
\[\bH^i_{\Wb}(\Ub_n,\FrWYFDn )=0 \text{ for } i<2r,\]
so that the morphisms of complexes 
{  
\[\tau_{\leq 2r}\Gamma_{\Wb}(\Ub_n,\cI_n(r)^\bullet)\xrightarrow{\alpha^{r,n}_{\Wb}} \bH^{2r}_{\Wb}(\Ub_n,\FrWYFDn )[-2r]  \]
\begin{equation}\label{eqMapAlpha}\indlim {W\in \SrnU} \tau_{\leq 2r}\Gamma_{\Wb}(\Ub_n,\cI_n(r)^\bullet)
\xrightarrow{\alpha^{r,n}} \indlim {W\in \SrnU}\bH^{2r}_{\Wb}(\Ub_n,\FrWYFDn )[-2r] 
\end{equation}
}
are quasi-isomorphisms, both compatible with the cubical structure. %, where $\bH^{2r}_{\Wb}(\Ub_n,\FrWYFDn )[-2r]$ is concentrated in degree $2r$.
%\begin{rem} Although cubical (or simplicial) objects are usually not well-behaved in the derived categories, the existence of a functorial acyclic resolution such as the Godement resolution allow us to extend the cubical structure 
%\end{rem}
 \begin{rem}The complex $\cI_{n}(r)^\bullet = \cI_n(r)^\bullet_{(\Xb,D)}$ is contravariantly functorial in the pair $(\Xb, D)$, where by a morphism of pairs $(\Xb, D)\to (\Xb^\prime, D^\prime)$ we mean a morphism of schemes $f\colon \Xb \to \Xb^\prime$ such that $f^*(D^\prime)$ is defined and $f^*(D^\prime) \leq D$ as Cartier divisors on $\Xb$. 
 \end{rem} 
\subsubsection{}Combining \eqref{eqMapAlpha} and \eqref{eqMapcl}, we have a diagram of complexes
\begin{equation*}%\label{cyclemap.eq2}
\xymatrix{
 \zzUbD r n [-2r] \ar[r]^{\hskip -40pt cl^{r,n}_U} & \indlim {W\in \SrnU} \bH^{2r}_{\Wb}(\Ub_n,\FrWYFDn )[-2r]  \\
& \indlim {W\in \SrnU} \tau_{\leq 2r}\Gamma_{\Wb}(\Ub_n,\cI_n(r)^\bullet) \ar[u]^{\alpha^{r,n}} 
  \;\ar[r]^{\hskip 30pt\beta^{r,n}}& \Gamma(\Ub_n,\cI_n(r)^{\bullet}) \\}
\end{equation*}
where $\beta^{r,n}$ is the canonical map ``forget supports''. Since all the morphisms are contravariant for face maps, % Note:
%\begin{enumerate}
%\item[$(\clubsuit)$]
%$\alpha^{r,n}$ is a quasi-isomorphism.
%\end{enumerate}
%Indeed we have $H^i_{\Wb}(U_n,\FrWYFDn )=0$ for $i<2r$ by \eqref{clOmega-eq8}.
%All maps in the above diagram are contravariant for face maps.
%For $\beta^{r,n}$, this is obvious and for $\alpha^{r,n}$ this follows from Lemma \ref{lemma-pullback}.
 we get a diagram of cubical objects of complexes
\begin{equation*}%\label{cyclemap.eq2}
\xymatrix{
 \zzUbD r \star [-2r] \ar[r]^{\hskip -40pt cl^{r,\star}_U} & \indlim {W\in \SrstarU} \bH^{2r}_{\Wb}(\Ub_\star,\FrWYFDstar)[-2r]  \\ %& I(r)^{\bullet}\ar[d]^{\gamma}[-r] \\
& \indlim {W\in \SrstarU} \tau_{\leq 2r}\Gamma_{\Wb}(\Ub_\star,\cI_\star(r)^\bullet)\;\; \ar[u]^{\alpha^{r,\star}} 
  \ar[r]^{\hskip 40pt\beta^{r,\star}}& \;\;\Gamma(\Ub_\star,\cI_\star(r)^{\bullet}). \\}
\end{equation*}
Let $\Tot({ \tau_{\leq 2r}} \Gamma_{\cS_{U}^{r, *}}(\Ub_*,\cI_*(r)^{\bullet}))$ be the total complex of the non-degenerate associated complex 
\[\indlim {W\in \SrstarU} \frac{{  \tau_{\leq 2r}}  \Gamma_{\Wb}(\Ub_\star,\cI_\star(r)^\bullet)}{{\Big( \tau_{\leq 2r}\Gamma_{\Wb}(\Ub_\star,\cI_\star(r)^\bullet)\Big)}_{\degn}}\]
and let 
\[\alpha^{r, *}\colon \Tot({  \tau_{\leq 2r}} \Gamma_{\cS_{U}^{r, *}}(\Ub_*,\cI_*(r)^{\bullet})) \to \indlim {W\in \SrstarU} (\bH^{2r}_{\Wb}(\Ub_*,{\Omega^{\geq r}_{Y_{*}|D_{*}}(\log F_{*})})[-2r] )_{non-\degn} := F_{*, r} \]
 be the induced morphism. Since the maps $\alpha^{r,n}$ are quasi-isomorphisms for every $n$, the same holds for $\alpha^{r, *}$. 
 Let $\WXD {\geq r}\to \cI(r)^{\bullet}$ be  the Godement resolution of the relative de Rham complex $\WXD{\geq r}$ on $\Xb$ and let $\gamma$ be the  inclusion to the factor at $\star=0$, $\cI(r)^{\bullet}\xrightarrow{\gamma}\Gamma(\Ub_\star,\cI_\star(r)^{\bullet})$. 
 %\begin{equation*}\label{eq.gamma}\cI(r)^{\bullet}\xrightarrow{\gamma}\Gamma(\Ub_\star,\cI_\star(r)^{\bullet}).\end{equation*}
 By Lemma \ref{lem-HIOmega}, the induced map
 \[\cI(r)^{\bullet} \xrightarrow{\gamma}\Tot \frac{\Gamma(\Ub_*,\cI_*(r)^{\bullet})}{\Gamma(\Ub_*,\cI_*(r)^{\bullet})_{\degn}} = \tilde{\Gamma}(\Ub_*, \cI_*(r)^\bullet)\]
  is a quasi-isomorphism. Combining it with the previously constructed maps we get a diagram of complexes
 \begin{equation*}%\label{cyclemap.eq2}
 \xymatrix{
  \zUbD r * [-2r] \ar[r]^{cl^{r,*}_U} & 
F_{*, r}  & \cI(r)^{\bullet}\ar[d]^{\gamma}[-r] \\ %\indlim {W\in \SrstarU} (\bH^{2r}_{\Wb}(\Ub_*,{\Omega^{\geq r}_{Y_{*}|D_{*}}(\log F_{*})})[-2r] )_{non-\degn}
 & \Tot(\Gamma_{\cS_{U}^{r, *}}(\Ub_*,\cI_*(r)^{\bullet})) \ar[u]^{\alpha^{r,\star}} 
   \ar[r]^{\hskip 30pt \beta^{r,\star}}& \tilde{\Gamma}(\Ub_*, \cI_*(r)^\bullet) \\}
 \end{equation*} 
  that sheafified on $\Xb_{\zar}$ gives the desired map \eqref{clrelDR.eq1}
  \begin{equation*}
  \reggDR \colon \bZXDr \to \WXD {\geq r}.
  \end{equation*}
\begin{rem}The strategy used to construct regulator map \eqref{clrelDR.eq1}, that relies on the existence of a functorial flasque resolution of $\FrWYFDn$ is due to Sato, taken from \cite[3.5-3.10]{S}.
	\end{rem}
%\begin{equation*}% 
%\xymatrix{
% \zzUD r \star [-2r] \ar[r]^{\hskip -40pt cl^{r,\star}_U} & \indlim {W\in \SrstarU} H^{2r}_{\Wb}(U_\star,\FrWYFDstar)[-2r]  & I(r)^{\bullet}\ar[d]^{\gamma}[-r] \\
%& \indlim {W\in \SrstarU} \tau_{\leq 2r}\Gamma_{\Wb}(U_\star,\cI_\star(r)^\bullet)\;\; \ar[u]^{\alpha^{r,\star}} 
%  \ar[r]^{\hskip 40pt\beta^{r,\star}}& \;\;\Gamma(U_\star,\cI_\star(r)^{\bullet}) \\}
%\end{equation*}
%where Let $\WXD {\geq r}\to I(r)^{\bullet}$ is the Godement resolution of the relative de Rham complex $\WXD{\geq r}$ on $\Xb$ and $\gamma$ is the inclusion to the factor at $\star=0$.
%We can then take the total complex of the associated non-degenerate complexes to get the diagram

%By Lemma \ref{lem-HIOmega}, $\gamma$ is a quasi-isomorphism, and the same holds for   $\alpha^{r,\star}$.

% \begin{equation*}% 
% \xymatrix{
%  \zUD r * [-2r] \ar[r]^{\hskip -50pt cl^{r,*}_U} & 
%\indlim {W\in \SrstarU} (\bH^{2r}_{\Wb}(\Ub_*,{\Omega^{\geq r}_{Y_{*}|D_{*}}(\log F_{*})})[-2r] )_{non-degn}  & I(r)^{\bullet}\ar[d]^{\gamma}[-r] \\
 %& \Tot(\Gamma_{\cS_{U}^{r, *}}(\Ub_*,\cI_*(r)^{\bullet})) \ar[u]^{\alpha^{r,\star}} 
 %  \ar[r]^{\hskip 30pt\beta^{r,\star}}& \Tot \frac{\Gamma(\Ub_*,\cI_*(r)^{\bullet})}{\Gamma(\Ub_*,\cI_*(r)^{\bullet})_{degn}} \\}
 %\end{equation*} 

\subsection{Compatibility with proper push forward}

Let $(Y,F,D)$ and $\YFDp$ be two triples satisfying the condition $(\bigstar)$ of \ref{Setting_YFD} and 
let $f\colon\YFDp\to(Y,F,D)$ be an admissible proper morphism between the triples $\YFDp$ and $(Y,F,D)$
(see \S\ref{ProperpushclOmega}). 
Suppose that $f$ is either a closed immersion or a smooth morphism.
The Gysin map of Lemma \ref{pushforDerCat} can be turned into a map of complexes 
\[f_* \colon Rf_* \Omega^{\bullet + n}_{\Yp|\Dp}(\log \Fp) [n]\to \Omega^{\bullet}_{Y|D}(\log F)\]
where $n= \dim Y- \dim Y^\prime$. We can show this by the same method of \cite[II.5]{RD} (see also \cite[Prop. 2.2]{Ha}). It induces a map of the relative de Rham cohomology groups with supports
\begin{equation}\label{eq.pushDeRham}f_*\colon \bH^{2r'}_{\Wb}(Y',\Omega^{\geq r'}_{\Yp|\Dp}(\log \Fp) ) \to \bH^{2r}_{\overline{f(W)}}(Y,\FrWYFD )
\end{equation}
 that is compatible with the fundamental class of Theorem \ref{thm-purityDR}. %, namely
%\begin{equation}\label{eq.CompClDR}f_* cl_{DR}^{r'}(W) = cl_{DR}^{r}(f_*([W]))\end{equation}
%where the equality \eqref{eq.CompClDR} follows from \eqref{eq.CompClOmega} and the fact that the fundamental class of a cycle is a cohomology class of a closed form (see Remark \ref{rem-CC1}).
\subsubsection{}We resume the notations of Section \ref{cyclecomplex} and \ref{Setting-Constructioncycle}. Let $f\colon \Xb'\to \Xb$ be a proper morphism between smooth varieties over $k$ that is either a closed immersion or a smooth morphism. Let $D'$ and $D$ be effective Cartier divisors such that $D'_{red}$ and $D_{red}$ are simple normal crossing. Write $X' = \Xb'-\Dp$ (resp. $X=\Xb-D$) for the open complement. Suppose for simplicity that $D^\prime = f^*D$. The map of cubical complexes  
\[ \zzUbD r \star [-2r] \xrightarrow{cl^{r,\star}_U }\indlim {W\in \SrstarU} \bH^{2r}_{\Wb}(U_\star,\FrWYFDstar)[-2r]\]
constructed in \ref{cyclemap-cube} is compatible with the pushforward \eqref{eq.pushDeRham}. This can be used to show that the regulator map constructed in Section \ref{cyclemapDR}
 is compatible with the proper pushforward. %, i.e. that the diagram 
%\[\xymatrix{ \bZ(r')_{\Xb'|D'} \ar[d]^{f_*} \ar[r]^{\phi_{DR}} & \Omega^{\geq r'}_{\Xb'|D'}\ar[d]^{f_*}\\
%\bZXDr \ar[r]^{\reggDR} & \WXD {\geq r}
%} 
%\]
%is commutative.

%\newpage

\def\bZYde#1{\bZ_{Y}^\mathcal{D}(#1)}
\def\bZXDde#1{\bZ_{\Xb|D}^\mathcal{D}(#1)}
\def\bZXbde#1{\bZ_{\Xb}^\mathcal{D}(#1)}

\section{Regulator maps to relative Deligne cohomology}\label{clDe}

In this section we work over the base field $k=\bC$.
For an algebraic variety $Y$ over $\bC$, write $\cO_Y$ for the analytic sheaf of holomorphic functions on $Y$ and $\Omega^{i}_{Y}$ for the sheaf of holomorphic $i$-th differential forms. %Let $D^b(Y_{\an})$ denotes the derived category of bounded complexes of sheaves of abelian groups on the analytic site $Y_{\an}$ on $Y(\bC)$.

\subsection{Relative Deligne complex}\label{reldelcpx}

Let $\Xb$ be a smooth algebraic variety over $\bC$ and let $D\subset \Xb$ be an effective Cartier divisor on $\Xb$ such that the reduced part $\Dred$
is  simple normal crossing. Let $j\colon X = \Xb - D \hookrightarrow \Xb$ be the open complement.
Write  $\WX i(\log D)$ for the sheaf of meromorphic $i$-th differential forms on $\Xb_{\an}$ that are holomorphic on $X$ and with at most logarithmic poles along $\Dred$. We write  % Resuming the notations of \ref{cyclemapDR}, we write 
\[
\WXD i = \Omega^i_{\Xb}(\log D)\otimes_{\cO_{\Xb}} \cO_{\Xb}(-D),
\]
and $\WXD \bullet$ for the relative (analytic) de Rham complex. Let $\bC_X$ denote the constant sheaf $\bC$ on $X_{\an}$. The proof of the following Lemma  uses the same strategy of the proof of Proposition \ref{prop-projformula}.

%\subsection{Hodge structure on Betti cohomology with compact support}
% Let $\Xb$ be a smooth variety over $\bC$ and $D\subset \Xb$ be a reduced simple normal crossign divisor. 
% Put $X=\Xb-D$. We write $\cO_{\Xb}$ for the sheaf of holomorphic functions, 
% $\cO_{\Xb}(-D)\subset \cO_{\Xb}$ for the ideal of 
% $D\subset\Xb$ and $\WX i$ for the sheaf of holomorphic differential forms on $\Xan$. We consider the sheaves 
% \[
% \WXD i = \Omega^i_{\Xb}(\log D)\otimes_{\cO_{\Xb}} \cO_{\Xb}(-D)\subset \WX i
% \]
% which form the de Rham complex:
% \begin{equation*}
% \WXD \bullet : \;\;\cO_{\Xb}(-D)\rmapo d \WXD 1  \rmapo d \WXD 2 \to \cdots \to\WXD r \to \cdots 
% \end{equation*}
% %Let $\ZrX$ denote the constant sheaf $(2\pi i)^r\bZ\subset \bC$ on $\Xan$.

\begin{lem}\label{HSHc}Assume $D$ is a reduced simple normal crossing divisor on $\Xb$.
Then the canonical map $j_!\bC_X \to \WXD \bullet$ is a quasi-isomorphism. 
\end{lem}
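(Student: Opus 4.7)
The plan is to verify the quasi-isomorphism stalk-by-stalk, reducing to a one-dimensional computation via a local Künneth decomposition.

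First I would observe that the statement is local on $\Xban$, so it suffices to check quasi-isomorphism on stalks. Since $D$ is reduced simple normal crossing, around any point of $\Xb$ we may choose local analytic coordinates $z_1,\dots,z_n$ on a polydisc $\Delta^n$ so that $D$ is cut out by $z_1\cdots z_k=0$ for some $0\le k\le n$. We may therefore assume $\Xb=\Delta^n$ and $D=\{z_1\cdots z_k=0\}$.

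Next, I would exploit the fact that both sides are external tensor products over the factors $\Delta$. On the sheaf side, $j_!\bC_X=(j_{0!}\bC_{\Delta^*})^{\boxtimes k}\boxtimes \bC_{\Delta}^{\boxtimes(n-k)}$, where $j_0\colon\Delta^*\hookrightarrow\Delta$. On the differentials side, using $D=\sum_{i=1}^k p_i^*\{0\}$ and the multiplicative structure of logarithmic forms, one has
\[
\Omega^{\bullet}_{\Xb}(\log D)\otimes\cO_{\Xb}(-D)\;\cong\;\bigl(\Omega^{\bullet}_{\Delta}(\log 0)(-\{0\})\bigr)^{\boxtimes k}\boxtimes(\Omega^{\bullet}_{\Delta})^{\boxtimes(n-k)},
\]
and the canonical map is the external tensor product of the canonical maps on each factor. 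The factors without logarithmic poles are handled by the holomorphic Poincar\'e lemma $\bC_{\Delta}\xrightarrow{\sim}\Omega^{\bullet}_{\Delta}$, so the problem reduces to showing that
\[
j_{0!}\bC_{\Delta^*}\;\longrightarrow\;\Omega^{\bullet}_{\Delta|\{0\}}
\]
is a quasi-isomorphism on the unit disc.

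In this one-dimensional case, the target complex is the two-term complex $z\cO_{\Delta}\xrightarrow{d}\cO_{\Delta}\,dz$, where I have used $\Omega^{1}_{\Delta}(\log 0)(-\{0\})=\cO_{\Delta}\cdot dz$. At a point $w\neq 0$ the complex restricts to the ordinary holomorphic de Rham complex of a disc, which is quasi-isomorphic to $\bC$, matching the stalk of $j_{0!}\bC_{\Delta^*}$. At the origin the stalk of $j_{0!}\bC_{\Delta^*}$ is zero, so I must show both $\ker d$ and $\coker d$ vanish at $0$: if $zf\in z\cO_{\Delta,0}$ satisfies $d(zf)=0$ then $zf$ is locally constant and vanishes at $0$, forcing $f=0$; and for any germ $g=\sum_{m\ge 0}a_m z^m\in\cO_{\Delta,0}$ the antiderivative $h=\sum_{m\ge 0}\frac{a_m}{m+1}z^{m+1}$ lies in $z\cO_{\Delta,0}$ with $dh=g\,dz$, giving surjectivity of $d$.

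The main obstacle is not the 1-dimensional verification itself but pinning down the Künneth reduction: one needs to make precise that the canonical comparison map is compatible with the external tensor product decomposition of both sides, so that a tensor product of stalkwise quasi-isomorphisms of bounded complexes of free modules stays a quasi-isomorphism after taking the completed (analytic) external product. Once this formal compatibility is in place, the proof is complete.
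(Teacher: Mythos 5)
Your argument is correct, but it takes a genuinely different route from the paper's. The paper never computes the twisted complex $\WXD{\bullet}$ in local coordinates: writing $D^{[a]}$ for the disjoint union of the $a$-fold intersections of components of $D$, it resolves both sides globally by the strata, via the exact sequences
$0\to j_!\bC_X\to\bC_{\Xb}\to(i_1)_*\bC_{D^{[1]}}\to(i_2)_*\bC_{D^{[2]}}\to\cdots$
and
$0\to\WXD{\bullet}\to\WX{\bullet}\to(i_1)_*\Omega^{\bullet}_{D^{[1]}}\to(i_2)_*\Omega^{\bullet}_{D^{[2]}}\to\cdots$,
and then only needs the ordinary holomorphic Poincar\'e lemma on $\Xb$ and on each smooth stratum $D^{[a]}$ (formally, an induction on the number of components). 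Your route localizes and reduces to the one-variable complex $z\cO_{\Delta}\to\cO_{\Delta}\,dz$, which you verify explicitly and correctly; the payoff is that one sees directly \emph{why} the twist by $\cO_{\Xb}(-D)$ converts $Rj_*$ into $j_!$ (the forced divisibility by $z$ kills the stalk at the origin), whereas the paper's argument hides this in the identification of $\WXD{\bullet}$ as the kernel of restriction to the strata. The one step you rightly flag -- that a completed analytic external product of stalkwise quasi-isomorphisms remains a quasi-isomorphism -- is genuinely not automatic (the stalk of $\cO_{\Delta^n}$ is not the tensor product of stalks of $\cO_{\Delta}$), but it is cheap to repair: your one-variable acyclicity at the origin is witnessed by the explicit continuous contracting homotopy $g\,dz\mapsto\int_0^z g$, and contracting homotopies, unlike abstract quasi-isomorphisms, do pass to completed tensor products. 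Equivalently, integrating in $z_1,\dots,z_k$ one variable at a time (always possible inside $z_1\cdots z_k\cO$, since no division by zero occurs) gives an explicit contraction of the stalk of $\WXD{\bullet}$ at any point of $D$; this is Deligne's classical local computation of $\Omega^{\bullet}_{\Xb}(\log D)\simeq Rj_*\bC_X$ adapted to the twist. With that substitution in place of the Künneth formalism, your proof is complete.
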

%\begin{proof}
%Write $D_1,\ldots, D_n$ for the irreducible components of $D$. We do again induction on $n$, the case $n=0$ being clear. %We use the strategy of the proof of Proposition \ref{prop-projformula}, doing induction on $n$.
%If $n=0$ the assertion is clear. Write $I=\{1,\ldots, n\}$ and
 %For each  $1\leq a\leq n$ define $D^{[a]}$ as in Proposition \ref{prop-projformula}. It is 
%\[
%D^{[a]}= \underset{\{i_1, \ldots, i_a\}\subset I}{\coprod}\; D_{i_1}\cap\cdots\cap D_{i_a},
%\]
%where  $\{i_1, \ldots, i_a\}\subset I$ range over all pairwise distinct indices. Then $D^{[a]}$ is the
%disjoint union of smooth varieties and we have a canonical finite morphism $i_{a}\colon D^{[a]}\to \Xb$. 
%\[i_{a}\colon D^{[a]}\to \Xb \qfor a \geq 1.\]
%The lemma follows then from the standard exact sequences
%\[
%0\to j_!\bC_{X} \to \bC_{\Xb} \to (i_1)_*\bC_{D^{[1]}}\to (i_2)_*\bC_{D^{[2]}} \to \cdots,
%\]
%\[
%0\to \WXD \bullet\to \WX \bullet \to (i_1)_*\Omega_{D^{[1]}}^{\bullet} \to 
%(i_2)_*\Omega_{D^{[2]}}^{\bullet} \to \cdots.
%\]
%\end{proof}
\begin{rem}
By Lemma \ref{lemma-Dred-qiso}, in characteristic $0$ the relative de Rham complex $\WXD \bullet$ is independent from the multiplicity of $D$. Lemma \ref{HSHc} gives then 
\[j_!\bC_X \xrightarrow{\sim} \WXDred \bullet \xleftarrow{\sim} \WXD \bullet \]
where $\WXDred \bullet$ is defined as $\WXD \bullet$ with $D$ replaced by $\Dred$.
\end{rem}
\subsubsection{}Let $Y, X, F, D$ be as in \ref{Setting_YFD} with $k=\bC$. Let $\tau\colon  Y-F {\hookrightarrow} Y$, $j\colon X\hookrightarrow \Xb = Y- F$. % and write
%\[
%X=Y-(F+D) \overset{j}{\hookrightarrow} \Xb= Y-F \overset{\tau}{\hookrightarrow} Y.
%\]
On  $Y_{\an}$ we have the relative de Rham complex $\WYFD \bullet$ and the truncated subcomplex $\FrWYFD$. Write $\WYFDred \bullet$ for the variant of $\WYFD \bullet$ with $D$ replaced by $\Dred$.
\begin{lem} We have a natural isomorphism in $D^b(Y_{\an})$
\begin{equation}\label{cyclemapDeligne.eq0.5}
\beta\colon\bR\tau_* j_!\bC_X \simeq \WYFDred \bullet.
\end{equation}
\end{lem}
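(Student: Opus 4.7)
The plan is to deduce the lemma from Lemma \ref{HSHc} by applying $\bR\tau_*$ and invoking a logarithmic refinement of Deligne's theorem.

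\textbf{Step 1.} Under assumption $(\bigstar)$, the reduced divisor $D_{red}$ is simple normal crossing on the smooth variety $\Xb = Y-F$, and $X = \Xb - D_{red}$. Applying Lemma \ref{HSHc} to the pair $(\Xb, D_{red})$ produces a quasi-isomorphism
\[
j_!\bC_X \xrightarrow{\sim} \Omega^\bullet_{\Xb|D_{red}}
\]
on $\Xb_{\an}$. Applying $\bR\tau_*$ gives $\bR\tau_* j_!\bC_X \simeq \bR\tau_* \Omega^\bullet_{\Xb|D_{red}}$.

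\textbf{Step 2.} By definition $\Omega^p_{\Xb|D_{red}} = \Omega^p_{\Xb}(\log D_{red})\otimes \cO_\Xb(-D_{red})$, and $\cO_\Xb(-D_{red}) = \tau^*\cO_Y(-D_{red})$ because $D_{red}$ is a (locally principal) Cartier divisor on $Y$ contained in $\Xb$. Since $\cO_Y(-D_{red})$ is locally free, the projection formula yields
\[
\bR\tau_* \Omega^p_{\Xb|D_{red}} \;\simeq\; \bigl(\bR\tau_* \Omega^p_{\Xb}(\log D_{red})\bigr)\otimes_{\cO_Y} \cO_Y(-D_{red}).
\]
Hence the lemma reduces to establishing, for every $p\geq 0$, the quasi-isomorphism
\[
\Omega^p_Y(\log F + D_{red}) \xrightarrow{\sim} \bR\tau_* \Omega^p_{\Xb}(\log D_{red}).
\]

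\textbf{Step 3.} This is the logarithmic analogue of Deligne's classical theorem $\Omega^\bullet_Y(\log F)\simeq \bR\tau_*\Omega^\bullet_{Y-F}$ for a simple normal crossing divisor $F$, and is proved by exactly the same local argument. The question is local on $Y_{\an}$: near a point where the SNC divisor $F + D_{red}$ is cut out by coordinates $z_1\cdots z_{r+s} = 0$ with $F = \{z_1\cdots z_r = 0\}$ and $D_{red} = \{z_{r+1}\cdots z_{r+s}=0\}$, one computes $\bR\tau_*$ in a polydisc by expanding meromorphic sections of $\Omega^p_{\Xb}(\log D_{red})$ in Laurent series in $z_1,\dots,z_r$ and applying a Koszul resolution associated with inverting $z_1,\dots,z_r$. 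Since the logarithmic coefficients along $D_{red}$ play the role of passive parameters in this computation, the same proof as Deligne's shows that $R^q\tau_*\Omega^p_{\Xb}(\log D_{red}) = 0$ for $q>0$ and that $\tau_* \Omega^p_{\Xb}(\log D_{red}) = \Omega^p_Y(\log F+D_{red})$.

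The main technical step is the local computation in Step 3; once it is in hand, Steps 1 and 2 assemble it into the desired natural isomorphism $\beta$ in $D^b(Y_{\an})$.
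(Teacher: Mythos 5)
Your Steps 1 and 2 follow the same route as the paper: its proof also starts from Lemma \ref{HSHc} and pushes the quasi-isomorphism $j_!\bC_X\simeq \WXDred\bullet$ forward along $\tau$. The gap is in Step 3. In the analytic category the termwise identity $\tau_*\Omega^p_{\Xb}(\log \Dred)=\Omega^p_Y(\log F+\Dred)$ is false: for a polydisc $V\subset Y$ meeting $F=\{z_1=0\}$ one has $\Gamma(V,\tau_*\cO_{\Xb})=\Gamma(V\setminus F,\cO)$, which contains $e^{1/z_1}$ and $1/z_1^2$, and likewise $\tau_*\Omega^p_{\Xb}(\log \Dred)$ contains sections such as $e^{1/z_1}\,dz_2$ and $dz_1/z_1^2$ that are nowhere near logarithmic along $F$. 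The Laurent-series/Koszul computation you describe is the one appropriate to an affine (Zariski) open immersion, where $\tau_*\cO$ is the localization $\cO[1/z_1\cdots z_r]$; analytically the Laurent tails are infinite, and even algebraically the answer is the localization, not the logarithmic sheaf. Consequently the ``reduction to a statement for each $p$'' at the end of your Step 2 cannot be completed: the comparison with $\WYFDred\bullet$ is not a degreewise identity of sheaves.

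What is true, and what the paper actually uses, is the following pair of statements. First, $R^q\tau_*\mathcal{F}=0$ for $q>0$ and any coherent $\mathcal{F}$ on $\Xb$, because $\tau$ is locally the inclusion of the complement of a hypersurface in a polydisc, hence locally Stein; this gives $\bR\tau_*\WXDred\bullet\simeq\tau_*\WXDred\bullet$ (so the vanishing half of your Step 3 is correct). Second, the inclusion of \emph{complexes} $\WYFDred\bullet\hookrightarrow\tau_*\WXDred\bullet$ is a quasi-isomorphism; this is the adaptation of Deligne's regularity lemma \cite[II, Lemme 6.9]{De1}, and it genuinely needs the de Rham differential: a closed form on $V\setminus F$ with log poles along $\Dred$ is, modulo exact forms, logarithmic along $F$, which is how the sections with essential singularities or higher-order poles along $F$ get killed on cohomology sheaves. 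Your remark that the logarithmic poles along $\Dred$ behave as passive parameters is the right idea for carrying out this adaptation, but the statement it must be applied to is the quasi-isomorphism of complexes, not a computation of each $R^q\tau_*\Omega^p_{\Xb}(\log\Dred)$ separately.
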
\begin{proof}
By Lemma \ref{HSHc} we have a functorial quasi-isomorphism
\[
j_!\bC_X \xrightarrow{\sim}\WXDred \bullet
\]
and pushing forward along $\tau$ one has the quasi-isomorphisms
\[
\bR\tau_* \WXDred \bullet \simeq \tau_* \WXDred \bullet \xleftarrow{\sim} \WYFDred \bullet
\]
where the canonical map $\WYFDred \bullet\hookrightarrow\tau_* \WXDred \bullet$ is a quasi-isomorphism by  using the same argument as \cite[II, Lemme 6.9]{De1}.
\end{proof}
\subsubsection{}For every integer $r\geq0$, write $\ZrX$ for the constant sheaf $(2 i \pi)^r\bZ\subset \bC$ on $\Xan$. We define the {\it relative Deligne complex} for the triple $(Y, F, D)$ as the object in the bounded derived category $D^b(Y_{\an})$ given by
\begin{equation}\label{relDelignecomplex2}
\bZYFDrde=\Cone[\bR\tau_*j_!\ZrX \;\oplus\;\FrWYFD \rmapo{\iota-\gamma} \bR\tau_*j_!\bC_X][-1],
\end{equation} 
where $\iota$ is induced by $\ZrX \hookrightarrow \bC_X$ and $\gamma$ is the composite
\[
\FrWYFD \to \WYFD \bullet \xrightarrow{\sim} \WYFDred \bullet\overset{\beta}{\simeq} \bR\tau_*j_!\bC_X,
\]
where the last isomorphism $\beta$ is defined in  \eqref{cyclemapDeligne.eq0.5}.
%We have then a natural distinguished triangle in $D^b(Y_{\an})$:
%\begin{equation}\label{relDelignecomplex3}
%\bZYFDrde \to \bR\tau_*j_!\ZrX \;\oplus\;\FrWYFD \to \bR\tau_*j_!\bC_X \overset{+}{\longrightarrow}.
%\end{equation} 
\begin{rem}We note that the map $\gamma$ is a priori defined only at the level of the derived category. However, after replacing $\WXDred \bullet$ with a functorial resolution $ \WXDred\bullet \to \cI^\bullet_{\Xb|D}$, we can lift it to an actual morphism of complexes.
\end{rem}
 
%\begin{proof}
%By Lemma \ref{lemma-Dred-qiso} we have a quasi-isomorphism
%$\WXD \bullet \simeq \WXDred \bullet$, where $\WXDred \bullet$ is defined as $\WXD \bullet$ with
%$D$ replaced by $\Dred$. Hence Lemma \ref{relDecpx.lem1} follows from Lemma \ref{HSHc}(1).
%\end{proof}

%Before going into this, we explain compatibility of $\reggDe$ with the de Rham and Betti cycle maps.

%\subsection{Fundamental class in relative Deligne cohomology}\label{fundamentalclassDe}
%By definition, $W$ is a closed subvariety of $X$ of codimension $r$ whose closure
%$\Wb$ in $Y$ satisfies the modulus condition \eqref{moduluscond}.
\subsubsection{}Let $W\in \CYFDr$ and write $\Wb$ for its closure in $Y$.  Since $\Wb \cap (Y-F) \cap D=\emptyset$, % As noted in after Definition \ref{def-CYFD}, the condition implies that
%\[\Wb \cap (Y-F) \cap D=\emptyset\]
 %and that $W$ is closed in $\Xb=Y-F$. Therefore,
 the localization exact sequence for $X\hookrightarrow \Xb$  gives the isomorphism  
\[ \H^i_{\Wb\cap \Xb}(\Xb_{\an},j_!\ZrX)\xrightarrow{\sim} \H^i_{W}(\Xan,\ZrX).\]
In particular, we have
\begin{equation}\label{purityBetti.eq1} 
\bH^i_{\Wb}(\Yan,\bR\tau_*j_!\ZrX)\simeq \H^i_{\Wb\cap \Xb}(\Xb_{\an},j_!\ZrX)\xrightarrow{\sim}  \H^i_{W}(\Xan,\ZrX),
\end{equation} 
%where the first isomorphism follows from Leray's spectral sequence.
so that purity for the Betti cohomology implies
\begin{equation}\label{purityBetti.eq2} 
\bH^i_{\Wb}(\Yan,\bR\tau_*j_!\ZrX) =0 \qfor i<2r,
\end{equation}
and that for $i=2r$ we have the Betti fundamental class
\begin{equation}\label{purityBetti.eq3} 
\clB r W\in \bH^{2r}_{\Wb}(\Yan,\bR\tau_*j_!\ZrX)=\H^{2r}_{W}(\Xan,\ZrX).
\end{equation}
obtained by the fundamental class of $W$ in $\H^{2r}_{W}(\Xan,\bZ)$ after multiplication by $(2i\pi)^r$. 
%On the other hand Theorem \ref{thm-purityDR} and \eqref{clOmega-eq8} provide the vanishing 
%\begin{equation}\label{purityDRan.eq1} 
%H^{i}_{\Wb}(\Yan,\WYFD {\geq r})0\qfor i<2r
%\end{equation}
%and the de Rham fundamental class
%\begin{equation}\label{purityDRan.eq2} 
%\clDR r W\in H^{2r}_{\Wb}(\Yan,\WYFD {\geq r}).
%\end{equation}
\begin{theo}\label{thm-purityDeligne}
For $W\in \CYFDr$   we have
\[
\bH^{q}_{\Wb}(\Yan,\bZYFDrde )=0 \qfor q<2r
\]
and there is a unique element, called the fundamental class of $W$ in the relative Deligne cohomology,
\begin{equation}\label{purityDeligne.eq1} 
 \clDe r W \in \bH^{2r}_{\Wb}(\Yan,\bZYFDrde )
\end{equation}
which maps to $(\clB r W,\clDR r W)$ under the (injective) map 
\begin{equation}\label{purityDeligne.eq2} 
\bH^{2r}_{\Wb}(\Yan,\bZYFDrde ) \to \bH^{2r}_{\Wb}(\Yan,R\tau_*j_!\ZrX) \oplus \bH^{2r}_{\Wb}(\Yan,\FrWYFD)
\end{equation}
arising from \eqref{relDelignecomplex2}, where $\clDR r W\in \bH^{2r}_{\Wb}(\Yan,\WYFD {\geq r})$ is the  de Rham fundamental class of Theorem \ref{thm-purityDR}.
\end{theo}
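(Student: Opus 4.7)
The plan is to apply the functor $\bR\Gamma_{\Wb}(\Yan,-)$ to the distinguished triangle \eqref{relDelignecomplex3}, which produces the long exact sequence
\[
\cdots \to \bH^{q-1}_{\Wb}(\Yan,\bR\tau_*j_!\bC_X) \to \bH^{q}_{\Wb}(\Yan,\bZYFDrde) \to \bH^{q}_{\Wb}(\Yan,\bR\tau_*j_!\ZrX) \oplus \bH^{q}_{\Wb}(\Yan,\FrWYFD) \xrightarrow{\iota - \gamma} \bH^{q}_{\Wb}(\Yan,\bR\tau_*j_!\bC_X) \to \cdots
\]
and to analyze each term in degrees $q \leq 2r$.

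The excision argument producing the identification \eqref{purityBetti.eq1} applies verbatim with $\bC_X$ in place of $\ZrX$, yielding
\[
\bH^{q}_{\Wb}(\Yan,\bR\tau_*j_!\ZrX) \cong \H^{q}_W(\Xan,\ZrX), \qquad \bH^{q}_{\Wb}(\Yan,\bR\tau_*j_!\bC_X) \cong \H^{q}_W(\Xan,\bC_X).
\]
Both groups vanish for $q<2r$ by classical purity of Betti cohomology, while the vanishing $\bH^q_{\Wb}(\Yan,\FrWYFD) = 0$ for $q < 2r$ is precisely \eqref{clOmega-eq8} applied with $c=r$. Feeding this into the long exact sequence immediately gives $\bH^q_{\Wb}(\Yan,\bZYFDrde)=0$ for $q<2r$, and in degree $q = 2r$ reduces it to the left-exact sequence
\[
0 \to \bH^{2r}_{\Wb}(\Yan,\bZYFDrde) \to \bH^{2r}_{\Wb}(\Yan,\bR\tau_*j_!\ZrX) \oplus \bH^{2r}_{\Wb}(\Yan,\FrWYFD) \xrightarrow{\iota - \gamma} \bH^{2r}_{\Wb}(\Yan,\bR\tau_*j_!\bC_X),
\]
thereby establishing the injectivity of \eqref{purityDeligne.eq2} and, a fortiori, the uniqueness of the claimed fundamental class.

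The remaining task is to show that $(\clB r W,\clDR r W)$ lies in the kernel of $\iota - \gamma$, equivalently that $\iota(\clB r W)$ and $\gamma(\clDR r W)$ agree in $\H^{2r}_W(\Xan,\bC_X)$. By \eqref{purityBetti.eq3}, $\iota(\clB r W)$ equals $(2\pi i)^r$ times the topological fundamental class of $W$. Chasing $\clDR r W$ through the quasi-isomorphism $\beta$ of \eqref{cyclemapDeligne.eq0.5}, the inclusion $\FrWYFD \hookrightarrow \WYFD \bullet$, and the injection \eqref{clOmega-eq10} specialized to $V=X$, one sees that $\gamma(\clDR r W)$ is the image of the classical complex-analytic de Rham fundamental class of $W$ in $X$. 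The required equality then amounts to the standard Betti--de Rham comparison for cycle classes, which can be checked locally by noting that the \v{C}ech representative $\dlog f_1 \wedge \cdots \wedge \dlog f_r$ from \eqref{affineDescrClass} integrates to $(2\pi i)^r$ over a product of small loops around the generic point of $W$. Together with the left-exact sequence above, this produces the unique class $\clDe r W \in \bH^{2r}_{\Wb}(\Yan,\bZYFDrde)$ mapping to $(\clB r W,\clDR r W)$.

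The main obstacle is the compatibility check in the last paragraph: one must carefully track the quasi-isomorphism $\beta$ and the filtration $\FrWYFD \subset \WYFD \bullet$ to identify $\gamma(\clDR r W)$ with the topological de Rham class and to verify that the normalizations match so that the Betti--de Rham comparison yields exactly the factor $(2\pi i)^r$ needed for cancellation in $\iota - \gamma$. Once this identification is in place, everything else is a formal consequence of the vanishing results already established for the Hodge and Betti terms.
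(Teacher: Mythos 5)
Your proposal is correct and follows essentially the same route as the paper: apply $\bR\Gamma_{\Wb}$ to the triangle \eqref{relDelignecomplex3}, use \eqref{clOmega-eq8} and Betti semi-purity to kill the terms in degrees $<2r$ and obtain injectivity of \eqref{purityDeligne.eq2}, then reduce existence to checking that $\clB r W$ and $\clDR r W$ have the same image in $\H^{2r}_W(\Xan,\bC_X)$ after restriction to $X$. The only cosmetic difference is that you sketch the final Betti--de Rham comparison via the local residue computation with $\dlog f_1\wedge\cdots\wedge\dlog f_r$, whereas the paper simply cites \cite[Remark 6.4(b)]{EV} for this standard fact.
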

\begin{proof}
From \eqref{relDelignecomplex2} we have the long exact sequence
\begin{multline*}
 \bH^{i-1}_{\Wb}(\Yan,\bR\tau_*j_!\bC)\to
\bH^{i}_{\Wb}(\Yan,\bZYFDrde )\to \\
\bH^{i}_{\Wb}(\Yan,\bR\tau_*j_!\ZrX) \oplus \bH^{i}_{\Wb}(\Yan,\FrWYFD) \to \bH^{i}_{\Wb}(\Yan,\bR\tau_*j_!\bC_X).
\end{multline*}
Recall that by \eqref{clOmega-eq8} we have the vanishing
\begin{equation}\label{purityDRan.eq1} 
\H^{i}_{\Wb}(\Yan,\WYFD {\geq r})=0\qfor i<2r,
\end{equation}
so that the first assertion follows from \eqref{purityDRan.eq1}  and \eqref{purityBetti.eq2}, proving at the same time the injectivity of \eqref{purityDeligne.eq2}.
To prove the second assertion, it suffices to show that $\clB r W$ and $\clDR r W$ have the same image in
$\bH^{2r}_{\Wb}(\Yan,R\tau_*j_!\bC)$, giving rise to a unique element in $\bH^{2r}_{\Wb}(\Yan,\bZYFDrde )$, that we denote $\clDe r W $. 

Note that pulling back along the inclusion $\iota_X \colon X\to Y$ gives rise to a commutative diagram
\[\xymatrix{\bH^{2r}_{\Wb}(\Yan,\bR\tau_*j_!\ZrX) \oplus \bH^{2r}_{\Wb}(\Yan,\FrWYFD)  \ar[r] \ar[d]^{\iota_X^{*}} & \bH^{2r}_{\Wb}(\Yan,\bR\tau_*j_!\bC_X) \ar[d]^{\simeq} \\
\H^{2r}_{W}(\Xan,\ZrX) \oplus \bH^{2r}_{W}(\Xan,\Omega^{\geq r}_X)  \ar[r] & \H^{2r}_{W}(\Xan,\bC_X).
}\]
where the right vertical map is an isomorphism by \eqref{purityBetti.eq1}.
%{\color{red} (Remark:I think the injectivity of the left vertical map (by \eqref{clOmega-eq10}) is not necessary to show the desired assertion.)}
%the left vertical map is obtained by the isomorphism \eqref{purityBetti.eq1} and by the map 
%\[ \bH^{2r}_{\Wb}(\Yan,\WYFD {\geq r}) \to \bH^{2r}_{W}(\Xan,\Omega^{\geq r}_{X}),\]  
%that is injective by \eqref{clOmega-eq10}.  
As noticed in \cite[Remark 6.4(b)]{EV}, the fundamental classes
\[\clB r W\in \H^{2r}_{W}(\Xan,\ZrX) \;\text{ and }\;\clDR r W\in \bH^{2r}_{W}(\Xan,\Omega^{\geq r}_{X})\]
have the same image in $\bH^{2r}_{W}(\Xan,\WX {\bullet})\simeq \H^{2r}_{W}(\Xan,\bC)$. The claim follows then from the fact that, by Theorem \ref{thm-purityOmega}, the class $\clDR r W\in \bH^{2r}_{\Wb}(\Yan,\WYFD {\geq r})$ maps, via the pullback along $\iota_X$, to the  class $\clDR r W$.
\end{proof}

\subsection{Deligne cohomology and product structures}
Let $Y$ be a smooth algebraic variety over $\bC$. Recall that for every $r\geq0$, the (analytic) Deligne complex of $Y$ is defined as 
\[\bZYde r \colon 0 \to \bZ(r)_Y\to \cO_Y\xrightarrow{d} \Omega^1_Y\to\ldots\to \Omega^{r-1}_Y \]
with $\bZ(r)_Y = \bZ (2\pi i)^r$ in degree $0$. Following \cite{EV}, we define a multiplication
\begin{equation}\label{eq.cupDeligne}\cup\colon \bZYde p \tensor \bZYde q \to \bZYde {p+q}\end{equation}
 by \begin{equation*}\label{eq.cupDeligne2}x \cup y = 
\left.\left\{\begin{gathered}
 xy \; \\ 
x\wedge dy \\
\end{gathered}\right.\quad
\begin{aligned}
&\text{if $\deg x =0$,}\\
&\text{if $\deg x \neq 0, \deg y = q$,}  
\end{aligned}\right.
\end{equation*}
and $0$ otherwise (the degree refers to the degree in the complex). The cup product is associative, compatible with the differential, satisfies the Leibniz rule, and equips the cohomology $\bigoplus_{p,q} \H^p_\mathcal{D}(X, \bZ(q))$ with a ring structure.
\subsubsection{}We introduce a relative version $\bZXDrde$ of the Deligne complex on $\Xb$:
\begin{equation}\label{relDelignecomplex}
\bZXDrde\colon j_!\ZrX  \to \cO_{\Xb}(-D) \to \WXD 1  \to \cdots\to \WXD  {r-1},
\end{equation} 
where $j_!\ZrX$ is put in degree zero and the map $j_!\ZrX  \to \cO_{\Xb}(-D)$ is  obtained by adjunction from the canonical inclusion $\ZrX\to \cO_X$.
The complex $\bZXDrde$ is naturally a subcomplex of the classical Deligne complex $\bZ(r)^{\mathcal{D}}_{\ol{X}}$ on $\ol{X}$.
The hypercohomology groups
\[
 \HDeXDr q = \bH^{q}(\Xb_{\an},\bZXDrde) %\reggDe\colon 
\]
are called the relative Deligne cohomology groups for the pair $(\Xb,D)$. By Lemmas \ref{HSHc} and \ref{lemma-Dred-qiso}, the definition of $\bZXDrde$ implies the following
\begin{lem}\label{relDecpx.lem1}Let $r\geq 0$ and let  $\FrWXD$ be the $r$-th (brutally) truncated subcomplex of $\WXD \bullet$. Then there is a natural distinguished triangle in $D^b(\Xb_{\an})$
\[
\bZXDrde \to j_!\ZrX \;\oplus\;\FrWXD \to j_!\bC_X \overset{+}{\longrightarrow}.
\]
\end{lem}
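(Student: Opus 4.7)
The plan is to rewrite both sides of the desired triangle as mapping cones in $D^b(\Xb_{\an})$ and identify them via the standard homotopy pullback formalism.

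First, I would observe that, by inspection of its definition \eqref{relDelignecomplex}, the complex $\bZXDrde$ is nothing but the shifted mapping cone
\[
\bZXDrde \simeq \Cone\bigl(\, j_!\ZrX \xrightarrow{\pi} K^\bullet \,\bigr)[-1],
\]
where $K^\bullet := [\cO_{\Xb}(-D) \to \WXD 1 \to \cdots \to \WXD {r-1}]$ is placed in degrees $0,\dots,r-1$, and $\pi$ is the morphism of complexes whose degree $0$ component is the adjunction map $j_!\ZrX \to \cO_{\Xb}(-D)$ (coming from $\ZrX \hookrightarrow \cO_X$) and which vanishes in higher degrees. This already yields a distinguished triangle
\[
\bZXDrde \to j_!\ZrX \xrightarrow{\pi} K^\bullet \xrightarrow{+}.
\]

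Next, the brutal truncation of $\WXD\bullet$ gives a short exact sequence of complexes $0 \to \FrWXD \to \WXD\bullet \to K^\bullet \to 0$, hence a distinguished triangle $\FrWXD \to \WXD\bullet \to K^\bullet \xrightarrow{+}$. Composing with the quasi-isomorphism $j_!\bC_X \xrightarrow{\sim} \WXD\bullet$ supplied by Lemma \ref{HSHc} (after first reducing $\WXD\bullet$ to $\WXDred\bullet$ via Lemma \ref{lemma-Dred-qiso}), one gets a distinguished triangle
\[
\FrWXD \xrightarrow{\gamma} j_!\bC_X \xrightarrow{q} K^\bullet \xrightarrow{+}
\]
in $D^b(\Xb_{\an})$, where $\gamma$ is the composite appearing in \S\ref{reldelcpx}. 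An inspection in degree $0$ shows that the composite $j_!\ZrX \xrightarrow{\iota} j_!\bC_X \xrightarrow{q} K^\bullet$ agrees with the map $\pi$ of the previous paragraph, both being the extension by zero of the canonical map $j_!\ZrX \to \cO_{\Xb}(-D)$.

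Finally, I would invoke the standard Mayer--Vietoris triangle in a triangulated category: given maps $A \xrightarrow{f} C$ and $B \xrightarrow{g} C$, the object $P := \Cone(A \oplus B \xrightarrow{f-g} C)[-1]$ fits into a distinguished triangle
\[
P \to A \xrightarrow{\bar f} \Cone(g) \xrightarrow{+},
\]
where $\bar f$ is the composite $A \xrightarrow{f} C \to \Cone(g)$; this follows from the octahedral axiom applied to the composition $P \to A \oplus B \to A$. Applied to $f = \iota$ and $g = \gamma$, together with the quasi-isomorphism $\Cone(\gamma) \simeq K^\bullet$ identifying $\bar f$ with $\pi$, the resulting triangle coincides with the defining triangle of $\bZXDrde$ established above. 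Using the axiom TR3 one obtains an isomorphism $P \simeq \bZXDrde$ in $D^b(\Xb_{\an})$ compatible with the maps to $j_!\ZrX$ and $\FrWXD$, producing the desired triangle
\[
\bZXDrde \to j_!\ZrX \oplus \FrWXD \to j_!\bC_X \xrightarrow{+}.
\]
The delicate point will be making the zig-zag $\FrWXD \hookrightarrow \WXD\bullet \xleftarrow{\sim} j_!\bC_X$ defining $\gamma$ into an honest morphism so that the composites with $q$ and $\pi$ can be compared on the nose; passing to the functorial flasque (Godement) resolution of $\WXD\bullet$ mentioned in the remark at the end of \S\ref{reldelcpx} is exactly what is needed to make this manipulation rigorous.
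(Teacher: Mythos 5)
Your argument is correct and is exactly the expansion of what the paper leaves implicit: the paper's proof consists of the single assertion that the lemma follows from the definition \eqref{relDelignecomplex} together with Lemmas \ref{HSHc} and \ref{lemma-Dred-qiso}, and your cone/homotopy-pullback bookkeeping (identifying $\bZXDrde$ with $\Cone(j_!\ZrX\oplus\FrWXD \to j_!\bC_X)[-1]$, i.e.\ with the triple-version complex \eqref{relDelignecomplex2} for $F=\emptyset$) is precisely the missing verification, using exactly those two lemmas. Your closing caveat about realizing the zig-zag $\FrWXD\hookrightarrow\WXD\bullet\xleftarrow{\sim}\WXDred\bullet\xleftarrow{\sim}j_!\bC_X$ by an honest morphism matches the remark the paper itself makes after \eqref{relDelignecomplex3}.
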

\begin{exam}\label{ExDe}For $r=0$ we have $\bZ(0)_{\Xb|D} = j_! \bZ_{X}$, so that $\H^q_{\cD}(\Xb|D, \bZ(0)) = \H^q_{c}(X, \bZ)$. % is nothing but Betti cohomology with compact support.
	 
For $r=1$, let $\cO_{\Xb}^{\times}$ (resp. $\cO_{D}^\times$) denote the sheaf of invertible holomorphic functions on $\Xb$ (resp. on $D$) and let $\cO_{\Xb|D}^\times$ be the kernel of the restriction map
\[1\to \cO_{\Xb|D}^\times \to \cO_{\Xb}^{\times} \to {\iota_{D}}_*\cO_{D}^{\times} \to 1,.\]
{ where $\iota_D:D\to \Xb$ is the closed immersion.}
Then the complex $\bZ(1)_{\Xb|D}$ is  quasi isomorphic to $\cO_{\Xb|D}^\times[-1]$ via the exponential map. % In particular, one has
%\[\H^2_{\cD}(\Xb|D, \bZ(1)) \cong \operatorname{Pic}(\Xb|D).\] 
\end{exam}
Restricting the cup product \eqref{eq.cupDeligne} to $\bZXDde p$ gives a module structure 
\[\bZXDde p \tensor \bZXbde q \to \bZXDde {p+q} \]
that we read in cohomology as
\begin{equation}\label{eq.ModuleProductDeligne}\H^p_\mathcal{D}(\Xb|D, \bZ(q)) \tensor\H^{p'}_\mathcal{D}(\Xb, \bZ(q')) \to \H^{p+p'}_\mathcal{D}(\Xb|D, \bZ(q+q')).\end{equation}

\subsection{The construction of the regulator map}\label{subsec:construction-reg-Deligne}In this section we construct a cycle map in the derived category $D^-(\Xban)$ of bounded above complexes of analytic sheaves on $\Xb$
\begin{equation}\label{cyclemapDeligne.eq1}
\phi_\cD\colon \ep^*\bZXDr \to \bZXDrde,
\end{equation}
where $\bZXDr$ is the relative motivic complex of \eqref{eq.ZXD} and 
$\ep\colon \Xban\to \Xbzar$ is the map of sites.
 The induced maps
\begin{equation*}\label{cyclemapDeligne.eq2}
\regDe q r: \HMXDr q \to \HDeXDr q
\end{equation*}
are called the {\it regulator maps to relative Deligne cohomology.} 

In the notations of Section \ref{cyclecomplex} and \ref{Setting-Constructioncycle}, we write again for $n\geq 0$
\[
X_n = X\times \cubb n {\hookrightarrow} \Xb_n=\Xb \times \cubb n 
{\hookrightarrow} Y_n =\Xb \times \Pp n \supset D_n=D \times \Pp n.
\] 
Let $\Fn$ be the divisor $\Xb\times (\Pp n -\cubb n)$, $D_n$  the divisor $D\times \Pp n$ on $Y_n$ and  $\pi_n:Y_n \to \Xb$  the projection. 
Let $\bZYFDnrde$ be the sheaf on $(Y_n)_{\an}$ defined as $\bZYFDrde$ for the triple $(Y,F,D)=(Y_n,F_n,D_n)$. 
The analogue of Lemma \ref{lem-HIOmega} is given by
\begin{lem}\label{lem-HIZde}
Let $i$ be a positive integer.  The natural map 
\[
\bZXDrde \xrightarrow{\sim} \bR(\pi_n)_* \bZYFDnrde.
\]
is an isomorphism in $D^-(\Xban)$.
\end{lem}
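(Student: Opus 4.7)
The plan is to check the stated isomorphism piece by piece, using the defining distinguished triangle \eqref{relDelignecomplex3} of $\bZYFDnrde$. Applying $\bR(\pi_n)_*$ produces a distinguished triangle in $D^-(\Xban)$, and comparing it with the analogous triangle on $\Xb$ furnished by Lemma \ref{relDecpx.lem1}, namely
\[
\bZXDrde \to j_!\bZ(r)_X \oplus \FrWXD \to j_!\bC_X \xrightarrow{+},
\]
the five lemma reduces the claim to producing compatible isomorphisms between each of the three corresponding pairs of terms.

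For the relative de Rham term, Lemma \ref{lem-HIOmega} already gives the degreewise quasi-isomorphism $\WXD i \xrightarrow{\sim} \bR(\pi_n)_*\Omega^i_{Y_n|D_n}(\log F_n)$ for every $i$. A standard Cartan--Eilenberg resolution argument promotes this to a quasi-isomorphism $\FrWXD \xrightarrow{\sim} \bR(\pi_n)_*\FrWYFDn$ of truncated complexes.

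For the two Betti terms, observe that $\bR(\pi_n)_*\bR\tau_{n,*} = \bR(\overline{\pi}_{\Xb,n})_*$, where $\overline{\pi}_{\Xb,n}\colon \Xb_n = \Xb \times \cubb n \to \Xb$ is the projection. Consider the excision triangle
\[
(j_n)_!\bZ(r)_{X_n} \to \bZ(r)_{\Xb_n} \to (i_n)_*\bZ(r)_{D\times\cubb n} \xrightarrow{+}
\]
on $\Xban$, where $i_n\colon D\times\cubb n \hookrightarrow \Xb_n$ is the closed complement of $X_n$. Since $\cubb n = (\P^1\setminus\{1\})^n$ is analytically contractible, the trivial fibrations $\Xb\times\cubb n \to \Xb$ and $D\times\cubb n \to D$ are fiberwise homotopy equivalences, so smooth base change yields $\bR(\overline{\pi}_{\Xb,n})_*\bZ(r)_{\Xb_n} \simeq \bZ(r)_{\Xb}$ and $\bR(\overline{\pi}_{\Xb,n})_*(i_n)_*\bZ(r)_{D\times\cubb n} \simeq i_*\bZ(r)_D$. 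Pushing forward the excision triangle and comparing with the analogous triangle $j_!\bZ(r)_X \to \bZ(r)_{\Xb} \to i_*\bZ(r)_D$ on $\Xb$, we obtain $\bR(\overline{\pi}_{\Xb,n})_*(j_n)_!\bZ(r)_{X_n} \simeq j_!\bZ(r)_X$. The same argument with $\bC$ in place of $\bZ(r)$ handles the third term.

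The main subtlety is verifying that these three isomorphisms are compatible with the structural maps in the defining triangles of the Deligne complexes, so that they do assemble into the natural comparison morphism of the statement. This compatibility follows from the naturality of the adjunction $\mathrm{id} \to \bR(\pi_n)_*\pi_n^{-1}$ and the base-change identity $\pi_n^{-1}(j_! F) \simeq (j_n)_!\pi_n^{-1} F$ valid for the open immersion $j$, which together identify the induced map on each term with the one constructed piecewise above.
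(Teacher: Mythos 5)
Your proof is correct and follows essentially the same route as the paper's: compare the defining distinguished triangles of the two Deligne complexes term by term, handling the truncated de Rham part via Lemma \ref{lem-HIOmega} and the $j_!\bZ(r)_X$ and $j_!\bC_X$ terms via homotopy invariance of Betti cohomology for the contractible fibre $\cubb n$. The extra details you supply (the excision triangle for the Betti terms, the filtration argument for the truncated complex, and the compatibility check) are exactly the points the paper leaves implicit.
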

\begin{proof}
By Lemma \ref{relDecpx.lem1} and \eqref{relDelignecomplex2}, the statement follows from the natural isomorphism
\[
 \WXlogDD i \to \bR(\pi_n)_*\Omega^{i}_{Y_n|D_n}(\log F_n) \qfor i>0,
\]
given by Lemma \ref{lem-HIOmega}, and from the isomorphism
\begin{equation}\label{HI-Betti}
j_!\bZ_X \xrightarrow{\sim} \bR(\tilde{\pi}_n)_* (j_n)_!\bZ_{X_n} \qwith  \tilde{\pi}_n : \Xb_n=\Xb\times \cubb n \to \Xb,
\end{equation}
which follows from the homotopy invariance for the Betti cohomology, where $j_n\colon X_n \to \Xb_n$ denotes the open immersion.
\end{proof}\medskip
%\begin{rem}The isomorphism \eqref{HI-Betti} is the analogue for Betti cohomology of Lemma \ref{lem-HIOmega} and \ref{lem-HIZde}.
	%\end{rem}
	The method developed in \ref{cyclemapDR} applies, \textit{mutatis mutandis}, to this setting,  using the fundamental class in relative Deligne cohomology constructed in Theorem \ref{thm-purityDeligne} and Lemma  \ref{lem-HIZde} in place of Lemma \ref{lem-HIOmega}. This gives rise to the natural map \eqref{cyclemapDeligne.eq1}. The same argument (this time using the fundamental class \eqref{purityBetti.eq3} in Betti cohomology and \eqref{HI-Betti} in place of Lemma \ref{lem-HIOmega}) provides a cycle map in  $D^-(\Xban)$
	\begin{equation}\label{cyclemapBetti.eq1}
	\phi_B \colon \ep^*\bZXDr \to j_! \bZ(r)_X,
	\end{equation}
	whose induced maps in cohomology will be called regulator maps to Betti cohomology.
%The method developed in \ref{cyclemapDR} applied to the fundamental classes in Betti \eqref{purityBetti.eq3} and Deligne \eqref{purityDeligne.eq1}  cohomology provide the cycle maps
%\[
%\reggB: \ep^*\bZXDr \to j_!\ZrX,\quad  \reggDe: \ep^*\bZXDr \to \bZXDrde. %\quad \reggDR: \ep^*\bZXDr \to \WXD {\geq r}
%\]
\begin{rem}By construction, we have the following commutative square of distinguished triangles in $D^-(\Xban)$
\begin{equation*}\label{clrelDeDRB}
\xymatrix{
\ep^*\bZXDr \ar[r]^{\hskip -30pt\Delta}\ar[d]^{\reggDe} 
& \ep^*\bZXDr \oplus \ep^*\bZXDr \ar[r]^{\hskip 10pt \delta}\ar[d]^{\reggB\oplus\reggDR} 
& \ep^*\bZXDr \overset{+}{\longrightarrow}\ar[d]^{\reggB} \\
\bZXDrde \ar[r] & j_!\ZrX\oplus \FrWXD \ar[r]  & j_!\bC_X\overset{+}{\longrightarrow}\\
}
\end{equation*}
where $\Delta$ is the diagonal and $\delta$ is the difference of identity maps, and the lower distinguished triangle
comes from Lemma \ref{relDecpx.lem1}.
\end{rem}

%\newpage
\section{Infinitesimal Deligne cohomology and the additive dilogarithm}\label{Sec:addDilog}
\def\Cepsm{\bC[\varepsilon]_m}
\def\Xepsm{X[\varepsilon]_m}
\def\depss{\frac{\mathrm{d}\varepsilon}{\varepsilon}}
\def\OmegaXinfm{\Omega^\bullet_{\Xepsm}}

\subsection{The infinitesimal complex}\label{SettinginfDeligne}We keep working as in the previous section over the field of complex numbers $\bC$. Let $X$ be a smooth algebraic variety over $\bC$. %Let $\cO_X$ be the analytic sheaf of holomorphic functions on $X$ and $\Omega^i_X$ the sheaf of holomorphic $i$-th forms on $X$.
 For $m\geq 2$, we denote by $\Cepsm$ the truncated polynomial ring $\bC[\varepsilon]/(\varepsilon^m)$.
The $m$-th \emph{infinitesimal de Rham} complex $(\OmegaXinfm, d)$ of $X$ is the complex of analytic sheaves on $X$
\begin{equation}\label{infdeRham}\Omega^i_{\Xepsm} = 
\left.\left\{\begin{gathered}
 \cO_X\tensor_\bC \Cepsm \; \\ 
 \Omega_X^i\tensor_\bC \Cepsm \oplus (\Omega^{i-1}_X \tensor_\bC \varepsilon \Cepsm) \wedge \depss \\
\end{gathered}\right.\quad
\begin{aligned}
&\text{if $i = 0$,}\\
&\text{if $i\geq 1$,}  
\end{aligned}\right.
\end{equation}
with differentials $d\colon \Omega^i_{\Xepsm} \to \Omega^{i+1}_{\Xepsm}$ given by linear extension of the formulas
\[ d(\omega \tensor \varepsilon^r)  = (d\omega)\tensor {\varepsilon^r} + (-1)^i r \omega \tensor \varepsilon^r \wedge \depss \quad \text{ for } \omega \in \Omega^i_X\]
\[d(\omega \tensor (\varepsilon^r \wedge \depss)) = (d\omega) \tensor (\varepsilon^r \wedge \depss) \quad \text{ for } \omega \in \Omega^{i-1}_X.\]
 %We think of the sections of $(\Omega^{i-1}_X \tensor_\bC \varepsilon \Cepsm) \wedge \depss$ as $i$-th  differential forms with one $d\varepsilon$.
As $\cO_X\tensor_\bC \bC[\varepsilon]_m$-module, $\Omega^i_{\Xepsm}$  is generated by symbols
\[\varepsilon \omega_1 \wedge \depss, \ldots, \varepsilon^{m-1}\omega_{m-1} \wedge \depss, \quad \text{with } \omega_i \in \Omega^{i-1}_X\]
\[ \nu_0, \varepsilon \nu_1, \ldots, \varepsilon^{m-1}\nu_{m-1}, \quad \text{with } \nu_i \in \Omega^{i}_X\]
and relations $\varepsilon^i(\omega_j\varepsilon^{j}\wedge \depss) = \varepsilon^{i+j}\omega_j\wedge \depss, \omega_m\varepsilon^{m}\wedge \depss = 0$.  

\subsubsection{}Write $X[t]$ for the product $X\times \mathbb{A}^1$. 
%, $\pi\colon X[t]\to X$ for the first projection and $q\colon X[t] \to \mathbb{A}^1$ for the second projection.
For an $\cO_X$-module $\mathcal{M}$, write $\mathcal{M}[t]$ for the pullback on $X[t]$. The de Rham complex of $X[t]$ can be then written as
\[\Omega^\bullet_{X[t]} = \Omega^\bullet[t] \oplus \Omega^{\bullet -1}_X[t]dt.\]
Let $D_m$ denote the effective Cartier divisor on $X[t]$ given by the closed subscheme $X\times_\bC \Spec{\Cepsm}$. Let $\Omega^\bullet_{X[t]|D_m}$ denote the relative de Rham complex of the pair $(X[t], D_m)$, i.e. 
\[ \Omega^\bullet_{X[t]|D_m} = \Omega^\bullet_{X[t]}(\log |D_m|)(-D_m) = t^m(\Omega^\bullet[t] \oplus \Omega^{\bullet -1}_X[t]\frac{dt}{t} ). \]
It is a subcomplex of $\Omega^\bullet_{X[t]}$. According to \eqref{infdeRham}, we have in  $D^b(X_{\an})$
\[ \Cone[ \Omega^\bullet_{X[t]|D_m} \to \Omega^\bullet_{X[t]}] \cong \Omega^\bullet_{\Xepsm}.\]
\subsubsection{} Let $1\leq \nu \leq m-1$. We can define a filtration on $\OmegaXinfm$ by 
\[ F^{\nu} \OmegaXinfm = \Omega^\bullet_X\tensor \varepsilon^\nu \Cepsm \oplus (\Omega^{\bullet-1}_X \tensor \varepsilon^\nu \Cepsm )\wedge \depss.\]
%It's easy to check that $Fil^{\nu} \OmegaXinfm$ is a subcomplex of $\OmegaXinfm$.
\begin{lem}\label{SplittingInfCpx}In the above notations, one has that $F^{\nu} \OmegaXinfm$ is a subcomplex of $\OmegaXinfm$ and there is 	 a canonical splitting 
	\[ \OmegaXinfm = \Omega^\bullet_X \oplus F^1\OmegaXinfm\]
	as direct sum of complexes.
	\end{lem}
\def\bZXdeinf#1{\bZ(#1)^\mathcal{D}_{X[\varepsilon]_m}}
\subsubsection{}We define the infinitesimal Deligne complex of $X$ as 
\[\bZXdeinf r = i_*\bZ(r) \to \cO_X\tensor_\bC \Cepsm \to \Omega^1_{\Xepsm}\to\ldots\to \Omega^{r-1}_{\Xepsm}\]
that we can see alternatively as complex on $X[t]$ or on $X$. 
The filtration on $\OmegaXinfm$ induces a filtration $F^\nu \bZXdeinf r $ that satisfies
\begin{equation}\label{eq.filDelcpx} F^\nu \bZXdeinf r = F^\nu \Omega^r_{\Xepsm}[-1] \qfor \nu \geq 1,\end{equation}
and we have, by Lemma \ref{SplittingInfCpx}, a canonical splitting 
\[\bZXdeinf r = \bZ(r)^\mathcal{D}_X\oplus F^1  \bZXdeinf r =\bZ(r)^\mathcal{D}_X\oplus F^1 \Omega^r_{\Xepsm}[-1] \]
\begin{lem}\label{lem.Delinfsplit} We have a quasi isomorphism
    \[ \bZXdeinf r = \Cone[ \bZ(r)^{\mathcal{D}}_{X[t]|D_m} \to \bZ(r)^\mathcal{D}_{X[t]}] \]
 as complexes of sheaves on $X[t]$.
    \end{lem}
By Lemma \ref{lem.Delinfsplit} together with homotopy invariance for Deligne cohomology, we  have a split exact sequence 
\[ 0\to \H_\mathcal{D}^i(X, \bZ(r)) \to \mathbb{H}^i(X,  \bZXdeinf r) \to \H^{i+1}_\mathcal{D}(X[t] | D_m , \bZ(r))\to 0\]
that using \eqref{eq.filDelcpx} gives the isomorphism 
\begin{equation}\label{eq.Delignecpx-Fil}\mathbb{H}^{i}(X, F^1\Omega^{<r}_{X[\varepsilon]_m}) \xrightarrow{\cong}\H^{i+1}_\mathcal{D}(X[t] | D_m , \bZ(r))  \end{equation}
\subsubsection{}Let $\bZ(r)_{X[t]|D_m}$ be the relative motivic complex of the pair $(X[t], D_m)$. The cycle map
\[ \phi_\mathcal{D} \colon \epsilon^*\bZ(r)_{X[t]|D_m} \to \bZ(r)^\mathcal{D}_{X[t] | D_m}, \quad \text{ with }\epsilon^*\colon D^b( X[t]_{\text{Zar}}) \to D^b(X[t]_{\text{an}}) \]
constructed in \ref{subsec:construction-reg-Deligne} gives rise to regulator maps
\[ \phi_\mathcal{D}^{i, r}\colon \H^{i}_\mathcal{M}(X[t] | D_m, \bZ(r)) \to \H^{i}_\mathcal{D}(X[t] | D_m , \bZ(r)).\]
Composing this with the canonical map from additive higher Chow groups to relative motivic cohomology gives
\[\varphi^{2r-i, r}_\mathcal{D}\colon T\CH^r(X, i+1; m ) = \CH^r(X[t] | D_m, i )\to \H^{2r-i}_\mathcal{D}(X[t] | D_m , \bZ(r)), \]
that using \eqref{eq.Delignecpx-Fil} finally provides the map (that we denote in the same way) to the cohomology of the $F^1$-piece of the truncated infinitesimal de Rham complex
\[\varphi^{2r-i+1, r}_\mathcal{D}\colon  T\CH^r(X, i; m )\to \mathbb{H}^{2r -i}(X, F^1\Omega^{<r}_{X[\varepsilon]_m}). \]
Particularly interesting is the case of $0$-cycles. The infinitesimal regulator from additive higher Chow groups reads, in this case, as 
\[\phi_X^{n,n;m}\colon T\CH^n(X,n;m)\to \mathbb{H}^{n-1}(X, F^1\Omega^{<n}_{X[\varepsilon]_m})\]
that for $n=2$ is identified with
\[\phi_X^{2,2;m}\colon T\CH^2(X, 2; m )\to \mathbb{H}^1\Big(X, {\big[}\cO_X \tensor \varepsilon \Cepsm \xrightarrow{d}  
\Omega^1_X\tensor \varepsilon \Cepsm \oplus  \cO_X \tensor \varepsilon \Cepsm \wedge \frac{d\varepsilon}{\varepsilon}{\big]}\Big). \]
This regulator generalizes  Bloch-Esnault additive regulator map \cite[5]{BE2}, as shown by the computation in the next section.

\begin{rem} For $X$ affine, a direct computation shows that
\[ \mathbb{H}^{n-1}(X, F^1\Omega^{<n}_{X[\varepsilon]_m}) \cong \H^0(X,\Omega^{n-1}_{X/\mathbb{C}} \tensor \varepsilon \mathbb{C}[\varepsilon]_m )\]
so that the regulator map $\phi^{n,n;m}_X$ induces a natural map
\[\phi_X\colon T\CH^n(X, n;m) \to \H^0(X,\Omega^{n-1}_{X/\mathbb{C}} \tensor \varepsilon \mathbb{C}[\varepsilon]_m ) \]
that we can see as Hodge-theoretic incarnation of Bloch-Esnault-R\"ulling regulator map
\[ T\CH^n(\mathbb{C}(X), n;m) \to \mathbb{W}_{m-1} \Omega^{n-1}_{\mathbb{C}(X)},\]
 where $\mathbb{C}(X)$ is the function field of $X$.
\end{rem}
\subsection{The Bloch-Esnault additive dilogarithm} We keep the  notations of \ref{SettinginfDeligne}. The module structure of the relative Deligne cohomology groups over the usual Deligne cohomology groups of \eqref{eq.ModuleProductDeligne} extends to a product 
\[\bZ(p)_{X}^\mathcal{D}\tensor F^1\Omega^{<q}_{\Xepsm} [-1] \to F^1\Omega^{<(p+q)}_{\Xepsm}[-1] \quad \text{ in } D^{b}(X).\]
For $p=q=1$, we have 
\[\bZ(1)^\mathcal{D}_X\tensor [0\to \varepsilon \cO_X \tensor \Cepsm] \to [\varepsilon \cO_X \tensor \Cepsm \xrightarrow{d} \varepsilon \Omega^1_X\tensor \Cepsm \oplus \varepsilon \cO_X \tensor \Cepsm \wedge \frac{d\varepsilon}{\varepsilon}][-1].\]
We can write the product in cohomology explicitly. Let $f\in \Gamma(X, \cO_X^\times)$ be a global section of the sheaf of invertible holomorphic functions on $X$ and let $\{U_\alpha\}$ be an open covering of $X$ such that the logarithm of $f_\alpha = f_{| U_\alpha}$ is defined, denoted $\log_\alpha f$. Mapping $f$ to the analytic \v{C}ech cocycle
\[(\frac{1}{2\pi \sqrt{-1}} { (} \log_\beta f - \log_\alpha f { )},  \frac{1}{2\pi \sqrt{-1}}\log_\alpha f)  \]
gives an explicit inverse to the exponential map $\exp\colon \H_\mathcal{D}^1(X, \bZ(1)) \xrightarrow{\cong} \H^0(X,\cO_X^\times)$.  
Suppose now that $m = 2$, so that a section of $\mathbb{H}^1(X, F^1\Omega^{ <1}_{\Xepsm}[-1])$ is just of the form $\varepsilon a$, for $a\in \Gamma(X, \cO_X)$. Using the same covering $\{U_\alpha\}$ of $X$, we can represent elements of the cohomology group
\[\mathbb{H}^1\Big(X, [\varepsilon \cO_X \tensor \Cepsm \xrightarrow{d} \varepsilon \Omega^1_X\tensor \Cepsm \oplus \varepsilon \cO_X \tensor \Cepsm \wedge \frac{d\varepsilon}{\varepsilon}]\Big)\ni (\varepsilon g_{\alpha\beta},  \varepsilon \omega_\alpha + \epsilon h_\alpha  \frac{ d\varepsilon}{\varepsilon})\]
for
\[g_{\alpha\beta} \in \Gamma(U_{\alpha \beta}, \cO_X),\quad \omega_\alpha\in \Gamma(U_\alpha, \Omega^1_X)\qaq h_\alpha\in \Gamma(U_\alpha, \cO_X),\]
subject to the obvious cocycle condition. In particular, note that
\[ d g_{\alpha \beta}  = \omega_\beta - \omega_\alpha = d (h_\beta - h_\alpha).\]
The explicit description of the product in Deligne cohomology presented in \eqref{relDelignecomplex} gives then
\begin{equation}\label{eq.ExplicitcupProduct}
\begin{aligned}
 \H_\mathcal{D}^1(X, \bZ(1)) \otimes  \mathbb{H}^1(X, &F^1\Omega^{ <1}_{\Xepsm}[-1]) \xrightarrow{\cup} \mathbb{H}^2(X, F^1\Omega^{ <2}_{\Xepsm}[-1]) \\
 (f, a) = (\frac{1}{2\pi \sqrt{-1}}(\log_\beta f - \log_\alpha f, \log_\alpha f), \varepsilon a ) &\mapsto f\cup a  \\ & = \varepsilon \frac{1}{2\pi \sqrt{-1}}\big( a(\log_\beta f - \log_\alpha f ),(\log_\alpha f) da +  (a \log_\alpha f)\frac{d\varepsilon}{\varepsilon}\big) 
\end{aligned}
\end{equation}
since $d(\varepsilon a ) = \varepsilon d a + \varepsilon a \frac{d\varepsilon}{\varepsilon}$, where the right hand side of the equality is a \v{C}ech cocycle representing an element  of $\mathbb{H}^1(X, F^1\Omega^{ <2}_{\Xepsm})$.

For $a, f \in \cO_X^\times$, we can consider the element $\{a,f\} \in T\CH^2(X, 2; 2 )$ arising from the cycle $(a,f) \subset X\times \mathbb{G}_m\times \square^1$. Then, the  computation of \eqref{eq.ExplicitcupProduct} shows precisely that $\{a,f\}$ is mapped under $\phi_X^{2,2;2}$ to the cohomology class represented by the cocycle
\begin{equation}\label{eq:BE-refined}\phi_X^{2,2;2} (\{a, f\}) =  \varepsilon \frac{1}{2\pi \sqrt{-1}}\big( a(\log_\beta f - \log_\alpha f ),(\log_\alpha f) da +  (a \log_\alpha f)\frac{d\varepsilon}{\varepsilon}\big).  \end{equation}
This is a refinement of the Bloch-Esnault formula \cite[(5.8)]{BE2} with the extra infinitesimal term $(a \log_\alpha f) \dlog \varepsilon$. 
% Remark : Depending on the choice of the isomorphism \exp\colon \H_\mathcal{D}^1(X, \bZ(1)) \xrightarrow{\cong} \H^0(X,\cO_X^\times), there might be a factor \frac{1}{2\pi i} in front of the cocycle. I choose not to write it.
\def\Pp1{\mathbb{P}^1\setminus\{1\}}
\subsubsection{} We specialize now to the case where $a\in \Gamma(X, \cO^\times_X)$ such that $1-a$ is also invertible. Consider the parametrized cycle
\[Li_2^{\rm add}(a) = (t, \frac{1}{t}, 1- \frac{t^2}{t-1}(a(1-a))) \subset X\times\mathbb{A}^1\times (\Pp1)^2\]
obtained by intersecting with $X\times \mathbb{A}^1\times (\Pp1)^2$ the codimension $2$-cycle in $X\times \mathbb{A}^1\times (\mathbb{P}^1)^2$ given by the graph of the rational function
\[X[t] \to (\mathbb{P}^1)^2, \quad (\frac{1}{t}, 1- \frac{t^2}{t-1}(a(1-a))).\]
The intersection of $Li_2^{\rm add}(a) $  with the faces of $\square^2 = (\Pp1)^2 $ given by $y_1 = 0,\infty$ and $y_2=\infty$ is empty. Intersecting with $y_2 = 0$ gives the boundary component $(\frac{1}{a}, a) + (\frac{1}{1-a}, 1-a)$. %, that is a $0$-cycle in $\mathbb{G}_m\times \square$.
Moreover, $Li_2^{\rm add}(a) $ satisfies the modulus condition with respect to the divisor $2[0]$ of $\mathbb{A}^1$ and therefore the above computation shows that the Cathelineau element
\begin{equation}\label{eq.Cath.elem.cycle} \big(\frac{1}{a}, a\big) + \big(\frac{1}{1-a}, 1-a\big)\qfor a \in \Gamma(X, \cO_X^\times), a\neq 1\end{equation}  
is $0$ in $\CH^{2}(X[t]| D_2, 1) = T\CH^2(X, 2; 2)$. The cycle $Li_2^{\rm add}(a)$ is thus a valid cycle-theoretic avatar of the additive dilogarithm, in the sense of \cite[5]{BE}.

Note that for $a = 0$ or $1$, the above cycle is still in good position and trivially satisfies the modulus condition, but has empty boundary.
\begin{rem}One can relax the condition that $a$ is an invertible holomorphic function. The above cycle makes sense more generally if $a$ (and $1-a$) is a holomorphic function, but maybe not everywhere invertible. The subset $Z_a$ (resp. $Z_{1-a}$) of $X$ where $a$ (resp. $1-a$) is zero satisfies 
    \[(Z_a\times \mathbb{A}^1 \times (\mathbb{P}^1)^2)\cap Li_2^{\rm add}(a)  \subset X\times \mathbb{A}^1 \times \mathbb{P}^1 \times (y_2 = 1)\] (and similarly for $Z_{1-a}$) and so does not give extra boundary.
    \end{rem}
\begin{rem}  We drop for a moment the assumption of working over $\bC$. Let $k$ be an algebraically closed field of characteristic $0$ and let $X$ be a smooth $k$-scheme. In \cite[6.22]{BE2} the following map was defined
\[ \rho\colon k\tensor_\bZ \wedge^{n-1}_{i=1} k^\times \to T\CH^n(k, n; 2) \]
\[  a\tensor_\bZ ( b_1\wedge \ldots \wedge b_{n-1}) \mapsto (\frac{1}{a}, b_1,\ldots, b_n)\qfor a\neq 0, \text{and }\mapsto 0\qfor a=0  \]
and it was shown that $\rho$ induces an isomorphism $\Omega^{n-1}_{k/\bZ} \xrightarrow{\cong} {T\CH^n(k, n; 2)}$. 
We see then that for $X = \Spec (k)$, the Cathelineau element \eqref{eq.Cath.elem.cycle} is the image under $\rho$ of the element $a\tensor a + (1-a)\tensor (1-a) \in k\tensor_\bZ k^\times$.
\end{rem}
\subsubsection{} Let $X$ be again a smooth algebraic variety over $\bC$ and let $a$ be an invertible holomorphic function on $X$ such that $1-a$ is also invertible. Let $\{U_\alpha\}$ be a covering of $X$ such that the logarithms $\log_\alpha a$ and $\log_\alpha (1-a)$ are both defined.  We can consider the Cathelineau element in Deligne cohomology
\[ (a, a ) + (1-a, 1-a)\in  \H_\mathcal{D}^1(X, \bZ(1)) \times  \mathbb{H}^1(X, F^1\Omega^{ <1}_{\Xepsm}[-1]).\]
By  \eqref{eq.ExplicitcupProduct}, the cup product $a\cup a + (1-a) \cup (1-a)$ can be represented by the following \v{C}ech cocycle
\begin{equation*}
\begin{aligned}
a\cup a + (1-a) \cup (1-a) = & \varepsilon\frac{1}{2\pi \sqrt{-1}}\big( a(\log_\beta a - \log_\alpha a ) + (1-a)(\log_\beta { (1-a)} - \log_\alpha { (1-a)} ), \\ &(\log_\alpha a)da - (\log_\alpha (1-a)) da + (a\log_\alpha a + (1-a)\log_\alpha(1-a))\frac{d\varepsilon}{\varepsilon}  \big).
\end{aligned}
\end{equation*}
This is actually a boundary element, so that it represents the $0$ class in cohomology. Indeed, consider the element
\[ ( \frac{\varepsilon}{2\pi \sqrt{-1}} \int^a { \log_\alpha\Big(\frac{z}{1-z}\Big)\text{d}z})_\alpha= \frac{1}{2\pi \sqrt{-1}}(\varepsilon a \log_\alpha a + \varepsilon(1-a)\log_\alpha(1-a) )_\alpha \in \prod_\alpha\Gamma(U_\alpha, \varepsilon \cO_X), \]
where $ \int^a { \log_\alpha\Big(\frac{z}{1-z}\Big)\text{d}z}$ is Shannon's entropy function. %As explained by Bloch-Esnault \cite{BE}, it is the replacement of Bloch's dilogarithm function in the additive theory and deserves the name of \textit{additive dilogarithm}.
If $\partial$ denotes the \v{C}ech differential, we have
\begin{equation*}
\begin{aligned}
&\partial( \frac{\varepsilon}{2\pi \sqrt{-1}} ( a \log_\alpha a + (1-a)\log_\alpha(1-a) )_\alpha ) = \\
& \frac{\varepsilon}{2\pi \sqrt{-1}}  \big[ a(\log_\beta a - \log_\alpha a ) + (1-a)(\log_\beta (1-a) - \log_\alpha (1-a)), \\
 & \hskip 30pt
d a \log_\alpha a +   d a  + a\log_\alpha(a) \depss + d(1-a) \log_\alpha(1-a) - d a  + (1-a)\log_\alpha(1-a) \depss \big] % \\ 
%  &= a\cup a + (1-a) \cup (1-a).
\end{aligned}
\end{equation*}
Using the expression of the regulator as computed in \eqref{eq:BE-refined}, we find the relation
\begin{equation}\label{eq:add-dilog-refined-eq} \phi_X^{2,2;2}(\{a,a\}) + \phi_X^{2,2;2}(\{1-a,1-a\}) = \partial (\frac{\varepsilon}{2\pi \sqrt{-1}} \int^a { \log_\alpha\Big(\frac{z}{1-z}\Big)\text{d}z})_\alpha.\end{equation}

\subsection{}Let $k$ be an algebraically closed field of characteristic $0$. One can construct interesting cycles in additive higher Chow groups using the technique of Bloch in \cite{BlochMTM},  giving some evidence of Bloch-Esnault conjecture \cite[4.6]{BE2} and on which it is possible to test the higher regulators $\phi_k^{n,n;2}$. Consider, for example, the $2$-dimensional cycle 
\begin{equation}\label{eq:mod-li2} L(t, u) = (t, \frac{1}{t}, 1 + \frac{t^2}{t-1} u , u ) \subset \mathbb{A}^1\times \square^2 \times \mathbb{A}^1,
\end{equation}
obtained, up to reordering the factors, by intersecting with $\mathbb{A}^1\times \square^2 \times \mathbb{A}^1$ the graph of the rational function 
$\bA^1\times \bA^1\to (\P^1)^2$ that sends $(t,u)$ to $(\frac{1}{t}, 1 + \frac{t^2}{t-1} u)$.
Denote by $\mu\colon \bA^1\times \bA^1 \to \bA^1$ the multiplication map $(x,y)\mapsto xy$ and by $\tau\colon \square^1\to \bA^1$ the isomorphism sending the affine coordinate $y$ to $\frac{y}{y-1}$: it sends $0$ to $0$, $\infty$ to $1$ and extends to a morphism $\P^1\to \P^1$ by sending $1$ to $\infty$. We can pullback the cycle $L(t,u)$ along
\[ \tilde{\mu} = \text{id} \times \mu \colon  \bA^1 \times \square^2\times \bA^1\times \bA^1\to  \bA^1 \times \square^2\times \bA^1  \]
setting $u =xy$ in the defining equation \eqref{eq:mod-li2}. Specializing $\tilde{\mu}^*(L(t,u))$ at $x = a(1-a)$
and composing with $(\text{id}\times  \tau)^*$ gives rise to the cycle
\[W(a) = (t, \frac{1}{t}, 1  + \frac{t^2}{t-1} a(1-a)\frac{y}{y-1}, a(1-a)\frac{y}{y-1}, y) \subset  \mathbb{A}^1\times \square^4\]
that is in good position and that satisfies the modulus $2$ condition. This cycle satisfies  
\[\partial W(a) = (t, \frac{1}{t}, \frac{t-1}{t^2}, \frac{1-t}{1-t + t^2(a(1-a)}) - Li_2^{\rm add}(a)\wedge a(1-a)\quad \text{ and } \partial^2 W(a) = 0,\]
so that $\partial W(a) \in T\CH^3(k, 3; 2)$, where $\partial$ denotes
the boundary map of the cycle complex.
\section{The Abel-Jacobi map for relative Chow groups}\label{AJrelchow}
\def\JXDralg{J^r_{\Xb|D,alg}}
\def\JXDd{J^d_{\Xb|D}}
\def\AzXD{A_0(\Xb|D)}
\def\AzZm{A_0(\Cb|\fm)}
\def\AzCm{A_0(\Cb|\fm)}
\def\DeC{\H^2_{\cD}(\Xb|D, \Z(1))}
\def\DeCs{\H^2_{\cD, |\alpha|}(\Xb|D, \Z(1))}
\def\rhoXD{\rho_{\Xb|D}}
\def\LXD{\varOmega(\Xb|D)}
\def\LXDt{\widetilde{\varOmega}(\Xb|D)}

\def\LCm{\varOmega(\Cb|\fm)}

\def\nb{\underline{n}}
\def\pinb{\pi^{\nb}}
\subsection{Relative intermediate Jacobians}
\subsubsection{}\label{RelJac.Dia}We resume the setting of \ref{clDe}: let $X$ be a smooth variety over $\bC$ equipped with an open embedding $X\hookrightarrow \Xb$ into a smooth proper variety $\Xb$ such that $X$ is the complement of an effective Cartier divisor $D$, with $\Dred$ simple normal crossing. 
By definition, the relative Deligne complex \eqref{relDelignecomplex} fits into the distinguished triangle in $D^b(X_\an)$
\begin{equation}\label{eq.triangleDeligne}\Omega^{<r}_{\Xb|D}[-1]\to \bZ(r)^{\cD}_{\Xb|D} \to j_!\bZ(r)_X \overset{+}{\longrightarrow}\end{equation}
from which we get an exact sequence
% Assume moreover that $\Xb$ is projective and smooth over $\bC$. From \eqref{relDelignecomplex} we get an exact sequence
\[
0 \to \EXD {q-1} r \to \HDeXDr q \to \H^q(\Xb_{\an},j_!\ZrX),
\]
where $\EXD {q-1} r$ is defined to be the cokernel 
\[
\EXD {q} r =\Coker\big(\H^q(\Xb_{\an},j_!\ZrX)\to \H^q(\Xb_{\an},\WXD {< r}) \big).
\]
By Theorem \ref{thm-purityDeligne}, we have a commutative diagram
\[
\xymatrix{
&& \HMXDr q \ar[d]^{\regDe q r} \ar[rd]^{\regB q r}\\
0 \ar[r]&\EXD {q-1} r \ar[r] & \HDeXDr q \ar[r] & \H^q(\Xb_{\an},j_!\ZrX)\\
}\]
Thus we get the induced map
\begin{equation}\label{clmaprelDRB}
\rho^{q,r}_{\Xb|D} \colon {\HMXDr q}_{hom} \to \EXD {q-1} r
\end{equation}
where ${\HMXDr q}_{hom}$ is, by definition, the kernel of the regulator map to Betti cohomology $\regB q r$.
\subsubsection{} For  $q=2r$,  we write $\JXDr$ for $\EXD {2r-1} r$ and we call it the {\it$r$-th relative intermediate Jacobian} (for the pair $\Xb, D$). By Theorem \ref{Thm.relchow},  %, we have a natural map
%\[
%\CH^r(\Xb|D) \to \HMXDr {2r},
%\]
%where $\CH^r(\Xb|D)$ is the relative Chow group of codimension $r$ cycles on $\Xb$. 
the morphism $\rho^{2r,r}_{\Xb|D}$ of \eqref{clmaprelDRB} induces a map
\begin{equation}\label{relAJmap1}
\rho^r_{\Xb|D}  : \CH^r(\Xb|D)_{hom} \to \JXDr
\end{equation}
that we call the {\it relative Abel-Jacobi map.} 

The following lemma is shown by the same strategy of the proof of Proposition \ref{prop-projformula}.
\begin{lem}\label{HSHc-2}Let the notation be as above and assume that $D$ is a reduced normal crossing divisor on $\Xb$. Then the Hodge to de Rham spectral sequence
\[
E_1^{a,b} = \H^b(\Xb_{\an},\WXD a)\;\Rightarrow\; \bH^{a+b}(\Xb_{\an},\WXD\bullet)
\simeq \H^{a+b}(\Xb_{\an},j_!\bC_X)
\]
degenerates at the page $E_1$ and 
\[
F^r\H^{*}(\Xb_{\an},j_!\bC_X)= \H^*(\Xb_{\an},\WXD {\geq r})
%\subset H^{*}(\Xb_{\an},j_!\bC_X)
\]
is the $r$-th Hodge filtration for the Hodge structure on $\H^{*}(\Xb_{\an},j_!\bZ_X)$.
\end{lem}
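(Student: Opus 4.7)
The plan is to reduce the statement to Deligne's classical degeneration theorem for the log-de Rham complex via Grothendieck--Serre duality on the smooth proper variety $\Xb$, of dimension $d$, together with Poincar\'e duality on $X = \Xb - D$. First, by Lemma \ref{HSHc} we have a quasi-isomorphism $\WXD{\bullet} \simeq j_!\bC_X$, so the abutment of the spectral sequence is identified with $\H^{a+b}(\Xb_{\an}, j_!\bC_X) \cong H^{a+b}_c(X, \bC)$. Deligne's theorem (Hodge II, 3.2.13), applied to the open immersion $X \hookrightarrow \Xb$, gives the degeneration at $E_1$ of the analogous spectral sequence
\[
\widetilde{E}_1^{a,b} = \H^b(\Xb_{\an}, \Omega^a_{\Xb}(\log D)) \;\Rightarrow\; \H^{a+b}(X_\an, \bC),
\]
together with the identification of the induced filtration with the Hodge filtration on $H^{a+b}(X, \bC)$.

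Next I would invoke the perfect pairing of locally free sheaves
\[
\Omega^a_{\Xb}(\log D)(-D) \otimes_{\cO_{\Xb}} \Omega^{d-a}_{\Xb}(\log D) \longrightarrow \Omega^d_{\Xb}(\log D)(-D) = \omega_{\Xb},
\]
which by Serre duality on $\Xb$ yields natural isomorphisms
\[
\H^b(\Xb, \WXD a) \;\cong\; \H^{d-b}(\Xb, \Omega^{d-a}_{\Xb}(\log D))^{\vee}.
\]
This exhibits the $E_1$-page of our spectral sequence as the dual of Deligne's, with bidegree $(a,b)$ dualized to $(d-a, d-b)$. Summing dimensions along the antidiagonal $a+b = q$ and comparing with Deligne's degeneration together with Poincar\'e duality $H^q_c(X,\bC) \cong H^{2d-q}(X,\bC)^{\vee}$, one obtains
\[
\sum_{a+b=q} \dim_{\bC} E_1^{a,b} \;=\; \dim_{\bC} H^{2d-q}(X,\bC) \;=\; \dim_{\bC} H^q_c(X,\bC),
\]
which forces $E_1 = E_{\infty}$; this gives the degeneration.

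For the identification of the abutment filtration with the Hodge filtration, I would globalize the pointwise duality to a duality of filtered complexes, sending $\sigma_{\geq r}\WXD{\bullet}[d]$ to the quotient $\WX{\bullet}(\log D)/\sigma_{\geq d-r+1}\WX{\bullet}(\log D)$ under the Grothendieck--Serre dualizing functor $\mathbb{R}\uhom(-,\omega_{\Xb}[d])$. Taking hypercohomology and using Serre duality, the image of $\bH^q(\Xb_{\an}, \sigma_{\geq r}\WXD{\bullet})$ in $H^q_c(X,\bC)$ is the orthogonal, under the Poincar\'e duality pairing $H^q_c(X,\bC) \otimes H^{2d-q}(X,\bC) \to \bC$, of the image of $\bH^{2d-q}(\Xb_{\an}, \sigma_{\geq d-r+1}\WX{\bullet}(\log D)) \subset H^{2d-q}(X,\bC)$, i.e.\ of $F^{d-r+1}H^{2d-q}(X,\bC)$ by Deligne's theorem. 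This is, by definition, the Hodge filtration $F^r H^q_c(X,\bC)$ of the mixed Hodge structure on compactly supported cohomology.

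The main technical obstacle is the compatibility statement in the last paragraph: one must verify that the sheaf-level duality pairing lifts to a pairing of filtered complexes whose induced pairing on hypercohomology coincides, up to the expected sign and shift, with topological Poincar\'e duality on $X$. This is a standard but somewhat delicate bookkeeping exercise; one can carry it out either by working with the Godement resolutions introduced in \S\ref{cyclemapDR} (to realize the duality at the level of actual complexes of injective sheaves) or by appealing to the compatibility between Grothendieck--Serre duality and Verdier duality on $X$ for constructible sheaves, as treated for instance in Peters--Steenbrink.
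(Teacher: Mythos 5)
Your proposal is correct in substance, but it takes a genuinely different route from the paper. The paper's proof is the one-line reduction to the argument of Lemma \ref{HSHc}: both $j_!\bC_X$ and $\Omega^{\bullet}_{\Xb|D}=\Omega^{\bullet}_{\Xb}(\log D)(-D)$ admit resolutions by the corresponding objects on $\Xb$ and on the smooth proper strata $D^{[a]}$ (the normalized $a$-fold intersections of the components of $D$), via the restriction sequences
$0\to \Omega^{\bullet}_{\Xb|D}\to \Omega^{\bullet}_{\Xb}\to (i_1)_*\Omega^{\bullet}_{D^{[1]}}\to\cdots$ and its topological analogue. Since the mixed Hodge structure on $\H^{*}(\Xb_{\an},j_!\bZ_X)=H^*_c(X)$ is \emph{constructed} from exactly this cosimplicial resolution, classical Hodge theory on each $D^{[a]}$ plus strictness of the connecting maps gives degeneration and the identification of the filtration simultaneously. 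You instead prove degeneration by the dimension count coming from the perfect pairing $\Omega^a_{\Xb}(\log D)(-D)\otimes\Omega^{d-a}_{\Xb}(\log D)\to\omega_{\Xb}$, Serre duality, Deligne's degeneration for $\Omega^{\bullet}_{\Xb}(\log D)$, and Poincar\'e duality $H^q_c(X,\bC)\cong H^{2d-q}(X,\bC)^{\vee}$ --- this part is clean, complete, and arguably slicker than the paper's. The cost appears in the second half: identifying the abutment filtration with the Hodge filtration on $H^q_c(X)$ via your filtered duality requires knowing that Poincar\'e duality is a filtered isomorphism of mixed Hodge structures, i.e.\ $F^rH^q_c(X)=\bigl(F^{d-r+1}H^{2d-q}(X)\bigr)^{\perp}$, together with the compatibility of coherent Serre duality with topological Verdier duality. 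These are standard (Peters--Steenbrink, Deligne's Hodge III), but their usual proofs themselves pass through the stratification the paper uses, so your argument outsources precisely the content the paper's resolution handles directly. With those references made explicit your proof closes; as written, the last paragraph is a correctly identified but not yet executed step rather than a gap in the strategy.
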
%\begin{proof}This follows from the same argument as in the proof of Lemma \ref{HSHc}, using again the standard exact sequences as in loc.~cit.
%\[
%0\to j_!\bC_{X} \to \bC_{\Xb} \to (i_1)_*\bC_{D^{[1]}}\to (i_2)_*\bC_{D^{[2]}} \to \cdots,
%\]
%\[
%0\to \WXD \bullet\to \WX \bullet \to (i_1)_*\Omega_{D^{[1]}}^{\bullet} \to 
%(i_2)_*\Omega_{D^{[2]}}^{\bullet} \to \cdots.
%\]
%\end{proof}
\subsubsection{}Let $\JXDredr$ be defined as $\JXDr$ with $D$ replaced by $\Dred$.
By Lemma \ref{HSHc-2}, we can write $\JXDredr$ as quotient
\begin{equation}\label{eq.Hodgedescr}
\JXDredr = \H^{2r-1}(\Xb_{\an},j_!\bC_X)/F^r+ \H^{2r-1}(\Xb_{\an},j_!\ZrX), %\\
%& \simeq \Ext_{MHS}(\bZ, H^{2r-1}(\Xb_{\an},j_!\ZrX)),
\end{equation}
where $F^r=\H^{2r-1}(\Xb_{\an},\WXDred {\geq r})$ is the $r$-th Hodge filtration on $\H^{2r-1}(\Xb_{\an},j_!\bC_X)$. By \cite[Prop. 2]{Ca}, we further have
\[\H^{2r-1}(\Xb_{\an},j_!\bC_X)/F^r+ \H^{2r-1}(\Xb_{\an},j_!\ZrX)  \simeq \Ext_{MHS}(\bZ, \H^{2r-1}(\Xb_{\an},j_!\ZrX)). \]
For $r=1$ or $\dim X$, $\JXDredr$ is an extension of the Jacobian $J^r_{\Xb}$ by a finite product of copies of $\bC^\times$. In the intermediate case, the canonical map
\[\JXDredr \to J^r_{\Xb}\]
is not surjective in general, but $\JXDredr$ is still a non compact complex Lie group, extension of a complex torus by a product of copies of $\bC^\times$ (see \cite[Lemma 6]{Ca}).
\begin{rem}When $D$ is not reduced, the relative intermediate Jacobian $\JXDr$ still has an interpretation as an extension group, but this time in the category of enriched Hodge structure $EHS$ defined by Bloch and Srinivas \cite{BS}.
	\end{rem}

\subsubsection{}We note that there is an exact sequence
\begin{equation}\label{eq.JExtension}
0 \to \UXDr \to \JXDr \rmapo{\pi} \JXDredr \to 0,
\end{equation}
where
\[
\UXDr=\Ker\big(\H^{2r-1}(\Xb_{\an},\WXD {<r}) \to \H^{2r-1}(\Xb_{\an},\WXDred {<r})\big).
\]
The only thing to check is the surjectivity of $\pi$, which is a consequence of the commutative diagram
\[
\xymatrix{
 \H^q(\Xb_{\an},\WXD \bullet) \ar[r] \ar[d]_{\simeq} & \H^q(\Xb_{\an},\WXD {< r})\ar[d]\\
 \H^q(\Xb_{\an},\WXDred \bullet) \ar[r]^{\alpha} &  \H^q(\Xb_{\an},\WXDred {< r}) %\simeq \HXD q/F^i\HXD q,
}\]
where the isomorphism comes from Lemma \ref{lemma-Dred-qiso} and $\alpha$ is surjective by Lemma \ref{HSHc-2}.
Thus we may view the map \eqref{relAJmap1} as the Abel-Jacobi map with $\bG_a$-part.
An analogous construction has been made in \cite{ESV} and \cite{BS} for Chow groups for singular varieties.

\subsection{Universality of Abel-Jacobi maps for zero-cycles with moduli}

In this section we prove a universal property of the Abel-Jacobi maps for zero-cycles with moduli
(see Theorem \ref{thm.universailty}).
It is an analogue of \cite[Th.4.1]{ESV} where a similar property is shown for Abel-Jacobi maps for zero-cycles
on singular varieties. 
We also note that it is a Hodge theoretic analogue of \cite[Th.3.29]{Ru} (cf. also \cite{KR}).

\subsubsection{}\label{SettingUniversality}Let the notation be as in \S\ref{reldelcpx} with $d=\dim(X)$. We consider the Abel-Jacobi map
\begin{equation}\label{relAJmap}
\rhoXD: \AzXD \to \JXDd,
\end{equation}
where $A_0(\Xb|D)=\CH^d(\Xb|D)_{hom}$ is the degree-$0$ part of the Chow group $\CH_0(\Xb|D)$
of zero cycles with modulus $D$.  Recall (cf. \eqref{eq.relchow})  that $\CH_0(\Xb|D)$ is the quotient of the group of zero-cycles on $X$ by
an equivalence relation which refines the rational equivalence by a modulus condition with respect to $D$. %
% \begin{equation}\label{relCH0}
% \CH_0(\Xb|D) = \Coker\big(\underset{C\in C^N_1(X)}{\bigoplus}\; G(\Cb,\gamma_C^*D) \rmapo{\delta} Z_0(X) \big),
% \end{equation}
% where $C^N_1(X)$ is the set of the normalizations of integral closed curves on $X$, and for $C\in C_1^N(X)$,
%  $\Cb$ is the smooth compactification of $C$ with the natural morphism $\gamma_C:\Cb\to \Xb$,
% and $G(\Cb,\gamma_C^*D)$ is the subgroup of $\bC(C)^\times$ defined as \eqref{eq.DefG},
% and $\delta$ is induced by the divisor map on $C$ and the pushforward map of zero cycles via  $\gamma_C$.
By definition, we have
\begin{equation}\label{eq.JXDd}
\JXDd =\Coker\big(\H^{2d-1}(\Xban,j_!\ZdX)\to \H^{2d-1}(\Xban,\WXD {< d}) \big).
\end{equation}
%where $\WXD \bullet$ for the relative (analytic) de Rham complex with
%\[\WXD i = \Omega^i_{\Xb}(\log D)\otimes_{\cO_{\Xb}} \cO_{\Xb}(-D).\]
We endow $\JXDd$ with the structure of a complex Lie group as a quotient of the finite-dimensional complex vector space
$\H^{2d-1}(\Xban,\WXD {< d})$ by a discrete subgroup. 
By \eqref{eq.JExtension}, we have an exact sequence
\[
0 \to \UXDd \to \JXDd \rmapo{\pi} \JXDredd \to 0,
\]
where $\UXDd$ is a finite-dimensional complex vector space. By \eqref{eq.Hodgedescr} we see that $\JXDredd $
%\[
%\JXDredd = H^{2d-1}(\Xban,j_!\bC_X)/F^d+ H^{2d-1}(\Xban,j_!\ZdX),
%\]
%which
is a semi-abelian variety, due to the fact that the non-zero Hodge numbers of 
\[\H^{2d-1}(\Xban,j_!\bZ(d)_X)\]are among $\{(-1,0),(0,-1),(-1,-1)\}$ (cf. \cite[3]{ESV}).

\begin{lem}\label{lem1.universailty}
$\JXDd$ has a unique structure as a commutative algebraic group for which $\pi$ is a morphism of algebraic groups.
\end{lem}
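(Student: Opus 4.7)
The plan is to transport the Lie group extension
\[
0 \to \UXDd \to \JXDd \rmapo{\pi} \JXDredd \to 0
\]
to the category of commutative algebraic groups over $\bC$ and to show that this transport is unique. First, I would endow the two outer terms with their canonical algebraic structures: $\UXDd$ is a finite-dimensional $\bC$-vector space and so carries a unique structure as $\bG_a^n$ (with $n=\dim_\bC \UXDd$), while $\JXDredd$ is already a semi-abelian variety by the analysis following \eqref{eq.Hodgedescr} together with the constraint on Hodge types. Thus it suffices to show that there is a unique algebraic extension recovering the given analytic one.

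The key step is the comparison of extension groups. I would establish a natural isomorphism
\[
\Ext^1_{\text{alg-grp}/\bC}(\JXDredd,\bG_a) \xrightarrow{\;\sim\;} \Ext^1_{\text{an-grp}/\bC}(\JXDredd{}^{\an},\bG_a^{\an}),
\]
which induces the corresponding isomorphism for vector-valued $\bG_a^n$-coefficients. For an abelian variety $A$ both groups are well known to be canonically isomorphic to $H^1(A,\cO_A)$, algebraic resp.~analytic; for a torus $T$ both $\Ext^1$-groups vanish (Hilbert 90 on the algebraic side, contractibility of the cocycle description on the analytic side). For the semi-abelian variety $\JXDredd$, which sits in an extension $0 \to T \to \JXDredd \to A \to 0$ of an abelian variety $A$ by a torus $T$, the long exact sequences of $\Ext$ in both categories compare these two cases, and GAGA applied to a smooth compactification of $\JXDredd$ (together with the five lemma) gives the required comparison.

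Having this, the extension class of $\JXDd$ in $\Ext^1_{\text{an}}(\JXDredd,\UXDd)$ corresponds to a unique class in $\Ext^1_{\text{alg}}(\JXDredd,\UXDd)$. The associated algebraic extension defines a commutative algebraic group structure on $\JXDd$ for which $\pi$ is a morphism of algebraic groups; uniqueness is the injectivity statement of the same comparison. The main obstacle I anticipate is being careful about the semi-abelian (non-proper) case of the comparison of $\Ext^1$'s, where one must argue via a compactification or via the universal vector extension of $\JXDredd$ rather than invoking GAGA directly.
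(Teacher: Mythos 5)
Your argument is correct and is essentially the paper's: the proof in the text simply cites Deligne, Th\'eorie de Hodge II, (10.1.3.3) and \cite[Lem.~3.1]{ESV} for the coincidence of analytic and algebraic group extensions of an abelian variety by $\mathbb{G}_a$ or $\mathbb{G}_m$, which is precisely the comparison of extension groups you establish by d\'evissage of $\JXDredd$ into its torus and abelian parts before transporting the class of $0\to\UXDd\to\JXDd\to\JXDredd\to 0$. The only superfluous step is the appeal to a smooth compactification of $\JXDredd$: once the five-lemma reduces the semi-abelian case to the abelian case (where GAGA applies directly, as the variety is proper) and the torus case (where both extension groups vanish), no compactification is needed.
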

\begin{proof}
This follows from the fact noted in \cite[(10.1.3.3)]{De} that the isomorphism classes of analytic and algebraic group
extensions of an abelian variety by $\bG_a$ or $\bG_m$ coincide (see \cite[Lem.3.1]{ESV}).
\end{proof}

\begin{defi}\label{def.regular}
Take a point $o\in X$ and define a map of sets
\[
\iota_o: X \to A_0(\Xb|D)\;;\; x \to \text{the class of } [x] - [o].
\]
For a commutative algebraic group $G$, a homomorphism of abelian groups
\[\rho: \AzXD \to G\]
is called \emph{regular} if $\rho\circ\iota_o: X\to G$ is a morphism of algebraic varieties.
\end{defi}

The following theorem implies that $\JXDd$ is the \emph{universal regular quotient} of $\AzXD$.

\begin{theo}\label{thm.universailty}Let the notation be as in \ref{SettingUniversality}.
\begin{enumerate}
\item
The map $\rho_{\Xb|D}: \AzXD \to \JXDd$ is surjective and regular.
\item
For a regular map $\rho: \AzXD \to G$, there is a unique morphism 
$h_\rho: \JXDd \to G $ of algebraic groups such that $\rho=h_\rho\circ\rhoXD$.
\end{enumerate}
\end{theo}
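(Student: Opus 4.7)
The plan is to adapt the strategy of \cite[Theorem~4.1]{ESV} to the present modulus setting, using the exact sequence
\begin{equation*}
0 \to \UXDd \to \JXDd \xrightarrow{\pi} \JXDredd \to 0
\end{equation*}
of \eqref{eq.JExtension} to split both assertions into a semi-abelian part (the quotient) and a unipotent part (the kernel). The semi-abelian quotient $\JXDredd$ together with the Abel--Jacobi map $\rho_{\Xb|\Dred}\colon A_0(\Xb|\Dred)\to \JXDredd$ is the generalized Albanese attached, \`a la Serre, to the smooth variety $\Xb$ with simple normal crossing boundary $\Dred$ (cf.~\cite{Se,ESV}); in particular $\rho_{\Xb|\Dred}$ is classically known to be surjective, regular, and universal among regular homomorphisms to semi-abelian varieties.

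For part (1), regularity of $\iota_o\circ\rho_{\Xb|D}\colon X\to \JXDd$ follows from the naturality in the pair $(\Xb,D)$ of the regulator map $\reggDe$ constructed in Section \ref{clDe}, applied to a universal family of relative cycles on $X\times X$ with modulus $\Xb\times D$. To lift the classical surjectivity from $\JXDredd$ to $\JXDd$, it suffices by the snake lemma to show that $\rho_{\Xb|D}$ hits $\UXDd$. Using the filtration by the multiplicity vector $\fm$ introduced in the proof of Lemma \ref{lemma-Dred-qiso}, $\UXDd$ is built from successive graded pieces that lie in cohomology groups of the form $H^{d-1}$ of sheaves $\Omega^{d-1}_{D_\nu}(\log\cdots)\otimes I^{\fm}/I^{\fm+\delta_\nu}$; the relative fundamental class of Theorem \ref{thm-purityDeligne}, combined with the explicit presentation of relative $1$-cycles via admissible functions $f$ with $f-1\in I^{\nu}$ (Lemma \ref{lem.divisor-functionMC}), then shows that the logarithmic differentials $df/f$ attached to such $f$ already surject onto each graded piece.

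For part (2), given a regular $\rho\colon A_0(\Xb|D)\to G$, decompose $G$ functorially as an extension $0\to U\to G\to A\to 0$ with $U$ unipotent and $A$ semi-abelian. The absence of non-constant morphisms from unipotent groups to semi-abelian varieties implies that the composite $A_0(\Xb|D)\to G\to A$ descends to a regular homomorphism $\bar\rho\colon A_0(\Xb|\Dred)\to A$, which by classical universality factors uniquely as $\bar h\circ\rho_{\Xb|\Dred}$ for some morphism $\bar h\colon \JXDredd\to A$ of algebraic groups. The main obstacle is the construction of the lift of $\bar h$ through both extensions to the desired morphism $h_\rho\colon \JXDd\to G$. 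Here the modulus condition plays its essential role: regularity of $\iota_o\circ\rho$ at $o\in X$ forces its cotangent map to send translation-invariant $1$-forms on $U$ to global sections of $\Omega^1_{\Xb}(\log \Dred)\otimes\cO_{\Xb}(-D)$ (the modulus condition is exactly what is needed to control the pole order along $D$); dualizing via Serre duality on $\Xb$ and using the Hodge-theoretic description of $\JXDd$ in \eqref{eq.JXDd} yields a $\bC$-linear map $\ell\colon \UXDd\to U(\bC)$. That $(\bar h,\ell)$ assembles into a morphism of algebraic groups $h_\rho\colon \JXDd\to G$ realizing $\rho$ on cycles is then formal from the functoriality of the two extensions, and uniqueness follows from the surjectivity in (1).
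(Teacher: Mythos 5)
There is a genuine gap at the heart of part (2). You assert that regularity of $\iota_o\circ\rho$ ``forces its cotangent map to send translation-invariant $1$-forms on $U$ to global sections of $\Omega^1_{\Xb}(\log \Dred)\otimes\cO_{\Xb}(-D)$'', but this is precisely the statement that carries all the difficulty, and you give no mechanism for proving it. The paper establishes it in two steps that your proposal does not supply: first (Lemma \ref{lem3-1.universality}) one restricts to an arbitrary curve $\gamma\colon \Cb\to\Xb$ and uses Serre's classical universality of the generalized Jacobian $J^1_{\Cb|\gamma^*D}$ to conclude that the pullback of any invariant form of $G$ to $\Cb$ lies in $\H^0(\Cb,\Omega^1_{\Cb}(\gamma^*D))$; second (Lemma \ref{lem3-2.universality}) one proves, by an explicit local computation choosing suitable curves through points of $D$, that the pole order of a global $1$-form on $X$ along $D$ is \emph{detected} by its restrictions to curves, i.e. that $\H^0(X,\Omega^1_X)/\H^0(\Xb,\Omega^1_{\Xb}(D))$ injects into the product over curves of the analogous quotients. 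Without this detection-by-curves lemma there is no route from ``$\rho$ is regular'' to the required bound on poles, and the closedness argument (\eqref{claim.LXD}) that upgrades $\Omega^1_{\Xb}(D)$ to $\Omega^1_{\Xb}(\log D)(-(D-\Dred))$ is also needed. The same omission undermines your surjectivity argument in (1): the claim that the classes $d\log f$ of admissible functions ``surject onto each graded piece'' of $\UXDd$ is unsubstantiated, whereas the paper deduces surjectivity from the finite-dimensionality of $\LXD$ plus the curve-detection lemma, which produces finitely many curves whose generalized Jacobians (isomorphic to their $A_0$ by Serre) already cover $\JXDd$.

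Two further structural points. Your strategy of splitting both $\JXDd$ and $G$ into unipotent and semi-abelian pieces and reassembling $(\bar h,\ell)$ is not ``formal from functoriality'': the descent of $\rho$ composed with $G\to A$ to $A_0(\Xb|\Dred)$ requires an argument (the kernel of $A_0(\Xb|D)\to A_0(\Xb|\Dred)$ is not obviously killed by a map to a semi-abelian variety), and lifting through two non-split extensions compatibly is exactly the kind of bookkeeping that can fail. The paper sidesteps all of this by proving the uniform Lie-theoretic description $\JXDd\cong \LXD^\vee/\mathrm{Image}(\H_1(X_{\an},\bZ))$ together with the integration formula for $\rho_{\Xb|D}$ (Lemma \ref{lem2.universality}, whose proof in the curve case is itself a substantial computation with the exponential sequence and Stokes' theorem); a single dualized linear map $\psi_\rho^*\colon \varOmega(G)\to\LXD$ then yields $h_\rho$ in one stroke, with no extension-splitting. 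Your outline identifies the right objects but omits the arguments that make the theorem true.
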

\begin{rem}\label{rem.regular}
\begin{itemize}
\item[(1)]
It is easy to see that the universality does not depend on the choice of the base point $o\in X$.
\item[(2)]
By the same argument as \cite[Lem.1.12]{ESV}, one can shows that the image of $\rho\circ\iota_o$
is contained in the connected component of $G$. 
\item[(3)]%\label{rem.thm.universality}
Suppose that $\dim(X)=1$. Then, by Lemma \ref{lem2.universality} below, $\JXDd$ is the generalized Jacobian of $\Xb$ with modulus $D$.
%Let $\varphi_{\Xb|D}:X \to \JXDd$ be the composite of $\rhoXD$ and $\iota_0$.
% and and $\varphi_{\Xb|D}$ is the canonical morphism (cf. \cite[Ch.V \S8]{Se}). 
Thus $\rho_{\Xb|D}$ is an isomorphism and Theorem \ref{thm.universailty} in this case 
follows from \cite[Ch.V Th.1]{Se}.
\end{itemize}
\end{rem}
\subsection{The proof of the universality theorem}
%\subsection{Analytic maps to complex Lie groups}
\subsubsection{}We recall some basic facts on structure of a complex Lie groups.
Let $G$ be a connected commutative complex Lie group and $\varOmega(G)$ be 
the space of the invariant holomorphic $1$-forms on $G$.
We have a natural isomorphism
\[
\tau_G: G \isom \varOmega(G)^\vee/\H_1(G,\bZ)\;;\; g \to \biggl\{\omega \to \int_{e}^g \omega \biggl\}\quad 
(g\in G,\;\omega\in \varOmega(G)),
\]
where $\H_1(G,\bZ)\to \varOmega(G)^\vee$ is given by integration of $1$-forms over topological cycles,
$e\in G$ is the unit and the integration is over a chosen path from $e$ to $x$.
Note that $\varOmega(G)^\vee$ is the identified with the space $Lie(G)$ of the invariant vector fields on $G$ and 
$\tau_G^{-1}$ is given by the exponential map $Lie(G) \to G$. 

For a given morphism $f: M \to G$ of complex manifolds and a point $o\in M$ with $e=f(o)$, we have a formula
\begin{equation}\label{eq.fMG}
\tau_G(f(x)) =  \biggl\{\omega \to \int_{o}^x f^*\omega\biggl\} \quad (x\in M,\;\omega\in \varOmega(G)),
\end{equation}
where $f^*: \varOmega(G) \to \H^0(M,\Omega^1_M)$ is the pullback along $f$.
\medbreak

%\subsection{Proof of the universality theorem}

\begin{lem}\label{lem2.universality}
Put
\[
\LXD := \biggl\{\omega\in \H^0(\Xban,\Omega^1_{\Xb}(D))\;|\; 
d\omega=0\in \H^0(\Xan,\Omega^2_X)\biggl\}.
%H^0(\Xb,\Omega^1_{\Xb}(D))_{cl}=
\]
\begin{itemize}
\item[(1)]
There is a canonical isomorphism of complex Lie groups
\[
\tau_{\Xb|D}: \JXDd \simeq \LXD^\vee/\Image(\H_1(\Xan,\bZ)),
\]
where $\H_1(X_{\an},\bZ)\to \LXD^\vee$ is given by integration of $1$-forms over topological cycles.
\item[(2)]
Let $\varphi_{\Xb|D}:X \to \JXDd$ be the composite of $\rhoXD$ and $\iota_0$.
Then 
\[ \tau_{\Xb|D}(\varphi_{\Xb|D}(x)) = \biggl\{\omega \to \int_{o}^x \omega \biggl\}\;\in \LXD^\vee \quad(x\in X).\]
\item[(3)]
The pullback of holomorphic $1$-forms by $\varphi_{\Xb|D}$ induces an isomorphism
\[\varphi_{\Xb|D}^*: \varOmega(\JXDd) \isom \LXD\subset \H^0(\Xan,\Omega^1_X).\] 
\end{itemize}
\end{lem}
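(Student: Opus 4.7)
The plan is to establish parts (1), (2), (3) in sequence, with the bulk of the work concentrated in (1). For (1), I would first compute $\bH^{2d-1}(\Xban, \Omega^{<d}_{\Xb|D})$ via the stupid filtration spectral sequence $E_1^{p,q} = H^q(\Xb, \Omega^p_{\Xb|D}) \Rightarrow \bH^{p+q}(\Xban, \Omega^{<d}_{\Xb|D})$. Since each $\Omega^p_{\Xb|D}$ is coherent on the $d$-dimensional proper variety $\Xb$, $E_1^{p,q} = 0$ for $q>d$ or $p\notin[0,d-1]$; in total degree $2d-1$ only the spot $(p,q) = (d-1,d)$ survives, and the only possibly nonzero differential touching it is $d_1\colon E_1^{d-2,d} \to E_1^{d-1,d}$ coming from the exterior derivative, giving
\[
\bH^{2d-1}(\Xban, \Omega^{<d}_{\Xb|D}) \;\simeq\; \coker\big(d\colon H^d(\Xb, \Omega^{d-2}_{\Xb|D}) \to H^d(\Xb, \Omega^{d-1}_{\Xb|D})\big).
\]
The wedge pairing $\Omega^p_{\Xb}(\log D)(-D) \otimes \Omega^{d-p}_{\Xb}(\log D)(D - D_{red}) \to \omega_{\Xb}$ is a perfect pairing of locally free sheaves, so Grothendieck--Serre duality on $\Xb$ identifies the $\bC$-linear dual of this cokernel with
\[
\ker\big(d\colon H^0(\Xb, \Omega^1_{\Xb}(\log D)(D - D_{red})) \to H^0(\Xb, \Omega^2_{\Xb}(\log D)(D - D_{red}))\big).
\]
A local computation near a component $D_i$ of $D$ of multiplicity $m_i$, writing $\omega = z_i^{-m_i}(f_1 dz_1 + \cdots + f_d dz_d)$ with $z_i$ a local equation of $D_i$, shows that vanishing of the most singular part of $d\omega$ forces each $f_j$ with $j\neq i$ to be divisible by $z_i$; hence every section of $\Omega^1_{\Xb}(D)$ that is closed on $X$ already lies in $\Omega^1_{\Xb}(\log D)(D - D_{red})$, and the displayed kernel coincides with $\LXD$. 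We thus obtain a canonical isomorphism $\bH^{2d-1}(\Xban, \Omega^{<d}_{\Xb|D}) \simeq \LXD^\vee$.

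It remains, to complete (1), to identify the map $H^{2d-1}(\Xan, j_!\ZdX) \to \LXD^\vee$ with the period pairing. Poincar\'e--Lefschetz duality for the open real $2d$-manifold $X$ yields $H^{2d-1}(\Xan, j_!\ZdX) = H^{2d-1}_c(\Xan, \bZ(d)) \cong H_1(\Xan, \bZ)$. Since $\Omega^{\bullet}_{\Xb|D}$ resolves $j_!\bC_X$ (Lemma \ref{HSHc} together with Lemma \ref{lemma-Dred-qiso}), the connecting map in \eqref{eq.triangleDeligne} factors, after extension of scalars to $\bC$, as
\[
H^{2d-1}_c(\Xan, \bC) \xrightarrow{\cong} H^1(\Xan, \bC)^\vee \twoheadrightarrow \LXD^\vee,
\]
where the second arrow is dual to the composite $\LXD \hookrightarrow H^0(\Xan, \Omega^1_X) \to H^1(\Xan, \bC)$ sending a closed form to its de Rham class. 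The induced pairing $H_1(\Xan, \bZ) \otimes \LXD \to \bC$ is therefore the classical period pairing $(\gamma, \omega) \mapsto \int_\gamma \omega$, and passing to cokernels produces the isomorphism $\tau_{\Xb|D}$ of (1).

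For (2), given $x \in X$, I would choose a piecewise smooth $1$-chain $\gamma \subset \Xan$ with $\partial \gamma = [x] - [o]$. Unwinding the definition of $\rho_{\Xb|D}$ through the relative Deligne fundamental class (Theorem \ref{thm-purityDeligne}) and the triangle \eqref{eq.triangleDeligne}, the class $\rho_{\Xb|D}([x]-[o]) \in \JXDd$ is represented by $\gamma$ viewed inside $H^{2d-1}_c(\Xan, \bC)$, which under $\tau_{\Xb|D}$ becomes the functional $\omega \mapsto \int_o^x \omega$. Part (3) is then formal: by (1), $\varOmega(\JXDd) = (\LXD^\vee)^\vee = \LXD$, and comparing the universal formula \eqref{eq.fMG} with (2) shows that $\varphi_{\Xb|D}^*$ realises this canonical identification as the inclusion $\varOmega(\JXDd) \hookrightarrow H^0(\Xan, \Omega^1_X)$. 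The main obstacle is the delicate local analysis of (1) showing that the closedness condition washes out the a priori distinction between $\Omega^1_{\Xb}(D)$ and $\Omega^1_{\Xb}(\log D)(D - D_{red})$, together with carefully unwinding the Deligne-theoretic Abel--Jacobi construction of Section \ref{clDe} into the classical integration formula.
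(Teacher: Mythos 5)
Your treatment of part (1) is essentially the paper's own argument: the same stupid-filtration spectral sequence reducing $\bH^{2d-1}(\Xban,\WXD{<d})$ to the cokernel of $d$ on $\H^d(\Xban,\WXD{d-1})$, the same Serre-duality identification of its dual with closed sections of $\Omega^1_{\Xb}(\log D)(D-D_{red})$, and the same local computation showing that closedness upgrades a section of $\Omega^1_{\Xb}(D)$ to one of $\Omega^1_{\Xb}(\log D)(D-D_{red})$ (your sketch handles the single-component case; for several components one also needs the cross-term divisibilities $\pi_i\pi_l \mid n_l\pi_i a_i - n_i\pi_l a_l$, as in the paper's Claim \eqref{claim.LXD}, but this is a routine elaboration). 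Part (3) is formal in both accounts.

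The genuine gap is in part (2). The sentence ``unwinding the definition of $\rho_{\Xb|D}$ through the relative Deligne fundamental class and the triangle, the class $\rho_{\Xb|D}([x]-[o])$ is represented by $\gamma$ viewed inside $H^{2d-1}_c(\Xan,\bC)$'' asserts precisely the content of part (2) without proving it. The class $\rho_{\Xb|D}([x]-[o])\in \EXD{2d-1}d$ is defined by lifting the Deligne class of $[x]-[o]$ (characterized only through its Betti and de Rham components \emph{with support}) after forgetting supports; to identify this lift with the period functional $\omega\mapsto\int_\gamma\omega$ one must exhibit an explicit cochain-level primitive of the difference of the two fundamental classes and check that it is compatible with the modulus twist $\cO_{\Xb}(-D)$ and with the subspace $\LXD\subset \H^0(\Xan,\Omega^1_X)$ under your duality from (1) --- this is where the nontrivial analysis lives. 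The paper does exactly this: it first treats $\dim X=1$ using the exponential quasi-isomorphism $\bZXDonede\cong\cO_{\Xb|D}^\times[-1]$, shows $\exp\circ cl_{\cD}=\cL$ (Lemma \ref{lem.expclDe}, a Čech computation with the sheaf $\cK^\times_{\Xb|D}$ of functions congruent to $1$ mod $D$), and then proves the integration formula by a Dolbeault/Stokes argument with the form $\frac{1}{2\pi i}\overline{\partial}f\log(t_x/t_o)$, where the cutoff $f$ is chosen to vanish on $D$ precisely so that the representative lands in $\H^1(\Xban,\cO_{\Xb}(-D))$ (Lemma \ref{lem.expint}); the case $d>1$ is then reduced to curves by covariant functoriality of the regulator for proper morphisms of pairs. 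Some argument of this kind --- either the curve reduction plus the explicit $\overline{\partial}$-computation, or a direct currents/primitives argument that tracks the $\cO(-D)$-twist --- is needed before your part (2), and hence your part (3), can be considered established.
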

\begin{proof}
Recall the definition of the Jacobian \eqref{eq.JXDd}. By the Poincar\'e duality we have a canonical isomorphism
\begin{equation}\label{eq.PD}
\H^{2d-1}(\Xban,j_!\ZdX) \cong \H_1(\Xan,\bZ).
\end{equation}
From a standard spectral sequence argument, 
%\[
%E_1^{p,q} = \left.\left\{\begin{gathered}
% \H^q(\Xban,\WXD p)
% \;\quad \text{if $p<d$} \\ 
% 0 \;\quad \text{otherwise}\\
%\end{gathered}\right.\quad
%\right.
%\;\Rightarrow \H^{p+q}(\Xban,\WXD {< d})
%\]
we get an isomorphism
\[
\H^{2d-1}(\Xban,\WXD {< d}) \isom\Coker\big(\H^d(\Xban,\WXD {d-2}) \rmapo d  \H^d(\Xban,\WXD {d-1})\big).
\]
Hence, by  Serre duality, we have a natural isomorphism 
\[
\H^{2d-1}(\Xban,\WXD {< d})^\vee \cong \LXDt,
\]
where
\[
\LXDt=
\biggl\{\omega\in \H^0(\Xban,\Omega^1_{\Xb}(\log D)\otimes_{\cO_{\Xb}} \cO_{\Xb}(D'))\;|\; 
d\omega=0\in \H^0(\Xan,\Omega^2_X)\biggl\}
\]
and $D'$ denotes the divisor $D-\Dred$. Lemma \ref{lem2.universality}(1) follows then from \eqref{eq.PD} and the following claim.
\begin{equation}\label{claim.LXD}
\LXDt =\LXD.
\end{equation}
To show \eqref{claim.LXD}, we may work locally at a point $x\in D$. 
Choose a system of regular parameters $(\pi_1,\dots,\pi_r,t_1\dots,t_s)$ in $\cO_{X,x}$ such that
$\pi_i$ are the local equations of the irreducible components $D_i$ of $D$ passing through $x$.
Let $n_i$ be the multiplicity of $D_i$ in $D$. Then a local section
$\omega$ of $\Omega^1_{\Xb}(D)$ at $x$ is written as
\[
\omega=\frac{\xi}{\pi_1^{n_1}\cdots \pi_r^{n_r}}
\qwith\; \xi=\underset{1\leq i\leq r}{\sum} a_i d\pi_i +\underset{1\leq j\leq s}{\sum} b_j d t_j
\quad(a_i,b_j\in \cO_{X,x}).
\] 
Put $\pi=\pi_1\cdots\pi_r$. If $\omega\in \LXD$, we have 
\[
0=d\omega = \frac{1}{\pi_1^{n_1}\cdots \pi_r^{n_r}}
\biggl[-\underset{1\leq l\leq r}{\sum} n_l \frac{d \pi_l}{\pi_l}\; \wedge \xi +  d\xi\biggl],
\]
which implies
\begin{equation}\label{eq.eta}
\eta:= \underset{1\leq l\leq r}{\sum} n_l \frac{d \pi_l}{\pi_l}\; \wedge \xi \;\in \Omega_{\Xb,x}.
\end{equation}
We compute
\[
\eta= \underset{1\leq i<l\leq r}{\sum} (n_l \pi_i a_i - n_i\pi_l a_l) 
\frac{d \pi_l}{\pi_l}\wedge \frac{d \pi_i}{\pi_i}\;+\;
\underset{1\leq l,j \leq r}{\sum} n_lb_j \frac{d \pi_l}{\pi_l}\wedge dt_j
\]
Thus \eqref{eq.eta} implies $b_j$ are divisible by $\pi_l$ for all 
$j,l$ and $n_l \pi_i a_i - n_i\pi_l a_l$ are divisible by $\pi_l\pi_i$ for all $i,l$. 
This implies that $b_j$ and $\pi_ia_i$ are divisible by $\pi$ for all $i,j$. Hence
\[
\omega=\frac{1}{\pi_1^{n_1-1}\cdots \pi_r^{n_r-1}}\big(
\underset{1\leq i\leq r}{\sum} a'_i \frac{d\pi_i}{\pi_i} +\underset{1\leq j\leq s}{\sum} b'_j d t_j\big)
\qwith b'_j=b_j/\pi,\;a'_i=\pi_ia_i/\pi \in \cO_{X,x}
\]
so that $\omega$ is a local section of $\Omega^1_{\Xb}(\log D)(D')$ at $x$. This proves \eqref{claim.LXD} and 
the proof of Lemma \ref{lem2.universality}(1) is complete. 
\medbreak
\def\bZXDonede{\bZ(1)_{\Xb|D}^{\cD}}
\def\tclDeb#1#2{\widetilde{cl}_{\cD}^{#1}(#2)}
\def\clDemap{cl_{\cD}}

We now prove Lemma \ref{lem2.universality}(2). Suppose first that $\dim X = 1$. %, i.e.  that $\Xb$ is a smooth complex connected projective curve.
In this case we have
%\begin{equation}\label{eq.ZDone}
%\bZXDonede = j_!\bZ(1)_X \to  \cO_{\Xb}(-D),
%\end{equation}
%where $j\colon X=\Xb \setminus D\hookrightarrow \Xb$ is the open immersion. 
%The long exact sequence arising from \eqref{eq.ZDone} gives 
an exact sequence
\[
0 \to J^1_{\Xb|D} \to \DeC \to \bZ \to 0.
\]
%where $J^1_{\Xb|D}= H^1(\Xban,\cO_{\Xb}(-D))/H^1(\Xban,j_!\bZ(1)_X)$ and we used the trace isomorphism
%\begin{equation}\label{eq.traceiso}
%H^2(\Xban,j_!\bZ(1)_X)\cong \bZ.
%\end{equation}
Let $ Z_0(X)$ be the group of $0$-cycles on $X$.
According to Theorem \ref{thm-purityDeligne}, % (where one takes $F=\emptyset$ and $Y=\Xb$), 
we have defined the fundamental class 
\begin{equation}\label{eq.clDe1alpha}
\clDe 1 \alpha \in \bH^{2}_{|\alpha|}(\Xban,\bZXDonede )\qfor \alpha\in Z_0(X)
\end{equation}
as the unique element which maps to the pair $(\clB 1{\alpha}, \clDR 1{\alpha})$ in
\[\H^2_{|\alpha|}(\Xban, j_! \Z(1))\oplus \bH^2_{|\alpha|}(\Xban, \Omega^{\geq 1}_{\Xb|D}) = \H^2_{|\alpha|}(\Xban, j_! \Z(1))\oplus \H^1_{|\alpha|}(\Xban, \Omega^1_{\Xb|D}).\]
This gives us a homomorphism
\[
\clDemap : Z_0(X) \to \DeC=\bH^{2r}(\Xban,\bZXDonede ).
\]
%which maps $\alpha$ to the image of $\clDe 1 \alpha$ under $\DeCs\to \DeC$, the forget support map.
By the definition \eqref {relAJmap1} we have a commutative diagram
\begin{equation}\label{eq.CD1}
\xymatrix{
Z_0(X)_{\deg 0} \ar[r]^{\clDemap} \ar[d]  & \DeC\\
\CH^1(\Xb|D)_{hom} \ar[r]^{\rho^1_{\Xb|D}} & J^1_{\Xb|D} \ar[u]}
\end{equation}
To compute $\clDemap$ we use an isomorphism
\begin{equation}\label{eq.expisom}
\exp: \bZXDonede \cong  \cO_{\Xb|D}^\times[-1]\;\;\text{ in }\;  D^b(\Xban),
\end{equation}
which is induced by the exponential sequence
\begin{equation}\label{eq.expseq}
0 \to j_!\bZ(1)_X \to  \cO_{\Xb|D} \rmapo{\exp}  \cO_{\Xb|D}^\times\to 0.
\end{equation}
% in view of \eqref{eq.ZDone}.
 The composite map
\begin{equation*}\label{eq.expclDe}
Z_0(X) \rmapo{\clDemap}  \bH^{2r}(\Xban,\bZXDonede ) \rmapo {\exp} H^1(\Xban,\cO_{\Xb|D}^\times)
\end{equation*}
is computed as follows: Let $\cK_{\Xb|D}^\times $ be the subsheaf of the constant sheaf of rational functions on $\Xb$ that are congruent to $1$ modulo $D$. We have an isomorphism
\[
\div_X: \H^0(\Xban,\cK_{\Xb|D}^\times/\cO_{\Xb|D}^\times) \isom Z_0(X),
\]
given by taking the divisors of rational functions on $X$. This gives us a map
\begin{equation}\label{eq.mapL}
\cL: Z_0(X)\rmapo{\div_X^{-1}}   \H^0(\Xban,\cK_{\Xb|D}^\times/\cO_{\Xb|D}^\times) \rmapo{\partial}
\H^1(\Xban,\cO_{\Xb|D}^\times), 
\end{equation} 
where $\partial$ is the boundary map arising from the exact sequence
\begin{equation}\label{eqCurv1}
1\to  \cO_{\Xb|D}^\times \to \cK_{\Xb|D}^\times \to \cK_{\Xb|D}^\times/\cO_{\Xb|D}^\times \to 1.
\end{equation}

\begin{lem}\label{lem.expclDe}
We have $\exp\circ \clDemap = \cL$.
\end{lem}

\begin{lem}\label{lem.expint}
Consider the composite map
\begin{equation*}\label{eq.lem.expint}
\epsilon: H^0(\Xb,\WXD 1)^\vee\cong H^1(\Xban,\cO_{\Xb}(-D)) \rmapo{\exp} H^1(\Xban,\cO_{\Xb|D} ^\times).
\end{equation*}
where the first isomorphism is due to the Serre duality and the second map is induced by \eqref{eq.expseq}.
Take points $x,o\in X$ and consider 
\[
\gamma_{[o,x]}=\biggl\{\omega \to \int_{o}^x \omega \biggl\}\;\in \LXD^\vee/\H_1(\Xan,\bZ).
\]
Then we have $\epsilon(\gamma_{[o,x]})= \cL([x]-[o])$ with $[x]-[o]\in Z_0(X)$.
\end{lem} 

Note that in case $\dim(X)=1$, we have $\LXD=H^0(\Xb,\WXD 1)$ and also a commutative diagram
\[\xymatrix{
H^1(\Xban,\cO_{\Xb}(-D)) \ar[r]^\cong & \H^0(\Xb,\WXD 1)^\vee\\ 
\H^1(\Xban,j_!\bZ(1)_X) \ar[r]^\cong \ar[u] & \H_1(\Xan,\bZ)\ar[u]\\
}\]
where the lower isomorphism is due to Poincar\'e duality and the right vertical map is given by
integration on topological $1$-cycles. Hence Lemma \ref{lem2.universality}(2) in case $\dim(X)=1$ follows from \eqref{eq.CD1} and Lemmas \ref{lem.expclDe} and \ref{lem.expint}.
 
\medbreak\noindent
{\it Proof of Lemma \ref{lem.expint}:}  
Let $\gamma$ be a path in $X$ from $o$ to $x$. Let $V = \Xb\setminus \gamma$ be the complement of $\gamma$ in $\Xb$. Let $t_x$ (resp. $t_o$ ) be a holomorphic function having a simple zero on $x$ (resp. on $o$) defined on a small neighborhood of $x$ (resp. $o$). Let $U$ be an open neighborhood of $\gamma$, disjoint from $D$, such that the function $g= \frac{1}{2\pi i }\log(\frac{t_x}{t_o})$ is single valued on $V\cap U$. Then the cocycle $\{V\cap U, \frac{t_x}{t_o}\}\in \H^1(\Xban,\cO_{\Xb|D} ^\times)$ represents the element $\cL([x]-[o])$. By multiplying by a $\mathcal{C}^{\infty}$-function $f$, that we can choose identically $0$ on $D$ and identically $1$ on neighborhood of $\gamma$ containing $U$,  we can consider a $\overline{\partial}$-closed form
\[\alpha = \frac{1}{2\pi i} \overline{\partial}f\log{\Big(\frac{t_x}{t_o}\Big)} \]
of type $(0,1)$, representing a lifting of the class of $g$ in $\H^1(\Xban,\cO_{\Xb}(-D))/\H^1(\Xban,j_!\bZ(1)_X)$. % (we are using the fact that one can compute the cohomology group $\H^1(\Xb, \cO_{\Xb}(-D))$ by means of Dolbeaut cohomology).
This gives rise to an element in $\LXD^\vee/\H_1(\Xan,\bZ)$
\[\biggl\{ \omega\mapsto \int_{\Xb}\alpha\wedge \omega \biggl\}\]
and it suffices to show that 
\[\int_{\Xb}\alpha\wedge \omega = \int_{o}^x \omega \]
for every form $\omega \in \LXD$. The proof of this fact is standard and we omit it. %For $\epsilon >0$, let $\Gamma_{\epsilon}$ denote a tubular neighborhood of $\gamma$ of radius $\epsilon$. Since $\omega$ is $d$-closed, we have
%\[\alpha \wedge \omega = \frac{1}{2\pi i} \overline{\partial} f \log{\Big(\frac{t_x}{t_o}\Big)}\wedge \omega = \frac{1}{2 \pi i} d (f\log{\Big(\frac{t_x}{t_o}\Big)} %\omega) \]
%on $\Xb\setminus \Gamma_\epsilon$. By Stokes theorem we get then
%\[\int_{\Xb}\alpha\wedge \omega = \lim_{\epsilon \to 0} \int_{\Gamma_{\epsilon}} \alpha \wedge \omega = \lim_{\epsilon \to 0 } \int_{\partial \Gamma_{\epsilon}} { \frac{1}{2\pi i} \log\Big( \frac{t_x}{t_o}\Big) \omega}\]
%where we used the fact that $f$ is $1$ on $\Gamma_{\epsilon}$ (for $\epsilon$ sufficiently small). By cutting down the boundary $\partial \Gamma_{\epsilon}$ into pieces, one finally sees that
%\[\lim_{\epsilon \to 0 } \int_{\partial \Gamma_{\epsilon}} { \frac{1}{2\pi i} \log\Big( \frac{t_x}{t_o}\Big) \omega} = \int_{o}^{x} \omega\]
%completing the proof of the Lemma.
 
\medbreak\noindent
{\it Proof of Lemma \ref{lem.expclDe}:}  
Take $\alpha\in Z_0(X)$. Note
\[
\div_X^{-1}(\alpha)\in \H^0_{|\alpha|}(\Xban,\cK_{\Xb|D}^\times/\cO_{\Xb|D}^\times)
\]
since the restriction of $\div_X^{-1}(\alpha)$ to $\H^0(\Xb-|\alpha|,\cK_{\Xb|D}^\times/\cO_{\Xb|D}^\times)$ vanishes.
We have a commutative diagram
\[\xymatrix{
\H^0_{|\alpha|}(\Xban,\cK_{\Xb|D}^\times/\cO_{\Xb|D}^\times) \ar[r]^{\partial} \ar[d]^{\iota} & 
\H^1_{|\alpha|}(\Xban,\cO_{\Xb|D}^\times)\ar[d]^{\iota} 
& \ar[l]_{\exp}\bH^{2}_{|\alpha|}(\Xban,\bZXDonede )\ar[d]^{\iota}\\
\H^0(\Xban,\cK_{\Xb|D}^\times/\cO_{\Xb|D}^\times) \ar[r]^{\partial}  & 
\H^1(\Xban,\cO_{\Xb|D}^\times)& \ar[l]_{\exp} \bH^{2}(\Xban,\bZXDonede ).\\
}\]
Thus it suffices to show (cf. \eqref{eq.clDe1alpha})
\begin{equation*}\label{eq.claim1}
\partial(\div_X^{-1}(\alpha)) = \exp(\clDe 1 \alpha) \in \H^1_{|\alpha|}(\Xban,\cO_{\Xb|D}^\times).
\end{equation*}
For this we first note that the composite map
\[
\bH^{2}_{|\alpha|}(\Xban,\bZXDonede ) \rmapo{\exp} \H^1_{|\alpha|}(\Xban,\cO_{\Xb|D}^\times) 
\rmapo{d\log} H^1_{|\alpha|}(\Xban,\WXD 1) 
\]
coincides with the map induced by the map $\bZXDonede \to \WXD {\geq 1}= \WXD 1[-1]$
from lemma \ref{relDecpx.lem1}. Hence $d\log(\exp(\clDe 1 \alpha)))=\clDR 1\alpha\in \H^1_{|\alpha|}(\Xban,\cO_{\Xb|D}^\times)$. Hence the statement is a consequence of the following.

\begin{claim}\label{claim.lem.expclDe}We have
\begin{itemize}
\item[(1)]
$d\log: H^1_{|\alpha|}(\Xban,\cO_{\Xb|D}^\times) \to \H^1_{|\alpha|}(\Xban,\WXD 1)$ is injective.
\item[(2)]
$d\log(\partial(\div_X^{-1}(\alpha))) = \clDR 1\alpha\in \H^1_{|\alpha|}(\Xban,\cO_{\Xb|D}^\times)$.
\end{itemize}
\end{claim}

To show (1), we consider a commutative diagram 
\[\xymatrix{
0\ar[r]&\H^1_{|\alpha|}(\Xban,\cO_{\Xb}(-D)) \ar[r]^{\exp} \ar[d]^{=} & \H^1_{|\alpha|}(\Xban,\cO_{\Xb|D}^\times)
\ar[r] \ar[d]^{d\log} & \H^2_{|\alpha|}(\Xban,j_!\bZ(1)_X)\ar[d]\\
0\ar[r]&\H^1_{|\alpha|}(\Xban,\cO_{\Xb}(-D)) \ar[r]^{d}  & \H^1_{|\alpha|}(\Xban,\WXD 1) 
\ar[r] & \H^2_{|\alpha|}(\Xban,j_!\bC_X)\\
}\]
where the horizontal sequences are exact arising from \eqref{eq.expseq} and the exact sequence
\[
0\to j_!\bC_X \to \cO_{\Xb}(-D) \rmapo{d} \WXD 1\to 0.
\]
The injectivity of the first map in the upper (resp. lower) sequence follows from the vanishing of 
$\H^1_{|\alpha|}(\Xban,j_!\bZ(1)_X)$ (resp. $\H^1_{|\alpha|}(\Xban,j_!\bC_X)$) by semi-purity.
Thus (1) follows from the injectivity of the right vertical map due to the trace isomorphism.% \eqref{eq.traceiso}.
%\[\H^2_{|\alpha|}(\Xban,j_!\bZ(1)_X)\isom \bZ,\;\; \H^2_{|\alpha|}(\Xban,j_!\bC_X)\isom \bC.\]
%\medbreak

To show (2) take a sufficiently small open $U\subset X=\Xb-|D|$ such that $|\alpha|\subset U$ and that there is 
$f\in \Gamma(U-|\alpha|,\cO_U^\times)$ such that $\alpha=\div_U(f)$. We have a commutative diagram
\[\xymatrix{
\H^0_{|\alpha|}(\Xban,\cK_{\Xb|D}^\times/\cO_{\Xb|D}^\times) \ar[r]^{\partial} \ar[d]^{\cong} & 
\H^1_{|\alpha|}(\Xban,\cO_{\Xb|D}^\times)\ar[r]^{d\log} \ar[d]^{\cong}& \H^1_{|\alpha|}(\Xban,\WXD 1)\ar[d]^{\cong}\\
\H^0_{|\alpha|}(U,\cK_{U}^\times/\cO_U^\times) \ar[r]^{\partial} & 
\H^1_{|\alpha|}(U,\cO_{U}^\times)\ar[r]^{d\log} & \H^1_{|\alpha|}(U,\Omega^1_U)\\
& \ar[lu]_{\psi}  \H^0(U-|\alpha|,\cO_U^\times)
\ar[r]^{d\log}\ar[u]_{\delta} & \H^0(U-|\alpha|,\Omega^1_U)\ar[u]_{\delta}\\
}\]
where the vertical maps in the upper column are isomorphisms by excision, $\delta$ is a boundary map
in the localization sequence associated to $\tau: U-|\alpha|\hookrightarrow U$,
and $\delta$ is induced by the inclusion $\tau_*\cO_{U-|\alpha|}^\times \to \cK_U^\times$.
By definition we have $\psi(f)=\div_X^{-1}(\alpha)$. Hence Claim \ref{claim.lem.expclDe}(2) follows from the fact
$\delta(d\log f)=\clDR 1\alpha$, which follows from \eqref{affineDescrClass}. 
This completes the proof of Claim \ref{claim.lem.expclDe} and Lemma \ref{lem2.universality}(2) for $\dim(X)=1$.
\medbreak

For $\dim \Xb>1$, the assertion follows from the covariant functoriality of the cycle class maps \eqref{clrelDR.eq1} and
\eqref{cyclemapDeligne.eq1} for proper morphisms of pairs.
Finally Lemma \ref{lem2.universality}(3) follows from (2). 
\end{proof}
\medbreak

\begin{lem}\label{lem3.universality}
Let $\rho: \AzXD \to G$ be a regular map with $G$ connected and $\psi_\rho: X\to G$ be the composite of $\rho$ and $\iota_o$ 
(see Definition \ref{def.regular}). Then $\varOmega(G) \to \H^0(\Xan,\Omega^1_X)$, the pullback map on holomorphic $1$-forms, induces 
\[
\psi_\rho^*: \varOmega(G) \to \LXD.
\]
\end{lem}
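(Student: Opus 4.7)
The plan is to verify the two conditions defining $\LXD$ for an invariant $1$-form $\omega \in \varOmega(G)$: (a) $d(\psi_\rho^*\omega)=0$ on $X_{an}$, and (b) $\psi_\rho^*\omega$, a priori a rational $1$-form on $\Xb$ regular on $X$, has pole of order at most $n_i$ along each component $D_i$ of $D$ of multiplicity $n_i$. Condition (a) is immediate: since $G$ is a commutative complex Lie group, every invariant $1$-form is closed, and pullback commutes with $d$.

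For (b), I would reduce to a pole estimate on smooth curves. Fix a smooth point $x$ of $D_{red}$ lying on a single component $D_i$ of multiplicity $n_i$, and choose a smooth projective curve $\gamma_C\colon \Cb \to \Xb$, birational onto its image, crossing $D_i$ transversally at $x$ and disjoint from the other components of $D$; set $C^\circ = \gamma_C^{-1}(X)$. By the regularity assumption on $\rho$, the composite $\psi_{\rho,C} := \psi_\rho\circ (\gamma_C|_{C^\circ})\colon C^\circ \to G$ is a morphism of algebraic varieties. For every $g \in G(\Cb,\gamma_C^*D)$, the pushforward cycle $(\gamma_C)_*\div_{\Cb}(g)$ is trivial in $\CH_0(\Xb|D)$ by the presentation \eqref{relCH0}, hence killed by $\rho$; combined with $\deg((\gamma_C)_*\div_{\Cb}(g))=0$, this shows that $\psi_{\rho,C}$ sends $\div_{\Cb}(g)$ to $0_G$.

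This vanishing is precisely the admissibility condition in Serre's theory of generalized Jacobians \cite[Ch.~III, Ch.~V]{Se}, valid for any commutative algebraic group $G$ over $\bC$: it guarantees a unique factorization $\psi_{\rho,C} = \widetilde{\psi}_{\rho,C}\circ \alpha_{\gamma_C^*D}$ through the Abel map $\alpha_{\gamma_C^*D}\colon C^\circ \to J(\Cb,\gamma_C^*D)$ with $\widetilde{\psi}_{\rho,C}$ a morphism of algebraic groups. Serre's identification $\alpha_{\gamma_C^*D}^{*}\varOmega(J(\Cb,\gamma_C^*D)) = H^0(\Cb,\Omega^1_{\Cb}(\gamma_C^*D))$ then yields $\gamma_C^*(\psi_\rho^*\omega) \in H^0(\Cb,\Omega^1_{\Cb}(\gamma_C^*D))$, whose pole at $x$ has order at most $n_i$. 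Since the transversality of $C$ and $D_i$ at $x$ implies that a local uniformizer of $D_i$ on $\Xb$ pulls back to a uniformizer of $\Cb$ at $x$, letting $x$ and $\gamma_C$ vary we conclude that $\psi_\rho^*\omega$ has pole of order at most $n_i$ along $D_i$ at its generic point. Finally, since the singular locus of $D_{red}$ has codimension at least $2$ in the smooth scheme $\Xb$, Hartogs extends the form across it as a section of the locally free sheaf $\Omega^1_{\Xb}(D)$.

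The main obstacle is the factorization through the generalized Jacobian: one has to verify rigorously that the modulus condition on $\CH_0(\Xb|D)$ translates exactly into Serre's classical admissibility condition for $\psi_{\rho,C}$, and that there exist sufficiently many transverse smooth curves through each generic point of $D$ to detect the pole order (this is a Bertini-type statement, true for projective $\Xb$ by intersecting with generic hyperplane sections in some projective embedding and then normalizing). A secondary point is that $G$ may have a unipotent part, so one invokes Serre's theory in the generality of arbitrary commutative algebraic groups, or reduces along the semi-abelian/unipotent decomposition.
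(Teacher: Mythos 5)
Your proposal is correct in outline, and its first half coincides with the paper's: the reduction to curves is exactly Lemma \ref{lem3-1.universality}, where the factorization of $\psi_\rho\circ\gamma_C$ through the generalized Jacobian $J^1_{\Cb|\gamma_C^*D}$ is obtained from Serre's universality theorem applied to the regular homomorphism $\rho\circ(\gamma_C)_*$ on $A_0(\Cb|\gamma_C^*D)$ --- your ``admissibility'' phrasing and the paper's ``universal regular quotient'' phrasing are the same theorem of Serre. The two arguments part ways at the globalization step. The paper isolates it as Lemma \ref{lem3-2.universality} and proves it by a local computation: writing $\omega=\pi^{-(n+1)}\bigl(a\,d\pi+\sum_j b_j\,dt_j\bigr)$ near a generic point of a component $Z$ of $D$, it first tests $\omega$ against the curve cut out by $(t_1-t_1(x),\dots,t_r-t_r(x))$ to force $\pi\mid a$, and then against the curve cut out by $(t_1-t_1(x)-\pi,\,t_2-t_2(x),\dots,t_r-t_r(x))$ to force $\pi\mid b_1$, and so on. You instead propose generic transverse curves plus a Hartogs extension across the codimension $\geq 2$ locus; this also works, but the justification you give --- transversality makes a uniformizer of $D_i$ pull back to a uniformizer of $\Cb$ --- is not by itself enough. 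Transversality only controls the $a\,d\pi$ component: a curve whose tangent at $D_i$ points purely in the $\pi$-direction (say, all $t_j$ constant) is blind to the $b_j\,dt_j$ terms, so a family of transverse curves with a fixed tangent direction does not detect the pole order. What saves your version is that generic hyperplane-section curves have generic tangent directions along $D_i$, so the pulled-back coefficient $a+\sum_j\alpha_j b_j$ is a unit at a generic intersection point whenever some coefficient fails to be divisible by $\pi$; this is precisely the role played by the paper's second family $t_1=t_1(x)+\pi$. Once that point is made explicit, your argument closes the same way as the paper's.
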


The proof of this Lemma will be given later.
In view of \eqref{eq.fMG}, Lemma \ref{lem3.universality} implies the following corollary.

\begin{coro}\label{lem3cor.universality}
Under the notation of Lemma \ref{lem3.universality}, we have $\rho=h_\rho\circ\rhoXD$, 
where $h_\rho: \JXDd \to G$ is the morphism of algebraic groups defined by the commutative diagram
\[
\xymatrix{
\JXDd \ar[r]^{\hskip -60pt\simeq} \ar[d]_{h_\rho} & \LXD^\vee/\Image(\H_1(\Xan,\bZ))\ar[d]^{\lambda_\rho} \\
G \ar[r]^{\hskip -60pt\simeq} & \varOmega(G)^\vee/\H_1(\Gan,\bZ)\\
}\]
where $\lambda_\rho$ is induced by $\psi_\rho^*$ in Lemma \ref{lem3.universality} and 
${\psi_\rho}_*: \H_1(\Xan,\bZ)\to \H_1(\Gan,\bZ)$. 
\end{coro}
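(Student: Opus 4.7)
\textbf{Proposal for Corollary \ref{lem3cor.universality}.}
The strategy is to verify the equality $\rho = h_\rho \circ \rho_{\Xb|D}$ directly on the classes $[x]-[o]$ with $x \in X$, since these generate $A_0(\Xb|D)$. By the very construction of $\rho_{\Xb|D}$ we have $\rho_{\Xb|D}([x]-[o]) = \varphi_{\Xb|D}(x)$, so the claim reduces to the assertion of maps of sets $X \to G$,
\[
h_\rho \circ \varphi_{\Xb|D} \;=\; \psi_\rho,
\]
which can be verified after applying the canonical isomorphism $\tau_G: G \isom \varOmega(G)^\vee/\H_1(\Gan,\bZ)$.

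First I would check that the vertical map $\lambda_\rho$ in the diagram is well defined, i.e.\ that the composite $\H_1(\Xan,\bZ) \to \LXD^\vee \xrightarrow{\psi_\rho^*} \varOmega(G)^\vee$ lands inside $\H_1(\Gan,\bZ)$. This is immediate from the functoriality of integration: for $\gamma \in \H_1(\Xan,\bZ)$ and $\eta \in \varOmega(G)$ one has $\int_\gamma \psi_\rho^*\eta = \int_{\psi_{\rho,*}\gamma} \eta$, so the induced map on cycles is precisely $\psi_{\rho,*}$. Combined with the isomorphisms $\tau_{\Xb|D}$ and $\tau_G$, this produces a homomorphism of complex Lie groups $h_\rho:\JXDd \to G$. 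Since both source and target are commutative algebraic groups (the source by Lemma \ref{lem1.universailty}, the target by hypothesis), any analytic homomorphism between them is automatically a morphism of algebraic groups — this is the standard GAGA principle for commutative algebraic groups over $\bC$ (extensions of abelian varieties by $\bG_a$ and $\bG_m$), as already invoked in the proof of Lemma \ref{lem1.universailty}.

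Next, I would chase an element $x \in X$ around the diagram. By Lemma \ref{lem2.universality}(2),
\[
\tau_{\Xb|D}\bigl(\varphi_{\Xb|D}(x)\bigr) \;=\; \Bigl\{\omega \mapsto \int_o^x \omega\Bigr\} \;\in\; \LXD^\vee/\H_1(\Xan,\bZ).
\]
Applying $\lambda_\rho$ and using the pullback formula $\int_o^x \psi_\rho^*\eta = \int_{\psi_\rho(o)}^{\psi_\rho(x)} \eta$ (valid because $\psi_\rho$ is holomorphic and $\psi_\rho(o) = e$, the identity of $G$), we get
\[
\lambda_\rho\bigl(\tau_{\Xb|D}(\varphi_{\Xb|D}(x))\bigr) \;=\; \Bigl\{\eta \mapsto \int_e^{\psi_\rho(x)} \eta \Bigr\}.
\]
On the other hand, applying formula \eqref{eq.fMG} to the holomorphic map $\psi_\rho: X \to G$, the right-hand side is precisely $\tau_G(\psi_\rho(x))$. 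Hence $\tau_G(h_\rho(\varphi_{\Xb|D}(x))) = \tau_G(\psi_\rho(x))$, and injectivity of $\tau_G$ yields $h_\rho\circ\varphi_{\Xb|D} = \psi_\rho = \rho \circ \iota_o$.

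Finally I would conclude: since every element of $A_0(\Xb|D)$ is a finite sum of classes $[x]-[o]$ and both $\rho$ and $h_\rho\circ\rho_{\Xb|D}$ are group homomorphisms, the equality on the $\iota_o(x)$ extends to all of $A_0(\Xb|D)$. The main obstacle is the verification that Lemma \ref{lem3.universality} delivers enough — that $\psi_\rho^*$ of an invariant holomorphic form on $G$ is not merely a holomorphic $1$-form on $X$ but actually lies in $\LXD$, so that the dual map $\lambda_\rho$ makes sense on $\LXD^\vee$. This, however, is exactly the content of Lemma \ref{lem3.universality} and can be assumed. Uniqueness of $h_\rho$ follows since $\varphi_{\Xb|D}(X)$ generates $\JXDd$ as an algebraic group (the image of the Abel-Jacobi map contains a Zariski dense subset of the connected component of the identity).
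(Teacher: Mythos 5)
Your main argument --- reducing to the generators $[x]-[o]$, checking that $\lambda_\rho$ is well defined via functoriality of integration (so that the induced map on lattices is ${\psi_\rho}_*$), and then chasing a point $x\in X$ through the diagram using \eqref{eq.fMG} together with Lemma \ref{lem2.universality}(2) --- is exactly the argument the paper intends: its entire proof is the sentence that the corollary follows from \eqref{eq.fMG} and Lemma \ref{lem3.universality}, and you have filled in those details correctly, including the reduction to group generators at the end.

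The one step that does not survive scrutiny is your justification that $h_\rho$ is a morphism of \emph{algebraic} groups. It is not true that every analytic homomorphism between connected commutative algebraic groups over $\bC$ is algebraic: the exponential $\bG_a(\bC)=\bC\to\bC^\times=\bG_m(\bC)$ is an analytic homomorphism that is not algebraic, and so is a one-parameter subgroup $\bC\to E(\bC)$ of an elliptic curve. The fact invoked in the proof of Lemma \ref{lem1.universailty} concerns isomorphism classes of \emph{extensions} of an abelian variety by $\bG_a$ or $\bG_m$, not arbitrary homomorphisms, so it does not give what you want. A correct route is to use that $h_\rho\circ\varphi_{\Xb|D}=\psi_\rho$ is a morphism of algebraic varieties (because $\rho$ is regular) and that $\varphi_{\Xb|D}$ is algebraic with image generating $\JXDd$; by Chevalley's theorem the group $\JXDd$ is a finite sum of translates of $\pm\varphi_{\Xb|D}(X)$, so the homomorphism $h_\rho$ agrees with an algebraic map on a dense constructible subset and is therefore algebraic everywhere by translation. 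The paper is silent on this point, so the omission is shared, but the general principle you state in its place is false.
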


\bigskip

We need some preliminaries for the proof of Lemma \ref{lem3.universality}.

\begin{lem}\label{lem3-1.universality}
Let $\rho: \AzXD \to G$ be a regular map with $G$ connected. 
%Let $C\in C_1(X)$ and $\gamma_C:\Cb^N\to \Xb$ be the induced fintie map, where $\Cb^N$ is the normalizationa
Let $\Cb$ be a smooth projective curve and $\gamma: \Cb \to \Xb$ be a morphism such that $C=\gamma^{-1}(X)$ is not empty.
Take $o\in C$ and write $o$ also for its image in $X$. Consider the composite map
\[
\psi: C \rmapo{\gamma} X \rmapo {\iota_o} \AzXD \rmapo \rho G.
\]
Then the image of $\psi^*:\varOmega(G) \to \H^0(C,\Omega^1_C)$ is contained in $\H^0(\Cb,\Omega^1_{\Cb}(\gamma^*D))$.
\end{lem}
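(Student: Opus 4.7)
The plan is to reduce the statement to the case $\dim\Xb=1$, where the conclusion follows from Serre's theory of generalized Jacobians combined with the one-dimensional instance of Lemma~\ref{lem2.universality}(3).

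First I would set up the pushforward: since $C\neq\emptyset$, the image $\gamma(\Cb)$ is not contained in $D$, so the Cartier pullback $\gamma^*D$ is a well-defined effective divisor on $\Cb$. The covariant functoriality of the relative cycle complex for the proper morphism $\gamma$ (Lemma~\ref{lem-zXD}(1)) induces a pushforward
\[ \gamma_*\colon A_0(\Cb|\gamma^*D) \to A_0(\Xb|D), \]
which is compatible with the Betti cycle class and hence preserves homological triviality. Setting $\widetilde{\rho}:=\rho\circ\gamma_*$ and letting $\iota_o^{\Cb}\colon C\to A_0(\Cb|\gamma^*D)$ denote the zero-cycle map based at $o$, one immediately checks that $\widetilde{\rho}\circ\iota_o^{\Cb}=\psi_\rho\circ\gamma=\psi$, which is a morphism of algebraic varieties as the composition of two such. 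Thus $\widetilde{\rho}$ is a regular homomorphism in the sense of Definition~\ref{def.regular}.

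Next I would invoke Remark~\ref{rem.regular}(3): the relative intermediate Jacobian $J^1_{\Cb|\gamma^*D}$ coincides with Serre's generalized Jacobian of $\Cb$ with modulus $\gamma^*D$, and by \cite[Ch.~V, Th.~1]{Se} it is the universal regular quotient of $A_0(\Cb|\gamma^*D)$. Consequently $\widetilde{\rho}$ factors uniquely as $\widetilde{\rho}=h\circ\rho_{\Cb|\gamma^*D}$ for some morphism of algebraic groups $h\colon J^1_{\Cb|\gamma^*D}\to G$. Composing with $\iota_o^{\Cb}$ gives $\psi = h\circ\varphi_{\Cb|\gamma^*D}$, where $\varphi_{\Cb|\gamma^*D}\colon C\to J^1_{\Cb|\gamma^*D}$ is the Abel--Jacobi morphism of the curve.

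Finally, I would apply Lemma~\ref{lem2.universality}(3) to the pair $(\Cb,\gamma^*D)$, which identifies
\[ \varphi_{\Cb|\gamma^*D}^{\,*}\colon \varOmega(J^1_{\Cb|\gamma^*D}) \xrightarrow{\;\sim\;} \varOmega(\Cb|\gamma^*D) = H^0(\Cb,\Omega^1_{\Cb}(\gamma^*D)). \]
For every $\omega\in\varOmega(G)$ we then have $\psi^*(\omega) = \varphi_{\Cb|\gamma^*D}^{\,*}(h^*\omega)\in H^0(\Cb,\Omega^1_{\Cb}(\gamma^*D))$, which is exactly the assertion of the lemma. The main point requiring care is ensuring that the factorization through $J^1_{\Cb|\gamma^*D}$ yields $h$ as a morphism of algebraic groups rather than merely an abstract homomorphism, but this is precisely the content of the curve case of Theorem~\ref{thm.universailty} already settled via Serre's cited result.
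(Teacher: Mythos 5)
Your proposal is correct and follows essentially the same route as the paper: push forward along $\gamma$ to reduce to the curve $(\Cb,\gamma^*D)$, observe that $\rho\circ\gamma_*$ is regular, factor it through the generalized Jacobian $J^1_{\Cb|\gamma^*D}$ via its universality (Remark \ref{rem.regular}(3), i.e.\ Serre), and conclude with Lemma \ref{lem2.universality}(3) identifying $\varphi_{\Cb|\gamma^*D}^{*}(\varOmega(J^1_{\Cb|\gamma^*D}))$ with $\H^0(\Cb,\Omega^1_{\Cb}(\gamma^*D))$. No gaps.
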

\begin{proof}
Put $\fm=\gamma^*D$ and let $\rho_{\Cb|\fm}: \AzCm\to J^1_{\Cb|\fm}$ be the Abel-Jacobi map for the curve $\Cb$ with modulus $\fm$. 
We have a commutative diagram
\[
\xymatrix{
C \ar[r]^{\hskip -20pt\iota_o} \ar[d]^{\gamma} & \AzCm \ar[d]^{\gamma_*} \\
X \ar[r]^{\hskip -20pt\iota_o}  & \AzXD \ar[r]^{\hskip 10pt\rho} & G \\
}\]
where the right vertical map is induced by $\gamma_*:Z_0(C) \to Z_0(X)$. % (see \eqref{relCH0}).
By the assumption that $\rho$ is regular, $\rho\circ\gamma_*:\AzCm \to G$ is also regular.
By Remark \ref{rem.regular}(3) we know that the generalized Jacobian $J^1_{\Cb|\fm}$ is universal, so that there exists a morphism $h: J^1_{\Cb|\fm} \to G$ of algebraic groups such that
$\rho\circ\gamma_*= h\circ \rho_{\Cb|\fm}$.
%Recalling 
%\[\AzCm=\Coker\big(G(\Cb,\fm) \rmapo{\div_C} Z_0(C)^{\deg 0}\big),\]
%this implies that $\psi$ has modulus $\fm$ in the sense of \cite[Ch.III Def.1]{Se}.
%Noting that $J^1_{\Cb|\fm}$ is the generalized Jacobian of $\Cb$ with modulus $\fm$ (cf. \cite[Ch.V Prop.11]{Se}), 
Hence $\psi$ factors as
\[
\psi: C \rmapo {\varphi} J^1_{\Cb|\fm} \rmapo h G,
\]
where $\varphi$ is the composite $C \rmapo {\iota_o} \AzCm \rmapo{\rho_{\Cb|\fm}} J^1_{\Cb|\fm}$.
Hence $\psi^*$ in the lemma factors as
\[
\varOmega(G) \rmapo{h^*} \varOmega(J^1_{\Cb|\fm}) \rmapo {\varphi^*} \H^0(C,\Omega^1_C).
\]
Now the lemma follows from the fact (cf. \cite[Ch.V Prop.5]{Se} and Lemma \ref{lem2.universality}(3)) that
\[\varphi^*(\varOmega(J^1_{\Cb|\fm}))=\H^0(\Cb,\Omega^1_{\Cb}(\fm))\subset \H^0(C,\Omega^1_C).\]
\end{proof}
\medbreak

\begin{lem}\label{lem3-2.universality}
%Let the notation be as in  \eqref{relCH0}.
The restriction map
\[
\theta: \H^0(X,\Omega^1_X)/\H^0(\Xb,\Omega^1_{\Xb}(D)) \to \underset{C\in C^N_1(X)}{\prod}\; 
\H^0(C,\Omega^1_{C})/\H^0(\Cb,\Omega^1_{\Cb}(\gamma_C^*D))
\]
where $C^N_1(X)$ is the set of the normalizations of integral closed curves on $X$ (see \ref{eq.relchow}), is injective. 
\end{lem}
\begin{proof}
Let  $Z$ be an irreducible component of $D$. It suffices to show the map
\[
\H^0(\Xb,\Omega^1_{\Xb}(D+Z))/\H^0(\Xb,\Omega^1_{\Xb}(D)) \xrightarrow{\theta} \underset{C\in C_1(X)}{\prod}\; 
\H^0(\Cb,\Omega^1_{\Cb}(\gamma_C^*(D+Z)))/\H^0(\Cb,\Omega^1_{\Cb}(\gamma_C^*D)).
\]
is injective. Let $\omega\in  \H^0(\Xb,\Omega^1_{\Xb}(D+Z))$ such that $\theta(\omega)=0$.
We want to show $\omega\in  \H^0(\Xb,\Omega^1_{\Xb}(D))$. % Consider the Residue map
%\[0\to \]
Since $\Omega^1_{\Xb}(D+Z)/\Omega^1_{\Xb}(D)$ is a locally free $\cO_Z$-module, it suffices to show
$\omega_{|U}\in \H^0(U,\Omega^1_{\Xb}(D))$ for some open subset $U\subset \Xb$ with $U\cap Z\not=\emptyset$.
Choose an affine open $U=\Spec(A)$ satisfying the following conditions:
\begin{itemize}
\item[$(\spadesuit1)$]
$Z_U:=Z\cap U\not=\emptyset$ and $U\cap Z'=\emptyset$ for any component $Z'\not=Z$ of $D$.
\item[$(\spadesuit2)$]
There exists a regular system of parameters $\pi, t_1\dots,t_r$ in $A$, with $r=d-1$, such that $Z_U=\Spec(A/(\pi))$ and 
\[
\H^0(U,\Omega^1_{\Xb})= A d\pi \oplus \underset{1\leq i\leq r}{\bigoplus} A dt_i.
\]
\end{itemize}
Let $n$ be the multiplicity of $Z$ in $D$. We can write
\[
\omega_{|U}=\frac{1}{\pi^{n+1}}\big(a d\pi + \underset{1\leq i\leq r}{\bigoplus} b_i dt_i)
\qwith a,b_i\in A.
\]
Assume, by contradiction, that $a$ is not divisible by $\pi$ in $A$. Then we can take a closed point $x\in Z_U$ such that $a\in \cO_{X,x}^\times$.
Consider the ideal \[I=(t_1-t_1(x),\dots,t_r-t_r(x))\subset A,\] where for a  section $f\in A$, $f(x) \in \bC$ denotes the residue class at the point $x$.
By construction, there exists a unique irreducible component  $W\subset U$  of $\Spec(A/I)$ passing through $x$.
The condition $(\spadesuit2)$ above implies then that $\dim W=1$ and that $W$ is regular at $x$. Let $\Cb$ be the normalization of the closure of $W$ in $\Xb$.
Then $\overline{\pi}=\pi\mod I$ is a local parameter of $\Cb$ at $x$. By definition, the pullback of $\omega$ to $\Cb$ is written locally at $x$ as
\[
\omega_{|\Cb} = \frac{1}{\pi^{n+1}} \overline{a} d\overline{\pi}\quad (\overline{a}=a\mod I).
\]
Now recall that, by assumption, we have $\omega_{|\Cb} \in \H^0(\Cb,\Omega^1_{\Cb}(\gamma_C^*D))$. On the other hand, 
$\overline{a}\in \cO_{\Cb,x}^\times$ since $a\in \cO_{X,x}^\times$. 
This is a contradiction and  so $a$ must be divisible by $\pi$.

We repeat the same argument: if $b_1$ is not divisible $\pi$ in $A$, we  can take a closed point $x\in Z_U$ such that $b_1\in \cO_{X,x}^\times$.
Considering this time the ideal \[I^\prime= (t_1-t_1(x)-\pi,t_2-t_2(x),\dots,t_r-t_r(x))\subset A,\] we get in the same way a contradiction, proving that also $b_1$ must be divisible $\pi$. Iterating the argument for $b_i$ with $i\geq 2$ completes the proof. %, we finally have that the restriction  $\omega_{|U}$ belongs to $\H^0(U,\Omega^1_{\Xb}(D))$, completing the proof.
\end{proof}

\medbreak\noindent
{\it Proof of Lemma \ref{lem3.universality}:}\;
Since an invariant differential form on a commutative Lie group is closed, it suffices to show the image of 
$\psi_\rho^*: \varOmega(G) \to \H^0(\Xan,\Omega^1_X)$ is contained in $\H^0(\Xban,\Omega^1_{\Xb}(D))$.
The assertion follows then from  Lemma \ref{lem3-1.universality} and Lemma \ref{lem3-2.universality}.
\medbreak
We can finally proof the main Theorem of this section

\medbreak\noindent
{\it Proof of Theorem \ref{thm.universailty}:}\;
Theorem \ref{thm.universailty}(2) follows from Corollary \ref{lem3cor.universality} and Remark \ref{rem.regular}(2).
We are left to show Theorem \ref{thm.universailty}(1). Let $\varphi_{\Xb|D}:X \to \JXDd$ be as Lemma \ref{lem2.universality}(2).
By loc.cit, it is analytic. One can then show that it is a morphism of algebraic varieties by the same argument as in 
the proof of \cite[Th.4.1(i)]{ESV}.% {\color{blue} (Shall we leave this like this, or would you try to write more details?)} 

It remains to show the surjectivity of $\rho_{\Xb|D}$.
Let $C\in C_1^N(X)$ and put $\fm=\gamma_C^*D$. By Lemma \ref{lem2.universality} we have a commutative diagram
\[ \xymatrixcolsep{5pc}
\xymatrix{
\AzCm \ar[r]^{\rho_{\Cb|\fm}}_{\simeq} \ar[d]_{{\gamma_C}_*} & 
J^1_{\Cb|\fm} \ar[r]^{\hskip -40pt\tau_{\Cb|\fm}}_{\hskip -40pt\simeq} & \LCm^\vee/Image(\H_1(C_{an},\bZ)) \ar[d]\\
\AzXD \ar[r]^{\rho_{\Xb|D}}  & \JXDd \ar[r]^{\hskip -40pt\tau_{\Xb|D}}_{\hskip -40pt\simeq} & 
\LXD^\vee/Image(\H_1(X_{an},\bZ)) \\
}\]
where the right vertical map is induced by the pullback $\gamma_C^*:\LXD\to \LCm$,
and $\rho_{\Cb|\fm}$ is an isomorphism by Remark \ref{rem.regular}(3).
Noting that $\H^0(\Xban,\Omega^1_{\Xb}(D))$ is of finite dimension, the argument of the proof of Lemma \ref{lem3-2.universality}
shows that there is a finite subset 
$\{C_i\}_{i\in I}\subset C_1^N(X)$ such that the pullback map
\[
\LXD \to \underset{i\in I}{\bigoplus}\; \varOmega(\Cb_i|\gamma_{C_i}^*D)
\]  
is injective. From the above diagram, this implies the composite map
\[
 \underset{i\in I}{\bigoplus}\; A_0(\Cb_i|\gamma_{C_i}^*D)\; \to \AzXD \rmapo{\rho_{\Xb|D}}  \JXDd 
\]
is surjective and hence so is $\rho_{\Xb|D}$. This completes the proof. 
%\newpage

    \bibliography{bib-relchow} 
    \bibliographystyle{siam}
\end{document}